\author[Kao]{Lien-Yung Kao}
\address{George Washington University, Washington, D.C. 20052, lkao@gwu.edu}
\author[Martone]{Giuseppe Martone}
\address{Sam Houston State University, Huntsville, Texas 77341, gxm120@shsu.edu}
\thanks{Kao was partially supported by the Simons Foundation (Gift No.~956047) and by the National Science Foundation under Award No.~2441413. Also, parts of this work were done during his visit to the National Center for Theoretical Sciences (NCTS) in Taiwan. 
He would like to thank NCTS for their support and hospitality. Martone was partially supported by an
AMS-Simons Research Enhancement Grant for PUI faculty and by a New Faculty Investement grant from Sam Houston State University. He would like to thank the Mathematics Department at Yale University were he completed part of this work.}
\theoremstyle{plain}
\newtheorem{theorem}{Theorem}[section]
\newtheorem{thm}[theorem]{Theorem}
\newtheorem{prop}[theorem]{Proposition}
\newtheorem{assumptions}[theorem]{Assumptions}
\newtheorem{lemma}[theorem]{Lemma}
\newtheorem{lem}[theorem]{Lemma}
\newtheorem{cor}[theorem]{Corollary}
\newtheorem{definition}[theorem]{Definition}
\newtheorem{remark}[theorem]{Remark}
\newtheorem{rem}[theorem]{Remark}
\theoremstyle{definition}
\newtheorem*{remark*}{\textbf{Remark}}
\newtheorem{notation}[theorem]{Notation}
\newtheorem{thmx}{Theorem}
\numberwithin{equation}{section}
\numberwithin{figure}{section}
\newtheorem*{thm*}{\protect\theoremname}
\newtheorem*{lem*}{\protect\lemmaname}
\newcommand{\boxt}{I^2_{\xi,m}(t)}
\DeclareMathOperator{\Rb}{\mathbb{R}}
\newcommand{\wt}[1]{\widetilde{#1}}
\newcommand{\wh}[1]{\widehat{#1}}
\newcommand{\vertiii}[1]{{\left\vert\kern-0.25ex\left\vert\kern-0.25ex\left\vert #1 
    \right\vert\kern-0.25ex\right\vert\kern-0.25ex\right\vert}}
	\newcounter{notes}
\def\bb #1{\mathbb{#1}}
\def\bf #1{\mathbf{#1}}
\def\cal #1{\mathcal{#1}}
\def\rm #1{\mathrm{#1}}
\def\sf #1{\mathsf{#1}}
\def\wt #1{\widetilde{#1}}
\def\wh #1{\widehat{#1}}
\def\ul #1{\underline{#1}}
\newcommand{\eee}{\epsilon}
\newcommand{\Lip}{\text{Lip}}
\newcommand{\fbf}{\mathbf f}
\newcommand{\xbf}{\mathbf x}
\newcommand{\ybf}{\mathbf y}
\newcommand{\zbf}{\mathbf z}
\def\inn #1{\langle #1\rangle}
\providecommand{\lemmaname}{Lemma}
\providecommand{\theoremname}{Theorem}
\begin{document}
\title[Correlation number]{Correlation number for potentials with entropy gaps and cusped Hitchin
representations}
\begin{abstract} We introduce a correlation number for two strictly positive, locally H\"older continuous, independent potentials with strong entropy gaps at infinity on a topologically mixing countable state Markov shift with BIP. We define in this way a correlation number for pairs of cusped Hitchin representations. Furthermore,
we explore the connection between the correlation number and the Manhattan curve, along with several rigidity properties of this correlation number.
\end{abstract}

\maketitle
\setcounter{tocdepth}{1}
\tableofcontents

\section{Introduction} 

In this article, we introduce a correlation number for pairs of well-behaved potentials over countable-state Markov shifts, defined through the study of their simultaneous orbital distributions. These dynamical results let us define a correlation number for pairs of cusped Hitchin representations. This correlation number quantifies the similarities of their marked length spectra and captures some rigidity properties.  

Our dynamical results extend Lalley's earlier work \cite{Lalley:1987df} to non-compact settings. We derive the geometric counterparts through the symbolic coding of cusped Hitchin representations. A similar approach was successfully employed in previous work by Bray, Canary, and the authors \cite{BCKMcounting}, where we established counting and equidistribution results for cusped Hitchin representations. In this article, we obtain a  comparison of the growth rate of the marked length spectra of a pair of cusped Hitchin representations.  

The primary motivation for this work stems from (higher rank) Teichm\"uller theory, especially from the study of geometrically meaningful diverging sequences of (cusped) Hitchin representations.  Motivated by results of Dai and the second author \cite{Dai}, we wish to study the behavior of the correlation number along these sequences and interpret its limit. For example, given two sequences of hyperbolic structures converging to distinct points in the same stratum of the augmented Teichm\"uller space, as a consequence of Theorem \ref{thm:main application}, we can now define the correlation number of the limiting points and it would be interesting to compare it to the correlation number along the diverging sequences. 

In the following subsections, we present our dynamical results first, and then their applications to geometry.

% In this article, we extend Lalley's orbital correlation theorem \cite{Lalley:1987df} to the class of countable state Markov shifts and potentials studied in previous work of Bray, Canary, and the authors \cite{BCKMcounting}. The framework in \cite{BCKMcounting} was developed to study the dynamical properties of cusped Hitchin representations, and, as such, our main results apply to this geometric context. This work is motivated by results in (higher) Teichm\"uller theory, in particular from \cite{BCKMpressure, Dai} and the theory of augmented Teichm\"uller spaces, and one of our long-term goals is to relate the correlation number introduced here to the correlation numbers of diverging sequences of pairs of Hitchin representations.

\subsection*{General thermodynamical results}
See Section \ref{sec:background} for precise definitions. Let $\Sigma^{+}$ be a topologically mixing (one-sided) countable state Markov
shift with BIP and consider two strictly positive, locally H\"older continuous, and independent potentials $f,g\colon\Sigma^{+}\to\mathbb{R}$. Let $\cal A$ denote the countable alphabet for $\Sigma^+$. Following \cite{BCKMcounting}, we say that a potential $f$ has a {\em strong entropy gap at infinity} if the series
\[
Z_1(f,s)=\sum_{a\in\mathcal A}e^{-s\sup\{f(\ul x)\colon  \ul x\in\Sigma^+\text{ and } x_1=a\}}
\]
has a a finite critical exponent $d(f)>0$ and diverges when $s=d(f)$. For the rest of this introduction, assume that $f$ and $g$ have strong entropy gaps at infinity, $\|f-g\|_\infty$ is finite, and set ${\bf f}=(f,g)$.

We fix $\xi, m>0$: $\xi$ is a (small) precision in our estimates, while $m$ is an {\em admissible slope}, defined below in terms of the {\em Manhattan curve of ${\bf f}$}. For any $t>0$, let $I^2_{\xi, m}(t)$ denote the open rectangle $(t,t+\xi)\times (mt,mt+\xi)$ and for any $n\in\bb Z_{>0}$ define
\[
\cal M(n,t)=\mathcal{M}(n,t;{\bf f,\xi,m):=\{\ul x\in\Sigma^{+}\colon\ \ul x\in \rm {Fix}^n,\ S_{n}{\bf f(\ul x)\in I^2_{\xi,m}(t)\}}}
\]
where $\rm {Fix}^n$ is the set of {\em $n$-periodic words} of $\Sigma^+$ and $S_{n}{\bf f}$ denotes the pair of {\em
$n$-th ergodic sums} of $\bf f$, that is
\[
S_{n}{\bf f}(\ul x):=\left(\sum_{k=1}^{n}f(\sigma^{k-1}(\ul x)),\sum_{k=1}^{n}g(\sigma^{k-1}(\ul x))\right).
\]

The first goal of this article is to study the growth rate as $t$ goes to infinity of
\[
M(t;{\bf f,m,\xi):=\sum_{n}\frac{1}{n}\#\mathcal{M}(n,t;{\bf f,m,\xi)}}.
\]

\begin{thmx}\label{thm:weaker-main}
For any admissible slope $m$, $M(t;{\bf f},m,\xi)$ grows exponentially. Specifically, the limit 
\[
\alpha_{\fbf}(m) := \lim_{t\to\infty} \frac{1}{t} \log M(t;{\bf f},m,\xi)
\]
exists, and moreover, $0 < \alpha_{\fbf}(m) < \infty.$
\end{thmx}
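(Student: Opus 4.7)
The plan is to identify $\alpha_{\fbf}(m)$ with the intercept $a_m + m b_m$ of the supporting line of slope $-m$ to the Manhattan curve $\mathcal{C}_{\fbf} = \{(a,b) : P(-af-bg) = 0\}$, where $P$ denotes the Gurevich pressure. Admissibility of $m$ should amount to the existence of a unique $(a_m, b_m) \in \mathcal{C}_{\fbf}$ where $\mathcal{C}_{\fbf}$ has tangent slope $-m$; equivalently, the equilibrium state $\mu_m$ for $-a_m f - b_m g$ satisfies $\int g\, d\mu_m = m \int f\, d\mu_m$. The real-analyticity and strict convexity of $P$, the existence of $\mu_m$, and the spectral gap of the Ruelle operator $\mathcal{L}_{-a_m f - b_m g}$ on a suitable Banach space of locally H\"older observables are all supplied by the thermodynamic formalism from \cite{BCKMcounting}.

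For the upper bound, pick $(a, b)$ slightly outside the convex region enclosed by $\mathcal{C}_{\fbf}$, so that $P(-af-bg) < 0$. For $\ul x \in \mathrm{Fix}^n$ with $S_n \fbf(\ul x) \in I^2_{\xi, m}(t)$, the trivial estimate $1 \leq e^{a(t+\xi) + b(mt+\xi)} e^{-a S_n f(\ul x) - b S_n g(\ul x)}$ gives
\[
\#\mathcal{M}(n, t) \leq e^{(a + mb) t + O(\xi)} \sum_{\ul x \in \mathrm{Fix}^n} e^{-a S_n f(\ul x) - b S_n g(\ul x)} \leq C e^{(a+mb) t + O(\xi)} e^{n P(-af-bg)}
\]
by the periodic-orbit formula and the spectral gap. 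Summing $\frac{1}{n}\#\mathcal{M}(n,t)$ over $n \geq 1$ converges because $P(-af-bg) < 0$, yielding $M(t) \leq C' e^{(a+mb)t + O(\xi)}$ and $\limsup_{t \to \infty} t^{-1}\log M(t) \leq a + mb$. Letting $(a, b)$ approach $(a_m, b_m)$ from the region $\{P < 0\}$ gives $\limsup \leq a_m + m b_m =: \alpha_{\fbf}(m)$.

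For the lower bound, set $\bar f_m = \int f\, d\mu_m$ and $n_t = \lfloor t/\bar f_m\rfloor$. A two-dimensional local central limit theorem for the periodic-orbit measures weighted by $e^{-a_m S_n f - b_m S_n g}$, whose asymptotic covariance is nondegenerate because $f$ and $g$ are independent, produces a polynomial-in-$t$ lower bound
\[
\sum_{\ul x \in \mathrm{Fix}^{n_t}} e^{-a_m S_{n_t} f(\ul x) - b_m S_{n_t} g(\ul x)} \, \mathbf{1}_{S_{n_t}\fbf(\ul x) \in I^2_{\xi,m}(t)} \geq c\, t^{-\kappa}
\]
for some $\kappa > 0$. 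Untilting yields $\#\mathcal{M}(n_t, t) \geq c' t^{-\kappa} e^{t(a_m + m b_m)}$, whence $M(t) \geq c'' t^{-\kappa - 1} e^{t \alpha_{\fbf}(m)}$ and $\liminf_{t \to \infty} t^{-1}\log M(t) \geq \alpha_{\fbf}(m)$. Finally, since $f, g > 0$, the pressure $P(-af-bg)$ is strictly decreasing in each variable and $P(0,0) = h_{\mathrm{top}}(\sigma) > 0$, so $\mathcal{C}_{\fbf}$ lies in the open positive quadrant and $\alpha_{\fbf}(m) = a_m + m b_m > 0$; finiteness is immediate from the upper bound.

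The hard part will be establishing the two-dimensional local central limit theorem on the non-compact, countable alphabet Markov shift with cusps. Concretely, one must show that the complex-tilted Ruelle operator $\mathcal{L}_{-(a_m + i\theta_1) f - (b_m + i\theta_2) g}$ retains a spectral gap on a suitable Banach space uniformly for $(\theta_1, \theta_2)$ in a neighborhood of the origin, and that no peripheral eigenvalues appear for $(\theta_1, \theta_2) \neq 0$ in a small disk about zero. The non-arithmeticity of the pair $(f, g)$ required for the latter is precisely where the independence hypothesis is used; the quasi-compactness in the non-compact alphabet setting is delivered by the strong entropy gap at infinity through the machinery of \cite{BCKMcounting}.
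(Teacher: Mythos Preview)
Your plan is correct and matches the paper's strategy: identify $\alpha_\fbf(m)$ with $H_\fbf(m)=a_m+mb_m$, bound above by tilting with a weight on or near the Manhattan curve, and bound below via a two-dimensional local limit theorem coming from complex perturbations of the transfer operator (Corollary~\ref{thm:decay-cor}). The differences are packaging rather than substance. For the upper bound the paper works directly at the tangent point $(a_m,b_m)$: it replaces periodic orbits by preimages in a fixed $k$-cylinder (Proposition~\ref{prop: epsilonk}), uses the Gibbs property to get $\sum_n W(n,p,t)\le C e^{tH_\fbf(m)}\mu(p)$ (Proposition~\ref{prop:a-priori estimate}), and combines this with a large-deviation split in $n$ (Lemma~\ref{lem:a-priori estimate-2}) to obtain the sharper bound $M(t)\le C e^{tH_\fbf(m)}/t$ needed for Theorem~\ref{thm:local-counting}; your Chebyshev-then-let-$(a,b)\to(a_m,b_m)$ argument is simpler if one only wants the $\limsup$. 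For the lower bound the paper proves a full saddle-point asymptotic $\sum_n\tfrac1n W(n,p,t)\sim C_p\,t^{-3/2}e^{tH_\fbf(m)}$ (Theorem~\ref{thm:loc-est}) rather than a single-$n$ local CLT, again because it is simultaneously aiming at Theorem~\ref{thm:local-counting}. One small correction: in the countable-alphabet setting $P(0)=h_{\mathrm{top}}(\sigma)=+\infty$, so the Manhattan curve cannot be located by appealing to $P(0)>0$; the positivity $\alpha_\fbf(m)>0$ is better seen from $H_\fbf(m)=h_{\mu}(\sigma)/\int f\,d\mu>0$ for the equilibrium state $\mu=\mu_{-a_mf-b_mg}$ (Lemma~\ref{lem:swarp_f_g}(2)).
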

In what follows, we examine properties of the exponential growth rate $\alpha_{\fbf}(m)$ of $M(t;{\bf f},m,\xi)$, which we call the {\em correlation number} of $\fbf$ and $m$. A {\em cylinder} $p$ is a subset of $\Sigma^+$ consisting of words sharing an initial string of length $|p|\in\mathbb N$. Our second result provides a more precise asymptotic growth estimate for
\[
M_{p}(t;{\bf f,m,\xi):=\sum_{n}\frac{1}{n}\#\mathcal{M}_{p}(n,t;{\bf f,m,\xi)}}
\]
where, for any cylinder $p$, we write
$\mathcal{M}_{p}(n,t;{\bf f,m,\xi)\colon=\mathcal{M}(n,t;{\bf f,m,\xi)\cap p}}$. Notably, we obtain a finer asymptotic estimate when we restrict to a cylinder.

\begin{thmx}\label{thm:local-counting}
For any admissible slope $m$ and any cylinder $p$, there exist constants $C_{1}(p)$ and $C_{2}(p)$ such that 
\[
C_{1}(p) \leq \lim_{t \to \infty} \frac{t^{\frac{3}{2}}}{e^{\alpha_{\fbf}(m)t}} M_{p}(t;{\bf f},m,\xi) \leq C_{2}(p),
\]
and 
\[
\lim_{|p| \to \infty} \frac{C_{1}(p)}{C_{2}(p)} = 1.
\]
\end{thmx}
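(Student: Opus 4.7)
The plan is to reduce the asymptotic to a two-dimensional local central limit theorem (LCLT) for the $\mathbb{R}^{2}$-valued ergodic sum $S_{n}\fbf$ under a carefully chosen equilibrium state, combined with the Gibbs property. Let $P(s,u)$ denote the Gurevich pressure of $-sf-ug$. Admissibility of $m$ should correspond to a unique point $(s_m,u_m)$ on the Manhattan curve $\{P(s,u)=0\}$ whose outward normal is parallel to $(1,m)$; equivalently, the equilibrium state $\mu_m$ of the zero-pressure potential $\phi_m:=-s_m f-u_m g$ satisfies $\int g\,d\mu_m=m\int f\,d\mu_m$. The thermodynamic reading of Theorem~A should then identify $\alpha_{\fbf}(m)=s_m+m\,u_m$. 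For any periodic $\underline{x}$ with $S_n\fbf(\underline{x})\in I^2_{\xi,m}(t)$ one has $S_n\phi_m(\underline{x})=-\alpha_{\fbf}(m)\,t+O(\xi)$, so the Gibbs property of $\mu_m$ together with equidistribution of periodic orbits yields
\[
\#\mathcal{M}_p(n,t)\;\asymp\; e^{\alpha_{\fbf}(m)t}\cdot\mu_m\!\left(\{\underline{x}\in p : S_n\fbf(\underline{x})\in I^2_{\xi,m}(t)\}\right),
\]
with a multiplicative error of the form $e^{\pm O(\xi)\pm V_p(\phi_m)}$, where $V_p(\phi_m)$ is the oscillation of $\phi_m$ along $p$ and decays exponentially in $|p|$ by local H\"older regularity.

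Write $\bar\fbf=(\bar f,\bar g):=\int\fbf\,d\mu_m$ and let $\Sigma$ denote the asymptotic covariance of $(f-\bar f,g-\bar g)$ under $\mu_m$; the independence hypothesis on $f,g$ keeps $\Sigma$ positive definite. A 2D LCLT for $S_n\fbf$ under $\mu_m$ should give, uniformly in $n$ with $|n-n_\star|=O(\sqrt{t})$ and $n_\star:=t/\bar f=mt/\bar g$ (equality from the Manhattan direction),
\[
\mu_m\!\left(\{\underline{x}\in p : S_n\fbf(\underline{x})\in I^2_{\xi,m}(t)\}\right)\;\sim\;\mu_m(p)\,\frac{\xi^{2}}{2\pi n\sqrt{\det\Sigma}}\,\exp\!\left(-\tfrac{1}{2n}\bigl\langle(t,mt)-n\bar\fbf,\,\Sigma^{-1}((t,mt)-n\bar\fbf)\bigr\rangle\right).
\]
Writing $n=n_\star+k$ reduces the exponent to $-\tfrac{k^{2}}{2n}\langle\bar\fbf,\Sigma^{-1}\bar\fbf\rangle$, a Gaussian in $k$ of variance $\asymp t$. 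Summing $\tfrac{1}{n}\#\mathcal{M}_p(n,t)$ is then a discrete Gaussian sum: the two $1/n$ prefactors give $1/t^{2}$ while summation of a Gaussian of width $\sqrt{t}$ contributes $\sqrt{t}$, producing
\[
M_p(t)\;\sim\;\frac{e^{\alpha_{\fbf}(m)t}}{t^{3/2}}\cdot\frac{\xi^{2}\mu_m(p)\,\bar f^{\,3/2}}{\sqrt{2\pi\det\Sigma\cdot\langle\bar\fbf,\Sigma^{-1}\bar\fbf\rangle}}\cdot(1+o(1)).
\]
The bounds $C_1(p),C_2(p)$ arise from the $e^{\pm O(\xi)}$ error in the Gibbs step compounded with $V_p(\phi_m)$; exponential decay of the latter in $|p|$ is exactly the mechanism forcing $C_1(p)/C_2(p)\to 1$.

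The principal obstacle is establishing this 2D LCLT with the required uniformity in the non-compact countable-state BIP setting, where Lalley's original finite-state arguments in \cite{Lalley:1987df} do not apply directly. My approach is a spectral analysis of the twisted transfer operators $\mathcal{L}_{\phi_m+i\langle\theta,\fbf\rangle}$ on the Banach space of locally H\"older functions used in \cite{BCKMcounting}: the strong entropy gap at infinity and BIP grant a spectral gap at $\theta=0$; analytic perturbation theory yields a smooth leading eigenvalue $\lambda(\theta)$ for small $|\theta|$ with Hessian $-\Sigma$ at the origin; and Dolgopyat-type oscillatory estimates, adapted to the non-compact alphabet, control the contribution from large $|\theta|$. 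The restriction to $p$ is implemented by composing with the projection onto functions supported in $p$, which commutes cleanly with the spectral analysis. Once the uniform LCLT is in hand, the Gaussian summation in the previous paragraph and the extraction of $C_1(p), C_2(p)$ with $C_1(p)/C_2(p)\to 1$ are routine.
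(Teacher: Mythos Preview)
Your architecture is the same as the paper's: locate the Manhattan point $(a_m,b_m)$, run a two-dimensional local limit theorem via spectral analysis of the twisted transfer operators $\mathcal{L}_{-a_mf-b_mg+i\langle\theta,\fbf\rangle}$, and then perform a Gaussian summation over $n$ near $t\cdot t_m$ to extract the $t^{3/2}$. The identification $\alpha_{\fbf}(m)=a_m+mb_m$ and the heuristic $S_n\phi_m(\ul x)=-\alpha_{\fbf}(m)t+O(\xi)$ are both correct.

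The gap is in the step where you pass from $\#\mathcal{M}_p(n,t)$ to $\mu_m\big(\{\ul x\in p:S_n\fbf(\ul x)\in I^2_{\xi,m}(t)\}\big)$ ``by the Gibbs property together with equidistribution of periodic orbits.'' The Gibbs inequality carries a \emph{fixed} multiplicative constant $Q>1$ that depends only on $\phi_m$ and the shift, not on $|p|$; likewise the $e^{\pm O(\xi)}$ coming from the width of the box is independent of $|p|$. Neither of these is the oscillation $V_p(\phi_m)$ of $\phi_m$ on the cylinder $p$, and neither decays as $|p|\to\infty$. With your bookkeeping the ratio $C_1(p)/C_2(p)$ would be bounded above by something like $Q^{-2}e^{-c\xi}<1$ uniformly in $|p|$, so the second assertion of the theorem would fail.

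The paper avoids this by never invoking the Gibbs constant. Instead it uses the explicit bijection $\Psi_p^n:\mathrm{Fix}^n\cap p\to\sigma^{-n}(\ul z_p)\cap p$ for a chosen non-periodic $\ul z_p\in p$ (Proposition~\ref{prop: epsilonk}); under $\Psi_p^n$ the ergodic sums move by at most $\epsilon_k$ with $\epsilon_k\to 0$ as $k=|p|\to\infty$. One then studies $W(n,p,t;\fbf,m,\xi')=\#\big(p\cap\sigma^{-n}(\ul z_p)\cap\{S_n\fbf\in I^2_{\xi',m}(t)\}\big)$, whose Laplace--Fourier transform is \emph{exactly} $(\mathcal{L}_{\langle\zbf,\fbf\rangle}^n\mathbf{1}_p)(\ul z_p)$ with no extraneous constant. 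The local estimate (Theorem~\ref{thm:loc-est}) gives a sharp asymptotic $\sum_n\frac{1}{n}W(n,p,t;\xi')\sim C(m,p,\xi')\,e^{tH_\fbf(m)}/t^{3/2}$, and the sandwich $W(\,\cdot\,;\xi-\epsilon_k)\le\#\mathcal{M}_p\le W(\,\cdot\,;\xi+\epsilon_k)$ then yields $C_i(p)=C(m,p,\xi\mp\epsilon_k)$, whose ratio tends to $1$ precisely because $\epsilon_k\to 0$. So the mechanism is a shrinking \emph{window} $\xi\pm\epsilon_k$, not a shrinking Gibbs defect.

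A smaller point: you do not need Dolgopyat-type bounds for large $|\theta|$. The paper handles this by the Babillot--Ledrappier device of testing against functions whose Fourier transform has compact support, together with the uniform-on-compacta spectral contraction of $\mathcal{L}_{\phi_m+i\langle\theta,\fbf\rangle}$ for $\theta\neq 0$ that follows from independence of $f$ and $g$ (Corollary~\ref{thm:decay-cor}(5)). This is much softer than Dolgopyat and already available in the countable-state BIP setting.
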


By restricting our focus to the counting problem on cylinders, Theorem \ref{thm:local-counting} provides an estimate for the \textit{local} asymptotic expansion of the orbital distribution with respect to $\fbf$ and $m$. Unlike in the compact case, the \textit{global} asymptotic expansion does not immediately follow from the local version. While we expect that the global asymptotic expansion also holds, we would need a stronger version of Lemma \ref{lem:rough-bounds} to establish it. See Remark \ref{rem:obsticle} for more details. However, let us note that the local asymptotic expansion suffices for our main applications.

Next, we explicitly relate the growth rate $\alpha_\fbf(m)$ to the Manhattan curve, an important dynamical object associated with orbital distribution problems for $\fbf$, first introduced by Burger in \cite{Burger:1993wb}. Let $P$ denote the {\em topological pressure} associated with $\Sigma^+$. We recall (see Section \ref{sec:manhattan} for more details) that the {\em Manhattan curve of $\fbf$} is the curve
\[
\mathcal{C}(\fbf) = \{(a,b) \in \mathbb{R}^{2} \colon P(-af - bg) = 0,\ a \geq 0,\ b \geq 0,\ a + b > 0\}.
\]
By the analyticity properties of the pressure function $P$ and \cite[Theorem C]{BCKMcounting}, we know that the Manhattan curve $\mathcal{C}(\fbf)$ can be parametrized by $(s, q(s))$, where $q = q(s)$ is a real analytic function for $s \in [0, \delta_f]$. Here, $\delta_f$ is the unique value for which $P(-\delta_f f) = 0$. The slope of the normal to the Manhattan curve at the point $(s, q(s))$ is given by $m(s) = -1 / q'(s)$. We then say that $m = m(s)$ is an {\em admissible slope} and we denote by $\cal S(\fbf)$ the set of admissible slopes. In particular, note that $m_* = \delta_f / \delta_g \in \cal S(\fbf)$ (see Theorem \ref{thm:manhattan}).

Given an admissible slope \(m\), we denote by \((a_m, b_m)\) the point on \(\mathcal{C}(\mathbf{f})\) where the slope of the normal equals \(m\). With these notations, we can define the quantity 
\[
H_{\mathbf{f}}(m) := a_m + m \cdot b_m.
\]
The following result shows that \(H_{\mathbf{f}}(m)\) equals the correlation number of $\fbf$ and $m$.

\begin{thmx}\label{thmx:2} If $m\in\cal S(\fbf)$ is an admissible slope, then 
\[
\alpha_{\fbf}(m)=H_{\fbf}(m).
\] 
\end{thmx}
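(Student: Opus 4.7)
The plan is to identify $\alpha_\fbf(m)$ with $H_\fbf(m) = a_m + m b_m$ via two matching estimates, both driven by the equilibrium state $\mu_m$ of the potential $-a_m f - b_m g$. Since $(a_m, b_m)\in\mathcal{C}(\fbf)$ gives $P(-a_m f - b_m g)=0$, the variational principle yields
\[
h_{\mu_m} = a_m \int f\,d\mu_m + b_m \int g\,d\mu_m,
\]
while implicit differentiation of $P(-sf - q(s)g) = 0$ at $s=a_m$ translates admissibility of $m$ into the slope identity $m = \int g\,d\mu_m / \int f\,d\mu_m$. Combining these two gives $h_{\mu_m}/\int f\,d\mu_m = a_m + m b_m = H_\fbf(m)$, which I will show is the correlation number.

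For the upper bound $\alpha_\fbf(m) \leq H_\fbf(m)$, I would run a Chebyshev-type argument parametrized by the Manhattan curve. For any $(a,b)\in\mathcal{C}(\fbf)$ and $\ul x\in\mathcal{M}(n,t;\fbf,m,\xi)$, the inequalities $a,b\geq 0$, $S_n f(\ul x)\leq t+\xi$, and $S_n g(\ul x)\leq mt+\xi$ yield
\[
1 \leq e^{a(t+\xi)+b(mt+\xi)}\cdot e^{-a S_n f(\ul x) - b S_n g(\ul x)},
\]
so that $\#\mathcal{M}(n,t) \leq e^{(a+mb)t + O(1)}\cdot Z_n(-af-bg)$. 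Under BIP and the strong entropy gap, $P(-af-bg)=0$ forces $Z_n(-af-bg)$ to be uniformly bounded in $n$, and strict positivity of $f,g$ confines the relevant values of $n$ to an interval of length $O(t)$, so $M(t;\fbf,m,\xi) \leq C t\, e^{(a+mb)t}$. Minimizing the linear functional $a+mb$ over the convex curve $\mathcal{C}(\fbf)$, the infimum is attained at the unique point whose outward normal is parallel to $(1,m)$, which by definition of admissible slope is $(a_m, b_m)$; hence $\alpha_\fbf(m)\leq a_m + m b_m$.

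For the matching lower bound the idea is to transport a local equidistribution result for $\mu_m$ to the counting measure on $\mathrm{Fix}^n$. The Gibbs property, available in the BIP setting under the strong entropy gap hypothesis inherited from \cite{BCKMcounting}, gives $e^{-a_m S_n f(\ul x) - b_m S_n g(\ul x)} \asymp \mu_m([x_1,\ldots,x_n])$ for $\ul x\in\mathrm{Fix}^n$, converting twisted weighted counts into $\mu_m$-measures up to bounded multiplicative constants. A two-dimensional local central limit theorem for $(S_n f, S_n g)$ under $\mu_m$, obtained by perturbation of the twisted transfer operators $\Lc_{-af-bg}$ in a neighborhood of $(a_m,b_m)$, controls $\mu_m\{S_n \fbf \in I^2_{\xi,m}(t)\}$ for $n$ in a window of width $O(\sqrt{t})$ around $t/\int f\,d\mu_m$. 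Transferring back to $\#\mathcal{M}(n,t)$ and summing yields a lower bound of the form $M(t)\geq c\, t^{-3/2}\, e^{H_\fbf(m)t}$, which is also consistent with the polynomial prefactor in Theorem~B, and shows $\alpha_\fbf(m)\geq H_\fbf(m)$.

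The main obstacle is the 2D local central limit theorem in the countable Markov shift setting: an unbounded alphabet and potentially unbounded potentials require a careful analysis of the family $\{\Lc_{-af-bg}\}$, including a uniform spectral gap on a neighborhood of $(a_m, b_m)$, real analyticity in the parameters, and non-degeneracy of the asymptotic covariance of $(S_n f, S_n g)$ under $\mu_m$; this last point is where the independence hypothesis on $f,g$ is used, ensuring $\det\Sigma>0$. Once this spectral machinery is in place, threading it through the Gibbs comparison is fairly mechanical, and the strong entropy gap at infinity is exactly what makes the spectral theory available in the non-compact setting.
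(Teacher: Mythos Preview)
Your proposal is essentially correct and follows the same overall architecture as the paper: an upper bound paired with a matching lower bound coming from a two-dimensional local limit theorem for the transfer operator family $\mathcal{L}_{-af-bg}$ near $(a_m,b_m)$.

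The lower bound you sketch is exactly what the paper implements. Your ``2D local CLT under $\mu_m$ via perturbation of twisted transfer operators'' is the paper's Lemma~\ref{lem:estimate}, proved by the saddle-point method (Proposition~\ref{prop:saddle-point}) together with the complex Ruelle--Perron--Frobenius theory of Corollary~\ref{thm:decay-cor}; the independence hypothesis enters precisely where you say, to make $\nabla^2\mathbb{P}$ nondegenerate. The ``threading through the Gibbs comparison'' that you call mechanical is the passage from periodic orbits to preimages of a non-periodic sample word in Proposition~\ref{prop: epsilonk}, which is indeed routine once set up. Your window-and-sum heuristic recovering the $t^{-3/2}$ prefactor matches the paper's use of Lemma~\ref{lem:asymptotic}.

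Where you differ is the upper bound. Your Chebyshev argument at the single point $(a_m,b_m)$ with the observation that the periodic partition function $Z_n(-a_mf-b_mg)$ stays bounded (via Gibbs and $P=0$) is more elementary than the route the paper takes; it yields $M(t)\le C\,t\,e^{H_\fbf(m)t}$, which suffices for $\alpha_\fbf(m)\le H_\fbf(m)$. The paper instead establishes a sharper global bound $M(t)\le C\,e^{H_\fbf(m)t}/t$ (Lemma~\ref{lem:rough-bounds}) by combining a cylinder-wise a~priori estimate (Proposition~\ref{prop:a-priori estimate}) with a large-deviation bound isolating $n/t\approx t_m$ (Lemma~\ref{lem:a-priori estimate-2}). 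The payoff of the paper's heavier machinery is that these same ingredients feed directly into the finer asymptotic of Theorem~\ref{thm:local-counting}; your Chebyshev bound alone would not. One small caution: your phrasing ``$a,b\ge 0$'' is unnecessary---since $S_n\fbf(\ul x)$ lies in a box of side $\xi$, the inequality $a_m S_nf+b_m S_ng\le H_\fbf(m)t+O(1)$ holds regardless of the signs of $a_m,b_m$, which matters because admissible slopes are defined via the \emph{extended} Manhattan curve $\mathcal{C}^*(\fbf)$ and $(a_m,b_m)$ need not lie in the closed first quadrant.
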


Using the convexity of the Manhattan curve and the above results, we derive a rigidity result for the special admissible slope $m_*=\delta_f/\delta_g$:
\begin{cor}[Correlation number rigidity]\label{cor:corr-rigidity}
For $m_*=\delta_f/\delta_g$, we have
\[
\alpha_\fbf(m_*)=H_{\fbf}(m_*)\leq\delta_f.
\]
Moreover, the equality holds if and only if $m_*S_{n}f(\ul x)=S_{n}g(\ul x)$ for all $\ul x\in \rm {Fix}^{n}.$
\end{cor}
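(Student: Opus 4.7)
The plan is to combine Theorem \ref{thmx:2} with the convexity and analyticity of the Manhattan curve $\mathcal{C}(\fbf)$, and then to invoke a Livsic-type rigidity statement in the countable Markov shift setting to translate the equality case into a cohomological identity on periodic orbits.

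First I would derive the inequality $\alpha_\fbf(m_*) \leq \delta_f$. By Theorem \ref{thmx:2}, $\alpha_\fbf(m_*) = H_\fbf(m_*) = a_{m_*} + m_* b_{m_*}$, and the affine line $\{s + m_* b = H_\fbf(m_*)\}$ is exactly the tangent to $\mathcal{C}(\fbf)$ at $(a_{m_*}, b_{m_*})$. Since the parametrizing function $q$ is convex, the whole Manhattan curve lies in the closed half-plane $\{s + m_* b \geq H_\fbf(m_*)\}$. Evaluating at the point $(\delta_f, 0) \in \mathcal{C}(\fbf)$, which lies on the curve because $P(-\delta_f f) = 0$ by definition of $\delta_f$, yields $\delta_f \geq H_\fbf(m_*)$.

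For the rigidity I would argue both directions. Assuming $m_* S_n f(\ul x) = S_n g(\ul x)$ for every $\ul x \in \mathrm{Fix}^n$ and every $n \geq 1$, a Livsic theorem for topologically mixing countable state Markov shifts with BIP applied to $g - m_* f$ yields a locally H\"older $h$ with $g = m_* f + h \circ \sigma - h$. Coboundary invariance of the pressure then gives $P(-af - bg) = P(-(a + m_* b) f)$ for all $a, b \geq 0$, forcing $\mathcal{C}(\fbf)$ to collapse onto the segment $\{a + m_* b = \delta_f\}$, and in particular $H_\fbf(m_*) = \delta_f$. Conversely, if $H_\fbf(m_*) = \delta_f$, the tangent to $\mathcal{C}(\fbf)$ at $(a_{m_*}, b_{m_*})$ passes through $(\delta_f, 0)$, which is itself on the curve; convexity forces $q$ to coincide with its tangent on the interval joining $a_{m_*}$ to $\delta_f$, and real analyticity of $q$ on $[0, \delta_f]$ (from the pressure analyticity recalled before Theorem \ref{thmx:2}) upgrades this to $q(s) = \delta_g(1 - s/\delta_f)$ on the whole interval. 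Setting $\phi = f - m_*^{-1} g$, this reads $P(-\delta_g g - s\phi) \equiv 0$; differentiating once and twice in $s$ and using the first- and second-derivative formulas for pressure at the equilibrium states $\mu_s$ gives $\int \phi \, d\mu_s = 0$ and vanishing asymptotic variance of $\phi$ with respect to $\mu_s$. Standard thermodynamic formalism then forces $\phi$ to be a coboundary, and summing along any $\ul x \in \mathrm{Fix}^n$ yields $S_n g(\ul x) = m_* S_n f(\ul x)$.

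The main technical obstacle will be verifying that Livsic's theorem and the \emph{vanishing asymptotic variance implies coboundary} criterion go through in this specific framework---countable state Markov shifts with BIP together with potentials having only a strong entropy gap at infinity. Both statements are classical for finite state compact shifts with H\"older potentials, but in the non-compact setting they require the precise summability and regularity assumptions developed in \cite{BCKMcounting}; in addition one must confirm that the equilibrium states $\mu_s$ associated to points of the Manhattan curve stay within the class to which the analytic pressure machinery applies.
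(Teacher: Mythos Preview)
Your argument is correct, but it takes a longer and more self-contained route than the paper's. For the inequality, the paper uses the \emph{secant} through $(\delta_f,0)$ and $(0,\delta_g)$: convexity places any point $(a,b)\in\mathcal C(\fbf)$ on or below this secant, so $a\delta_g+b\delta_f\le\delta_f\delta_g$, and specializing at $(a_{m_*},b_{m_*})$ and dividing by $\delta_g$ gives $H_\fbf(m_*)\le\delta_f$. Your tangent-line argument is equally valid and equally elementary---just a dual way of reading the same convexity. For the rigidity, the paper simply invokes Theorem~\ref{thm:manhattan}(3) (i.e.\ \cite[Theorem C]{BCKMcounting}), which already packages the equivalence ``$\mathcal C(\fbf)$ is a line $\iff$ $S_nf=\frac{\delta_g}{\delta_f}S_ng$ on all periodic orbits'' as a black box. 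You instead unwind this: Liv\v sic for the forward direction, and for the converse you differentiate the degenerate pressure identity to force zero asymptotic variance and then appeal to the ``variance zero $\Rightarrow$ coboundary'' criterion. This is exactly the mechanism hidden inside Theorem~\ref{thm:manhattan}(3), and the technical worries you flag at the end (Liv\v sic and the variance criterion in the countable BIP setting with strong entropy gap) are precisely what \cite{Sarig06phase} and \cite{BCKMcounting} supply---see Corollary~\ref{cor:analy+postive-def}(4) and its proof. So your approach buys transparency about \emph{why} the rigidity holds, at the cost of reproving part of a result already available; the paper's approach is a two-line citation.
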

Another interesting dynamical growth rate associated with $\fbf$ is $h_{(\alpha, \beta)}^{BS}({\bf f})$, the {\em $(\alpha,\beta)$-Bishop-Steger entropy}, defined for $\alpha, \beta > 0$ as
\[
h_{(\alpha, \beta)}^{BS}({\bf f}) := \lim_{t \to \infty} \frac{1}{t} \log \sum_{n} \frac{1}{n} \#\left\{ \ul x\in \rm{Fix}^n \colon \alpha S_{n} f(\ul{x}) + \beta S_{n} g(\ul{x}) \leq t \right\}.
\]
Our last result in the symbolic setting relates the Bishop-Steger entropy with the correlation number. 

\begin{thmx}  \label{Thm:BSentropy}  
Let $m \in \mathcal{S}(\fbf)$ be an admissible slope. Then,
\[
\frac{H_{{\bf f}}(m)}{\alpha + m\beta} \leq h_{(\alpha, \beta)}^{BS}({\bf f}),
\]
with equality if and only if $m$ is the unique slope satisfying $a_{m}/b_{m} = \alpha/\beta$.
\end{thmx}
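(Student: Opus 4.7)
The plan is to introduce the single potential $\phi := \alpha f + \beta g$ and identify $h^{BS}_{(\alpha,\beta)}(\fbf)$ with its critical exponent $\delta_\phi$, defined by $P(-\delta_\phi \phi)=0$. The proof then reduces to comparing $H_\fbf(m)/(\alpha+m\beta)$ with $\delta_\phi$ using the convex geometry of $\mathcal{C}(\fbf)$, which is already in our hands from the parametrization $(s,q(s))$.

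First I would prove the inequality $H_\fbf(m)/(\alpha+m\beta) \leq h^{BS}_{(\alpha,\beta)}(\fbf)$ for every admissible $m$ as a direct consequence of Theorems \ref{thm:weaker-main} and \ref{thmx:2}. The key observation is that every $\underline x \in \mathcal{M}(n,t';\fbf,m,\xi)$ satisfies $\alpha S_n f(\underline x) + \beta S_n g(\underline x) \leq (\alpha+m\beta) t' + (\alpha+\beta)\xi$. Summing the rectangle counts $M(t';\fbf,m,\xi)$ over a discrete grid $t'_k = k\xi$ with $(\alpha+m\beta)t'_k + (\alpha+\beta)\xi \leq t$ produces a lower bound for the Bishop-Steger sum, and Theorems \ref{thm:weaker-main}--\ref{thmx:2} imply $M(t'_k;\fbf,m,\xi) \asymp e^{H_\fbf(m) t'_k}$, so the geometric sum has exponential rate $H_\fbf(m)/(\alpha+m\beta)$ in $t$, as desired.

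Next, to pin down the equality case, I would argue $h^{BS}_{(\alpha,\beta)}(\fbf) = \delta_\phi$. The potential $\phi$ is strictly positive and locally H\"older, and since $\alpha,\beta>0$ the relation $\sup_a\phi\geq\alpha\sup_a f$ shows it inherits a strong entropy gap at infinity, so the orbital counting machinery of \cite{BCKMcounting} applies to the single potential $\phi$ and yields the identification. By the definition of $\delta_\phi$, $P(-(\delta_\phi\alpha)f - (\delta_\phi\beta)g)=0$, so $(\delta_\phi\alpha,\delta_\phi\beta)$ lies on $\mathcal{C}(\fbf)$. Denoting by $m_0$ the slope of the normal to $\mathcal{C}(\fbf)$ at this point, we have $a_{m_0}=\delta_\phi\alpha$ and $b_{m_0}=\delta_\phi\beta$, hence $a_{m_0}/b_{m_0}=\alpha/\beta$ and
\[
\frac{H_\fbf(m_0)}{\alpha+m_0\beta} = \frac{a_{m_0}+m_0 b_{m_0}}{\alpha+m_0 \beta} = \delta_\phi = h^{BS}_{(\alpha,\beta)}(\fbf).
\]

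For $m \neq m_0$ strict inequality then follows from convexity of the pressure. The sublevel set $\{(a,b)\in\mathbb{R}_{\geq 0}^2 : P(-af-bg)\leq 0\}$ is convex with $\mathcal{C}(\fbf)$ on its boundary; the tangent line to $\mathcal{C}(\fbf)$ at $(a_m,b_m)$ is exactly $\{x+my = H_\fbf(m)\}$, since its slope $-1/m$ is perpendicular to the normal of slope $m$, and this line supports the sublevel set on the side opposite the origin, so every point of $\mathcal{C}(\fbf)$ satisfies $x+my \geq H_\fbf(m)$. Applying this to $(\delta_\phi\alpha,\delta_\phi\beta) \in \mathcal{C}(\fbf)$ yields $\delta_\phi(\alpha+m\beta) \geq H_\fbf(m)$, and strict convexity of $\mathcal{C}(\fbf)$ (guaranteed by independence of $f$ and $g$) upgrades this to strict inequality unless $(a_m,b_m) = (\delta_\phi\alpha,\delta_\phi\beta)$, i.e.\ $a_m/b_m = \alpha/\beta$. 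I expect the main obstacle to be the identification $h^{BS}_{(\alpha,\beta)}(\fbf) = \delta_\phi$: the $\geq$ direction is automatic from the first step applied at $m_0$, but the matching upper bound is a genuine single-potential orbital counting statement that requires transcribing the machinery of \cite{BCKMcounting} to $\phi$ and verifying that the strong entropy gap condition is preserved.
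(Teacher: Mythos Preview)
Your proposal is correct, and the core idea---placing the point $(\alpha h_{BS},\beta h_{BS})$ on $\mathcal C(\fbf)$ via $P(-h_{BS}(\alpha f+\beta g))=0$ and then exploiting strict convexity---is exactly the paper's strategy. The differences are in execution. First, your opening step invoking Theorems \ref{thm:weaker-main} and \ref{thmx:2} is unnecessary: the paper proves Theorem \ref{Thm:BSentropy} in Section \ref{sec:first results}, \emph{before} establishing those counting results, using only the Manhattan curve. Your tangent-line argument in step 4 already gives the full inequality without any counting, so the first step is redundant (and in the paper's logical order would be a forward reference). Second, where you argue geometrically that the tangent line $\{x+my=H_\fbf(m)\}$ supports $\mathcal C(\fbf)$, the paper instead parametrizes $\mathcal C(\fbf)$ by $(s,q(s))$, writes $F(s)=\dfrac{sq'(s)-q(s)}{\alpha q'(s)-\beta}$, and computes $F'(s)=\dfrac{q''(s)(\alpha q(s)-\beta s)}{(\alpha q'(s)-\beta)^2}$; since $q''>0$, the unique critical point is at $s/q(s)=\alpha/\beta$. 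These two arguments are equivalent reformulations of the same convexity fact. Finally, your concern about verifying the strong entropy gap for $\phi=\alpha f+\beta g$ is well placed, but the paper handles the identification $P(-h_{BS}\phi)=0$ as a black-box citation to \cite[Thm~A]{BCKMcounting}, so no additional work is needed here.
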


\subsection*{Application to cusped Hitchin representations} We apply the results above to pairs of cusped Hitchin representations. See Section \ref{sec:cuspedHit} for details. Let $\Gamma$ be a torsion-free geometrically finite Fuchsian group which is not convex-cocompact. The recurrent portion of the geodesic flow on $T^1(\bb H^2/\Gamma)$ can be coded by a topologically mixing countable states Markov shift $\Sigma^+$ with BIP. In particular, there is a map $G\colon\text{Fix}^n\to \Gamma$ such that if $\gamma\in\Gamma$ is hyperbolic, then it is conjugated to $G(\ul x)$ for some $n$ and a unique up to shift $\ul x\in\text{Fix}^n$.

A representation $\rho\colon\Gamma\to\mathsf{SL}(d,\mathbb R)$ is {\em cusped Hitchin} if there exists a continuous, $\rho$-equivariant, positive map from the limit set of $\Gamma$ to the space of complete flags in $\mathbb R^d$. Using a Lie theoretic construction, a cusped Hitchin representation together with a choice of a nonzero positive linear combination $\phi$ of the simple roots of $\mathsf{SL}(d,\bb R)$ determines a length function $\ell^\phi_\rho\colon\Gamma\to\mathbb R_{\geq 0}$ which is constant on the conjugacy class $[\gamma]\in[\Gamma]$ of an element $\gamma\in\Gamma$. Then, for any cusped Hitchin representation $\rho\colon\Gamma\to\mathsf{SL}(d,\bb R)$ and any such $\phi$ there exists a strictly positive, locally H\"older continuous potential $\tau_\rho^\phi$ with a strong entropy gap at infinity and such that
\[
\tau_\rho^\phi(\ul x)=\ell^\phi_\rho(G(\ul x))
\] 
for any $\ul x\in\text{Fix}^n$. Moreover, given another cusped Hitchin representation $\eta\colon\Gamma\to\mathsf{SL}(d,\bb R)$, then $\|\tau_\rho^\phi-\tau_\eta^\phi\|_\infty$ is finite and $\tau_\rho^\phi$ and $\tau_\eta^\phi$ are independent unless $\eta=\rho,(\rho^{-1})^\top$, see Lemma \ref{lem:independence}. 

Recall that for a cusped Hitchin representation $\rho$ and a nonzero positive linear combination $\phi$ of the simple roots of $\mathsf{SL}(d, \mathbb{R})$, we can define $\delta_\phi(\rho)$, the {\em $\phi$-topological entropy}, as $\delta_{\tau^\phi_\rho}$, the entropy of the corresponding potential $\tau^\phi_\rho$. Furthermore, for a pair of cusped Hitchin representations $(\rho, \eta)$ and a given $\phi$, we can define the Manhattan curve $\mathcal{C}^\phi(\rho, \eta)$ by $\mathcal C(\tau^\phi_\rho, \tau^\phi_\eta)$. (See \cite[Cor.~1.3, Cor.~1.4]{BCKMcounting} for more details.)
Thus, we can apply our results from symbolic dynamics to obtain the following.
 
\begin{thmx}\label{thm:main application} Let $\rho,\eta\colon\Gamma\to\mathsf{SL}(d,\bb R)$ be cusped Hitchin representations such that $\eta\neq \rho,(\rho^{-1})^\top$ and let $\phi$ be a nonzero positive linear combination of the simple roots of $\mathsf{SL}(d,\bb R)$. Let $m$ be an admissible slope for the Manhattan curve $\mathcal C^\phi(\rho,\eta)$. Then
\[
\lim_{t\to \infty}\frac{1}{t}\log\left\{[\gamma]\in[\Gamma]\colon (\ell^\phi_\rho(\gamma),\ell^\phi_\eta(\gamma))\in I^2_{\xi, m}(t)\right\}=:H^\phi_{(\rho,\eta)}(m)=a_m+m\cdot b_m.
\] 
where $H^{\phi}_{(\rho,\eta)}(m)$ is the correlation number of $(\rho,\eta)$ and $\phi$, and $(a_m,b_m)$ is the point $\mathcal{C}^\phi(\rho,\eta)$ where the slope of the normal equals $m$.
\end{thmx}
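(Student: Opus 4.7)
The plan is to reduce Theorem~\ref{thm:main application} to Theorems~\ref{thm:weaker-main} and \ref{thmx:2} applied to $\mathbf{f} = (\tau^\phi_\rho, \tau^\phi_\eta)$, and then translate the resulting symbolic asymptotic into a conjugacy class count via the coding map $G\colon \mathrm{Fix}^n \to \Gamma$. First I would verify the hypotheses on $\mathbf{f}$: as recalled in the excerpt, both $\tau^\phi_\rho$ and $\tau^\phi_\eta$ are strictly positive, locally H\"older continuous, and have strong entropy gaps at infinity, with $\|\tau^\phi_\rho - \tau^\phi_\eta\|_\infty < \infty$; the assumption $\eta \neq \rho, (\rho^{-1})^t$ together with Lemma~\ref{lem:independence} supplies independence. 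Since $\mathcal{C}^\phi(\rho,\eta) = \mathcal{C}(\mathbf{f})$ by definition, an admissible slope $m$ and its associated point $(a_m, b_m)$ agree on both sides, so Theorems~\ref{thm:weaker-main} and \ref{thmx:2} give
\[
\lim_{t\to\infty}\frac{1}{t}\log M(t;\mathbf{f},m,\xi) = \alpha_{\mathbf{f}}(m) = H_{\mathbf{f}}(m) = a_m + m\cdot b_m.
\]

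To transfer this to the conjugacy class count $N(t)$ in the statement, I use that for $\ul x \in \mathrm{Fix}^n$ the element $G(\ul x)$ is hyperbolic and $S_n \tau^\phi_\rho(\ul x) = \ell^\phi_\rho(G(\ul x))$, and similarly for $\eta$. Each primitive hyperbolic $[\gamma]$ of period $k$ corresponds to a single primitive shift orbit of length $k$, which contributes exactly $k$ points to $\mathrm{Fix}^{jk}$ through the power $\gamma^j$, each carrying ergodic sum $(j\ell^\phi_\rho(\gamma), j\ell^\phi_\eta(\gamma))$. Unfolding the definition of $M(t;\mathbf{f},m,\xi)$ then yields
\[
M(t;\mathbf{f},m,\xi) = \sum_{[\gamma]\text{ prim}} \sum_{j\geq 1} \frac{1}{j}\,\mathbf{1}\!\left[(j\ell^\phi_\rho(\gamma), j\ell^\phi_\eta(\gamma)) \in I^2_{\xi, m}(t)\right].
\]
The $j=1$ terms count precisely the primitive conjugacy classes that appear in $N(t)$. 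For each fixed $j\geq 2$, the $j$-th term equals $\frac{1}{j}$ times the count of primitive classes with $(\ell^\phi_\rho, \ell^\phi_\eta) \in I^2_{\xi/j, m}(t/j)$, which by Theorem~\ref{thm:weaker-main} grows in $t$ at rate at most $(a_m + m b_m)/j$; the non-primitive contributions to $N(t)$ itself admit the same bound. Parabolic conjugacy classes have uniformly bounded $\phi$-length and so cannot contribute to $N(t)$ once $t$ is sufficiently large.

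The main obstacle, and the step I expect to require the most care, is to control these lower-order contributions uniformly as $t \to \infty$ while respecting the fixed precision $\xi$ of the rectangle window, so that the $j=1$ main term dominates both sides. Once this coding dictionary is set up, $\log M(t;\mathbf{f},m,\xi)$ and $\log N(t)$ agree to leading order in $t$, and the theorem follows.
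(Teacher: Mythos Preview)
Your proposal is correct and follows the same overall route as the paper: verify via Theorem~\ref{thm:Hitchinpotentials} and Lemma~\ref{lem:independence} that $\mathbf f=(\tau^\phi_\rho,\tau^\phi_\eta)$ satisfies Assumptions~\ref{rmk:potentials}, identify $\mathcal C^\phi(\rho,\eta)$ with $\mathcal C(\mathbf f)$, and invoke Theorems~\ref{thm:weaker-main} and~\ref{thmx:2}. The paper's proof is literally that one sentence; it treats the passage from the weighted periodic-orbit count $M(t;\mathbf f,m,\xi)$ to the conjugacy class count as implicit in the coding of Theorem~\ref{thm:coding}.

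What you add---the explicit decomposition over primitive classes and powers $j\ge 1$, the observation that the $j$-th term lives at scale $t/j$ with shrunken window $\xi/j$, and the remark that parabolic classes drop out for large $t$---is the standard bookkeeping the paper elides. Your concern about controlling the tail $\sum_{j\ge 2}$ is easily resolved: strict positivity of $\tau^\phi_\rho$ forces $j\le (t+\xi)/B$ for some $B>0$, and since $N_{\mathrm{prim}}(s)\le M(s;\mathbf f,m,\xi)$ the a~priori upper bound (Proposition~\ref{prop:a-priori estimate} or Lemma~\ref{lem:rough-bounds}) gives each $j$-th term growth at most $e^{H_{\mathbf f}(m)t/j}$, so the whole tail is $O\bigl(t\,e^{H_{\mathbf f}(m)t/2}\bigr)=o(e^{H_{\mathbf f}(m)t})$. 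There is no circularity, since you only need the upper bound from the symbolic side, not the full conjugacy-class asymptotic.
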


The following rigidity result follows immediately from Theorem \ref{thm:main application}, Corollary \ref{cor:corr-rigidity}, and \cite[Cor.~1.5]{BCKMcounting}.

\begin{cor}[Correlation number rigidity II]
Under the same assumptions as in Theorem \ref{thm:main application}, for $m_* = \delta_\phi(\rho) / \delta_\phi(\eta)$, we have
\[
H^\phi_{(\rho,\eta)}(m_*) \leq \delta_\phi(\rho).
\]
Moreover, equality holds if and only if $m_* \ell^\phi_\rho(\gamma) = \ell^\phi_\eta(\gamma)$ for all $\gamma \in \Gamma$.
\end{cor}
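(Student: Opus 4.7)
The plan is to deduce this corollary by assembling the three cited results in the natural way, with the only real content being the translation between the symbolic dynamics side and the geometric side. Set $\mathbf f = (\tau^\phi_\rho, \tau^\phi_\eta)$. By the construction recalled just before Theorem \ref{thm:main application}, both coordinates are strictly positive, locally H\"older continuous potentials with a strong entropy gap at infinity, satisfy $\|\tau^\phi_\rho - \tau^\phi_\eta\|_\infty < \infty$, and are independent under the hypothesis $\eta \neq \rho, (\rho^{-1})^t$ (Lemma \ref{lem:independence}). Moreover, by the definition of $\delta_\phi(\rho)$, we have $\delta_{\tau^\phi_\rho} = \delta_\phi(\rho)$ and $\delta_{\tau^\phi_\eta} = \delta_\phi(\eta)$, so the symbolic ratio $\delta_f/\delta_g$ associated with $\mathbf f$ coincides with $m_* = \delta_\phi(\rho)/\delta_\phi(\eta)$.

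First, I would invoke Theorem \ref{thm:main application} to identify $H^\phi_{(\rho,\eta)}(m_*) = a_{m_*} + m_*\cdot b_{m_*} = H_{\mathbf f}(m_*) = \alpha_{\mathbf f}(m_*)$, where the last equality is Theorem \ref{thmx:2}. Then Corollary \ref{cor:corr-rigidity} applied to $\mathbf f$ immediately gives
\[
H^\phi_{(\rho,\eta)}(m_*) = \alpha_{\mathbf f}(m_*) \leq \delta_{\tau^\phi_\rho} = \delta_\phi(\rho),
\]
which is the claimed inequality.

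For the equality case, Corollary \ref{cor:corr-rigidity} tells us that $H^\phi_{(\rho,\eta)}(m_*)=\delta_\phi(\rho)$ if and only if $m_* S_n \tau^\phi_\rho(\underline{x}) = S_n \tau^\phi_\eta(\underline{x})$ for every $\underline{x} \in \text{Fix}^n$ and every $n$. Using the defining relation $\tau^\phi_\rho(\underline{x}) = \ell^\phi_\rho(G(\underline{x}))$ (and similarly for $\eta$), together with the fact that $S_n \tau^\phi_\rho$ on a periodic word evaluates to the $\phi$-length of the corresponding conjugacy class, this condition translates to $m_* \ell^\phi_\rho(\gamma) = \ell^\phi_\eta(\gamma)$ for every conjugacy class represented by some $G(\underline{x})$, i.e.\ for every hyperbolic $\gamma \in \Gamma$. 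Finally, \cite[Cor.~1.5]{BCKMcounting} upgrades this identity along hyperbolic elements (parabolics give $0$ on both sides, so they add no constraint) to hold on all of $\Gamma$.

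The main potential obstacle is the translation step: one must verify that the map $G\colon \text{Fix}^n \to \Gamma$ surjects (up to conjugation) onto the hyperbolic conjugacy classes of $\Gamma$, so that the periodic-orbit identity is equivalent to a length identity on all hyperbolic group elements, and then confirm that \cite[Cor.~1.5]{BCKMcounting} is stated in a form that promotes this to all of $\Gamma$. Both points are built into the coding and rigidity machinery from \cite{BCKMcounting}, so no new argument is required beyond correctly quoting them.
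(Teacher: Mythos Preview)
Your proposal is correct and matches the paper's own argument, which simply states that the corollary follows immediately from Theorem~\ref{thm:main application}, Corollary~\ref{cor:corr-rigidity}, and \cite[Cor.~1.5]{BCKMcounting}. One small slip: the defining relation from Theorem~\ref{thm:Hitchinpotentials} is $S_n\tau^\phi_\rho(\underline{x}) = \ell^\phi_\rho(G(\underline{x}))$ for $\underline{x}\in\mathrm{Fix}^n$, not $\tau^\phi_\rho(\underline{x}) = \ell^\phi_\rho(G(\underline{x}))$; you use the correct version in the next clause, so this is only a typo.
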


Similarly, we can define the {\em $(\alpha, \beta)$-Bishop-Steger entropy} of $\phi$, $\rho$ and $\eta$ as $h_{(\alpha, \beta)}^{\phi}(\rho,\eta):=
h_{(\alpha, \beta)}^{BS}(\tau^\phi_\rho, \tau^\phi_\eta).$ Then, Theorem \ref{Thm:BSentropy} can be rewritten as follows:

\medskip
\noindent
$\mathbf{Theorem\ \ref{Thm:BSentropy}^*\scalebox{1.5}{.}}$ 
{\em 
Let $m$ be an admissible slope. Then,
\[
\frac{H^\phi_{(\rho,\eta)}(m)}{\alpha + m\beta} \leq h_{(\alpha, \beta)}^{\phi}(\rho,\eta),
\]
with equality if and only if $m$ is the unique slope satisfying $a_m / b_m = \alpha / \beta$.
}

\subsection*{Outline of the paper and main steps} 

The approach in this paper combines ideas from \cite{Lalley:1987df} and \cite{BCKMcounting}. 
The results in \cite{BCKMcounting} rely on a renewal theorem, which is not available when studying simultaneous orbital distribution problems. 
For this reason, we follow Lalley's strategy from the compact case \cite{Lalley:1987df} and use Fourier analysis to study the asymptotic behavior of the simultaneous orbital distribution of a pair of potentials.

Lalley's method involves two key steps. 
% The first is the construction of a symbolic model, in particular a shift space together with a length-prescribing function that records orbital lengths\GM{a function that associates a length to each orbit. Is this part of the data of the problem?}. 
The first step consists of converting the counting problem into estimates on thermodynamical quantities. 
The second step uses Fourier analysis on these quantities to derive the relevant asymptotic expansions. The simultaneous orbital growth problem studied in this paper is a refinement of the single-orbital growth problem treated in \cite{BCKMcounting}. 
To extend Lalley's method to the non-compact setting, we build upon the framework in \cite{BCKMcounting} to establish more delicate estimates that we can then use to perform the two steps as above.

In what follows, we outline the main steps of each section. Section \ref{sec:background} gathers essential background results and definitions from the thermodynamic formalism and the entropy gaps at infinity introduced in \cite{BCKMcounting}. In Section \ref{sec:first results}, we begin our study of the correlation number \( H_\fbf(m) \) and establish the rigidity results given in Corollary \ref{cor:corr-rigidity} and Theorem \ref{Thm:BSentropy}.  
Section \ref{sec:prep} reformulates the orbit counting problem by linking it to the transfer operator, enabling the application of tools from thermodynamic formalism. In Section \ref{sec:globalgrowth}, we derive a priori estimates for the counting problem over countable Markov shifts, giving an upper bound for the simultaneous orbital growth. Section \ref{sec:local-estimate} uses Fourier analysis and the Saddle Point Method to establish a local asymptotic expansion, providing a lower bound for the simultaneous orbital growth and thereby proving Theorems \ref{thm:weaker-main}, \ref{thm:local-counting}, and \ref{thmx:2}. In Section \ref{sec:cuspedHit}, we review key aspects of the theory of cusped Hitchin representations and establish Lemma \ref{lem:independence}, which allows us to apply of our main dynamical results in this context, leading to the proof of Theorem \ref{thm:main application}. Finally, in Appendix \ref{sec:Proof-of-holo} and Appendix \ref{app:saddle-point}, we provide proofs of Theorem \ref{thm:tran-holo} and Proposition \ref{prop:saddle-point}, which are known to experts, but we were not able to find in the available literature.

\subsection*{Historical remarks}
Orbital distribution has long been a central theme in ergodic geometry. A key early milestone in this field is the Prime Orbit Theorem (see, for example, the foundational works of Huber, Margulis, Lalley, Parry, and Pollicott \cite{Huber:1959ed,Margulis:1970gj,Lalley:1989jm,Parry:1990tn}), which establishes a connection between the growth rate of closed orbits and topological entropy. Lalley's pioneering work \cite{Lalley:1987df,Lalley:1989eh} introduces probabilistic perspectives to study orbital distribution problems within compact settings. Building on this foundation, along with earlier contributions by the authors \cite{Kao:2018th,Kao:2019tr,BCKMcounting}, our work extends Lalley's results to a large class of countable Markov shifts.

Lalley's results in \cite{Lalley:1987df} initiated further investigations into correlation numbers, which Sharp \cite{Sharp:1998if} later explored through a different approach. Lalley's subsequent work \cite{Lalley:1989eh} focused on counting closed orbits within specific homology classes, drawing inspiration from Phillips and Sarnak \cite{Phillips87homology} and Katsuda and Sunada \cite{Katsuda88homology}. This problem was further studied by Pollicott and Sharp \cite{Pollicott91homology,Sharp93homology} using a zeta function approach. Babillot and Ledrappier \cite{Babillot:1998kh} later provided a unified framework for these two closely related counting problems in compact hyperbolic flows.

Schwartz and Sharp \cite{Schwartz:1993ty} used Lalley's work \cite{Lalley:1987df} to define and examine the correlation of length spectra of two hyperbolic structures on a closed surface. Pollicott and Sharp \cite{Pollicott06correlation,Pollicott13correlation} further analyzed a related correlation number, defined relative to word length. Glorieux \cite{Glorieux17} studied correlation numbers with slope from the perspective of GHMC Anti-De Sitter manifolds. Dai and the second author \cite{Dai} extended Schwartz and Sharp's approach to pairs of Hitchin representations. Chow and Oh \cite{chow2023jordan,chow2024multiplecor} recently developed a new approach and generalized these results to tuples of Anosov groups.

Cusped Hitchin representations are examples of positive representations in the sense of
Fock and Goncharov \cite{Fock:2006da}. Canary, Zhang, and Zimmer \cite{CZZcusped} establish several of their geometric properties, which extend results on Anosov representations. Bray, Canary, and the authors developed dynamical aspects of this theory in \cite{BCKMcounting, BCKMpressure}.
We refer to \cite{Canary:2024} for a survey on (cusped) Hitchin representations.

In the non-convex cocompact setting, constrained counting problems remain less developed. Epstein \cite{Epstein87homology} studied the counting of orbits in given homology classes for finite-volume hyperbolic manifolds, while Babillot and Peign\'e \cite{Babillot:2000fv} extended these results to certain infinite-volume hyperbolic manifolds with cusps. To the best of our knowledge, no other results on correlation numbers exist in non-convex cocompact settings, apart from Theorems \ref{thm:weaker-main} and \ref{thm:local-counting}. Moreover, these two results partially answer open question no.7 listed on \cite[p.11]{Pollicott:2012ud}.

Theorem \ref{thmx:2} generalizes \cite[Thm 1]{Sharp:1998if} and \cite[Theorem 3.25]{Glorieux17} (with slopes) for convex cocompact Fuchsian representations, as well as \cite[Thm 6.14]{Dai} for Hitchin representations. Corollary \ref{cor:corr-rigidity} draws inspiration from the Intersection Number Rigidity result in \cite[Cor 1.5]{BCKMcounting}. Finally, Theorem \ref{Thm:BSentropy} studies the relation between the correlation number and (generalized) Bishop-Steger entropy, extending \cite[Theorem 3.15]{Glorieux17} for convex cocompact Fuchsian representations to cusped Hichin representations.

\subsection*{Acknowledgements} The authors would like to thank Harry Bray, Dick Canary, Xian Dai, Fran\c{c}ois Ledrappier, and Hee Oh for helpful and insightful conversations. We thank the anonymous referee for helpful and thoughtful feedback on an earlier version of this manuscript.

\section{Background}\label{sec:background}
\subsection{Markov shifts}
A {\em countable state Markov shift} is the data of a countable {\em alphabet} $\mathcal A$, a {\em transition matrix} $\mathbb T=(t_{ab})\in \{0,1\}^{\mathcal A\times \mathcal A}$, the set of {\em words}
\[
\Sigma^+=\{\ul x=(x_n)\in\mathcal A^{\mathbb Z_{>0}}\colon t_{x_nx_{n+1}}=1\}
\]
and the {\em left-shift} $\sigma\colon\Sigma^+\to\Sigma^+$ defined by $\sigma((x_n))=(x_{n+1})$.  The countable state Markov shift is: (i) {\em topologically mixing} if for any letters $a,b\in\mathcal A$ there exists $N=N(a,b)$ such that for any $n> N$ there exists a word $\ul x\in\Sigma^+$ with $x_1=a$ and $x_n=b$; (ii) has {\em BIP}, short for {\em big images pre-images property}, if there exists a finite subset $\mathcal B$ of the alphabet $\mathcal A$ such that for any $a\in\mathcal A$ there exist letters $b_p,b_s\in\mathcal B$ such that $t_{b_pa}=t_{ab_s}=1$. For the rest of this section, we fix a topologically mixing, countable state Markov shift with BIP which, with a slight abuse of notation, we denote by $\Sigma^+$. Furthermore, we equip $\Sigma^+$ with the metric $d(\ul x,\ul y)=e^{-\inf \{n:\ x_n\neq y_n\}}$.

For any $k\in\mathbb Z_{>0}$, a {\em $k$-cylinder} is a non-empty subset $p\subset \Sigma^+$ defined by the property
\[
\ul x,\ul y\in p\iff x_i=y_i\text{ for all }i=1,\dots,k.
\] 
Denote by $\Lambda_k$ the set of $k$-cylinders.

\subsection{Potentials, entropy gaps, and pressure}
A {\em potential} is a continuous function $f\colon \Sigma^+\to\mathbb K$ where $\mathbb K=\mathbb R$ or $\mathbb C$ equipped with the norm $|\cdot |$.
The {\em $n$-th ergodic sum} of $f$ is the potential 
\[
S_nf(\ul x)=\sum_{i=0}^{n-1}f(\sigma^i(\ul x)).
\]

A potential $f\colon\Sigma^+\to\mathbb K$ is {\em locally H\"older continuous} (or $\beta$-{\em locally H\"older continuous with constant $A$}) if there exist $A,\beta>0$ such that 
\[
|f(\ul x)-f(\ul y)|<Ad(\ul x,\ul y)^\beta
\] 
whenever $x_1=y_1$. We denote by $\mathcal F_\beta$ (resp. $\mathcal F_\beta(\mathbb C)$) the space of $\beta$-locally H\"older continuous potentials valued in $\mathbb R$ (resp. $\mathbb C$) equipped with the norm 
\[
\|\phi\|_{\beta}=\|\phi\|_{\infty}+\Lip(\phi)\ 
\text{where}\ \Lip(\phi):=\sup\left\{\frac{|\phi(\ul x)-\phi(\ul y)|}{d(\ul x,\ul y)^\beta}\colon\ \ul x\neq \ul y,\ x_1=y_1\right\}.
\] 
Then, let $\mathcal F^b_\beta(\mathbb C)$ denote the subspace of bounded potentials and let $\mathcal B(\mathcal F^b_\beta(\mathbb C))$ be the space of bounded operators on $\mathcal F^b_\beta(\mathbb C)$ equipped with the operator norm
\[
\|\mathcal L\|_{\rm {op}}=\sup\left\{\frac{\|\mathcal L(\phi)\|_\beta}{\|\phi\|_\beta}:\ \phi\in \mathcal F^b_\beta(\mathbb C)\right\}.
\]
Note that $(\mathcal F^b_\beta(\mathbb C),\|\cdot\|_\beta)$ and $(\mathcal B(\mathcal F^b_\beta(\mathbb C)),\|\cdot\|_{\rm {op}})$ are Banach spaces.

Let $f\colon\Sigma^+\to\mathbb R$ be locally H\"older continuous and set $
S(f,a)=\sup\{f(\ul x)\colon \ul x\in\Sigma^{+},x_{1}=a\}.$
Then, recall from the introduction that $f$ has a {\em strong entropy gap at infinity} if the series
\[
Z_{1}(f,s)=\sum_{a\in\mathcal{A}}e^{-sS(f,a)}
\] 
has a finite critical exponent $d(f)>0$ and diverges when $s=d(f)$. 

Let $\mathcal M_\sigma$ denote the space of shift-invariant probability measures on $\Sigma^+$ and, for any $\mu \in\mathcal M_\sigma$, let $h_\sigma(\mu)$ denote the measure-theoretic entropy of $\sigma$ with respect to $\mu$. The {\em (topological) pressure} of the locally H\"older continuous potential $f$ is
\[
P(f)=\sup\left\{h_\sigma(\mu)+\int_{\Sigma^+}fd\mu\colon \mu\in\mathcal M_\sigma\text{ and }-\int_{\Sigma^+}f\ d\mu<\infty\right\}.
\]
An \emph{equilibrium state} for a potential $f$ is a $\sigma$-invariant Borel probability measure $\mu$ on $\Sigma^+$ such that 
\[
P(f) = h_\sigma(\mu) + \int_{\Sigma^+} f \, d\mu.
\]
A \emph{Gibbs state} for $f$ is a Borel probability measure $\nu$ on $\Sigma^+$ for which there exists a constant $Q > 1$ such that
\[
\frac{1}{Q} \leq \frac{e^{S_n f(\ul{x}) - nP(f)}}{\nu(p)} \leq Q
\]
for any $n$-cylinder $p$ and any $\ul{x}$ in $p$. 

The Gibbs states and equilibrium states of potentials with good regularity (if they exist) are closely related. Specifically, we have the following result:

\begin{thm}[{Mauldin-Urba\'nski \cite[Thm 2.2.9]{Mauldin:2003dn}, Sarig \cite[Thm 4.9]{Sarig:2009wta}}]\label{thm:gibbs and equilibrium} 
Let $\Sigma^+$ be a topologically mixing, countable state Markov shift with BIP. If $f$ is a real-valued, locally H\"older continuous potential with finite pressure, then $f$ admits a unique shift-invariant Gibbs state $\mu_f$. Moreover, if $-\int f \, d\mu_f < \infty$, then $\mu_f$ is the unique equilibrium state for $f$.
\end{thm}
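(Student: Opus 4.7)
The natural route is via the Ruelle--Perron--Frobenius transfer operator $\mathcal{L}_f\colon\mathcal{F}_\beta^b(\mathbb{C})\to \mathcal{F}_\beta^b(\mathbb{C})$ defined by
\[
(\mathcal{L}_f\phi)(\ul{x})=\sum_{\sigma(\ul{y})=\ul{x}}e^{f(\ul{y})}\phi(\ul{y}).
\]
The plan is to first establish, under BIP, topological mixing, local H\"older regularity, and the finite pressure hypothesis, a spectral description of $\mathcal{L}_f$: namely, $\lambda=e^{P(f)}$ is a simple isolated eigenvalue, with a strictly positive eigenfunction $h\in\mathcal{F}_\beta$ satisfying $\mathcal{L}_f h=\lambda h$, and the dual operator admits a conformal probability eigenmeasure $\nu$ with $\mathcal{L}_f^*\nu=\lambda\nu$. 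The BIP hypothesis is what makes the operator act well on the Banach space of bounded locally H\"older functions, and topological mixing provides the simplicity of the leading eigenvalue; the finiteness of $P(f)$ ensures $\mathcal{L}_f$ maps into the correct space.

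Next, I would normalize so that $\int h\,d\nu=1$ and define $\mu_f=h\,\nu$. Shift invariance of $\mu_f$ is a direct consequence of the duality identity $\int (\phi\circ\sigma)\cdot\psi\, d\nu = \lambda^{-1}\int \phi\cdot(\mathcal{L}_f\psi)\,d\nu$ combined with $\mathcal{L}_f h=\lambda h$. The Gibbs inequalities
\[
\frac{1}{Q}\leq\frac{e^{S_nf(\ul{x})-nP(f)}}{\mu_f(p)}\leq Q
\]
for $\ul{x}\in p$ an $n$-cylinder then follow by combining conformality of $\nu$, the uniform positivity and boundedness of $h$ on the support of cylinders with ``good'' initial letters (again using BIP), and the Bowen distortion estimate
\[
|S_nf(\ul{x})-S_nf(\ul{y})|\leq \frac{A}{1-e^{-\beta}}
\]
for $\ul{x},\ul{y}$ in a common $n$-cylinder, which is an immediate consequence of $\beta$-local H\"older continuity.

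For uniqueness of the Gibbs state, I would show that any shift-invariant Gibbs state $\mu$ is absolutely continuous with respect to $\mu_f$ with bounded Radon--Nikodym derivative; shift invariance forces this density to be $\sigma$-invariant, and the spectral gap of $\mathcal{L}_f$ yields ergodicity of $\mu_f$, hence the density is constant. For the equilibrium state portion, the variational principle gives $P(f)\geq h_\sigma(\mu_f)+\int f\,d\mu_f$ once the integral is well defined, which is precisely the content of the assumption $-\int f\,d\mu_f<\infty$. The reverse inequality, yielding equality, is obtained by applying the Gibbs bounds to a refining sequence of cylinder partitions and computing the entropy as the limit of $-\frac{1}{n}\sum_p \mu_f(p)\log\mu_f(p)$; uniqueness of the equilibrium state among $\sigma$-invariant measures with $-\int f\,d\mu<\infty$ then follows by a convexity/Jensen argument, since a second equilibrium state would have to coincide on cylinders with $\mu_f$ by the Gibbs bounds.

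The main obstacle is the non-compactness of $\Sigma^+$: unlike the subshift-of-finite-type setting, the transfer operator need not be compact, so quasi-compactness and the spectral gap require genuine work. The BIP condition is exactly what controls the contribution of the infinitely many preimages under $\sigma$, ensuring $\mathcal{L}_f$ preserves $\mathcal{F}_\beta^b$ and admits the desired spectral picture; correspondingly, the subtle point in the equilibrium statement is that without the integrability hypothesis $-\int f\,d\mu_f<\infty$, the quantity $h_\sigma(\mu_f)+\int f\,d\mu_f$ need not equal $P(f)$, so the hypothesis is genuinely needed to promote the Gibbs state to an equilibrium state.
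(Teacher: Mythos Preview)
The paper does not give its own proof of this statement: it is quoted as a known result, with attribution to Mauldin--Urba\'nski \cite[Thm~2.2.9]{Mauldin:2003dn} and Sarig \cite[Thm~4.9]{Sarig:2009wta}, and no argument is supplied. So there is nothing in the paper to compare your proposal against.

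That said, your outline is a faithful sketch of the standard route taken in those references: build the eigenmeasure and eigenfunction of $\mathcal{L}_f$ under BIP and finite pressure, set $\mu_f=h\,\nu$, verify shift invariance and the Gibbs bounds via conformality and bounded distortion, deduce uniqueness from ergodicity, and finally use the Gibbs inequalities on cylinder partitions together with the integrability hypothesis to identify $\mu_f$ as the unique equilibrium state. The one place where your sketch is slightly loose is the uniqueness of the equilibrium state: ``a second equilibrium state would have to coincide on cylinders with $\mu_f$ by the Gibbs bounds'' is not quite right as stated, since the Gibbs bounds only control cylinder masses up to a multiplicative constant. The actual argument (as in Sarig's notes) goes through strict convexity of $P$ or, equivalently, through showing that any equilibrium state must itself be a Gibbs state, at which point the already-established uniqueness of the Gibbs state finishes the job.
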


Two locally H\"older continuous potentials $f$ and $g$ are {\em (Liv\v{s}ic) cohomologous} if there exists a locally H\"older continuous potential $h$ such that $f-g=h-h\circ \sigma$. The real-valued potential $f$ is \emph{strictly positive} if there exists a constant $B > 0$ such that $f(\ul{x}) > B$ for all $\ul{x} \in \Sigma^+$. 

\begin{rem}\normalfont Several of the results from \cite{BCKMcounting} used in this work hold more generally for {\em eventually positive} potentials. However, in this paper we will need the stronger hypothesis of strict positivity, as in \cite[Theorem C]{BCKMcounting}. We note that, by \cite[Lemma 3.2]{BCKMcounting}, any eventually positive locally H\"older continuous potential with a strong entropy gap at infinity is cohomologous to a strictly positive potential with the same properties.
\end{rem}

A potential $f\colon\Sigma^+\to\mathbb R$ is {\em arithmetic} if the additive subgroup of $\mathbb R$ generated by $\{S_nf(\ul x)\colon \ul x\in\text{Fix}^n,\ n\in\mathbb N\}$ is cyclic.
Two potentials are {\em independent} if $af+bg$ is arithmetic only if $a=b=0$. Note that if $f$ and $g$ are independent, then each one of them is non-arithmetic.

\begin{assumptions}\label{rmk:potentials}
From now on through the paper, we assume that $\Sigma^{+}$ is a topologically
	mixing, countable state Markov shift with BIP, and let $f,g\colon\Sigma^+\to\mathbb R$ denote strictly positive locally H\"older continuous potentials with strong entropy gaps at infinity such that $\|f-g\|_\infty$ is finite. We often write $\fbf$ for the pair $(f,g)$.
\end{assumptions}

\begin{remark}\normalfont 
\begin{enumerate}
    \item When clear from context, our notations will not highlight the dependence on the fixed pair of potentials $\fbf$.

    \item Assuming that $\|f-g\|_\infty$ is finite and $f$ has a strong entropy gap at infinity guarantees that the same property holds for $g$. However, we explicitly assume that $g$ has a strong entropy gap at infinity to ease exposition.
\end{enumerate}
\end{remark}

\subsection{Transfer operators} 
We are interested in certain weighted sums of $f$ and $g$ that have finite pressure. More specifically, we will consider weights in the set
\begin{align}\label{eq:D(f)}
\begin{split}
D=D(\fbf):=\Bigg(&\{(z_{1},z_{2})\in\mathbb{R}^{2}\colon z_1\leq 0,z_2\leq 0,z_1+z_2<0\}\\
&\cup \{(z_{1},z_{2})\in\mathbb{R}^{2}\colon z_1> 0,z_2<0,z_1c(f)+z_2(c(f)-A)<0\}\\
&\cup \{(z_{1},z_{2})\in\mathbb{R}^{2}\colon z_1<0,z_2>0,z_1(c(g)-A)+z_2c(g)<0\}\Bigg)\\
&\hspace{-1em}\cap\{(z_{1},z_{2})\in\mathbb{R}^{2}\colon d(-z_{1}f-z_{2}g)<1\}
\end{split}
\end{align}
where $A>0$ is such that $\|f-g\|_{\infty}=A<\infty$, $c(f)=\inf_{\Sigma^+}f>0$, and $c(g)=\inf_{\Sigma^+}g>0$. Notice that for any $(z_{1},z_{2})\in D$, the potential $-(z_{1}f+z_{2}g)$ is locally H\"older continuous, strictly positive, and with strong entropy gap at infinity. In particular, \cite[Lemma 3.3(1)]{BCKMcounting} implies that $D\subset \{(z_{1},z_{2})\in\mathbb{R}^{2}\colon P(z_{1}f+z_{2}g)<\infty\}$. 
We now list some important properties of the weights in $D$.
\begin{prop}\label{cor:gibbs and eq for zf}
	 For any $(z_{1},z_{2})\in D$, the following holds.
	\begin{enumerate}
\item The potiential $z_{1}f+z_{2}g$ has a unique equilibrium state $\mu_{z_{1}f+z_{2}g}$.
\item The potentials $f,g$ are in $L^{n}(\mu_{z_{1}f+z_{2}g})$ for all $n\in\mathbb{N}.$
\item $D$ is an open set in $\mathbb R^2$.
\end{enumerate}
\end{prop}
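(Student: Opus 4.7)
The plan is to derive (2) first, then deduce (1) from it combined with Theorem~\ref{thm:gibbs and equilibrium}. Since $(z_1,z_2)\in D$ ensures $P(z_1 f+z_2 g)<\infty$ via \cite[Lem.~3.3(1)]{BCKMcounting}, Theorem~\ref{thm:gibbs and equilibrium} already provides a unique shift-invariant Gibbs state $\mu := \mu_{z_1 f+z_2 g}$; the same theorem upgrades $\mu$ to the unique equilibrium state once we know $-\int (z_1 f+z_2 g)\, d\mu<\infty$. Because $z_1,z_2\leq 0$ and $f,g$ are strictly positive, this integrability reduces to $f,g\in L^1(\mu)$, which is a special case of (2). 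So everything comes down to proving the $L^n$-integrability.

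To do that, set $\psi:=-(z_1 f+z_2 g)=|z_1|f+|z_2|g$ and $c:=|z_1|+|z_2|>0$ (positive because $z_1+z_2<0$). I would bound $\int f^n\, d\mu$ and $\int g^n\, d\mu$ cylinder by cylinder. On each $1$-cylinder $[a]$, local H\"older continuity of $f$ gives $f\leq S(f,a)$ on $[a]$ (up to an additive constant absorbed later), while the Gibbs property for $\mu$ combined with local H\"older regularity of $\psi$ yields $\mu([a])\leq Q'\, e^{-S(\psi,a)}$ for a constant $Q'$ independent of $a$. The hypothesis $\|f-g\|_\infty<\infty$, again paired with local H\"older oscillation bounds on $[a]$, produces a uniform additive comparison
\[
S(\psi,a)\ \geq\ c\, S(f,a)\ -\ C'
\]
with $C'$ independent of $a$. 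Summing the resulting bound
\[
\int_{[a]} f^n\, d\mu\ \leq\ Q''\, (S(f,a))^n\, e^{-c\, S(f,a)}
\]
over $a\in\mathcal A$ (and analogously for $g$, since $S(g,a)\leq S(f,a)+\|f-g\|_\infty$) reduces (2) to showing that $\sum_{a\in\mathcal A}(S(f,a))^n e^{-c\, S(f,a)}<\infty$.

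The main obstacle is precisely this series, and this is where the hypothesis $d(\psi)<1$ built into the definition of $D$ gets used. The additive comparison above turns convergence of $Z_1(\psi,s)=\sum_a e^{-sS(\psi,a)}$ at every $s>d(\psi)$ into convergence of $\sum_a e^{-sc\, S(f,a)}$ for the same range of $s$. Because $d(\psi)<1$, one may pick $s_0\in(d(\psi),1)$ and then choose $\epsilon>0$ small enough that $c-\epsilon\geq cs_0$ and the polynomial factor $(S(f,a))^n$ is absorbed by $e^{\epsilon S(f,a)}$ uniformly in $a$ (trivially when $S(f,a)$ is bounded, and by the usual polynomial-versus-exponential domination on the tail). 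Then
\[
\sum_a (S(f,a))^n e^{-c\, S(f,a)}\ \leq\ K_n\sum_a e^{-(c-\epsilon)S(f,a)}\ \leq\ K_n\sum_a e^{-cs_0 S(f,a)}\ <\ \infty,
\]
completing (2). With $f,g\in L^1(\mu)$ in hand, Theorem~\ref{thm:gibbs and equilibrium} closes the loop and yields (1).
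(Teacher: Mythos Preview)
Your argument is correct and follows essentially the same strategy as the paper: both exploit the Gibbs property to bound $\mu([a])$ by $e^{-S(\psi,a)}$ (up to constants), then use the hypothesis $d(\psi)<1$ to gain exponential room that absorbs the polynomial factor $(\cdot)^n$. The differences are organizational rather than substantive. The paper works directly with $\psi=-(z_1f+z_2g)$ and extracts all moments at once via the Taylor expansion
\[
Z_1(\psi,1-\epsilon)=\sum_{a}e^{-S(\psi,a)}\sum_{n\geq 1}\frac{\epsilon^n}{n!}\big(S(\psi,a)\big)^n,
\]
whereas you pass through $S(f,a)$ using the comparison $|S(\psi,a)-cS(f,a)|\leq C'$ (from $\|f-g\|_\infty<\infty$) and then dominate $(S(f,a))^n$ by $K_n e^{\epsilon S(f,a)}$. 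Also, for (1) the paper simply cites \cite[Lem.~3.4]{BCKMcounting}, while you rederive it by feeding $f,g\in L^1(\mu)$ back into Theorem~\ref{thm:gibbs and equilibrium}; this is more self-contained and is exactly what that external lemma does. Two cosmetic remarks: the bound $f\leq S(f,a)$ on $[a]$ is exact by definition of the supremum (no H\"older constant needed there), and the additive comparison $S(\psi,a)\geq cS(f,a)-C'$ follows already from $\|f-g\|_\infty<\infty$ without invoking oscillation bounds.
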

\begin{proof} 	
The first assertion follows from the assumption $d(-z_1f-z_2g)<1$ together with \cite[Lemma 3.4]{BCKMcounting}.
	For the second assertion, since $-(z_{1}f+z_{2}g)$ has
	a strong entropy gap at infinity, the series 
	\[
	Z_{1}(-z_{1}f-z_{2}g,s)=\sum_{a\in\mathcal{A}}e^{-sS(-z_{1}f-z_{2}g,a)}
	\]
	converges for all $s>d(-(z_{1}f+z_{2}g)).$ Since we assume $(z_1,z_2)\in D$, we can find $\epsilon>0$ small enough such that $1-\epsilon>d(-(z_{1}f+z_{2}g))$, and thus
	\[
	\sum_{a\in\mathcal{A}}e^{-(1-\epsilon)S(-z_{1}f-z_{2}g,a)}<\infty.
	\]
	Then,	
	\begin{align*}
		\infty>\sum_{a\in\mathcal{A}}e^{-(1-\epsilon)S(-z_{1}f-z_{2}g,a)} & =\sum_{a\in\mathcal{A}}e^{-S(-z_{1}f-z_{2}g,a)}e^{\epsilon S(-z_{1}f-z_{2}g,a)}\\
		& =\sum_{a\in\mathcal{A}}e^{-S(-z_{1}f-z_{2}g,a)}\sum_{n\geq1}\frac{\epsilon^{n}}{n!}\left(S(-z_{1}f-z_{2}g,a)\right)^{n}\\
		& =\sum_{n\geq1}\sum_{a\in\mathcal{A}}\frac{\epsilon^{n}}{n!}\left(S(-z_{1}f-z_{2}g,a)\right)^{n}e^{-S(-z_{1}f-z_{2}g,a)}
	\end{align*}
	where we can swap the series because $(z_1,z_2)\in D$, $-z_1f-z_2g$ is strictly positive, thus guaranteeing absolute convergence. It follows that for all $n\in\mathbb{N}$
	\[
	\sum_{a\in\mathcal{A}}\left(S(-z_{1}f-z_{2}g,a)\right)^{n}e^{-S(-z_{1}f-z_{2}g,a)}<\infty.
	\]
    Now, for all $a\in\mathcal A$, let $[a]$ denote the cylinder of words starting with the letter $a$ and, for any locally H\"older continuous potential $h$, let $I(h,a)$ denote the infimum of $\{h(\ul x)\colon \ul x\in[a] \}$.
	By the Gibbs property of $\mu_{z_1f+z_2g}$, we know that there exists $Q>1$ such that
	\[
	e^{-I(-z_{1}f-z_{2}g,a)}= e^{S(z_{1}f+z_{2}g,a)}\geq\frac{e^{P(z_{1}f+z_{2}g)}}{Q}\mu_{z_{1}f+z_{2}g}([a]).
	\]
Hence 
	\begin{align*}
	0<\int\left|-z_{1}f-z_{2}g\right|^{n}\ d\mu_{z_{1}f+z_{2}g}&\leq\frac{Q}{e^{P(z_{1}f+z_{2}g)}}\sum_{a\in\mathcal{A}}\left(S(-z_{1}f-z_{2}g,a)\right)^{n}e^{-I(-z_{1}f-z_{2}g,a)}\\
	&\leq \frac{Q}{e^{P(z_{1}f+z_{2}g)}}\sum_{a\in\mathcal{A}}\left(S(-z_{1}f-z_{2}g,a)\right)^{n}e^{-S(-z_{1}f-z_{2}g,a)+A}<\infty
	\end{align*}
	where we used that the potential $-z_1f-z_2g$ is locally H\"older continuous to see that
	\[
	|I(-z_{1}f-z_{2}g,a)-S(-z_{1}f-z_{2}g,a)|<A
	\]
	for some uniform constant $A>0$. From here we deduce that $f,g\in L^{n}(\mu_{z_{1}f+z_{2}g})$
	for all $n\in\mathbb{N}$.
    
The last assertion is an immdeiate consequence of the formula $$d(-z_1f-z_2g))=\frac{d(f)}{-(z_1+z_2)}$$
established in the proof of \cite[Theorem C*]{BCKMcounting}.
\end{proof}

Consider a locally H\"older continuous potential $h\colon\Sigma^+\to\mathbb K$. The {\em transfer operator} $\mathcal{L}_{h}$ of $h$
(also known as the {\em Ruelle-Perron-Frobenius operator}) is defined by 
\[
\mathcal{L}_{h}\phi(\underline{x})=\sum_{\ul y\in\sigma^{-1}(\ul x)}e^{h(\ul y)}\phi(\ul y)
\]
where $\phi\colon\Sigma^+\to\mathbb K$ is a \emph{bounded} locally H\"older continuous potential. The eigenvalues and eigenvectors of the transfer operator are related to the pressure and equilibrium states as follows.

\begin{thm}[{Mauldin-Urba\'nski \cite[Cor. 2.7.5]{Mauldin:2003dn}, Sarig \cite[Thm 4.9]{Sarig:2009wta}}]
\label{transfer fact}
If $u :\Sigma^+\to\mathbb R$ is locally H\"older continuous, \hbox{$P(u)<+\infty$}, and $\sup u<+\infty$ then there
exist unique probability measures $\mu_u$ and $\nu_u$ on $\Sigma^+$  and a positive function
$h_u:\Sigma^+\to\mathbb R$
so that
$$\mu_u=h_u\nu_u,\qquad \mathcal L_uh_u=e^{P(u)}h_u,\qquad\mathrm{and}\qquad
\mathcal L_u^*\nu_u=e^{P(u)}\nu_u.$$
Moreover, $h_u$ is bounded away from both $0$ and $+\infty$ and $\mu_u$ is an equilibrium state for $u$.
\end{thm}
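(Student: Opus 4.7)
The plan is to prove this as a Ruelle--Perron--Frobenius theorem adapted to the BIP countable state Markov shift setting, following broadly the strategies of Mauldin--Urba\'nski and Sarig. First I would normalize by replacing $u$ with $u - P(u)$, reducing to the case $P(u) = 0$ and eigenvalue $1$; the hypothesis $\sup u < +\infty$ together with BIP guarantees $\mathcal{L}_u$ preserves the Banach space $\mathcal{F}_\beta^b(\mathbb{C})$, so the operator is well-defined and regularity is not lost under iteration. The goal then becomes producing a probability eigenmeasure $\nu_u$ of $\mathcal{L}_u^*$, a positive bounded eigenfunction $h_u$ of $\mathcal{L}_u$, and verifying that $\mu_u := h_u \nu_u$ is $\sigma$-invariant and an equilibrium state.

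The construction of $\nu_u$ would proceed by a fixed-point argument on the projectivized adjoint $T\colon \nu \mapsto \mathcal{L}_u^*\nu / (\mathcal{L}_u^*\nu)(\Sigma^+)$ acting on Borel probability measures. The crucial input is that BIP pins a definite fraction of mass to the finite distinguished set $\mathcal{B}$, so the orbit $\{T^n \delta_{\underline{x}}\}$ is tight for any $\underline{x}$ in a BIP cylinder; a weak-$*$ subsequential limit yields a probability measure $\nu_u$ satisfying $\mathcal{L}_u^* \nu_u = \nu_u$. The same BIP counting estimates at once yield the Gibbs property
\[
\frac{1}{Q} \leq \frac{\nu_u(p)}{e^{S_n u(\underline{x})}} \leq Q
\]
for any $n$-cylinder $p$ and any $\underline{x} \in p$, with $Q > 1$ independent of $p$, $n$, $\underline{x}$.

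Next I would construct $h_u$: the Gibbs property bounds $\mathcal{L}_u^n \mathbf{1}$ uniformly above and away from $0$ on $\Sigma^+$, so Ces\`aro averages $N^{-1} \sum_{n=0}^{N-1} \mathcal{L}_u^n \mathbf{1}$ admit subsequential limits (uniformly on cylinders) that automatically satisfy $\mathcal{L}_u h_u = h_u$, with $h_u$ bounded away from both $0$ and $+\infty$ by the same bounds. Declaring $\mu_u := h_u \nu_u$, the $\sigma$-invariance is immediate from the eigenequations, and the Gibbs property transfers to $\mu_u$. The variational identity $h_\sigma(\mu_u) + \int u \, d\mu_u = P(u)$ then follows via a Rokhlin-type formula once one knows $-\int u \, d\mu_u < \infty$; this integrability is extracted, in the spirit of the proof of Proposition \ref{cor:gibbs and eq for zf}, from tail estimates implied by $\sup u < \infty$, $P(u) < \infty$, and BIP.

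The main obstacle is upgrading the subsequential constructions to genuine uniqueness statements, equivalently producing a spectral gap for $\mathcal{L}_u$ on $\mathcal{F}_\beta^b$. For compact shifts this is classical, but in the BIP setting the delicate point is to run the Birkhoff--Hilbert projective cone contraction on an appropriate cone of positive $\beta$-H\"older functions: one must use BIP quantitatively to force the diameter of the image cone to be strictly finite, so that the cone map is a genuine contraction. Every compactness step in the proof --- tightness of $T^n \delta_{\underline{x}}$, existence of subsequential limits of $\mathcal{L}_u^n \mathbf{1}$, and the cone contraction itself --- ultimately rests on BIP preventing mass from escaping to infinity, which is the essential non-compact difficulty and the reason BIP is assumed throughout.
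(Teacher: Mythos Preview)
The paper does not prove this statement: it is quoted as a background result from Mauldin--Urba\'nski \cite[Cor.~2.7.5]{Mauldin:2003dn} and Sarig \cite[Thm~4.9]{Sarig:2009wta}, with no argument given in the text. There is therefore nothing in the paper to compare your proposal against.

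That said, your sketch is a reasonable outline of the standard Ruelle--Perron--Frobenius argument in the BIP countable-state setting, and it tracks the strategies in the cited references. One point to be careful about: your justification that $-\int u\, d\mu_u < \infty$ (needed for the equilibrium state conclusion) appeals to the proof of Proposition~\ref{cor:gibbs and eq for zf}, but that proposition relies on the specific strong entropy gap structure of the potentials $z_1 f + z_2 g$, not merely on $\sup u < \infty$ and $P(u) < \infty$. For a general $u$ as in the theorem statement, the integrability of $u$ with respect to its Gibbs measure is a separate issue; in Sarig's framework it is typically an additional hypothesis (compare Theorem~\ref{thm:gibbs and equilibrium}), so you should either supply a direct argument from BIP and $\sup u<\infty$ or flag it as an extra assumption.
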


We will be interested in complex analytic perturbations of the transfer operator. To this end, consider the complex domain $$
\overline{D}=\overline{D}(\fbf):=\{(z,w)\subset\mathbb{C}\times\mathbb{C}:\ (\Re{z},\Re{w})\in D\}$$
which contains $D=D(\fbf)$.

\begin{thm}
[Holomorphicity] \label{thm:tran-holo} The map $\overline D\to \mathcal{B}(\mathcal{F}_{\beta}^{b}(\mathbb{C}))$ defined by $(z,w)\mapsto\mathcal{L}_{zf+wg}$
is holomorphic.
\end{thm}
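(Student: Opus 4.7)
My plan is to prove holomorphicity pointwise, by showing that around any $(z_0,w_0)\in\overline D$ the operator-valued map admits a Taylor expansion which converges absolutely in the operator norm of $\mathcal B(\mathcal F_\beta^b(\mathbb C))$ on a polydisk centered at $(z_0,w_0)$. Entirety of the complex exponential gives the pointwise identity
\[
e^{(zf+wg)(\ul y)} \;=\; e^{(z_0 f+w_0 g)(\ul y)}\sum_{n,m\geq 0}\frac{(z-z_0)^n(w-w_0)^m}{n!\,m!}f(\ul y)^n g(\ul y)^m,
\]
which, upon substitution into the definition of $\mathcal L_{zf+wg}\phi$, suggests the candidate expansion $\mathcal L_{zf+wg}=\sum_{n,m}\frac{(z-z_0)^n(w-w_0)^m}{n!\,m!}T_{n,m}$, where
\[
T_{n,m}\phi(\ul x) \;:=\; \sum_{\ul y\in\sigma^{-1}(\ul x)} e^{z_0 f(\ul y)+w_0 g(\ul y)}\, f(\ul y)^n g(\ul y)^m\, \phi(\ul y).
\]
Once the series converges in operator norm, Fubini justifies the interchange of summations, the equality in $\mathcal B(\mathcal F_\beta^b(\mathbb C))$ holds, and holomorphicity at $(z_0,w_0)$ follows from the Banach-space Taylor criterion.

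\textbf{Key estimate.}
The core step is to bound $\|T_{n,m}\|_{\rm op}$ so that the series has a positive polydisk of convergence. Since $(\Re z_0,\Re w_0)$ lies in the (relatively) open set $D$, I fix $\epsilon>0$ with $(\Re z_0+\epsilon,\Re w_0+\epsilon)\in D$. Combining the strict positivity of $f,g$ with the elementary inequality $t^k\leq k!\,\epsilon^{-k}e^{\epsilon t}$ (valid for all $t,\epsilon,k>0$, as a single term of the exponential series) yields the pointwise bound
\[
\bigl|e^{(z_0 f+w_0 g)(\ul y)}\bigr|\,f(\ul y)^n g(\ul y)^m \;\leq\; \frac{n!\,m!}{\epsilon^{n+m}}\,e^{(\Re z_0+\epsilon)f(\ul y)+(\Re w_0+\epsilon)g(\ul y)}.
\]
Summing over $\ul y\in\sigma^{-1}(\ul x)$ and invoking Theorem \ref{transfer fact} for the potential $(\Re z_0+\epsilon)f+(\Re w_0+\epsilon)g$---whose finite pressure and bounded positive Perron--Frobenius eigenfunction guarantee that $\mathcal L_{(\Re z_0+\epsilon)f+(\Re w_0+\epsilon)g}\mathbf 1$ is uniformly bounded---I deduce $\|T_{n,m}\phi\|_\infty\leq M\,\|\phi\|_\infty\,n!m!/\epsilon^{n+m}$ for some constant $M=M(z_0,w_0,\epsilon)$. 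Hence the series $\sum \frac{|z-z_0|^n|w-w_0|^m}{n!m!}\|T_{n,m}\phi\|_\infty$ converges geometrically on the polydisk $\{|z-z_0|,|w-w_0|<\epsilon\}$, delivering convergence in the sup-norm topology on the codomain.

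\textbf{Main obstacle.}
The main technical hurdle will be promoting the sup-norm bound to the full $\|\cdot\|_\beta$ operator norm, which amounts to controlling $\Lip(T_{n,m}\phi)$ uniformly in $n,m$. Expanding $T_{n,m}\phi(\ul x)-T_{n,m}\phi(\ul x')$ for $x_1=x_1'$, each summand acquires extra factors coming from applying the product rule to $e^{z_0 f+w_0 g} f^n g^m$; these are controlled by the local H\"older seminorms of $f,g,\phi$ together with additional polynomial-in-$f,g$ factors which can be re-absorbed using the same inequality $t^k\leq k!\,\epsilon^{-k}e^{\epsilon t}$ at the cost of slightly reducing $\epsilon$. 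The target estimate is $\Lip(T_{n,m}\phi)\leq C\|\phi\|_\beta\,n!m!/r^{n+m}$ for some $r>0$; combining it with the sup-norm bound yields $\|T_{n,m}\|_{\rm op}\leq C'\,n!m!/r^{n+m}$, so the Taylor series converges in operator norm on $\{|z-z_0|,|w-w_0|<r\}$, establishing holomorphicity.
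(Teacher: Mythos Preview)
Your approach is correct and takes a genuinely different route from the paper's. The paper (Appendix~\ref{sec:Proof-of-holo}) never writes down a Taylor expansion: instead it quotes a lemma of Mauldin--Urba\'nski asserting that for transfer operators continuity in the parameter already forces holomorphicity, then invokes Hartogs' theorem for Banach spaces to pass from separate to joint holomorphicity. The actual work in the paper thus reduces to verifying that $z\mapsto\mathcal L_{zf+wg}$ is continuous in operator norm for each fixed $w$, which is done via the Gibbs property and the bounded convergence theorem. Your argument is more self-contained---it avoids both black boxes and yields an explicit polydisk of convergence---at the cost of having to carry out the H\"older-seminorm estimate on the Taylor coefficients $T_{n,m}$, which the paper bypasses entirely. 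Both arguments ultimately rest on the same technical point: the uniform boundedness of $\mathcal L_{(\Re z_0+\epsilon)f+(\Re w_0+\epsilon)g}\mathbf 1$ for some small $\epsilon>0$.

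One small caveat: your sentence ``$(\Re z_0,\Re w_0)$ lies in the (relatively) open set $D$, I fix $\epsilon>0$ with $(\Re z_0+\epsilon,\Re w_0+\epsilon)\in D$'' is not quite right as stated, since $D\subset\mathbb R_{\le 0}^2$ contains boundary points with $\Re z_0=0$ or $\Re w_0=0$ (for instance $(0,-\delta_g)$), and at such points the shifted pair leaves $\mathbb R_{\le 0}^2$. What your estimate actually needs is only that the perturbed potential has finite pressure and bounded supremum, and this follows from $\|f-g\|_\infty<\infty$ together with $\Re z_0+\Re w_0<0$: for $\epsilon$ small one has $(\Re z_0+\epsilon)f+(\Re w_0+\epsilon)g\le(\Re z_0+\Re w_0+2\epsilon)g+{\rm const}$ with negative leading coefficient, giving both conditions. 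The paper's own proof glosses over the identical issue when it asserts $(\Re z+\epsilon,\Re w)\in D$, so this is a cosmetic point rather than a gap in your strategy.
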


\begin{proof}
The proof follows from \cite[Prop. 2 (3)]{Sarig06phase} with some minor modifications and simplifications. For the sake of completeness and
readability, we present the argument in Appendix \ref{sec:Proof-of-holo}.
\end{proof}
We recall the well-known analytic perturbation theorem (cf. for example,
\cite{Kato95-pertubation}, \cite[Prop. 4.6]{Parry:1990tn}, \cite[Thm. 5.6]{Sarig:2009wta},
or \cite[Theorem]{Benoit19Perturb}):
\begin{thm}
[Analytic Perturbation Theorem] \label{thm:APT}  Let $X$ be a complex,
(resp. real) Banach space and $\mathcal{B}(X)$ be the space of bounded
linear operators acting on $X$ endowed with the operator norm. If
$\mathcal{L}_{0}\in\mathcal{B}(X)$ has a simple isolated eigenvalue,
then there is an open neighborhood $V$ of $\mathcal L_{0}$ such that every $\mathcal{L}\in V$ has an eigenvalue $\lambda_{\mathcal L}$ close to
$\lambda_{0}$. The map $\lambda\colon V\to\mathbb{C}\ (resp.\ \mathbb{R})$
is analytic, $\mathcal{L}$ does not have other eigenvalues near $\lambda_0$,
and there is another analytic map $u:V\to X$ such that $u_{\mathcal L}$ is
an eigenvector of $\mathcal{L}$ for $\lambda_{\mathcal{L}}$. 
\end{thm}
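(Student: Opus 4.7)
The plan is to realize both the perturbed eigenvalue and a chosen eigenvector as analytic functions of $\mathcal{L}$ through the Riesz spectral projection. First, I would use the hypothesis that $\lambda_{0}$ is a simple isolated eigenvalue of $\mathcal{L}_{0}$ to select a small, positively oriented circle $\gamma\subset\mathbb{C}$ enclosing $\lambda_{0}$ and separating it from the rest of the spectrum of $\mathcal{L}_{0}$. On a neighborhood of $\gamma$, the resolvent $z\mapsto(z-\mathcal{L}_{0})^{-1}$ is holomorphic, so its operator norm is uniformly bounded on $\gamma$ by some $M>0$.

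Next, I would set up the analytic family of projections. For $\mathcal{L}$ close to $\mathcal{L}_{0}$ (say $\|\mathcal{L}-\mathcal{L}_{0}\|<M^{-1}$), the Neumann series
\[
(z-\mathcal{L})^{-1}=(z-\mathcal{L}_{0})^{-1}\sum_{n\geq 0}\bigl[(\mathcal{L}-\mathcal{L}_{0})(z-\mathcal{L}_{0})^{-1}\bigr]^{n}
\]
converges uniformly in $z\in\gamma$ and displays the resolvent as jointly analytic in $(z,\mathcal{L})$. Integrating over $\gamma$, the spectral projection
\[
P(\mathcal{L}):=\frac{1}{2\pi i}\oint_{\gamma}(z-\mathcal{L})^{-1}\,dz
\]
is an analytic map $V\to\mathcal{B}(X)$ on some open neighborhood $V\ni\mathcal{L}_{0}$. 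Standard Dunford calculus yields $P(\mathcal{L})^{2}=P(\mathcal{L})$ and $P(\mathcal{L})\mathcal{L}=\mathcal{L}P(\mathcal{L})$; since $\lambda_{0}$ is simple, $P(\mathcal{L}_{0})$ has rank one, and because any two idempotents $P,Q$ with $\|P-Q\|<1$ are similar (hence of equal rank), I may shrink $V$ so that $P(\mathcal{L})$ has rank one throughout.

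Finally, I would extract the eigenvalue and eigenvector explicitly. Fix $u_{0}\in X$ with $\mathcal{L}_{0}u_{0}=\lambda_{0}u_{0}$ and a continuous linear functional $\phi\in X^{*}$ with $\phi(u_{0})\neq 0$, and set
\[
u_{\mathcal{L}}:=P(\mathcal{L})u_{0},\qquad \lambda_{\mathcal{L}}:=\frac{\phi(\mathcal{L}u_{\mathcal{L}})}{\phi(u_{\mathcal{L}})}.
\]
Analyticity of $P$, together with $\phi(u_{\mathcal{L}_{0}})=\phi(u_{0})\neq 0$ and continuity, yields analyticity of $u_{\mathcal{L}}$ and $\lambda_{\mathcal{L}}$ on a smaller neighborhood. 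Since the range $E_{\mathcal{L}}:=P(\mathcal{L})X$ is one-dimensional and $\mathcal{L}$-invariant, $u_{\mathcal{L}}$ is automatically an eigenvector with eigenvalue $\lambda_{\mathcal{L}}$. For the uniqueness claim, I would invoke the spectral decomposition $X=E_{\mathcal{L}}\oplus\ker P(\mathcal{L})$: the spectrum of $\mathcal{L}$ inside $\gamma$ coincides with the spectrum of $\mathcal{L}|_{E_{\mathcal{L}}}=\{\lambda_{\mathcal{L}}\}$, so no other eigenvalue of $\mathcal{L}$ lies inside $\gamma$.

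The main obstacle is bookkeeping rather than a conceptual hurdle: one must track the size of the neighborhood $V$ through the Neumann estimate, the rank-stability bound $\|P(\mathcal{L})-P(\mathcal{L}_{0})\|<1$, and the nonvanishing of $\phi(u_{\mathcal{L}})$, and verify that all three hold simultaneously. For the real Banach space case, I would apply the complex argument above to the complexification $X\otimes_{\mathbb{R}}\mathbb{C}$; the resulting $\lambda_{\mathcal{L}}$ remains real for real $\mathcal{L}$ because the construction is equivariant under complex conjugation and an isolated simple real eigenvalue cannot split off a complex conjugate pair under sufficiently small perturbations.
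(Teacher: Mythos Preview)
Your argument is correct and is precisely the standard Riesz--Dunford projection approach underlying the references the paper cites (Kato, Parry--Pollicott, Sarig, Benoit). The paper itself does not give a proof of this theorem; it simply recalls it as well-known and defers to those sources, so there is nothing further to compare.
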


The following corollary is a consequence of the analytic perturbation theorem discussed above, which can be viewed as a complex version of the Ruelle-Perron-Frobenius theorem. See, for example, Sarig \cite[Thm 5.8]{Sarig:2009wta} for the standard Ruelle-Perron-Frobenius theorem for the unperturbed transfer operator in the countable setting. This corollary plays a key role in our local asymptotic estimates in Section \ref{sec:local-estimate}.
\begin{cor}
\label{thm:decay-cor} Let ${\bf z}=(z_{1},z_{2})\in D$ and for any
${\bf \Theta}=(\theta_1,\theta_2)\in\mathbb{R}^{2}$ let $w({\bf \Theta}):=(z_1+i\theta_1)f+(z_2+i\theta_2)g$.
\begin{enumerate}

\item The map ${\bf \Theta}\mapsto\mathcal{L}_{w({\bf \Theta})}$
is analytic.

\item There exists $\epsilon>0$ such that for all $\|{\bf \Theta}\|<\epsilon$
there are functions $P(w({\bf \Theta}))$, $h(w({\bf \Theta}))$,
and $\nu_{w({\bf \Theta})}$ satisfying
\[
\mathcal{L}_{w({\bf \Theta})}h(w({\bf \Theta}))=e^{P(w({\bf \Theta}))}h(w({\bf \Theta}))\ \text{ and } \mathcal{L}^*_{w({\bf \Theta})}\nu_{w({\bf \Theta})}=e^{P(w({\bf \Theta}))}\nu_w({\bf \Theta}) 
\]
where $\mathcal{L}^*_{w({\bf \Theta})}$ is the dual operator
of $\mathcal{L}_{w({\bf \Theta})}$ and $\nu_{w({\bf \Theta})}$
is a Radon complex measure.

\item For all $\|{\bf \Theta}\|<\epsilon$, the maps ${\bf \Theta}\mapsto P(w({\bf \Theta}))$
and ${\bf \Theta}\mapsto h(w({\bf \Theta}))$ are analytic, and ${\bf \Theta}\mapsto\nu_{w({\bf \Theta})}$
is weak{*} analytic, in the sense that for any $\phi\in\mathcal{F}_{\beta}^{b}(\mathbb{C})$, ${\bf \Theta}\mapsto\nu_{w({\bf \Theta})}(\phi)$
is analytic. 

\item For all $\|{\bf \Theta}\|<\epsilon$, there exist $K_{1}>0$, and $r_{1}\in(0,1)$
such that for any $\phi\in\mathcal{F}_{\beta}^{b}(\mathbb{C})$ 
\[
\left\|e^{-nP(w({\bf \Theta}))}\mathcal{L}_{w({\bf \Theta})}^{n}\phi-h(w({\bf \Theta}))\int \phi\ d\nu_{w({\bf \Theta})}\right\|_{{\rm \beta}}\leq K_{1}r_{1}^{n}\|\phi\|_{\beta}.
\]

\item If $f$ and $g$ are independent, then there exists $\delta>0$
such that for any $\phi\in\mathcal{F}_{\beta}^{b}(\mathbb{C})$
\[
\lim_{n\to\infty}(1+\delta)^n\|e^{-nP(w({\bf 0}))}\mathcal{L}_{w({\bf \Theta})}^{n}\phi\|_{{\rm \beta}}=0
\]
for all ${\bf \Theta}\neq(0,0)$ and this holds uniformly for $\bf\Theta$ in a compact set.
\end{enumerate}
\end{cor}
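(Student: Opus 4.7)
The plan is to prove the five parts in sequence, using Theorem~\ref{thm:tran-holo} (holomorphicity) and Theorem~\ref{thm:APT} (analytic perturbation) as the main engines. Part (1) is a direct consequence of Theorem~\ref{thm:tran-holo}: the map $\Theta\mapsto(z_{1}+i\theta_{1},z_{2}+i\theta_{2})$ is an affine embedding of $\mathbb{R}^{2}$ into $\mathbb{C}^{2}$ that lands in $\overline{D}$ for $\Theta$ in a small neighborhood of the origin (since $(z_{1},z_{2})\in D$ and $D$ is open), so composing with the holomorphic map of Theorem~\ref{thm:tran-holo} yields analyticity.

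For (2)--(3), at $\Theta=\mathbf{0}$ the operator $\mathcal{L}_{w(\mathbf{0})}$ corresponds to the real potential $z_{1}f+z_{2}g\in D$. By Proposition~\ref{cor:gibbs and eq for zf} and the Ruelle--Perron--Frobenius theorem for BIP shifts (\cite[Thm~5.8]{Sarig:2009wta}), $\mathcal{L}_{w(\mathbf{0})}$ has a simple isolated top eigenvalue $e^{P(w(\mathbf{0}))}$ with strictly positive eigenfunction $h_{w(\mathbf{0})}$ and Radon probability eigenmeasure $\nu_{w(\mathbf{0})}$, and the rest of the spectrum lies in a disk of radius $\rho_{0}e^{P(w(\mathbf{0}))}$ with $\rho_{0}<1$. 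Applying Theorem~\ref{thm:APT} to the analytic family $\Theta\mapsto\mathcal{L}_{w(\Theta)}$ produces analytic continuations $\lambda(\Theta):=e^{P(w(\Theta))}$ and $h(w(\Theta))$ on a neighborhood $\{\|\Theta\|<\epsilon\}$. Applying the same theorem to the adjoint family and using that the unperturbed eigenfunctional is already represented by a Radon measure gives the complex Radon measure $\nu_{w(\Theta)}$. Weak-$*$ analyticity of $\Theta\mapsto\nu_{w(\Theta)}(\phi)$ is then the scalar analyticity of pairing an analytic vector-valued map with a fixed vector.

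For (4), the spectral gap at $\mathbf{0}$ persists under small perturbations. Writing $\mathcal{L}_{w(\Theta)}=\lambda(\Theta)\Pi_{\Theta}+R_{\Theta}$ with $\Pi_{\Theta}=\frac{1}{2\pi i}\oint_{\gamma}(\zeta-\mathcal{L}_{w(\Theta)})^{-1}\,d\zeta$ (taken around a small fixed contour $\gamma$ enclosing only $\lambda(\mathbf{0})$) and $R_{\Theta}=\mathcal{L}_{w(\Theta)}(I-\Pi_{\Theta})$, both operators depend analytically on $\Theta$ and satisfy $\Pi_{\Theta}R_{\Theta}=R_{\Theta}\Pi_{\Theta}=0$. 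Shrinking $\epsilon$ if necessary, upper semicontinuity of the spectral radius gives $\|R_{\Theta}^{n}/\lambda(\Theta)^{n}\|_{\mathrm{op}}\leq K_{1}r_{1}^{n}$ uniformly in $\Theta$, for some $r_{1}<1$. The identification $\Pi_{\Theta}\phi=h(w(\Theta))\int\phi\,d\nu_{w(\Theta)}$ is forced by the rank-one structure (range $=\Span(h(w(\Theta)))$, cokernel annihilated by $\nu_{w(\Theta)}$) normalized by $\int h\,d\nu=1$, and iterating the decomposition yields the claim.

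For (5), independence of $f$ and $g$ is used to obtain the pointwise strict inequality $\rho(\mathcal{L}_{w(\Theta)})<e^{P(w(\mathbf{0}))}$ for every $\Theta\neq\mathbf{0}$: any equality would produce, via a Liv\v{s}ic-type coboundary argument (\cite{Parry:1990tn} adapted to BIP shifts), a cohomology relation forcing $\theta_{1}f+\theta_{2}g$ to take values in $2\pi\mathbb{Z}$ modulo a coboundary, contradicting the non-arithmeticity of every nonzero real-linear combination of $f,g$. On a compact $K\subset\mathbb{R}^{2}\setminus\{\mathbf{0}\}$, split $K=K_{\mathrm{in}}\cup K_{\mathrm{out}}$ with $K_{\mathrm{in}}\subset\{\|\Theta\|<\epsilon\}$: on $K_{\mathrm{in}}$, continuity of $\lambda(\Theta)$ plus compactness gives $\delta>0$ with $|\lambda(\Theta)|\leq(1+\delta)^{-1}e^{P(w(\mathbf{0}))}$ uniformly, and part (4) controls the remainder; on $K_{\mathrm{out}}$, upper semicontinuity of the spectral radius together with the pointwise strict inequality provides the analogous uniform bound, and Gelfand's formula converts this into the stated asymptotic decay. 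The main obstacle is this last step: Theorem~\ref{thm:APT} is purely local around $\mathbf{0}$, so outside the perturbative neighborhood one has no rank-one decomposition and must establish the Liv\v{s}ic coboundary identification directly in the non-compact BIP setting, and then patch it to the local picture via a compactness-plus-semicontinuity argument — neither ingredient has a straightforward counterpart in the compact-alphabet framework of Lalley~\cite{Lalley:1987df}.
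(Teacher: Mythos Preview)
Your approach matches the paper's for parts (1)--(4): Theorem~\ref{thm:tran-holo} for (1), Sarig's Ruelle--Perron--Frobenius theorem plus Theorem~\ref{thm:APT} for (2)--(3), and the Riesz-projector decomposition for (4), which is exactly the content of \cite[Prop.~5]{Lalley:1987df} that the paper cites. One cosmetic correction in (1): since $\overline{D}=\{(z,w)\in\mathbb C^2:(\Re z,\Re w)\in D\}$ and the real part of $(z_1+i\theta_1,z_2+i\theta_2)$ is $(z_1,z_2)\in D$ independently of $\Theta$, the affine embedding lands in $\overline{D}$ for \emph{all} $\Theta\in\mathbb{R}^2$, not just small $\Theta$; no shrinking is needed.

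For (5) you have correctly isolated the only non-routine point, and the paper handles it exactly as you anticipate but by citation rather than by reproving: it invokes \cite[Prop.~6]{Lalley:1987df} for the overall structure (the compactness-plus-semicontinuity patching), and replaces the compact-alphabet complex Ruelle--Perron--Frobenius input by \cite[Thm~2.14]{Kessebohmer:2017ke}, which supplies the Liv\v{s}ic coboundary identification and the strict spectral radius inequality $\rho(\mathcal{L}_{w(\Theta)})<e^{P(w(\mathbf 0))}$ for $\Theta\neq\mathbf 0$ in the countable-state BIP setting. With that reference in hand your argument goes through as written; without it, your self-described ``obstacle'' is indeed a genuine gap, since neither the peripheral-eigenvalue rigidity nor the uniform quasi-compactness bound away from $\mathbf 0$ can be lifted from Lalley's finite-alphabet setting by soft arguments alone.
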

\begin{proof}
This corollary is standard once the analytic perturbation theorem and the 
complex Ruelle--Perron--Frobenius theorem are available 
(cf.\ \cite[Appendix~1]{Lalley:1987df} for the case of subshifts of finite type). 
The first assertion follows from Theorem~\ref{thm:tran-holo}. 
Since \({\bf z} \in D\), we know that \(\mathcal{L}_{w({\bf 0})}\) is a bounded linear operator 
with a simple isolated eigenvalue of \(e^{P(w({\bf 0}))}\) 
(cf.\ \cite[Thm~5.8]{Sarig:2009wta}). 
Therefore, the second and third assertions are direct consequences of 
Theorem~\ref{thm:APT}. 
The fourth assertion follows from the proof of \cite[Prop.~5]{Lalley:1987df}. 
Finally, the last assertion is established using the proof of \cite[Prop.~6]{Lalley:1987df}, 
where the complex Ruelle--Perron--Frobenius theorem is provided by 
\cite[Thm~2.14]{Kessebohmer:2017ke} in this context.
\end{proof}

The (\textit{asymptotic}) \textit{covariance }of $f$
and $g$ with respect to the shift-invariant measure $\mu$ is given
by 
\[
\mathrm{Cov}(f,g,\mu):=\lim_{n\to\infty}\frac{1}{n}\int S_{n}\left(f-\int f\ d\mu\right)S_{n}\left(g-\int g\ d\mu\right)\ d\mu
\]
and the (\textit{asymptotic}) \textit{variance} of $f$ with respect to $\mu$ is $\text{Var}(f,\mu):=\text{Cov}(f,f,\mu).$

\begin{cor}\label{cor:analy+postive-def}
For any ${\bf z}=(z_{1},z_{2})\in D$, let $\mu=\mu_{z_{1}f+z_{2}g}$
be the unique equilibrium state of $z_{1}f+z_{2}g$. Then \normalfont
\begin{enumerate}
\item $(z_{1},z_{2})\mapsto P(z_{1}f+z_{2}g):=\mathbb{P}({\bf z})$ is analytic
in $D$;
\item $\frac{\partial}{\partial z_{1}}\mathbb{P}({\bf z})=\int f\ d\mu$ and
$\frac{\partial}{\partial z_{2}}\mathbb{P}({\bf z})=\int g\ d\mu$;
\item $\frac{\partial^{2}}{\partial z_{1}^{2}}\mathbb{P}({\bf z})=\mathrm{Var}(f,\mu)$,
$\frac{\partial^{2}}{\partial z_{2}^{2}}\mathbb{P}({\bf z})=\mathrm{Var}(g,\mu)$,
and $\frac{\partial^{2}}{\partial z_{1}\partial z_{2}}\mathbb{P}({\bf z})=\frac{\partial^{2}}{\partial z_{2}\partial z_{1}}\mathbb{P}({\bf z})=\mathrm{Cov}(f,g,\mu)$;
\item if $f$ and $g$ are independent, then $\nabla^{2}\mathbb{P}$ is positive
definite in $D$.
\end{enumerate}
\end{cor}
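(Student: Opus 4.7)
The plan is to address the four parts in order. Parts (1)--(3) follow from spectral perturbation of the transfer operator together with the eigenvalue equation from Theorem \ref{transfer fact}, while part (4) reduces to a standard Liv\v{s}ic-coboundary argument applied to the quadratic form given by (3).

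For part (1), analyticity is essentially immediate from Corollary \ref{thm:decay-cor}(3). Fix any $\mathbf{z}\in D$ and restrict the analytic map $\mathbf{\Theta}\mapsto P(w(\mathbf{\Theta}))$ from that corollary to purely real $\mathbf{\Theta}$ in a neighborhood of $\mathbf{0}$: this yields analyticity of $\mathbb{P}$ in a neighborhood of $\mathbf{z}$ in $D$, and $\mathbf{z}$ is arbitrary.

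For parts (2) and (3), I plan to differentiate the eigenvalue equation. Normalize the leading eigenfunction $h:=h_{z_1 f+z_2 g}$ and dual eigenmeasure $\nu:=\nu_{z_1 f+z_2 g}$ from Theorem \ref{transfer fact} so that $\mu:=h\nu$ is the equilibrium state of $z_1 f+z_2 g$, and in particular $\int h\,d\nu=1$. The identity $\partial_{z_1}\mathcal{L}_{z_1 f+z_2 g}(\phi)=\mathcal{L}_{z_1 f+z_2 g}(f\phi)$ is immediate from the definition of the transfer operator. Differentiating $\mathcal{L}_{z_1 f+z_2 g}h=e^{\mathbb{P}}h$ with respect to $z_1$ yields
\[
\mathcal{L}_{z_1 f+z_2 g}(fh)+\mathcal{L}_{z_1 f+z_2 g}(\partial_{z_1}h)=e^{\mathbb{P}}(\partial_{z_1}\mathbb{P})h+e^{\mathbb{P}}\partial_{z_1}h,
\]
and integrating against $\nu$ while using $\mathcal{L}_{z_1 f+z_2 g}^{*}\nu=e^{\mathbb{P}}\nu$ cancels the $\partial_{z_1}h$ terms, leaving $\partial_{z_1}\mathbb{P}=\int f\,d\mu$. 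The analogous calculation in $z_2$ proves (2). For (3), I would differentiate once more and invoke the spectral-gap expansion of Corollary \ref{thm:decay-cor}(4) to identify the second-order contributions with the asymptotic variances and covariance via the standard Green--Kubo formula. The $L^n$-integrability of $f$ and $g$ against $\mu$ provided by Proposition \ref{cor:gibbs and eq for zf}(2) justifies both the differentiation under the integral and the finiteness of $\mathrm{Var}(f,\mu)$, $\mathrm{Var}(g,\mu)$, and $\mathrm{Cov}(f,g,\mu)$.

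For part (4), the key point is that by (3),
\[
(a,b)\,\nabla^{2}\mathbb{P}\,(a,b)^{T}=a^{2}\mathrm{Var}(f,\mu)+2ab\,\mathrm{Cov}(f,g,\mu)+b^{2}\mathrm{Var}(g,\mu)=\mathrm{Var}(af+bg,\mu).
\]
Suppose for contradiction that $\mathrm{Var}(af+bg,\mu)=0$ for some $(a,b)\neq(0,0)$. By a standard Liv\v{s}ic-type theorem for locally H\"older potentials on topologically mixing countable shifts with BIP, zero asymptotic variance forces $af+bg=c+h-h\circ\sigma$ for some locally H\"older continuous $h$, where $c=\int(af+bg)\,d\mu$. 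Evaluating on any $\ul x\in\mathrm{Fix}^{n}$ telescopes the coboundary, so $S_n(af+bg)(\ul x)=nc$. Hence $\{S_n(af+bg)(\ul x):n\in\mathbb{N},\,\ul x\in\mathrm{Fix}^{n}\}\subset c\mathbb{Z}$ generates a cyclic subgroup of $\mathbb{R}$, so $af+bg$ is arithmetic, contradicting the independence of $f$ and $g$.

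The main obstacle is the Green--Kubo identification in part (3): in the non-compact setting one must verify that the spectral perturbation of $\mathcal{L}_{z_1 f+z_2 g}$ is regular enough to justify differentiating twice under the limit defining the pressure and that all the resulting correlation sums converge. However, the uniform exponential decay of the non-leading part of $\mathcal{L}^n_{z_1 f+z_2 g}$ from Corollary \ref{thm:decay-cor}(4), combined with the $L^n$-integrability from Proposition \ref{cor:gibbs and eq for zf}(2), reduces the computation to the classical argument in the compact case.
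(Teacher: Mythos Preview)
Your proposal is correct and follows essentially the same approach as the paper. The only presentational differences are that the paper outsources (2)--(3) to Mauldin--Urba\'nski \cite[Prop.~2.6.13, 2.6.14]{Mauldin:2003dn} rather than redoing the eigenvalue-differentiation computation, and for (4) the paper verifies positive definiteness via Sylvester's criterion (showing $\mathrm{Var}(f,\mu)>0$ and then $\det\nabla^{2}\mathbb{P}>0$ through $\mathrm{Var}\big(g-\tfrac{\mathrm{Cov}(f,g,\mu)}{\mathrm{Var}(f,\mu)}f,\mu\big)>0$) instead of your more direct observation that the quadratic form equals $\mathrm{Var}(af+bg,\mu)$; both routes rest on the same Liv\v{s}ic-type input, namely that vanishing asymptotic variance forces the potential to be cohomologous to a constant, cited in the paper as \cite[Thm~3 \& Rmk~(2)]{Sarig06phase}.
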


\begin{proof}
The first assertion follows from the Analytic Perturbation Theorem (Theorem \ref{thm:APT}). Specifically, for any ${\bf z} \in D$, we know that the transfer operator $\mathcal{L}_{z_{1}f+z_{2}g}$ is a bounded linear operator on $\mathcal{B}(\mathcal{F}_{\beta}^{b})$, with a simple, isolated eigenvalue $e^{\mathbb{P}({\bf z})}$ (cf. \cite[Thm 5.8]{Sarig:2009wta}). By applying Theorem \ref{thm:tran-holo}, we see that the map ${\bf z} \mapsto \mathcal{L}_{z_{1}f+z_{2}g}$ is real-analytic for ${\bf z} \in D$, and the analyticity of ${\bf z} \mapsto \mathbb{P}({\bf z})$ then follows directly from Theorem \ref{thm:APT}.

The second and third assertions are consequences of Proposition \ref{cor:gibbs and eq for zf} parts (1) and (2) and the formulas for the first and second derivatives of the pressure given in \cite[Prop. 2.6.13, Prop. 2.6.14]{Mauldin:2003dn}.

For the final assertion, we need to show that $\frac{\partial^{2}}{\partial z_{1}^{2}}P(z_{1}f+z_{2}g)>0$ and that $\det(\nabla^{2}\mathbb{P})>0$. Indeed, $\frac{\partial^{2}}{\partial z_{1}^{2}}P(z_{1}f+z_{2}g)>0$ follows from \cite[Thm 3 \& Rmk(2) on p. 635]{Sarig06phase}. Specifically, since $f \in L^{2}(\mu)$ by Proposition \ref{cor:gibbs and eq for zf}(2), \cite[Thm 3 \& Rmk(2) on p. 635]{Sarig06phase} states that $\mathrm{Var}(f,\mu)=0$ if and only if $f$ is cohomologous to a constant. However, since $f$ and $g$ are independent, we know that neither $f$ (nor $g$) can be cohomologous to a constant. Therefore, $\text{Var}(f,\mu)>0$. Now, $\det(\nabla^{2}\mathbb{P})>0$ follows from the non-degeneracy of the variance using a standard argument from probability theory. Namely, the positivity of $\det(\nabla^{2}\mathbb{P})$ follows, via a short computation, from the positivity of the variance of the function $g - \frac{\text{Cov}(f,g,\mu)}{\text{Var}(f,\mu)}f$.
\end{proof}

\subsection{The Legendre transform}\label{sec:legendre}

Recall (see Assumptions \ref{rmk:potentials}) that $f$ and $g$ are strictly positive locally H\"older continuous potentials with strong entropy gaps at infinity and $\|f-g\|_\infty$ is finite. We assume further that $f$ and $g$ are independent. For ${\bf f}=(f,g)$, we consider the domain $D$ defined in Equation \ref{eq:D(f)}.
% \[
% D=D({\bf f})=\{{\bf z=}(z_{1},z_{2})\in\mathbb{R}^2:\ d(-\langle{\bf z},\bf f\rangle)<1,\ -\langle{\bf z},(\inf f,\inf g )\rangle>0, \ z_1+z_2<0 \}
% \]
% where $\langle\cdot,\cdot\rangle$ denotes the standard inner product on $\mathbb R^2$ and $d(-\langle{\bf z},\bf f\rangle)$ is the finite critical exponent of the series $Z_1(-z_1f-z_2g,s)$.

Part (4) in Corollary \ref{cor:analy+postive-def} implies that the map ${\mathbb{P}: D \to \mathbb{R}}$ is convex. The \textit{Legendre transform} of ${\mathbb{P}}$ is defined for all ${\bf x\in\nabla\mathbb{P}(}D)$ as
\[
\mathbb{P^{*}}({\bf x})=\sup_{{\bf z\in}D}\left(\langle{\bf x},{\bf z}\rangle-\mathbb{P}({\bf z})\right)
\]
where $\langle\cdot,\cdot\rangle$ denotes the standard inner product on $\mathbb R^2$.
\begin{prop}
	\label{prop:legen-trans} For any ${\bf x\in\nabla\mathbb{P}(}D)$, the following properties hold.
	
	\begin{enumerate}
		
		\item $\mathbb{P}^{*}$ is analytic on $\nabla\mathbb{P}(D)$;
		
		\item $\nabla\mathbb{P}^{*}\circ\nabla\mathbb{P}=id$ on $D$;
		
		\item $\nabla\mathbb{P}\circ\nabla\mathbb{P}^{*}=id$ on $\nabla\mathbb{P}(D)$;
		
		\item\label{item:legendre 4} $\nabla\mathbb{P}({\bf z})={\bf x}$ $\iff$ $\nabla\mathbb{P}^{*}({\bf x})={\bf z}\iff\nabla^{2}\mathbb{P}^{*}({\bf x})=\left(\nabla^{2}\mathbb{P}({\bf z})\right)^{-1}$;
		
		\item\label{item:legendre 5} $\nabla\mathbb{P}({\bf z})={\bf x}$ $\implies$ $\mathbb{P}^{*}({\bf x})+\mathbb{P}({\bf z})=\langle{\bf x},{\bf z}\rangle;$
		
		\item $\nabla\mathbb{P}({\bf z})\neq{\bf x}$ $\implies$ $\mathbb{P}^{*}({\bf x})+\mathbb{P}({\bf z})>\langle{\bf x},{\bf z}\rangle;$
		
		\item $\nabla\mathbb{P}({\bf z})={\bf x}$ $\implies$ $\mathbb{P}^{*}({\bf x})=-h_{\mu_{\langle{\bf z},{\bf f}\rangle}}(\sigma).$
		
	\end{enumerate}
\end{prop}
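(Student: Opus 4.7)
The plan is to follow the classical convex analysis treatment of the Legendre transform, using as the sole inputs the conclusions of Corollary \ref{cor:analy+postive-def}: the map $\mathbb{P}\colon D\to \mathbb{R}$ is real-analytic, and $\nabla^{2}\mathbb{P}$ is positive definite throughout $D$. First, I would observe that since $\nabla^{2}\mathbb{P}$ is positive definite everywhere on $D$, the map $\nabla\mathbb{P}\colon D\to \nabla\mathbb{P}(D)$ is injective (strict convexity) and a local real-analytic diffeomorphism (inverse function theorem). Hence $\nabla\mathbb{P}(D)$ is open and $(\nabla\mathbb{P})^{-1}\colon \nabla\mathbb{P}(D)\to D$ is real-analytic. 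This single observation is the engine of the whole proposition.

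Next, I would prove \eqref{item:legendre 5} and (6) by variational analysis of $F_{\mathbf{x}}(\mathbf{z}):=\langle \mathbf{x},\mathbf{z}\rangle-\mathbb{P}(\mathbf{z})$. Because $-\mathbb{P}$ is strictly concave, so is $F_{\mathbf{x}}$, and a critical point of $F_{\mathbf{x}}$ is the unique global maximizer. The critical point equation $\nabla F_{\mathbf{x}}(\mathbf{z})=0$ is exactly $\nabla\mathbb{P}(\mathbf{z})=\mathbf{x}$, and when $\mathbf{x}\in\nabla\mathbb{P}(D)$ such $\mathbf{z}$ exists and is unique, so
\[
\mathbb{P}^{*}(\mathbf{x})=F_{\mathbf{x}}(\mathbf{z})=\langle\mathbf{x},\mathbf{z}\rangle-\mathbb{P}(\mathbf{z}),
\]
which is (5); strict concavity upgrades this to the strict inequality in (6) for any $\mathbf{z}'\neq\mathbf{z}$. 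Assertion (1) now follows by writing
\[
\mathbb{P}^{*}(\mathbf{x})=\langle\mathbf{x},(\nabla\mathbb{P})^{-1}(\mathbf{x})\rangle-\mathbb{P}\bigl((\nabla\mathbb{P})^{-1}(\mathbf{x})\bigr)
\]
and using the analyticity of $(\nabla\mathbb{P})^{-1}$ established in the first step.

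For (2) and (3), I would differentiate the displayed formula for $\mathbb{P}^{*}$. By the chain rule and the defining relation $\nabla\mathbb{P}\bigl((\nabla\mathbb{P})^{-1}(\mathbf{x})\bigr)=\mathbf{x}$, the two derivative contributions coming from differentiating $(\nabla\mathbb{P})^{-1}(\mathbf{x})$ cancel, leaving $\nabla\mathbb{P}^{*}(\mathbf{x})=(\nabla\mathbb{P})^{-1}(\mathbf{x})$; this gives both (2) on $D$ and (3) on $\nabla\mathbb{P}(D)$. For (4), the equivalence of $\nabla\mathbb{P}(\mathbf{z})=\mathbf{x}$ and $\nabla\mathbb{P}^{*}(\mathbf{x})=\mathbf{z}$ is immediate from (2) and (3), and the Hessian identity follows by differentiating the identity $\nabla\mathbb{P}^{*}\circ\nabla\mathbb{P}=\mathrm{id}$ once more:
\[
\nabla^{2}\mathbb{P}^{*}(\mathbf{x})\cdot\nabla^{2}\mathbb{P}(\mathbf{z})=I,
\]
which is meaningful because positive-definiteness of $\nabla^{2}\mathbb{P}(\mathbf{z})$ guarantees invertibility.

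Finally, for (7), I would combine (5) with the thermodynamic content. By Corollary \ref{cor:analy+postive-def}(2), the condition $\nabla\mathbb{P}(\mathbf{z})=\mathbf{x}$ says $\mathbf{x}=\bigl(\int f\,d\mu,\int g\,d\mu\bigr)$ for the equilibrium state $\mu=\mu_{\langle\mathbf{z},\mathbf{f}\rangle}$ furnished by Proposition \ref{cor:gibbs and eq for zf}. The variational principle then gives $\mathbb{P}(\mathbf{z})=h_{\mu}(\sigma)+\int\langle\mathbf{z},\mathbf{f}\rangle\,d\mu$, and substituting into (5) yields
\[
\mathbb{P}^{*}(\mathbf{x})=\langle\mathbf{x},\mathbf{z}\rangle-h_{\mu}(\sigma)-\int\langle\mathbf{z},\mathbf{f}\rangle\,d\mu=-h_{\mu}(\sigma),
\]
since $\langle\mathbf{x},\mathbf{z}\rangle=\int\langle\mathbf{z},\mathbf{f}\rangle\,d\mu$ by choice of $\mathbf{x}$. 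The main technical point to watch is only that the cancellation in (2)--(3) is valid, which in turn rests on confirming that $D$ is open (so $\nabla\mathbb{P}(D)$ is open and differentiation there is unambiguous); this is where the strict inequalities defining $D$, together with Theorem \ref{thm:tran-holo}, do the work. Beyond this, everything is routine strict-convex analysis grafted onto the thermodynamic input from Corollary \ref{cor:analy+postive-def}.
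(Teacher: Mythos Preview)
Your proposal is correct and follows essentially the same approach as the paper: the paper simply cites Rockafellar for items (1)--(6) as classical properties of the Legendre transform of an analytic strictly convex function, while you write out those details explicitly, and your argument for (7) via Corollary~\ref{cor:analy+postive-def}(2) and the variational principle matches the paper's proof exactly.
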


\begin{proof}
	Since $\mathbb{P}$ is analytic and strictly convex in $D$, all but the last assertion are classical properties of the Legendre transform
	(cf. \cite[Ex. 11.9]{Rockafellar98_Legendre} for a proof). 
	
	The last
	assertion follows from the derivative formula for the pressure, namely Corollary
	\ref{cor:analy+postive-def}(2). 
    Recall that $\mu=\mu_{z_{1}f+z_{2}g}=\mu_{\langle{\bf z},{\bf f}\rangle}$ is the equilibrium state of $z_{1}f+z_{2}g$, thus $$P(z_{1}f+z_{2}g)=h_{\mu}(\sigma)+\int {(z_{1}f+ z_{2}g)}\,  d\mu.$$
    Then, since by assumption 
	\[
	{\bf x}=\nabla\mathbb{P}({\bf z})=(\partial_{z_{1}}P(z_{1}f+z_{2}g),\partial_{z_{2}}P(z_{1}f+z_{2}g))=\left(\int f\ d\mu,\int g\ d\mu\right)
	\]
	we apply part (5) and compute
	\begin{align*}
		\mathbb{P}^{*}({\bf x})= \langle{\bf x},{\bf z}\rangle-\mathbb{P}({\bf z})
		=  \int z_{1}f\ d\mu+\int z_{2}g\ d\mu-P(z_{1}f+z_{2}g)
		= -h_{\mu}(\sigma).
	\end{align*}
\end{proof}

\begin{lem}
	\label{lem:cov-legen} For any ${\bf x\in\mathbb{R}^{2}}
	$ and any $t>0$ such that $\xbf/t\in\nabla\bb P(D)$ the following holds.\begin{enumerate}\normalfont
		
		\item $\frac{d}{dt}(-t\mathbb{P}^{*}({\bf x}/t))=\mathbb{P}(\nabla\mathbb{P^{*}}({\bf x}/t));$
		
		\item $\frac{d^{2}}{dt^{2}}(-t\mathbb{P}^{*}({\bf x}/t))=-\frac{1}{t}\left(\nabla^{2}\mathbb{P}^{*}({\bf x}/t)\right)\langle{\bf x}/t, {\bf x}/t\rangle<0$. 
		
	\end{enumerate}
	In particular, the map
	$t\mapsto-t\mathbb{P}^{*}({\bf x}/t)$ is concave down.
\end{lem}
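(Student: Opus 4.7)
The plan is to set $F(t) := -t\,\mathbb{P}^*(\mathbf{x}/t)$ and compute $F'(t)$ and $F''(t)$ directly using the chain rule, and then to deduce the claimed concavity from the positive-definiteness result in Corollary \ref{cor:analy+postive-def} together with item \ref{item:legendre 4} of Proposition \ref{prop:legen-trans}. Since $\mathbb{P}^*$ is analytic on $\nabla\mathbb{P}(D)$ (item 1 of Proposition \ref{prop:legen-trans}) and the hypothesis guarantees $\mathbf{x}/t \in \nabla\mathbb{P}(D)$, these computations are justified.

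First, I would differentiate $F$. By the chain rule,
\[
F'(t) = -\mathbb{P}^*(\mathbf{x}/t) - t\,\bigl\langle \nabla\mathbb{P}^*(\mathbf{x}/t),\, -\mathbf{x}/t^2\bigr\rangle = \bigl\langle \mathbf{x}/t,\, \nabla\mathbb{P}^*(\mathbf{x}/t)\bigr\rangle - \mathbb{P}^*(\mathbf{x}/t).
\]
Writing $\mathbf{z}(t) := \nabla\mathbb{P}^*(\mathbf{x}/t)$, item \ref{item:legendre 4} of Proposition \ref{prop:legen-trans} gives $\nabla\mathbb{P}(\mathbf{z}(t)) = \mathbf{x}/t$, and item \ref{item:legendre 5} then yields the identity
\[
\mathbb{P}^*(\mathbf{x}/t) + \mathbb{P}(\mathbf{z}(t)) = \langle \mathbf{x}/t,\, \mathbf{z}(t)\rangle.
\]
Substituting this into the expression for $F'(t)$ collapses it to $F'(t) = \mathbb{P}(\mathbf{z}(t)) = \mathbb{P}(\nabla\mathbb{P}^*(\mathbf{x}/t))$, which is the first assertion.

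Next I would compute $F''(t)$ by differentiating $\mathbb{P}(\mathbf{z}(t))$. The chain rule gives $F''(t) = \langle \nabla\mathbb{P}(\mathbf{z}(t)),\, \mathbf{z}'(t)\rangle$, and since $\nabla\mathbb{P}(\mathbf{z}(t)) = \mathbf{x}/t$ and $\mathbf{z}'(t) = \nabla^2\mathbb{P}^*(\mathbf{x}/t)\cdot(-\mathbf{x}/t^2)$, we obtain
\[
F''(t) = -\frac{1}{t}\,(\mathbf{x}/t)^{\mathsf t}\,\nabla^2\mathbb{P}^*(\mathbf{x}/t)\,(\mathbf{x}/t),
\]
which matches the stated formula.

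Finally, for the strict inequality, I would invoke item \ref{item:legendre 4} of Proposition \ref{prop:legen-trans}, which identifies $\nabla^2\mathbb{P}^*(\mathbf{x}/t) = \bigl(\nabla^2\mathbb{P}(\mathbf{z}(t))\bigr)^{-1}$. By Corollary \ref{cor:analy+postive-def}(4) the Hessian $\nabla^2\mathbb{P}$ is positive definite on $D$, so its inverse is positive definite as well; since $\mathbf{x}/t \neq \mathbf{0}$ (as $f,g$ are strictly positive, any $\mathbf{x}/t \in \nabla\mathbb{P}(D)$ has strictly positive coordinates), the quadratic form is strictly positive, giving $F''(t) < 0$ and hence the stated concavity. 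I do not anticipate any serious obstacle here: the argument is a bookkeeping exercise in the chain rule, with the Legendre-duality identities from Proposition \ref{prop:legen-trans} and the positivity from Corollary \ref{cor:analy+postive-def} doing all the real work.
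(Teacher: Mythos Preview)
Your proposal is correct and follows essentially the same approach as the paper: a direct chain-rule computation of the first and second derivatives, simplified using the Legendre-duality identities of Proposition~\ref{prop:legen-trans}, with the strict concavity coming from the positive-definiteness of $\nabla^2\mathbb{P}$ (and hence of its inverse $\nabla^2\mathbb{P}^*$) via Corollary~\ref{cor:analy+postive-def}. Your write-up is in fact slightly more explicit in justifying $\mathbf{x}/t\neq\mathbf{0}$, but there is no substantive difference.
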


\begin{proof}
	For part (1), we compute
	\begin{align*}
		\frac{d}{dt}(-t\mathbb{P}^{*}({\bf x}/t)) & =-\mathbb{P}^{*}({\bf x}/t)+(-t)\langle\nabla\mathbb{P}^{*}({\bf x}/t),-{\bf x}/t^{2}\rangle\\
		& =-\mathbb{P}^{*}({\bf x}/t)+\langle\nabla\mathbb{P}^{*}({\bf x}/t),{\bf x}/t\rangle\\
		& =\mathbb{P}(\nabla\mathbb{P}^{*}({\bf x}/t))
	\end{align*}
	where the last equality follows from Proposition \ref{prop:legen-trans} (5).
		
	For (2), we take another derivative of the above equation. Then
	\begin{align*}
		\frac{d^{2}}{dt^{2}}(-t\mathbb{P}^{*}({\bf x}/t)) & =\frac{d}{dt}\mathbb{P}(\nabla\mathbb{P}^{*}({\bf x}/t))\\
		& =\nabla\mathbb{P}(\nabla\mathbb{P}^{*}({\bf x}/t))\cdot\nabla^{2}\mathbb{P}^{*}({\bf x}/t)\cdot (-{\bf x}/t^2)
        =-\frac{1}{t}\left(\nabla^{2}\mathbb{P}^{*}({\bf x}/t)\right)\langle{\bf x}/t, {\bf x}/t\rangle.
	\end{align*}
	The second derivative is smaller than zero because $\nabla^{2}\mathbb{P}^{*}$
	is positive definite (as by Corollary \ref{cor:analy+postive-def}
	$\nabla^{2}\mathbb{P}$ is positive definite). 
\end{proof}

\subsection{The Manhattan curve}\label{sec:manhattan}

The {\em Manhattan curve} of $\fbf$ is given by
\[
\mathcal C(\fbf)=\mathcal{C}(f,g)=\{(a,b)\in\mathbb{R}_{\geq 0}^{2}:\ P(-af-bg)=0,\ a+b>0\},
\]
and the {\em extended Manhattan curve of $\fbf$} is 
\[
\mathcal C^*(\fbf)=\{(a,b)\in \mathcal D(\fbf) :\ P(-af-bg)=0\}
\]
where $\mathcal D(\fbf)=-D(\fbf)$ and $D(\fbf)$ is defined in Equation \ref{eq:D(f)}.

\begin{remark}{\normalfont
We will focus primarily on the Manhattan curve $\mathcal C(\fbf)$, however it will be convenient at times to consider an open neighborhood of $\mathcal C(\fbf)$ in $\mathcal C^*(\fbf)$.}
\end{remark}

Before we list important properties of the Manhattan curve we recall that under our assumptions on the potentials $f$ and $g$ (see Assumptions \ref{rmk:potentials}), there exist $\delta_f,\delta_g>d(f)=d(g)>0$
which are the unique numbers satisfying Bowen's formula $P(-\delta_f f)=P(-\delta_g g)=0$ (cf. \cite[Cor 1.2]{BCKMcounting}). Recall that we are assuming that $f$ and $g$ are independent.
\begin{thm}
	[Bray-Canary-Kao-Martone, \cite{BCKMcounting} Thm C*] \label{thm:manhattan}Let
	$f,g:\Sigma^{+}\to\mathbb{R}$ be strictly positive locally H\"older continuous potentials with strong entropy gaps at infinity and such that $\|f-g\|_\infty$ is finite. Then \begin{enumerate}
		
		\item \label{thm:cri-mann}$(\delta_{f},0)$ and $(0,\delta_{g})$
		are on $\mathcal{C}({\bf f})$;
		
		\item $\mathcal{C}({\bf f})$ is a closed subcurve of the analytic
		curve $\mathcal C^*(\fbf)$;
            
        \item $\mathcal{C}^*({\bf f})$ is convex and strictly convex unless 
            \[
S_{n}f(\ul x)=\frac{\delta_g}{\delta_f}S_{n}g(\ul x)
\]
for all $\ul x\in \rm{Fix}^n.$ In particular, $\mathcal{C}^*({\bf f})$ is strictly convex provided $f$, and $g$
		are independent;
		
		\item \label{thm:slope-normal}The slope of the normal to $\mathcal{C}^*(\fbf)$
		at $(a,b)\in\mathcal{C}^*({\bf f})$ is 
		\[
		m=\frac{\int g\ d\mu_{-af-bg}}{\int f\ d\mu_{-af-bg}}
		\]
		where $\mu_{-af-bg}$ is the equilibrium state of the potential $-af-bg$. 
	\end{enumerate}
\end{thm}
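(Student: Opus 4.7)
The plan is to organize the argument around the analytic and convex properties of the pressure function $(a,b) \mapsto P(-af-bg)$, which the preceding subsections have already equipped with substantial structure. Assertion (1) is immediate from Bowen's formula: strict positivity together with the strong entropy gap at infinity yields unique $\delta_f, \delta_g > 0$ with $P(-\delta_f f) = P(-\delta_g g) = 0$, placing $(\delta_f, 0)$ and $(0, \delta_g)$ on $\mathcal{C}(\fbf)$.

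For (2), I would first extend the analyticity statement of Corollary \ref{cor:analy+postive-def} to show that $(a,b) \mapsto P(-af-bg)$ is real-analytic on $\mathcal D(\fbf)$. Concretely, this reduces to checking that every $(a,b) \in \mathcal D(\fbf)$ yields a potential $-af-bg$ with strong entropy gap, hence a simple isolated leading eigenvalue for its transfer operator, so that the Analytic Perturbation Theorem still applies. By the derivative formula in Corollary \ref{cor:analy+postive-def}(2) and the strict positivity of $f, g$, the gradient $\nabla P(-af-bg) = (-\int f\,d\mu, -\int g\,d\mu)$ with $\mu = \mu_{-af-bg}$ is nonvanishing on $\mathcal D(\fbf)$. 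The implicit function theorem then realizes $\mathcal C^*(\fbf)$ as a real-analytic $1$-manifold, with $\mathcal C(\fbf)$ its closed arc between $(\delta_f, 0)$ and $(0, \delta_g)$. Assertion (4) follows in the same spirit by differentiating $P(-a(t)f - b(t)g) = 0$ along a local parametrization; Corollary \ref{cor:analy+postive-def}(2) gives
\[
\int f\,d\mu \cdot a'(t) + \int g\,d\mu \cdot b'(t) = 0,
\]
so the normal direction at $(a,b)$ is proportional to $(\int f\,d\mu, \int g\,d\mu)$, yielding the stated formula.

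For (3), convexity of $\mathcal C^*(\fbf)$ follows from convexity of the pressure (which is itself immediate from the variational principle): for $(a,b), (a',b') \in \mathcal C^*(\fbf)$ and $t \in [0,1]$,
\[
P\bigl(-(ta+(1-t)a')f - (tb+(1-t)b')g\bigr) \leq tP(-af-bg) + (1-t)P(-a'f-b'g) = 0,
\]
so the sublevel set $\{P(-af-bg) \leq 0\}$ is convex, and hence so is its boundary curve. Under independence, Corollary \ref{cor:analy+postive-def}(4) provides positive-definiteness of the Hessian, upgrading this to strict convexity of $\mathcal C^*(\fbf)$. For the converse, if $\mathcal C^*(\fbf)$ contains a line segment then $P(-af-bg)$ vanishes along it, a second directional derivative of pressure vanishes, and the vanishing-variance criterion underlying Corollary \ref{cor:analy+postive-def}(4) produces a nontrivial linear combination $\alpha f + \beta g$ cohomologous to a constant. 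Requiring that the line through this direction pass through both $(\delta_f, 0)$ and $(0, \delta_g)$ pins down $\alpha:\beta = \delta_f:(-\delta_g)$, so $\delta_f f - \delta_g g$ is itself a coboundary, and by Liv\v{s}ic this is equivalent to $S_n f(\ul x) = (\delta_g/\delta_f) S_n g(\ul x)$ on all periodic orbits.

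The main obstacle I anticipate is the converse direction in (3): converting a local Hessian degeneracy into the specific cohomological identity $\delta_f f \sim \delta_g g$ requires combining the vanishing-variance criterion with Liv\v{s}ic's theorem and the endpoint normalization forced by $(\delta_f, 0)$ and $(0, \delta_g)$. The remaining steps are largely formal consequences of the analytic-convex machinery assembled in Corollary \ref{cor:analy+postive-def} together with the implicit function theorem.
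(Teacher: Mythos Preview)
The paper does not supply a proof of this theorem: it is quoted from the authors' earlier work \cite{BCKMcounting} (Theorem C* there), so there is no in-paper argument to compare against. Your sketch is a reasonable reconstruction using the machinery of Section \ref{sec:background}, and the main ideas---analyticity of pressure plus the implicit function theorem for (2), the gradient formula in Corollary \ref{cor:analy+postive-def}(2) for (4), convexity of pressure for the convexity half of (3), and positive-definiteness of the Hessian for strict convexity---are all correct in outline.

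Two points deserve more care. First, for (2) you invoke analyticity of $(a,b)\mapsto P(-af-bg)$ on all of $\mathcal D(\fbf)$, but Corollary \ref{cor:analy+postive-def} as written applies only to $D(\fbf)$, where both weights are nonnegative. On $\mathcal D(\fbf)$ one of $a,b$ may be negative, so you must first check that $af+bg$ remains a strictly positive locally H\"older potential with strong entropy gap there; the definition of $\mathcal D(\fbf)$ is arranged to force strict positivity, and the entropy-gap condition propagates from $f$ via $\|f-g\|_\infty<\infty$, but this is a genuine verification rather than something already recorded in the paper. Second, in the rigidity direction of (3), vanishing of the second directional derivative only gives that $\delta_f f-\delta_g g$ is cohomologous to a \emph{constant} $c$, not immediately to zero. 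You need the extra observation that $P(-\delta_f f)=0=P(-\delta_g g)$ forces $c=0$, since $P(-\delta_f f)=P(-\delta_g g - c)=P(-\delta_g g)-c=-c$. Only then does Liv\v{s}ic yield $S_n f(\ul x)=(\delta_g/\delta_f)S_ng(\ul x)$ on periodic points. Relatedly, it is the analyticity of $\mathcal C^*(\fbf)$ that lets you pass from a local flat piece to the entire curve being the straight line through $(\delta_f,0)$ and $(0,\delta_g)$, which is what singles out the direction $(\delta_f,-\delta_g)$ in the first place.
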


By Theorem \ref{thm:manhattan}, we can parametrize
$\mathcal{C}^*({\bf f})$ by $(s,q(s))$ for $q=q(s)$ an analytic function and $s$ in an open interval containing $[0,\delta_f]$.  We define the set of {\em admissible slopes} as
\[
\mathcal{S}(\fbf):=\{m\colon m\text{ is the slope of the normal to $\mathcal{C}^*({\bf f})$
	for some point $(a,b)\in\mathcal{C}^*({\bf f})$}\}.
\]

\begin{notation}\label{not:ambm} When the potentials are independent, by the strict convexity of $\mathcal{C}^*({\bf f)}$, for any $m\in\mathcal{S}({\bf f)}$
there exists a unique point $(a_m,b_m)\in\mathcal{C}^*({\bf f)}$ such
that the slope of the normal at $(a_m,b_m)$ is $m$. Then, define
\[
t_{m}:=\left(\int f\ d\mu_{-a_{m}f-b_{m}g}\right)^{-1}\qquad {\bf m}:=(1,m)\qquad {\bf x}_{m}:={\bf m}/t_{m} \qquad\bf z_m:=(-a_m,-b_m).
\]
\end{notation}

\begin{cor}
	\label{cor:man-legen} The set of admissible slopes $\mathcal{S}(\bf f)$ is a non-empty open interval. Moreover, for any $m\in\mathcal S(\fbf)$ the following holds.
\begin{enumerate}
		\item $\mathbb{P}({\bf z}_{m})=0$, and in particular, $\bf z_m \in D$.
		
		\item 
		$
		\nabla\mathbb{P^{*}}({\bf x}_{m})={\bf z}_{m}\text{ and }\nabla\mathbb{P}({\bf z}_{m})={\bf x}_{m}.
		$
		
		\item $\mathbb{P}^{*}({\bf x}_{m})=-h_{\mu_{-a_{m}f-b_{m}g}}(\sigma)$.
		
		\item $\mathbb{P}\left(\nabla\mathbb{P^{*}}(\xbf_m)\right)=0$,
		thus $t=t_{m}$ realizes the maximum of $t\mapsto-t\mathbb{P}\left({\bf m}/t\right)$.
	\end{enumerate}
\end{cor}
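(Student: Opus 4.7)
The plan is to verify the four items in order, treating them largely as bookkeeping between the Legendre results of Proposition~\ref{prop:legen-trans} and Lemma~\ref{lem:cov-legen}, the derivative formulas of Corollary~\ref{cor:analy+postive-def}, and the geometric description of $\mathcal{C}^*(\fbf)$ in Theorem~\ref{thm:manhattan}. For the preliminary assertion that $\mathcal{S}(\fbf)$ is a non-empty open interval, I would invoke the analytic parametrization $(s, q(s))$ of $\mathcal{C}^*(\fbf)$ on an open interval $I$ containing $[0,\delta_f]$. Independence of $f$ and $g$ together with Theorem~\ref{thm:manhattan} give strict convexity; moreover, differentiating $P(-sf - q(s)g) = 0$ yields $q'(s) = -\int f\,d\mu/\int g\,d\mu < 0$ (since $f,g>0$ pointwise), so $s \mapsto -1/q'(s)$ is a strictly monotone real-analytic map carrying the open interval $I$ onto $\mathcal{S}(\fbf)$, which is therefore a non-empty open interval.

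Item (1) has two parts. The equality $\mathbb{P}(\mathbf{z}_m) = P(-a_m f - b_m g) = 0$ is the defining relation of $\mathcal{C}^*(\fbf)$ at $(a_m,b_m)$. To establish $\mathbf{z}_m \in D$, I would check the three defining conditions in turn: $z_1, z_2 \leq 0$ is automatic from $a_m, b_m \geq 0$; $z_1 + z_2 < 0$ is part of membership in $\mathcal{D}(\fbf)$; and $d(a_m f + b_m g) < 1$ follows because the Bowen exponent of the combination $a_m f + b_m g$ equals $1$ (a restatement of $\mathbb{P}(\mathbf{z}_m) = 0$) together with the strong entropy gap at infinity inherited by such positive combinations along $\mathcal{C}^*(\fbf)$, as used in the proof of \cite[Theorem~C]{BCKMcounting}.

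For item (2), Corollary~\ref{cor:analy+postive-def}(2) computes $\nabla \mathbb{P}(\mathbf{z}_m) = \bigl(\int f\,d\mu_{-a_m f - b_m g},\,\int g\,d\mu_{-a_m f - b_m g}\bigr)$; combining the definition of $t_m$ with the slope-normal identification in Theorem~\ref{thm:manhattan}(4) rewrites this vector as $(1/t_m, m/t_m) = \mathbf{x}_m$. The reciprocal identity $\nabla \mathbb{P}^*(\mathbf{x}_m) = \mathbf{z}_m$ then follows from Proposition~\ref{prop:legen-trans}(4), and item (3) is Proposition~\ref{prop:legen-trans}(7) applied at $\mathbf{z} = \mathbf{z}_m$ and $\mathbf{x} = \mathbf{x}_m$. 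For item (4), items (1) and (2) give $\mathbb{P}(\nabla \mathbb{P}^*(\mathbf{x}_m)) = \mathbb{P}(\mathbf{z}_m) = 0$; by Lemma~\ref{lem:cov-legen}(1) this is the vanishing of $\frac{d}{dt}(-t\mathbb{P}^*(\mathbf{m}/t))$ at $t = t_m$, and Lemma~\ref{lem:cov-legen}(2) shows the function is strictly concave, so $t_m$ realizes its unique maximum. The only step that is not a direct quotation of the preceding lemmas is the verification that $\mathbf{z}_m \in D$ in item (1), and that is where I would take the most care.
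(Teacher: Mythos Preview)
Your proposal is correct and follows essentially the same approach as the paper: both read off items (2)--(4) directly from Proposition~\ref{prop:legen-trans}, Corollary~\ref{cor:analy+postive-def}, and Lemma~\ref{lem:cov-legen}, and derive the interval structure of $\mathcal{S}(\fbf)$ from Theorem~\ref{thm:manhattan}. The only minor difference is in item~(1): the paper obtains $d(-\langle\mathbf{z}_m,\fbf\rangle)<1$ by a direct citation of \cite[Lemma~3.3(2)]{BCKMcounting}, whereas you unpack the same implication via the Bowen exponent and the inherited strong entropy gap.
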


\begin{proof} Theorem \ref{thm:manhattan}(2) implies that $\mathcal S(\fbf)$ is an open interval containing the interval with extremes given by the slopes of the normal at the point $(\delta_f,0)$ and $(0,\delta_g)$.
Item (1) follows from the definition of $\mathcal{C}^*(\fbf)$ and the observation (cf.~\cite[Lem 3.3 (2)]{BCKMcounting}) that $P(\langle \mathbf{z}_m, \mathbf{f} \rangle) = 0$ implies $d(-\langle \mathbf{z}_m, \mathbf{f} \rangle) < 1$.
	For (2), we notice that ${\bf x}_{m}=(1/t_{m},m/t_{m})=(\int f\ d\mu,\int g\ d\mu)$
	where $\mu=\mu_{-a_{m}f-b_{m}g}.$ Thus, by part (4) in Proposition \ref{prop:legen-trans},
	it is sufficient to show 
	\[
	\nabla\mathbb{P}(-a_{m},-b_{m})=\left(\int f\ d\mu,\int g\ d\mu\right).
	\]
	However, this follows from the formula for the derivative of the pressure
	given in Corollary \ref{cor:analy+postive-def}(2). The third assertion follows from
	Proposition \ref{prop:legen-trans} and the second assertion. The
	last assertion is an immediate consequence of assertions (1) and (2) and part (2) in Lemma
	\ref{lem:cov-legen}. 
\end{proof}

\section{The correlation number}\label{sec:first results} Let $\Sigma^+$ be a topologically mixing countable state Markov shift with BIP. Fix a pair $\fbf=(f,g)$ of strictly positive locally H\"older continuous independent potentials on $\Sigma^+$ with strong entropy gaps at infinity such that $\|f-g\|_\infty$ is finite.

\begin{definition}
	Let $m\in\mathcal S(\fbf)$ be an admissible slope for the Manhattan curve $\mathcal{C}(\fbf)$.
	The {\em correlation number} of ${\bf f}$
	and $m$ is
	\[
	H_{{\bf f}}(m)\colon=a_{m}+b_{m}\cdot m
	\]
	where $(a_{m},b_{m})\in \mathcal{C}(\fbf)$ is the unique point 
	such that the slope of the normal to $\mathcal{C}(\fbf)$ at $(a_{m},b_{m})$
	is $m$.
\end{definition}

\begin{rem}\label{rem:sus-flow}\
	{\normalfont\begin{enumerate}
		\item It is unclear at this point why we call $H_{{\bf f}}(m)$
		the correlation number. Theorem \ref{thmx:2} justifies this definition by showing that $H_\fbf(m)$ equals the exponential growth rate $\alpha_\fbf(m)$ of $M(t;\fbf,m,\xi)$. 
		
		\item There is a useful interpretation of $H_{{\bf f}}(m)$ via
		the suspension flow $\phi^f:\Sigma^{f}\to\Sigma^{f}$, which we do not
		use in this paper but is still worth mentioning. 
  Suppose $\Psi^g:\Sigma^{f}\to\mathbb{R}$
		is the symbolic reparametrization function of $g$ with respect to
		the suspension flow $\phi^f:\Sigma^{f}\to\Sigma^{f}$ (cf. \cite[Def 3.9]{Kao:2019tr}
		or \cite[Thm 10.3]{BCK-2019}). Then, for any $m\in\mathcal{S}({\bf f})$, one can check that 
		\[
		H_{\fbf}(m)=h_{\hat{\mu}}(\phi^f)
		\]
		where $\hat{\mu}$ is the equilibrium state of $-b_{m}\Psi^g$ and $h_{\hat{\mu}}(\phi^f)$
		is the measure-theoretic entropy of $\hat{\mu}$.		Indeed, observe that $P_{\phi^f}(-b_m\Psi^g)=a_m$, where $P_{\phi^f}$ is the pressure function with respect to the suspension flow, and thus
		\[
		a_m=P_{\phi^f}(-b_m\Psi^g)=h_{\hat{\mu}}(\phi^f)-b_m\int\Psi^g\ d\hat{\mu}.
		\]
		Then, recall that in this case $\hat{\mu}$ is the lift of $\mu=\mu_{-a_mf-b_mg}$
		(cf. \cite[Cor. 10.2]{BCK-2019}) and hence $\int\Psi^g\ d\hat{\mu}=\left(\int g\ d\mu\right)/\left(\int f\ d\mu\right)=m.$
	\end{enumerate}}
\end{rem}

\subsection{Properties of the correlation number} In this subsection, we establish some of the properties of $H_{\fbf}(m)$ including Corollary \ref{cor:corr-rigidity} and Theorem \ref{Thm:BSentropy} from the introduction.

\medskip
\noindent\textbf{Corollary \ref{cor:corr-rigidity}.}(Correlation number rigidity) {\em For $m_*=\delta_f/\delta_g$, we have
\begin{equation}
H_{\fbf}(m_*)\leq\delta_f.\label{eq:corr-rigidity}
\end{equation}
Moreover, the equality holds if and only if $m_*S_{n}f(\ul x)=S_{n}g(\ul x)$ for all $\ul x\in \rm{Fix}^n.$}

\begin{proof}
Since the Manhattan curve
$\mathcal{C}(\fbf)$ is convex and analytic, the point $(a,b)\in\mathcal{C}(\fbf)$ is below or on the secant passing through the points
$(\delta_f,0)$ and $(0,\delta_g)$.  Hence,
\[
a\delta_g+b\delta_f\leq\delta_f\delta_g
\]
 and the equality holds if and only if $\mathcal{C}(\fbf)$ is a straight
line. See Figure \ref{fig:Manhattan}.

\begin{figure}[h!]
    \centering \includegraphics[width=0.35\linewidth]{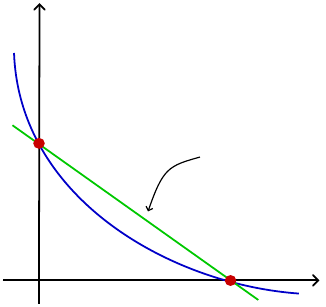}
    \put(-62,74){\small $x\delta_g+y\delta_f=\delta_f\delta_g$}
    \put(-50,19){\small $\delta_f$}
    \put(-140,80){\small $\delta_g$}
    \caption{The Manhattan curve and the secant between its intercepts with the axes.}
    \label{fig:Manhattan}
\end{figure}

We obtain Equation (\ref{eq:corr-rigidity}) by specializing at the point $(a_{m_*},b_{m_*})\in\mathcal{C}(\fbf)$ since
\begin{align*}
H_{\fbf}(m_*) & =a_{m_*}+m_*b_{m_*}\leq\frac{\delta_f\delta_g}{\delta_g}=\delta_f.
\end{align*}
The rigidity statement follows from Theorem \ref{thm:manhattan}(3) as $\mathcal{C}(\fbf)$
is a straight line segment if and only if 
\[
S_{n}f(\ul x)=\frac{\delta_g}{\delta_f}S_{n}g(\ul x)
\]
for all $\ul x\in \rm{Fix}^n.$
\end{proof}
Recall that for any $\alpha, \beta > 0$, the $(\alpha, \beta)$-Bishop-Steger entropy of ${\bf f}$ is defined as
\[
h_{(\alpha, \beta)}^{BS}({\bf f}) := \lim_{T \to \infty} \frac{1}{T} \log \sum_{n} \frac{1}{n} \#\left\{ \ul{x} \in \text{Fix}^{n}\colon \alpha S_{n} f(\ul{x}) + \beta S_{n} g(\ul{x}) \leq T \right\}.
\]

\smallskip
\noindent\textbf{Theorem \ref{Thm:BSentropy}.} {\em
Let $m \in \mathcal{S}(\fbf)$ be an admissible slope. Then
\[
\frac{H_{{\bf f}}(m)}{\alpha + m\beta} \leq h_{(\alpha, \beta)}^{BS}({\bf f}),
\]
and the equality holds if and only if $m$ is the unique slope satisfying $a_{m}/b_{m} = \alpha/\beta$.}
\begin{proof}
We ease notation by writing $h_{BS}$ for $h_{(\alpha, \beta)}^{BS}({\bf f})$. From \cite[Thm A]{BCKMcounting}, we know that $P(-h_{BS}(\alpha f + \beta g)) = 0$, which implies $(a_{BS}, b_{BS}) := (\alpha h_{BS}, \beta h_{BS}) \in \mathcal{C}({\bf f})$. Observe that $a_{BS}/b_{BS} = \alpha/\beta$ and denote by $m_{BS}$ the slope of the normal to $\mathcal{C}({\bf f})$ at $(a_{BS}, b_{BS})$. We claim that for any $m\in\cal S(\bf f)$,
\[
\frac{a_{m} + m b_{m}}{\alpha + m\beta} \leq \frac{a_{m_{BS}} + m_{BS} b_{m_{BS}}}{\alpha + m_{BS} \beta} = h_{BS}.
\]
The last equality follows directly from the definition of $h_{BS}$. To prove the inequality, note that  $\mathcal{C}({\bf f})$ can be parametrized as $(s, q(s))$ where $q(s)$ is an analytic function, and we denote the slope of the normal to $\mathcal{C}({\bf f})$ at $(s, q(s))$ by $m(s)$ so that $q'(s) = -1/m(s)$. Now consider the function
\[
F(s) = \frac{s + m(s) q(s)}{\alpha + m(s) \beta} = \frac{s - q(s)/q'(s)}{\alpha -\beta / q'(s)} = \frac{s q'(s) - q(s)}{\alpha q'(s) - \beta}.
\]
Differentiating, we get
\[
F'(s) = \frac{q''(s)\left(\alpha q(s) - \beta s\right)}{\left(\alpha q'(s) - \beta\right)^{2}}.
\]
Since $\mathcal{C}({\bf f})$ is strictly convex and real analytic, we know that $q''(s) > 0$. Moreover, $F'(0)>0$ while $F'(\delta_f)<0$, so $F(s)$ achieves its maximum when $\alpha q(s) = \beta s$, which implies $s / q(s) = \alpha / \beta$. By the strict convexity of $\mathcal{C}({\bf f})$, this solution is unique, occurring at $(a_{BS}, b_{BS}) = (\alpha h_{BS}, \beta h_{BS})$. This completes the proof of the claim and the theorem.
\end{proof}

We conclude this section by establishing technical properties of $H_\fbf(m)$ which will be useful for the estimates carried out in the next two sections.
\begin{lem}
	\label{lem:swarp_f_g}Let $f,g\colon\Sigma^{+}\to\bb R$ be strictly positive locally
	H\"older continuous potentials with strong entropy gaps at infinity such that $\|f-g\|_\infty$ is finite. Set $\fbf=(f,g)$.
	Then, for every $m\in\mathcal S(\fbf)$
	\begin{enumerate}
		
		\item $H_{\fbf}(m)=m\cdot H_{(g,f)}(\frac{1}{m})$.
		
		\item $H_{{\bf f}}(m)=-t_{m}\mathbb{P}^{*}(\xbf_m)$.
		
		\item $({\bf x}_{m})^{\top}\cdot\nabla^{2}\mathbb{P}^{*}({\bf x}_{m})\cdot({\bf x}_{m})>0$
		where $\mu=\mu_{-a_{m}f-b_{m}g}.$
	\end{enumerate}
\end{lem}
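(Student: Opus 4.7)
The plan is to deduce all three items from structural facts already established: the reflection symmetry of the Manhattan curve, the Legendre duality between $\mathbb P$ and $\mathbb P^*$, and the positive definiteness of $\nabla^2\mathbb P$.

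For (1), the approach is to observe that the Manhattan curve of $(g,f)$ is the reflection of $\mathcal C(\fbf)$ across the diagonal $\{a=b\}$: indeed $(a,b)\in\mathcal C(\fbf)$ if and only if $(b,a)\in\mathcal C(g,f)$, and the slope of the normal transforms as $m\mapsto 1/m$. Thus if $(a_m,b_m)\in\mathcal C(\fbf)$ is the unique point with normal slope $m$, then $(b_m,a_m)\in\mathcal C(g,f)$ is the unique point with normal slope $1/m$. By definition of $H_{(g,f)}$ one gets $H_{(g,f)}(1/m)=b_m+\tfrac{1}{m}a_m$, and multiplying by $m$ yields $H_{\fbf}(m)=a_m+m b_m$.

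For (2), the key is to combine Corollary \ref{cor:man-legen} with Proposition \ref{prop:legen-trans}(\ref{item:legendre 5}). From Corollary \ref{cor:man-legen} we have $\nabla\mathbb P(\zbf_m)=\xbf_m$ and $\mathbb P(\zbf_m)=0$, so the Legendre duality identity gives
\[
\mathbb P^*(\xbf_m)=\langle \xbf_m,\zbf_m\rangle-\mathbb P(\zbf_m)=\langle \xbf_m,\zbf_m\rangle.
\]
Substituting $\xbf_m=(1/t_m,m/t_m)$ and $\zbf_m=(-a_m,-b_m)$ gives $\mathbb P^*(\xbf_m)=-(a_m+m b_m)/t_m$, and multiplying by $-t_m$ recovers $H_{\fbf}(m)=a_m+m b_m$.

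For (3), I would invoke Proposition \ref{prop:legen-trans}(\ref{item:legendre 4}), which identifies $\nabla^2\mathbb P^*(\xbf_m)$ with $\bigl(\nabla^2\mathbb P(\zbf_m)\bigr)^{-1}$. Since $f$ and $g$ are independent and $\zbf_m\in D$ by Corollary \ref{cor:man-legen}(1), Corollary \ref{cor:analy+postive-def}(4) implies $\nabla^2\mathbb P(\zbf_m)$ is positive definite, and hence so is its inverse. The vector $\xbf_m=(1/t_m,m/t_m)$ is nonzero because $t_m^{-1}=\int f\,d\mu_{-a_mf-b_mg}>0$ (as $f$ is strictly positive). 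Therefore the quadratic form $(\xbf_m)^t\nabla^2\mathbb P^*(\xbf_m)\xbf_m$ is strictly positive. None of the three items presents a real obstacle; they are direct bookkeeping consequences of the material in Section \ref{sec:background}, and the only mild care needed is in verifying that the vectors and points used lie in the interior of the relevant domains so that the Legendre-transform identities apply.
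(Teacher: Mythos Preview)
Your proof is correct and follows essentially the same route as the paper's. The only cosmetic difference is in item (2): the paper passes through the identity $\mathbb P^*(\xbf_m)=-h_{\mu_{-a_mf-b_mg}}(\sigma)$ from Corollary \ref{cor:man-legen}(3) and then unwinds this via the variational principle, whereas you invoke the Legendre duality $\mathbb P^*(\xbf_m)+\mathbb P(\zbf_m)=\langle\xbf_m,\zbf_m\rangle$ directly together with $\mathbb P(\zbf_m)=0$; these are two formulations of the same computation.
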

\begin{proof} For the first assertion, given $m\in\mathcal{S}(\fbf)$ consider
	$(a_{m},b_{m})\in\mathcal{C}(\fbf)$, so that $P(-a_{m}f-b_mg)=0$,
	$H_{\fbf}(m)=a_{m}+b_{m}m$, and 
	\[
	m=\frac{\int g\ d\mu_{-a_{m}f-b_{m}g}}{\int f\ d\mu_{-a_{m}f-b_{m}g}}.
	\]
	Similarly, for $m'\in\mathcal{S}(g,f)$, there is $(b_{m'},a_{m'})\in\mathcal{C}(g,f)$,
	that is $P(-b_{m'}g-a_{m'}f)=0$, and
	\[
	m'=\frac{\int f\ d\mu_{-a_{m'}f-b_{m'}g}}{\int g\ d\mu_{-a_{m'}f-b_{m'}g}}.
	\]
	Moreover, $
	H_{(g,f)}(m')=b_{m'}+a_{m'}m'.$
	By Theorem \ref{thm:manhattan}(4), we see that $m\in\mathcal{S}(\fbf)$ if and only if $1/m\in\mathcal{S}(g,f)$,
	and that when $m'=1/m$ we have $(a_{m'},b_{m'})=(b_m,a_m)$. Thus
	\[
	H_{(g,f)}\left(\frac{1}{m}\right)=b_m+a_{m}\frac{1}{m}=\frac{1}{m}(a_m+mb_m)=\frac{1}{m}H_{\fbf}(m).
	\]
	For item (2), by Corollary \ref{cor:man-legen}(2), we know
	$\nabla\mathbb{P}(-a_{m},-b_{m})={\bf x}_m$ and 
	\[
	\mathbb{P}^{*}({\bf x}_m)=-h_{-a_{m}f-b_{m}g}(\sigma).
	\]
	Hence, using the fact that $\mu_{-a_{m}f-b_{m}g}$ is the equilibrium state of $-a_{m}f-b_{m}g$, we compute
	\begin{alignat*}{1}
		-t_{m}\mathbb{P}^{*}({\bf x}_m) & =\frac{h_{-a_{m}f-b_{m}g}(\sigma)}{\int f\ d\mu_{-a_{m}f-b_{m}g}}\\
		& =\frac{P(-a_{m}f-b_{m}g)-\int\left(-a_{m}f-b_{m}g\right)\ d\mu_{-a_{m}f-b_{m}g}}{\int f\ d\mu_{-a_{m}f-b_{m}g}}\\
		& =a_{m}+b_{m}\frac{\int g\ d\mu_{-a_{m}f-b_{m}g}}{\int f\ d\mu_{-a_{m}f-b_{m}g}}=a_{m}+b_{m}m\\
		& =H_{{\bf f}}(m).
	\end{alignat*}
	
	The last assertion follows from Proposition \ref{prop:legen-trans}(4) as
	$\nabla\mathbb{P}^{*}({\bf x}_{m})=(-a_{m},-b_{m})$ and
	\[
	\nabla^{2}\mathbb{P}^{*}({\bf x}_{m})=\left(\nabla^{2}\mathbb{P}(-a_{m},-b_{m})\right)^{-1}=\frac{1}{\mathrm{det\nabla^{2}\mathbb{P}}}\left(\begin{array}{cc}
		\mathrm{Var}(g,\mu) & \mathrm{Cov}(f,g,\mu)\\
		\mathrm{Cov}(f,g,\mu) & \mathrm{Var}(f,\mu)
	\end{array}\right)^{-1}
	\]
	is positive definite (as by Corollary \ref{cor:analy+postive-def}(4)
	$\nabla^{2}\mathbb{P}$ is positive definite). 
\end{proof}

\begin{rem}\normalfont\label{rem:P''}
	Using the same suspension flow interpretation as in Remark \ref{rem:sus-flow}, one can relate the term in Lemma \ref{lem:swarp_f_g}(3) with the second derivative of the pressure over the suspension flow. Explicitly
	\[
\overline{\mathbb{P}}_{m}'':=t_{m}^{3}\cdot({\bf x}_{m})^\top\cdot\nabla^{2}\mathbb{P}^{*}({\bf x}_{m})\cdot {\bf x}_{m}=\left(\frac{d^{2}}{dz^{2}}\left.P_{\phi^f}\right|_{z=-b_{m}}(z\Psi^g)\right).
	\] 
 The convenient notation $\overline{\mathbb{P}}_{m}''$ will be used in Section \ref{sec:localgrowth}.
\end{rem}

\section{Preparing to count}\label{sec:prep}
Before presenting the setup and results, we give a brief outline of this section. 
The goal here is to convert the given orbital counting problem into estimates on thermodynamical quantities. In particular, Lalley's method counts periodic orbits by reducing this problem to studying preimages of sample points in the shift space.

% {\color{blue} In Proposition~\ref{prop: epsilonk}, we establish the relation between {\color{red} periodic orbits and} preimages of a sample point. 
% When \(n\), the period of the periodic points, is greater than \(k\), the word length of the sample point\GM{depth of th cylinder?} (i.e.\ when \(n>k\)), this relation is a bijection. 
% In Proposition~\ref{prop:n<k}, we show that when \(n \le k\), the growth rate {\color{red} of bla bla bla bla} is smaller and therefore negligible.

% We then relate the simultaneous orbital counting term\GM{I am confused by the ``simultaneous orbital counting term'' terminology} to the transfer operator via the Laplace--Fourier transform. The final two lemmas (Lemmas \ref{lem:n>k-hatW} and \ref{lem:prop1}) of this section provide the estimates needed to obtain global bounds on the counting term.\GM{Which one is the counting term?}\GM{Do we need the paragraph ``In Proposition ... counting term''?}}

To start the setup of this section, we fix a topologically mixing, countable state Markov shift $\Sigma^+$ with BIP and let $f,g\colon\Sigma^+\to \mathbb R$ be strictly positive locally H\"older continuous potentials with strong entropy gaps at infinity such that $\|f-g\|_\infty$ is finite. We let $\fbf=(f,g)$ and use the notations from Sections \ref{sec:background} and \ref{sec:first results}.

For an admissible slope $m\in\mathcal S(\fbf)$ and a precision $\xi>0$, we set $I_{\xi,m}^2(t)=(t,t+\xi)\times(mt,mt+\xi)$ and define
	\[
	M(n,t):=M(n,t;{\bf f},m,\xi)=\#\{\ul x\in\text{Fix}^{n}\colon S_{n}\fbf(\ul x)\in I_{\xi,m}^2(t)\}
	\]
	where $S_n\fbf(\ul x):=\left(S_nf(\ul x),S_ng(\ul x)\right)$. The goal of this section is to obtain some a priori estimates on $M(n,t)$ by relating it to the transfer operator $\cal L_{-a_mf-b_mg}$. Our strategy is similar to the one from Section 5 in  \cite{BCKMcounting}, with some key differences which we single out below.

\subsection{Preimages of sample points and a priori estimates}

Fix an admissible slope $m\in{\cal S(\fbf)}$, $k\in\bb Z_{>0}$, and a non-periodic
word $\ul z_{p}\in\Sigma^+$ in each $k$-cylinder $p\in\Lambda_{k}$. For every $n\in\mathbb Z_{>0}$ and $\xi>0$ define
\begin{gather*}
{\cal {W}}(n,p,t;\fbf,m,\xi)=p\cap\sigma^{-n}(\ul z_{p})\cap\{\ul x\in\Sigma^+\colon S_{n}\fbf(\ul x)\in\boxt\},\text{ and} \\
W(n,p,t)=W(n,p,t;\fbf,m,\xi):=\#\cal {W}(n,p,t;\fbf,m,\xi).
\end{gather*}
Propositions \ref{prop: epsilonk} and \ref{prop:n<k} below relate $M(n,t)$ to $W(n,p,t)$, which is advantageous because $W(n,p,t)$ is itself related to the transfer operator via the {\em Laplace-Fourier
transform} (defined below).

First, we introduce the following convenient notations. For any $\ybf\in\mathbb R^2$, define 
	\[
	M(n,\ybf):=M(n,{\bf y};\fbf)=\sum_{\ul x\in\text{Fix}^{n}}{\bf 1_{\{\ul x\colon S_{n}\fbf(\ul x)=\ybf\}}(\ul x)}
	\]
	and for any open bounded rectangle $U\subset \mathbb R^2$ let
	\[
	M(n,U)=\#\{\ul x\in\text{Fix}^{n}\colon S_{n}\fbf(\ul x)\in U\}		=\int_{U}M(n,\ybf)\ d\ybf.
	\]
	Note that with this notation $M(n,t)=M(n,\boxt)$.
Define $W(n,p,\ybf)$ and
$W(n,p,U)$ analogously and let 
\[
W(n,U)=\sum_{p\in\Lambda_{k}}W(n,p,U).
\]
Finally, for any open rectangle $U=(a,b)\times(c,d)$ and sufficiently small $\eee>0$,
consider the open rectangles
\[
U_{\pm\eee}:=(a\mp\eee,b\pm\eee)\times(c\mp\eee,d\pm\eee).
\]
\begin{prop}[{{\cite[Lemma 5.1 (ii)-(iv)]{BCKMcounting}}}]
	\label{prop: epsilonk} Suppose that $\Sigma^{+}$ is a topologically
	mixing, countable state Markov shift with BIP, $f,g\colon\Sigma^{+}\to\bb R$
	are strictly positive locally H\"older continuous potentials with strong entropy gaps at infinity such that $\|f-g\|_\infty$ is finite.
	\begin{enumerate}
		\item For any $p\in\Lambda_{k}$ and $n\geq k$ there exists a bijection
		$\Psi_{p}^{n}\colon\text{Fix}^{n}\cap p\to\sigma^{-n}(\ul z_{p})\cap p.$ 
		\item There exists a sequence $(\eee_{k})_{k=1}^{\infty}$ such that $\lim_{k}\eee_{k}=0$
		and if $\ul y\in\text{Fix}^{n}\cap p$ and $n\geq k$, then 
		\[
		\|S_{n}\fbf(\ul y)-S_{n}\fbf(\Psi_{p}^{n}(\ul y))\|\leq\eee_{k}.
		\]
		\item For all $n\geq k$ and all open rectangles $U\subset\bb R^{2}$
		\begin{equation}
			W(n,U_{-\eee_{k}})\leq M(n,U)\leq W(n,U_{\eee_{k}}).\label{eq:W and N-1}
		\end{equation}
	\end{enumerate}
\end{prop}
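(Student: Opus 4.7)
The plan is to produce the bijection $\Psi_p^n$ by an explicit concatenation, then control the distortion of the ergodic sums via local H\"older regularity in a way that is uniform in $p$ and $n$. For $\ul{y} = (y_1, y_2, \ldots) \in \text{Fix}^n \cap p$ (necessarily the $n$-periodic extension of $y_1 \cdots y_n$), I would define
\[
\Psi_p^n(\ul{y}) := y_1 y_2 \cdots y_n \ul{z}_p.
\]
The only nontrivial transition constraint is $t_{y_n, z_{p,1}} = 1$; this holds because $\ul{y}$ and $\ul{z}_p$ both lie in $p$, so $z_{p,1} = y_1$, and $t_{y_n, y_1} = 1$ by periodicity of $\ul{y}$. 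By construction $\Psi_p^n(\ul{y}) \in p \cap \sigma^{-n}(\ul{z}_p)$. Conversely, any $\ul{x} = x_1 \cdots x_n \ul{z}_p \in p \cap \sigma^{-n}(\ul{z}_p)$ satisfies $x_1 = z_{p,1}$, so its $n$-periodic extension is a valid element of $\text{Fix}^n \cap p$; this yields the inverse and establishes (1).

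For (2), I would exploit the cylinder structure twice. With $\ul{x} = \Psi_p^n(\ul{y})$, for each $0 \leq i \leq n-1$ the shifted words $\sigma^i(\ul{y})$ and $\sigma^i(\ul{x})$ agree on the first $n-i$ coordinates automatically, and agree on $k$ additional coordinates beyond that precisely because $\ul{y}$ and $\ul{z}_p$ both lie in $p$. Hence $d(\sigma^i(\ul{y}), \sigma^i(\ul{x})) \leq e^{-(n-i+k)}$. Since $f$ is $\beta$-locally H\"older with constant $A_f$ and the initial letters match, summing a geometric series gives
\[
|S_n f(\ul{y}) - S_n f(\ul{x})| \leq A_f \sum_{i=0}^{n-1} e^{-(n-i+k)\beta} \leq \frac{A_f\, e^{-k\beta}}{1 - e^{-\beta}},
\]
with the analogous bound for $g$. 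Setting $\epsilon_k := C e^{-k\beta}$ where $C$ depends only on the H\"older data of $f$ and $g$ produces a sequence, independent of $p$ and $n$, with $\epsilon_k \to 0$ exponentially.

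Finally, (3) is a direct consequence of (1) and (2): the $\epsilon_k$-closeness turns the bijection into a count comparison on each $p \in \Lambda_k$, namely $M(n, p, U) \leq W(n, p, U_{\epsilon_k})$ and $W(n, p, U_{-\epsilon_k}) \leq M(n, p, U)$, and summing over cylinders yields the stated sandwich. The key point to pin down is the uniformity of $\epsilon_k$ across $p$ and $n$, which is exactly what the H\"older estimate above provides; the extra gain of $e^{-k\beta}$ comes precisely from the fact that two words in the same $k$-cylinder agree on $k$ coordinates. The argument is otherwise routine, and I expect the only real care to be needed in bookkeeping this uniform $\epsilon_k$, so that item (3) indeed provides the desired $k$-independent bounds when later combined with transfer operator estimates.
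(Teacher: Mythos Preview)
Your proposal is correct and essentially reproduces the standard argument that the paper invokes by citing \cite[Lemma 5.1]{BCKMcounting}: the explicit concatenation $\Psi_p^n(\ul y)=y_1\cdots y_n\,\ul z_p$ for the bijection, the geometric-series H\"older bound giving $\epsilon_k=O(e^{-k\beta})$ uniformly in $p$ and $n$, and the sandwich (3) read off from (1)--(2). The paper's own proof simply defers items (1) and (2) to that reference and notes (3) follows from (2), so there is no substantive difference in approach.
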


\begin{proof}
	Item (1) is \cite[Lemma 5.1 (ii)]{BCKMcounting}, item (2) is an immediate consequence of \cite[Lemma 5.1 (iii)]{BCKMcounting}, and item (3) follows
from item (2). 
\end{proof}
We will need to complement the results in Proposition \ref{prop: epsilonk}
with an exponential control of the quantities $\sum_{p\in\Lambda_{k}}W(n,p,t)$
and $M(n,t)$ for $k\in\bb Z_{>0}$ and $n<k$, similar to \cite[Lemma 5.1 (iv)]{BCKMcounting}.
However, in the current setting we will need a different approach which
relies on Lemmas \ref{lem:H>d} and \ref{lem:m>1} below.

\begin{lem}
	\label{lem:H>d} Let $f,g\colon\Sigma^{+}\to\bb R$ be strictly positive
	locally H\"older continuous potentials with strong entropy
	gaps at infinity such that $\|f-g\|_\infty$ is finite. If $m\geq1$ is an admissible slope, then 
	\[
	d(f)<H_{\fbf}(m).
	\]
\end{lem}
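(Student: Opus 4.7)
The plan is to reduce the claim to showing the stronger inequality $a_m + b_m > d(f)$; once that is established, the lemma follows immediately, since $m \geq 1$ and $b_m \geq 0$ give
\[
H_{\fbf}(m) = a_m + m b_m \geq a_m + b_m > d(f).
\]

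To prove $a_m + b_m > d(f)$, I would exploit the hypothesis $\|f - g\|_\infty < \infty$ in order to compare the single-letter partition function for $a_m f + b_m g$ with the one for $f$. First, since $(a_m, b_m) \in \mathcal{C}(\fbf)$, Corollary \ref{cor:man-legen}(1) yields $\mathbf{z}_m = (-a_m, -b_m) \in D$, which by definition of $D$ means $d(a_m f + b_m g) < 1$, so that the series $Z_1(a_m f + b_m g, 1)$ converges. Next, the pointwise bound $g \leq f + \|f - g\|_\infty$ yields, for every letter $a \in \mathcal{A}$,
\[
S(a_m f + b_m g, a) \leq (a_m + b_m)\, S(f, a) + b_m\, \|f - g\|_\infty.
\]
Exponentiating and summing over the alphabet then gives
\[
Z_1(a_m f + b_m g, 1) \geq e^{-b_m \|f - g\|_\infty}\, Z_1\bigl(f,\, a_m + b_m\bigr),
\]
and hence $Z_1(f, a_m + b_m) < \infty$.

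To close the argument I would invoke the strong entropy gap at infinity of $f$: by definition $Z_1(f, d(f)) = +\infty$, and $s \mapsto Z_1(f, s)$ is strictly decreasing (since $f > 0$ forces $S(f, a) > 0$ for every $a$), so the convergence of $Z_1(f, a_m + b_m)$ forces $a_m + b_m > d(f)$ strictly. No step is anticipated to be a serious obstacle; the essential point is that the finiteness of $\|f - g\|_\infty$ transfers the bound $d(a_m f + b_m g) < 1$, which is available for free once we know $\mathbf{z}_m \in D$, into a bound on the simpler one-variable series $Z_1(f, \cdot)$, at which point the strong entropy gap of $f$ can be applied directly.
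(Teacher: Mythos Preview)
Your proof is correct but takes a different route from the paper's. The paper argues at the level of the pressure function: using $m\geq 1$, $b_m\geq 0$, and $\|f-g\|_\infty=C<\infty$, it bounds
\[
P(-H_\fbf(m)f)=P(-a_mf-mb_mf)\leq P(-a_mf-b_mf)\leq P(-a_mf-b_mg)+b_mC=b_mC<\infty,
\]
and then invokes \cite[Lemma 3.3]{BCKMcounting} to conclude that finiteness of $P(-H_\fbf(m)f)$ forces $H_\fbf(m)>d(f)$. You instead work one level down, directly with the single-letter partition series $Z_1$: from $\mathbf z_m\in D$ you extract $Z_1(a_mf+b_mg,1)<\infty$, transfer this via $\|f-g\|_\infty<\infty$ to $Z_1(f,a_m+b_m)<\infty$, and then use the strong entropy gap of $f$ (divergence of $Z_1(f,\cdot)$ at $s=d(f)$) to obtain the strict inequality $a_m+b_m>d(f)$; the hypothesis $m\geq 1$ enters only at the very end. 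Your argument is more self-contained, avoiding the appeal to an external lemma, and it isolates the slightly stronger intermediate fact $a_m+b_m>d(f)$, which holds for all admissible slopes. The paper's version, on the other hand, keeps everything phrased in terms of pressure, which dovetails with the rest of the section.
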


\begin{remark}\normalfont There exist potentials $f$ and $g$ for which every admissible slope is strictly less than 1. 
\end{remark} 

\begin{proof} By \cite[Lemma 3.3]{BCKMcounting}, it suffices to show that $P(-H_{\fbf}(m)f)<\infty$. Recall that, by definition, there exists a point $(a_{m},b_{m})$ on the Manhattan curve $\mathcal{C}(\fbf)$ such that $H_{\fbf}(m)=a_{m}+m\cdot b_{m}$.
	Since $m\geq1$ and the pressure function is increasing by Corollay \ref{cor:analy+postive-def}(2), we see that
	\[
	P(-H_{\fbf}(m)f)=P(-a_{m}f-m\cdot b_{m}f)\leq P(-a_{m}f-b_{m}f).
	\]
	Then, since $\|f-g\|_\infty=C<\infty$ and $P$ is increasing and linear on constants, we see that 
	\[
	P(-a_{m}f-b_{m}f)\leq P(-a_{m}f-b_{m}g+b_{m}C)=P(-a_{m}f-b_{m}g)+b_{m}C=b_{m}C<\infty
	\]
	where we used the fact that $P(-a_{m}f-b_{m}g)=0$ in the last equality.
\end{proof}

\begin{lem}
	\label{lem:m>1} Let $f,g\colon\Sigma^{+}\to\bb R$ be strictly positive
	locally H\"older continuous potentials with strong entropy gaps
	at infinity such that $\|f-g\|_\infty$ is finite. Suppose $m\geq1$. Then for all $k\in\mathbb{N}$
	and $b\in(d(f),H_{\fbf}(m))$, there exists $C=C(k,b,\xi)>0$ such that
	for all $n<k$ and $t>0$, we have
	\[
	\sum_{p\in\Lambda_{k}}W(n,p,t)\leq Ce^{bt}
    \qquad\text{ and }\qquad 
	M(k,t)\leq Ce^{bt}.
	\]
\end{lem}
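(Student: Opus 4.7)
The plan is to exploit convergence of the series $Z_1(f, b) = \sum_{a \in \mathcal{A}} e^{-b S(f, a)}$ for $b > d(f)$ (granted by the strong entropy gap of $f$) together with a structural rigidity that severely restricts which $k$-cylinders $p \in \Lambda_k$ can contribute to $\mathcal{W}(n, p, t)$ when $n < k$. Because $m \geq 1$, Lemma \ref{lem:H>d} guarantees that the interval $(d(f), H_{\fbf}(m))$ is non-empty, so any valid exponent $b$ can be chosen in it.

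The key observation I would establish first is: for $0 < n < k$, the two requirements $\ul x \in p = [a_1 \dots a_k]$ and $\sigma^n \ul x = \ul z_p = a_1 \dots a_k z_{k+1} \dots$ respectively force $x_j = a_j$ for $j \leq k$ and $x_{n+i} = a_i$ for all $i$, which is only consistent if $a_{i+n} = a_i$ for $i = 1, \dots, k-n$. In this quasi-periodic case the intersection collapses to the single word $\ul x_p = a_1 \dots a_n \ul z_p$. Consequently the $k$-cylinders giving $W(n, p, t) \neq 0$ are in bijection with $n$-cycles $(a_1, \dots, a_n)$ in $\Sigma^+$, and the sum over $\Lambda_k$ (a priori countably infinite) collapses to a sum indexed by $n$-cycles.

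With this reduction in place, I would invoke local H\"older continuity of $f$ to compare $S_n f(\ul x_p)$ with $\sum_{i=1}^n S(f, a_i)$ up to an additive $nA$ error; indeed $(\sigma^{i-1} \ul x_p)_1 = a_i$, so $\sum_i S(f,a_i) - nA \leq S_n f(\ul x_p) \leq \sum_i S(f, a_i)$. The constraint $S_n f(\ul x_p) \in (t, t+\xi)$ then yields $e^{-b \sum_i S(f, a_i)} \geq e^{-b(t+\xi+nA)}$, and summing the indicator while dropping the cycle-transition constraint as a harmless upper bound produces
\[
\sum_{p \in \Lambda_k} W(n, p, t) \leq e^{b(\xi + nA)} Z_1(f, b)^n \, e^{bt}.
\]
A maximum over $0 \leq n < k$ gives the first estimate. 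The bound on $M(k, t)$ follows the same template: any $\ul x \in \text{Fix}^k$ is $(a_1 \dots a_k)^\infty$ for a $k$-cycle, and the analogous H\"older comparison with the $k$-letter sum yields $M(k, t) \leq e^{b(\xi + kA)} Z_1(f, b)^k \, e^{bt}$. Taking the larger of the two constants gives $C(k, b, \xi)$.

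The only non-routine step is the quasi-periodic rigidity forced by combining $\ul x \in p$ with $\sigma^n \ul x = \ul z_p$ when $n < k$. Without it one has no control over the countable sum; with it the counting reduces to a weighted sum over $n$-cycles that is immediately controlled by $Z_1(f, b)$. The upper endpoint $H_{\fbf}(m)$ of the interval plays no role in the estimate itself, only in guaranteeing that valid exponents exist via Lemma \ref{lem:H>d}.
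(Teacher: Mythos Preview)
Your argument is correct. Both your proof and the paper's ultimately rest on the convergence of $Z_1(f,b)$ for $b>d(f)$ and on Lemma~\ref{lem:H>d} to guarantee that the interval $(d(f),H_{\fbf}(m))$ is non-empty when $m\geq 1$. The routes differ, however. The paper simply drops the $g$-constraint in $I^2_{\xi,m}(t)$ to reduce to the one-potential bound
\[
\sum_{p\in\Lambda_k}\sum_{\ul y\in\sigma^{-n}(\ul z_p)}\mathbf{1}_p(\ul y)\mathbf{1}_{\{S_nf(\ul y)\leq t+\xi\}}(\ul y)\leq Ce^{bt},
\]
and then invokes \cite[Lemma~5.1(iv)]{BCKMcounting} (whose proof is valid for any $b>d(f)$) as a black box for both displayed inequalities. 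Your approach is more self-contained: you make explicit the combinatorial rigidity that for $n<k$ the set $p\cap\sigma^{-n}(\ul z_p)$ is empty unless the $k$-cylinder $p$ has period $n$, in which case it is a singleton, and you then bound the resulting sum over $n$-cycles directly by $Z_1(f,b)^n$ via the H\"older comparison. This is essentially what the cited lemma does internally, so the two arguments are close in spirit; yours is elementary and transparent, while the paper's is shorter by outsourcing the counting.
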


\begin{rem}\normalfont
    In this proof we will use \cite[Lemma 5.1 (iv)]{BCKMcounting}. In that result, one is only interested in
	the case $b<\delta_{f}$. However, its proof holds more generally for any $b>d(f)$ (see also \cite[Lemma 3.1]{BCKMcounting}). 
\end{rem}

\begin{proof}
	We notice that, by \cite[Lemma 5.1 (iv)]{BCKMcounting}, for all $b>d(f)$ there exists $C=C(k,b,\xi)$ such that
	\begin{alignat*}{1}
		\sum_{p\in\Lambda_{k}}W(n,p,t) & \leq\sum_{p\in\Lambda_{k}}\sum_{\ul y\in\sigma^{-n}(\ul z_{p})}\mathbf{1}_{p}(\ul y)\mathbf{1}_{\{S_{n}f(\ul y)\leq t+\xi\}}(\ul y)\leq Ce^{bt}.
	\end{alignat*}
	As $m\geq1$,
	we have that $H_{\fbf}(m)>d(f)$ by Lemma \ref{lem:H>d}, and hence,
	we may choose any $b\in(d(f),H_{\fbf}(m)).$ The same argument shows that for all $b\in(d(f),H_{\fbf}(m))$ there exists $C=C(k,b,\xi)$ such that 
	\[
	\text{ }M(k,t)\leq Ce^{bt}.\qedhere
	\]
\end{proof}
We are now ready to bound $\sum_{p\in\Lambda_{k}}W(n,p,t)$
and $M(k,t)$ for any $k\in\bb Z_{>0}$ and $n<k$.
\begin{prop}[Negligible parts]\label{prop:n<k}Let $f,g\colon\Sigma^{+}\to\bb R$
	be strictly positive locally H\"older continuous potentials with
	strong entropy gaps at infinity such that $\|f-g\|_\infty$ is finite. Then
	for any $k>0$, there exist constants $\zeta=\zeta(m), C=C(k,m,\xi)>0$ such that for all $n<k$ 
	\[
	\sum_{p\in\Lambda_{k}}W(n,p,t)\leq Ce^{\left(H_{\fbf}(m)-\zeta\right)t}
    \qquad\text{ and }\qquad
	M(k,t)\leq Ce^{\left(H_{\fbf}(m)-\zeta\right)t}.
	\]
\end{prop}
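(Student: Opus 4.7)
The plan is to reduce Proposition \ref{prop:n<k} to Lemma \ref{lem:m>1}, which is the same statement under the additional restriction $m \geq 1$ and for arbitrary $b \in (d(f), H_{\fbf}(m))$. In the case $m \geq 1$, Lemma \ref{lem:H>d} guarantees $d(f) < H_{\fbf}(m)$, so I would pick $\zeta = \zeta(m) > 0$ small enough that $H_{\fbf}(m) - \zeta$ still exceeds $d(f)$, and then invoke Lemma \ref{lem:m>1} with $b = H_{\fbf}(m) - \zeta$. This immediately produces both inequalities of the proposition with a single constant $C = C(k,m,\xi)$.

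The case $m < 1$ reduces to the previous one by swapping $f$ and $g$. Unpacking definitions yields the identity
\[
\cal W(n, p, t; \fbf, m, \xi) = \cal W\bigl(n, p, mt; (g,f), 1/m, \xi\bigr),
\]
and analogously $M(k, t; \fbf, m, \xi) = M\bigl(k, mt; (g,f), 1/m, \xi\bigr)$, because both sides consist of the words in $p \cap \sigma^{-n}(\ul z_p)$ satisfying $S_n f(\ul x) \in (t, t + \xi)$ and $S_n g(\ul x) \in (mt, mt + \xi)$; the cylinders and the sample points $\ul z_p$ are intrinsic to $\Sigma^+$ and unchanged by the swap. Since $\|g - f\|_\infty = \|f - g\|_\infty$ is finite and $1/m > 1$ is an admissible slope for $(g,f)$ by Theorem \ref{thm:manhattan}\,(\ref{thm:slope-normal}), I would apply the case $m \geq 1$ already established to the pair $(g, f)$ with slope $1/m$, producing a bound of the form $C e^{b' \cdot mt}$ for any $b' \in (d(g), H_{(g,f)}(1/m))$. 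By Lemma \ref{lem:swarp_f_g}(1), $H_{(g,f)}(1/m) = H_{\fbf}(m)/m$, so $b' m$ ranges over $(m \cdot d(g), H_{\fbf}(m))$; Lemma \ref{lem:H>d} applied to the pair $(g, f)$ confirms this interval is nonempty. Choosing $\zeta > 0$ with $H_{\fbf}(m) - \zeta \in (m \cdot d(g), H_{\fbf}(m))$ and setting $b' = (H_{\fbf}(m) - \zeta)/m$ completes the argument.

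The only step that requires genuine attention is the symmetry identity in the second paragraph. It is a definitional check, but it is essential: without it, applying Lemma \ref{lem:m>1} to $(g,f)$ would yield an exponential rate in $mt$ rather than in $t$, and the clean conversion to a rate in $t$ depends precisely on the relation $H_{(g,f)}(1/m) = H_{\fbf}(m)/m$ supplied by Lemma \ref{lem:swarp_f_g}. Modulo this observation, the proposition is essentially bookkeeping on top of Lemmas \ref{lem:m>1}, \ref{lem:H>d}, and \ref{lem:swarp_f_g}.
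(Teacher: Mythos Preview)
Your proposal is correct and follows essentially the same route as the paper: handle $m\geq 1$ directly via Lemma~\ref{lem:m>1}, and for $m<1$ swap to $(g,f)$ with slope $1/m>1$, using the set-theoretic identity $\cal W(n,p,t;\fbf,m,\xi)=\cal W(n,p,mt;(g,f),1/m,\xi)$ together with Lemma~\ref{lem:swarp_f_g}(1) to convert the exponential rate. The only cosmetic difference is in how $\zeta$ is parametrized (the paper chooses $\zeta$ so that $H_{(g,f)}(1/m)-\zeta/m\in(d(g),H_{(g,f)}(1/m))$, you choose $b'$ first and back out $\zeta$), which is equivalent.
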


\begin{proof}
	For $m\geq1$, a stronger version is given by Lemma \ref{lem:m>1}.
	Thus, we assume $m<1$ and consider $H_{(g,f)}(m')$ with $m'=1/m>1$.
	Applying Lemma \ref{lem:m>1} to the pair $(g,f)$, if we pick $\zeta$
	small enough such that $H_{(g,f)}(m')-m'\zeta\in(d(g),H_{(g,f)}(m'))$,
	then we know that there exists $C=C(k,m,\xi)$ such that
	\begin{align*}
		\sum_{p\in\Lambda_{k}}W(n,p,\widetilde{t};(g,f),m',\xi)=\#\left(p\cap\sigma^{-n}(\ul z_{p})\cap\{\ul x\colon\ S_{n}(g,f)(\ul x)\in I_{\xi,m'}^{2}(\wt t)\}\right)\leq Ce^{(H_{(g,f)}(m')-m'\zeta)\widetilde{t}}
	\end{align*}
	for all $n<k$ and $\wt t\geq0$. Therefore, setting $\widetilde{t}=mt$
	we have 
	\begin{alignat*}{1}
		\sum_{p\in\Lambda_{k}}W(n,p,t;\fbf,m,\xi) & =\#\left(p\cap\sigma^{-n}(\ul z_{p})\cap\{\ul x\colon S_{n}\fbf(\ul x)\in\boxt\}\right)\\
		& =\sum_{p\in\Lambda_{k}}W\left(n,p,mt;(g,f),\frac{1}{m},\xi\right)\\
		& \leq Ce^{(H_{(g,f)}(\frac{1}{m})-\frac{1}{m}\zeta)mt}\\
		& =Ce^{(mH_{(g,f)}(\frac{1}{m})-\zeta)t}\\
		& =Ce^{(H_{\fbf}(m)-\zeta)t}
	\end{alignat*}
	where the last equality follows from Lemma \ref{lem:swarp_f_g}(1). Applying
	the same argument to $M(k,t)$, we obtain 
	\[
	M(k,t)\leq Ce^{\left(H_{\fbf}(m)-\zeta\right)t}
	\]
	which ends the proof. 
\end{proof}

The {\em Laplace-Fourier transform} of a continuous function $F\colon\bb R^{2}\to\bb R$
is
\begin{equation}
\wh F(\zbf)=\int_{\bb R^{2}}e^{\inn{\zbf,\xbf}}F(\xbf)\ d{\bf x}.\label{eq:laplace-fourier}
\end{equation}
Write $\wh W(n,p,\zbf)$, $\wh W(n,\zbf)$, and $\wh M(n,\zbf)$ for
the Laplace-Fourier transforms of the functions $\xbf\mapsto W(n,p,\xbf)$,
$\xbf\mapsto W(n,\xbf)$, $\xbf\mapsto M(n,\xbf)$, respectively.
The key observation is that $\wh W(n,p,\zbf)$ is related to the transfer operator as follows:
\begin{align}
\widehat{W}(n,p,\mathbf{z}) & =\int_{\mathbb{R}^{2}}e^{\langle\mathbf{z},\mathbf{x}\rangle}\ W(n,p,\mathbf{x})\ d{\bf x}\nonumber \\
 & =\int_{\mathbb{R}^{2}}e^{\langle\mathbf{z},\mathbf{x}\rangle}\sum_{\underline{y}\in\sigma^{-n}(\ul z_{p})}{\bf 1}_{p}(\underline{y}){\bf 1}_{\{\underline{y}\colon S_{n}\mathbf{f}(\underline{y})=\mathbf{x}\}}(\underline{y})\ d\mathbf{x}\nonumber \\
 & =\sum_{\ul y\in\sigma^{-n}(\ul z_{p})}e^{\langle\mathbf{z},S_{n}\mathbf{f}(\underline{y})\rangle}{\bf 1}_{p}(\underline{y})\nonumber \\
 & =\left(\mathcal{L}_{\langle\mathbf{z},\mathbf{f}\rangle}^{n}{\bf 1}_{p}\right)(\ul z_{p}).\label{eq:w-hat}
\end{align}
Assume $\zbf\in D(\fbf)$ so that $\langle\mathbf{z},\mathbf{f}\rangle$ is a locally H\"older continuous potential
with finite pressure. By Proposition \ref{cor:gibbs and eq for zf}(1), $\mu_{\langle\mathbf{z},\mathbf{f}\rangle}$ is the unique invariant
Gibbs measure and equilibrium state for $\langle\mathbf{z},\mathbf{f}\rangle$ and we denote by $Q>1$ its Gibbs constant.

\begin{lem}
	\label{lem:n>k-hatW} Consider $k\in\mathbb N$ and $\bf z\in D(\fbf)$. For all $n\geq k,$ 
	and $p\in\Lambda_{k}$ we have that 
	\[
	\widehat{W}(n,p,\mathbf{z})\leq Q\mu_{\langle\mathbf{z},\mathbf{f}\rangle}(p)e^{n\mathbb{P}(\mathbf{z})}
	\]
	where $\mu_{\langle\mathbf{z},\mathbf{f}\rangle}$ is the equilibrium
	state for $\langle\mathbf{z},\mathbf{f}\rangle$ with Gibbs constant $Q$. 
\end{lem}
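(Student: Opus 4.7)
The starting point is the identity $\widehat{W}(n,p,\zbf)=(\mathcal{L}^n_{\langle\zbf,\fbf\rangle}\mathbf{1}_p)(\ul z_p)$ derived just above the statement. Unwinding the $n$-fold iterate of the transfer operator yields
\[
\widehat{W}(n,p,\zbf)=\sum_{\ul y\in\sigma^{-n}(\ul z_p)}e^{S_n\langle\zbf,\fbf\rangle(\ul y)}\mathbf{1}_p(\ul y)=\sum_{\ul y\in\sigma^{-n}(\ul z_p)\cap p}e^{S_n\langle\zbf,\fbf\rangle(\ul y)}.
\]
So the plan is to dominate each exponential by a constant multiple of the $\mu_{\langle\zbf,\fbf\rangle}$-mass of a suitable $n$-cylinder and then reassemble.

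The key bookkeeping step is the following. Each $\ul y$ contributing to the sum has a tail fixed by $\sigma^n(\ul y)=\ul z_p$ and the initial $k$ symbols of its $k$-cylinder $p$ fixed by $\ul y\in p$. Since $n\geq k$, the word $\ul y$ is completely determined by its first $n$ symbols, so it lies in a unique $n$-cylinder $q_{\ul y}$; distinct contributing $\ul y$'s lie in distinct $n$-cylinders, and since the first $k$ symbols coincide with those of $p$, each $q_{\ul y}$ is contained in $p$.

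Now I would invoke the Gibbs property. Because $\zbf\in D(\fbf)$, the potential $\langle\zbf,\fbf\rangle$ is real-valued, locally H\"older continuous, and has finite pressure, so Theorem \ref{thm:gibbs and equilibrium} together with Proposition \ref{cor:gibbs and eq for zf} provides a unique invariant Gibbs measure $\mu_{\langle\zbf,\fbf\rangle}$ with some constant $Q>1$ satisfying
\[
e^{S_n\langle\zbf,\fbf\rangle(\ul y)-n\mathbb{P}(\zbf)}\leq Q\,\mu_{\langle\zbf,\fbf\rangle}(q_{\ul y})
\]
for any $\ul y\in q_{\ul y}$.

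Summing this bound over the admissible $\ul y$, pulling $e^{n\mathbb{P}(\zbf)}$ out of the sum, and using that the cylinders $q_{\ul y}$ are pairwise disjoint subsets of $p$, we obtain
\[
\widehat{W}(n,p,\zbf)\leq Qe^{n\mathbb{P}(\zbf)}\sum_{\ul y}\mu_{\langle\zbf,\fbf\rangle}(q_{\ul y})\leq Q\,\mu_{\langle\zbf,\fbf\rangle}(p)\,e^{n\mathbb{P}(\zbf)},
\]
which is the required estimate. The only conceptual point requiring care is the injection of the admissible $\ul y$'s into the $n$-cylinders refining $p$, which uses $n\geq k$ in an essential way; otherwise the proof is routine, and I do not anticipate any serious obstacle.
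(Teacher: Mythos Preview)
Your proof is correct and follows exactly the same route as the paper's: expand $\widehat{W}(n,p,\zbf)$ as a sum over $\sigma^{-n}(\ul z_p)\cap p$, associate to each term a distinct $n$-cylinder contained in $p$, apply the Gibbs inequality termwise, and sum using disjointness. The only cosmetic difference is that the paper carries the factor $e^{-n\mathbb{P}(\zbf)}$ through from the outset rather than pulling it out at the end.
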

\begin{proof}
	Notice
	that 
	\begin{align*}
	e^{-n\mathbb{P}(\mathbf{z})}\cdot\widehat{W}(n,p,\mathbf{z})=e^{-n\mathbb{P}(\mathbf{z})}\left(\mathcal{L}_{\langle\mathbf{z},\mathbf{f}\rangle}^{n}{\bf 1}_{p}\right)(\ul z_{p}) & =\sum_{\underline{y}\in\sigma^{-n}(\ul z_{p})}e^{\langle\mathbf{z},S_{n}\mathbf{f}(\underline{y})\rangle-n\mathbb{P}(\mathbf{z})}{\bf 1}_{p}(\underline{y})\\
		& =\sum_{\underline{y}\in\sigma^{-n}(\ul z_{p})\cap p}e^{\langle\mathbf{z},S_{n}\mathbf{f}(\underline{y})\rangle-n\mathbb{P}(\mathbf{z})}.
	\end{align*}
	Since $n\geq k$
	we have $\sigma^{-n}(\ul z_{p})\cap p\neq\emptyset$. Then, for any $\ul y\in\sigma^{-n}(\ul z_{p})\cap p$, there exists an $n$-cylinder $p_{\ul y}$ such that $\ul y\in p_{\ul y}\subset p$. Moreover, $p_{\ul y_1}=p_{\ul y_2}$ implies $\ul y_1=\ul y_2$. Thus, we can repeatedly use the Gibbs property
 to see that
	\[
	\sum_{\underline{y}\in\sigma^{-n}(\ul z_{p})\cap p}e^{\langle\mathbf{z},S_{n}\mathbf{f}(\underline{y})\rangle-n\mathbb{P}(\mathbf{z})}\leq Q\sum_{\underline{y}\in\sigma^{-n}(\ul z_{p})\cap p}\mu_{\langle\mathbf{z},\mathbf{f}\rangle}(p_{\ul y})\leq Q\mu_{\langle\mathbf{z},\mathbf{f}\rangle}(p)
	\]
	which concludes the proof.
\end{proof}

Finally, we apply Lemma \ref{lem:n>k-hatW} to estimate the quantity
\[
W(n,U):=\sum_{p\in\Lambda_{k}}W(n,p,U).
\]
\begin{lem}
	\label{lem:prop1}Assume $\zbf\in D(\fbf)$, $U$ is an open bounded rectangle in $\mathbb R^2$, and $n\geq k$. Let $\xbf\in\mathbb R^2$ be such that $\langle\mathbf{z},\mathbf{y}\rangle\geq\langle\mathbf{z},\mathbf{x}\rangle$
	for all $\mathbf{y}\in U$. Then 
	\[
	W(n,U)\leq Qe^{n\mathbb{P}(\mathbf{z})-\langle\mathbf{z},\mathbf{x}\rangle}.
	\]
\end{lem}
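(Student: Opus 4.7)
The plan is to combine Lemma \ref{lem:n>k-hatW}, which controls the Laplace-Fourier transform $\widehat W(n,p,\zbf)$, with the hypothesis on $\xbf$ to compare the integral defining $W(n,p,U)$ with the one defining $\widehat W(n,p,\zbf)$. The only slightly delicate point is that we are multiplying by the positive quantity $e^{\langle\zbf,\ybf\rangle-\langle\zbf,\xbf\rangle}$ and need that this factor is $\geq 1$ on $U$; this is precisely what the hypothesis guarantees, and everything else is a direct estimate that also exploits the non-negativity of $W(n,p,\cdot)$ (which holds because it is a counting function).

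First I would fix $p\in\Lambda_k$ and bound $W(n,p,U)$. Since $W(n,p,\ybf)\geq 0$ and, by hypothesis, $\langle\zbf,\ybf\rangle\geq\langle\zbf,\xbf\rangle$ for all $\ybf\in U$, we have $e^{\langle\zbf,\ybf\rangle-\langle\zbf,\xbf\rangle}\geq 1$ on $U$, so
\[
W(n,p,U)=\int_{U}W(n,p,\ybf)\,d\ybf\leq e^{-\langle\zbf,\xbf\rangle}\int_{U}e^{\langle\zbf,\ybf\rangle}W(n,p,\ybf)\,d\ybf.
\]
Extending the integration to all of $\bb R^{2}$ (again using that the integrand is non-negative) gives $W(n,p,U)\leq e^{-\langle\zbf,\xbf\rangle}\widehat W(n,p,\zbf)$.

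Next, applying Lemma \ref{lem:n>k-hatW} (valid since $n\geq k$ and $\zbf\in D(\fbf)$) yields
\[
W(n,p,U)\leq Q\,\mu_{\langle\zbf,\fbf\rangle}(p)\,e^{n\mathbb{P}(\zbf)-\langle\zbf,\xbf\rangle}.
\]
Summing over $p\in\Lambda_{k}$ and using that the $k$-cylinders partition $\Sigma^{+}$ (so $\sum_{p\in\Lambda_{k}}\mu_{\langle\zbf,\fbf\rangle}(p)=1$) produces
\[
W(n,U)=\sum_{p\in\Lambda_{k}}W(n,p,U)\leq Qe^{n\mathbb{P}(\zbf)-\langle\zbf,\xbf\rangle},
\]
which is the desired bound.

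The only conceivable obstacle is ensuring that the $k$-cylinders actually form a partition summing to total measure $1$ under $\mu_{\langle\zbf,\fbf\rangle}$, but this is immediate since $\Lambda_{k}$ is a countable measurable partition of $\Sigma^{+}$ and $\mu_{\langle\zbf,\fbf\rangle}$ is a probability measure (available by Proposition \ref{cor:gibbs and eq for zf} because $\zbf\in D(\fbf)$). Thus the proof is essentially a one-line application of Lemma \ref{lem:n>k-hatW} after the correct sign choice exploits the hypothesis on $\xbf$.
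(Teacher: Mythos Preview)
Your proof is correct and follows essentially the same approach as the paper's own argument: insert the factor $e^{\langle\zbf,\ybf\rangle-\langle\zbf,\xbf\rangle}\geq 1$ on $U$, extend the integral to $\bb R^{2}$ to recognize $\widehat W(n,p,\zbf)$, apply Lemma~\ref{lem:n>k-hatW}, and sum over $p\in\Lambda_k$ using that the $k$-cylinders partition $\Sigma^+$. The paper presents these steps more tersely (and cites \cite[Prop.~1]{Lalley:1987df}), but the content is identical.
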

\begin{proof}
	The statement follows from the proof of \cite[Prop.  1]{Lalley:1987df}. Indeed,
    \begin{align*}
		W(n,U) &\leq\sum_{p\in\Lambda_{k}}\int_{U}e^{\langle\mathbf{z},\mathbf{y}\rangle-\langle\mathbf{z},\mathbf{x}\rangle}W(n,p,\mathbf{y}) d\bf y \\&\leq\sum_{p\in\Lambda_{k}}e^{-\langle\mathbf{z},\mathbf{x}\rangle}\int_{\mathbb{R}^{2}}e^{\langle\mathbf{z},\mathbf{y}\rangle}W(n,p,\mathbf{y}) d\bf y \\
		& \leq\sum_{p\in\Lambda_{k}}\widehat{W}(n,p,\mathbf{z})e^{-\langle\mathbf{z},\mathbf{x}\rangle}\leq Qe^{n\mathbb{P}(\mathbf{z})-\langle\mathbf{z},\mathbf{x}\rangle}
	\end{align*}
    where we apply Lemma \ref{lem:n>k-hatW} to obtain the last inequality.
\end{proof}

%  let us record the following immediate consequence of Proposition
% \ref{prop: epsilonk}(3) for future reference. 
% \begin{lem}\label{lem:bckm LF} If $n\geq k$ and $\zbf\in D(\fbf)$, then $\wh W(n,\zbf)e^{-|\zbf|\eee_k}\leq\wh M(n,\zbf)\leq\wh W(n,\zbf)e^{|\zbf|\eee_k}.$
% \end{lem}

\section{The global growth rate control}\label{sec:globalgrowth}
% {\color{blue} The first goal of this section is to derive an \emph{a priori} upper bound on the {\color{magenta} simultaneous orbital counting term}\GM{I am confused by the simultaneous orbital counting term means here}, namely Proposition~\ref{prop:a-priori estimate}. 
% In particular, these bounds are global in the sense that they do not depend on the choice of cylinder $p$.
% The second goal is to establish a large-deviation-type estimate {\color{magenta} for blablabla}, given in Lemma~\ref{lem:a-priori estimate-2}. 
% There we show that, away from the main contribution\GM{I am confused by the main contribution. Is it clear to the reader by this point in the paper?}, the growth of orbits is relatively slow and thus negligible.}

The first goal of this section is to establish Proposition~\ref{prop:a-priori estimate}, that is we find an {\em a priori} upper bound on the number of preimages of a fixed non-periodic word $\underline{z}_p$ in a cylinder $p$ for which the $n$-th ergodic sum of $\fbf$ is comparable to $t(1,m)$. It is important for our purposes that these upper bounds are global, in the sense that they do not depend on $p$.
The second goal is to establish Lemma~\ref{lem:a-priori estimate-2}, which is a type of large-deviation estimate. There we single out a set of preimages of the $\underline{z}_p$'s whose growth is relatively slow and thus negligible.

To begin the setup of this section, we fix a topologically mixing, countable state Markov shift $\Sigma^+$ with BIP and strictly positive locally H\"older continuous potentials $f,g\colon\Sigma^+\to\mathbb R$ with strong entropy gaps at infinity such that $\|f-g\|_\infty$ is finite. Set $\fbf=(f,g)$ and fix $\xi>0$. Recall that for an admissible slope $m\in\mathcal S(\fbf)$ in Notation \ref{not:ambm} we defined the quantities $(a_m,b_m)=-\zbf_m$, $t_m$, $\bf m$, $\xbf_m={\bf m}/t_m$, and that we define the correlation number as $H_\fbf(m)=a_m+b_mm$. Fix $k\in\mathbb N$, and for each $k$-cylinder $p\in\Lambda_k$ choose a non-periodic word $\ul z_{p}\in p$. 

In this section we obtain growth rate estimates which are independent on the choice of a cylinder.
 
\begin{prop}
	[A priori estimate]\label{prop:a-priori estimate} For any admissible slope $m\in\mathcal S(\fbf)$ there exists a
	constant $C=C(k,m,\xi)$ such that for all $p\in\Lambda_{k}$ we have 
	\[
	{\displaystyle \sum_{n\geq1}\sum_{\ul y\in\sigma^{-n}(\ul z_{p})}\mathbf{1}_{p}(\ul y)\mathbf{1}_{\{t\leq S_{n}f(\ul y)\leq t+\xi\}}(\ul y)\mathbf{1}_{\{mt\leq S_{n}g(\ul y)\leq mt+\xi\}}(\ul y)\leq Ce^{tH_{{\bf {\bf f}}}(m)}\mu_{-a_{m}f-b_{m}g}(p).}
	\]
	where $\mu_{-a_{m}f-b_{m}g}$ is the equilibrium state for $-a_{m}f-b_{m}g$. 
\end{prop}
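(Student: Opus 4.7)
The plan is to split the sum $\sum_{n \geq 1}$ into the ``small-$n$'' regime $1 \leq n < k$ and the ``large-$n$'' regime $n \geq k$, and to treat each using the material of Section~\ref{sec:prep}. In both regimes the strict positivity $f, g \geq B > 0$ combined with the constraint $S_n f(\ul y) \leq t + \xi$ forces $n \leq (t+\xi)/B$, so only finitely many indices actually contribute.

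For $1 \leq n < k$, I would invoke Proposition~\ref{prop:n<k}, which gives
\[
W(n, p, t) \leq \sum_{p' \in \Lambda_k} W(n, p', t) \leq C_1 \, e^{(H_{\fbf}(m) - \zeta) t}
\]
for some $\zeta = \zeta(m) > 0$. Summing over the $k-1$ relevant values of $n$ yields a bound of the form $k C_1 e^{(H_{\fbf}(m) - \zeta)t}$. Since $\mu_{-a_m f - b_m g}(p) > 0$ (by the Gibbs property applied on the $k$-cylinder $p$), the exponential gain $e^{-\zeta t}$ absorbs this contribution into the target $C e^{t H_{\fbf}(m)} \mu_{-a_m f - b_m g}(p)$, with the bounded range of $t$ dealt with by a crude pointwise estimate.

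For $n \geq k$, the key input is Lemma~\ref{lem:prop1} applied with the weight vector $\mathbf{z} = \mathbf{z}_m = (-a_m, -b_m) \in D$, for which $\mathbb{P}(\mathbf{z}_m) = 0$ by Corollary~\ref{cor:man-legen}. Since $\mathbf{z}_m$ has non-positive coordinates, the upper-right corner $\mathbf{x}_U = (t+\xi, mt+\xi)$ of $I^2_{\xi, m}(t)$ minimizes $\langle \mathbf{z}_m, \cdot \rangle$ on the closure, and a direct computation gives $-\langle \mathbf{z}_m, \mathbf{x}_U \rangle = t H_{\fbf}(m) + \xi(a_m + b_m)$. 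Combining Equation~(\ref{eq:w-hat}) with Lemma~\ref{lem:n>k-hatW} (and recalling $\mathbb{P}(\mathbf{z}_m) = 0$) then yields the per-index estimate
\[
W(n, p, t) \leq Q \, \mu_{-a_m f - b_m g}(p) \, e^{t H_{\fbf}(m) + \xi(a_m + b_m)}.
\]

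The main obstacle is to sum this per-index estimate over $n$ without introducing an unwanted polynomial factor in $t$. The plan is to exploit the strict concavity of $u \mapsto -u \mathbb{P}^*(\mathbf{m}/u)$ at its maximum $u = t_m$ (Lemmas~\ref{lem:cov-legen} and~\ref{lem:swarp_f_g}): choosing the weight $\mathbf{z}$ to depend on $n$ so that $\nabla \mathbb{P}(\mathbf{z}) = t \mathbf{m}/n$ improves the per-index bound by a Gaussian-type factor $e^{-c t (t/n - 1/t_m)^2}$ that concentrates the contribution near $n \approx t \cdot t_m$. The analyticity statements of Corollary~\ref{cor:analy+postive-def} together with the Gibbs property let one compare $\mu_{\mathbf{z} \cdot \fbf}(p)$ with $\mu_{-a_m f - b_m g}(p)$ uniformly for $\mathbf{z}$ in the relevant neighborhood of $\mathbf{z}_m$, so that the summation over $n$ is controlled by a constant $C = C(k, m, \xi)$, completing the proof.
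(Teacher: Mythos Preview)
Your large-$n$ plan has a genuine gap: the Gaussian summation over $n$ loses a factor of $\sqrt t$. With $\mathbf z_n=\nabla\mathbb P^*(t\mathbf m/n)$ the Legendre identity gives
\[
n\,\mathbb P(\mathbf z_n)-\langle\mathbf z_n,t\mathbf m\rangle
=-n\,\mathbb P^*(t\mathbf m/n)
=t\Big(-\tfrac{n}{t}\,\mathbb P^*\big(\mathbf m/(n/t)\big)\Big)\le t\,H_{\fbf}(m),
\]
and by Lemma~\ref{lem:cov-legen} the deficit near the maximum is $\sim-c\,t\,(n/t-t_m)^2$. Summing $\exp\big(-c\,t\,(n/t-t_m)^2\big)$ over $n\in\mathbb Z$ is $\asymp\sqrt t$, so the outcome of your scheme is $C\sqrt t\,e^{tH_{\fbf}(m)}\mu(p)$ rather than $Ce^{tH_{\fbf}(m)}\mu(p)$. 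The per-index Markov bound coming from Lemma~\ref{lem:n>k-hatW} is too crude by exactly this $t^{-1/2}$ on the central window $|n/t-t_m|\lesssim t^{-1/2}$, and nothing in your outline recovers it. (Your scheme is in fact what the paper uses in Lemma~\ref{lem:a-priori estimate-2} to control the \emph{tails} $|n/t-t_m|\ge\epsilon$, where a polynomial loss is harmless.)

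The paper avoids summing per-index bounds altogether. It introduces the single potential $\tilde f=(a_m f+b_m g)/H_{\fbf}(m)$, observes $P(-H_{\fbf}(m)\tilde f)=0$ and $\mu_{-H_{\fbf}(m)\tilde f}=\mu_{-a_m f-b_m g}$, and uses the identity $\tilde f=f+\tfrac{b_m}{H_{\fbf}(m)}(g-mf)$ to relax the two-dimensional box $I^2_{\xi,m}(t)$ to the half-line constraint $\{S_n\tilde f\le t+c\xi\}$. A one-potential renewal-type estimate (Lemma~\ref{lem:entropy bound}) then handles the full sum $\sum_{n\ge1}$ directly, with the correct $\mu(p)$ factor and no polynomial loss. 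As a secondary point, your treatment of $n<k$ also needs care: Proposition~\ref{prop:n<k} bounds $\sum_{p'}W(n,p',t)$, not $W(n,p,t)/\mu(p)$, and since $\inf_{p\in\Lambda_k}\mu_{-a_mf-b_mg}(p)=0$ in the countable-state setting, absorbing $e^{-\zeta t}$ into $\mu(p)$ with a constant independent of $p$ is not automatic.
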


Proposition \ref{prop:a-priori estimate} follows from Lemma \ref{lem:entropy bound}, which is a similar a priori estimate for one potential.

\begin{lemma}[Technical lemma]\label{lem:entropy bound} Suppose $f\colon\Sigma^+\to\mathbb R$ is a strictly positive, locally H\"older continuous potential with a strong entropy gap at infinity. Let $\delta_f>d(f)$ be the unique constant such that $P(-\delta_ff)=0$. Then, there exists a constant $D>0$ such that for all $p\in\Lambda_{k}$ 
	\begin{equation}
		\sum_{n\geq1}\sum_{\ul y\in\sigma^{-n}(\ul z_{p})}{\bf 1}_{p}(y){\bf 1}_{\{S_{n}f(\ul y)\le t\}}(\ul y)\le De^{t\delta}\mu_{-\delta f}(p)\label{eq:updated54}
	\end{equation} where $\mu_{-\delta f}$ is the equilibrium state of $-\delta f$.
\end{lemma}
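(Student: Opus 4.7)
Since $f\geq B>0$ for some constant $B$, the condition $S_n f(\ul y)\leq t$ forces $n\leq t/B$, so
\[
N(p,t):=\sum_{n\geq 1}\sum_{\ul y\in\sigma^{-n}(\ul z_p)}\mathbf 1_p(\ul y)\,\mathbf 1_{\{S_nf(\ul y)\leq t\}}(\ul y)
\]
is finite and non-decreasing in $t$. The plan is to analyze the Laplace transform of $N(p,t)$ via the transfer operator and invert it by a contour-integration argument, so as to extract the exponential rate $\delta_f$ together with a residue comparable to $\mu_{-\delta_f f}(p)$. For $s>\delta_f$ one has $P(-sf)<0$ and
\[
G(s;p):=\int_0^{\infty}e^{-st}\,dN(p,t)=\sum_{n\geq 1}\bigl(\mathcal{L}_{-sf}^{n}\mathbf 1_p\bigr)(\ul z_p).
\]

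Combining Theorem \ref{transfer fact} with the Analytic Perturbation Theorem \ref{thm:APT}, applied to the holomorphic family $s\mapsto\mathcal{L}_{-sf}$ in a complex neighborhood of $\delta_f$ (as in the single-potential specialization of Corollary \ref{thm:decay-cor}), yields a spectral decomposition
\[
\bigl(\mathcal{L}_{-sf}^{n}\mathbf 1_p\bigr)(\ul z_p)=e^{nP(-sf)}h_{-sf}(\ul z_p)\nu_{-sf}(p)+R_n(s;p),
\]
with $|R_n(s;p)|\leq K r^n$ for uniform constants $K$ and $r<1$. Summing the geometric series gives
\[
G(s;p)=\frac{h_{-sf}(\ul z_p)\nu_{-sf}(p)\,e^{P(-sf)}}{1-e^{P(-sf)}}+E(s;p),
\]
with $E(\cdot;p)$ analytic near $\delta_f$. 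Since $P(-\delta_f f)=0$ and $\partial_{s}P(-sf)|_{s=\delta_f}=-\bar f$ with $\bar f:=\int f\,d\mu_{-\delta_f f}>0$ by Corollary \ref{cor:analy+postive-def}, $G(s;p)$ extends meromorphically across $\mathrm{Re}(s)=\delta_f$ with a simple pole at $s=\delta_f$ of residue
\[
R(p)=\frac{h_{-\delta_f f}(\ul z_p)\,\nu_{-\delta_f f}(p)}{\bar f}.
\]

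Using $\mu_{-\delta_f f}=h_{-\delta_f f}\nu_{-\delta_f f}$ together with the two-sided bounds on $h_{-\delta_f f}$ from Theorem \ref{transfer fact}, one concludes $R(p)\leq C_1\mu_{-\delta_f f}(p)$, and an analogous estimate controls $E(s;p)$ by $C_2\mu_{-\delta_f f}(p)$ uniformly on a compact neighborhood of $\delta_f$. The next step is to invert the Laplace transform and shift the contour across the pole to a vertical line $\mathrm{Re}(s)=\delta_f-\eta$ for some small $\eta\in(0,\delta_f)$: the residue theorem extracts the leading term $(R(p)/\delta_f)e^{\delta_f t}$, while the shifted integral contributes $O(\mu_{-\delta_f f}(p)\,e^{(\delta_f-\eta)t})$ uniformly in $p$, giving $N(p,t)\leq De^{\delta_f t}\mu_{-\delta_f f}(p)$ after absorbing constants into $D$.

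The main obstacle will be justifying this contour shift, which demands uniform-in-$p$ bounds on $G(s;p)$ along the shifted contour. This rests on a complex Ruelle-Perron-Frobenius theory for $\mathcal{L}_{-sf}$ in a strip around $\delta_f$, ensuring the absence of further poles on $\mathrm{Re}(s)=\delta_f$ via a non-arithmeticity-type argument for $f$ (available in our strictly positive locally H\"older setting), together with sufficient decay of $G(s;p)$ as $|\mathrm{Im}(s)|\to\infty$, which can be obtained by integration by parts leveraging the piecewise-constant nature of $N(p,\cdot)$.
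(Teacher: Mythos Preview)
Your contour-shift strategy has two genuine gaps that prevent it from going through as stated.

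First, the lemma does not assume that $f$ is non-arithmetic. Your argument for the absence of further poles on the line $\Re(s)=\delta_f$ explicitly invokes a ``non-arithmeticity-type argument for $f$'', but nothing in the hypotheses rules out $f$ being arithmetic (in the applications later the relevant potential is $\tilde f=\tfrac{1}{H}(af+bg)$, but the lemma is stated and used for a single potential with only positivity, H\"older regularity, and a strong entropy gap). If $f$ is arithmetic you will in fact have infinitely many poles on $\Re(s)=\delta_f$ and the contour cannot be moved.

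Second, the claimed decay of $G(s;p)$ as $|\Im(s)|\to\infty$ via ``integration by parts leveraging the piecewise-constant nature of $N(p,\cdot)$'' is false: $N(p,\cdot)$ is a step function, so $dN(p,\cdot)$ is a sum of point masses and its Laplace--Fourier transform has no decay in the imaginary direction. Obtaining any usable decay here would require Dolgopyat-type spectral estimates for $\mathcal L_{-sf}$ at large $|\Im(s)|$, which are far from automatic in this countable-state setting and are not provided by the results you cite. A naive Chebyshev bound $N(p,t)\le e^{st}G(s;p)$ with $s=\delta_f+1/t$ only yields $N(p,t)\le C\,t\,e^{\delta_f t}\mu_{-\delta_f f}(p)$, which is weaker than what is claimed.

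The paper avoids all of this with a short, purely real argument. For $n\le k$ one quotes the crude estimate \cite[Lem.~5.1(iv)]{BCKMcounting}. For $n>k$ one writes $S_nf(\ul y)=S_kf(\ul y)+S_{n-k}f(\sigma^k\ul y)$ and uses the Gibbs property of $\mu_{-\delta f}$ on the $k$-cylinder $p$ to get $-\delta\,S_kf(\ul y)\le \log\mu_{-\delta f}(p)+\log Q$, i.e.\ $S_kf(\ul y)\ge -(\log\mu_{-\delta f}(p)+\log Q)/\delta$. This converts the constraint $S_nf(\ul y)\le t$ into $S_{n-k}f(\sigma^k\ul y)\le t+(\log\mu_{-\delta f}(p)+\log Q)/\delta$, drops the cylinder condition, shifts $n\mapsto m=n-k$, and applies the global estimate \cite[Lem.~4.3]{BCKMcounting} to obtain
\[
\sum_{m\ge 1}\sum_{\ul w\in\sigma^{-m}(\ul z_p)}\mathbf 1_{\{S_mf(\ul w)\le t+(\log\mu_{-\delta f}(p)+\log Q)/\delta\}}\le C\,e^{\delta t}\,Q\,\mu_{-\delta f}(p).
\]
The factor $\mu_{-\delta f}(p)$ appears exactly because the Gibbs bound shifted the threshold by $(\log\mu_{-\delta f}(p))/\delta$. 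This route needs no complex analysis, no non-arithmeticity, and no spectral decay.
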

\begin{proof} By \cite[Lem. 5.1 (iv)]{BCKMcounting} it is enough
	to consider $n>k$. Notice that by the Gibbs property of $\mu_{-\delta f}$,
	we know that
	\begin{alignat*}{1}
		\sum_{n>k}\sum_{\ul y\in\sigma^{-n}(\ul z_{p})}{\bf 1}_{p}(\ul y){\bf 1}_{\{S_{n}f(\ul y)\le t\}}(\ul y) & =\sum_{n>k}\sum_{\ul y\in\sigma^{-n}(\ul z_{p})}{\bf 1}_{p}(\ul y){\bf 1}_{\{S_{n-k}f(\sigma^{k}(\ul y))\le t-S_{k}f(\ul y)\}}(\ul y)\\
		& \leq\sum_{n>k}\sum_{\ul y\in\sigma^{-n}(\ul z_{p})}{\bf 1}_{p}(\ul y){\bf 1}_{\{S_{n-k}f(\sigma^{k}(\ul y))\le t+\frac{\log\mu_{-\delta f}(p)+\log Q}{\delta}\}}(\ul y)\\
		& \leq\sum_{n>k}\sum_{\ul w\in\sigma^{k-n}(\ul z_{p})}{\bf 1}_{\{S_{n-k}f(\ul w)\le t+\frac{\log\mu_{-\delta f}(p)+\log Q}{\delta}\}}(\ul w)\\
		& =\sum_{m\geq1}\sum_{\ul w\in\sigma^{-m}(\ul z_{p})}{\bf 1}_{\{S_{m}f(\ul w)\le t+\frac{\log\mu_{-\delta f}(p)+\log Q}{\delta}\}}(\ul w)\\
		& \leq \widetilde Ce^{\delta(t+\frac{\log\mu_{-\delta f}(p)+\log Q}{\delta})}=\widetilde CQe^{\delta t}\mu_{-\delta f}(p)
	\end{alignat*}
	where $Q$ is the Gibbs constant of $\mu_{-\delta_f}$ and the last inequality follows from \cite[Lem. 4.3]{BCKMcounting}. 
\end{proof}

\begin{proof}[Proof of Proposition \ref{prop:a-priori estimate}]
Let us simplify notation by writing $a=a_{m}$, $b=b_{m}$, $H=H_{{\bf f}}(m)=a+bm$, and $\mu=\mu_{-af-bg}$. We first claim that there exists $E$ such that
\[
{\displaystyle \sum_{n\geq1}\sum_{\ul y\in\sigma^{-n}(\ul z_{p})}\mathbf{1}_{p}(\ul y)\mathbf{1}_{\{S_{n}f(\ul y)\leq t\}}(\ul y)\mathbf{1}_{\{|S_{n}(g-mf)(\ul y)|\leq\xi\}}(\ul y)\leq Ce^{tH}\mu(p)}.
\]

Let $\varphi=\frac{1}{H}(af+bg)$ and observe that $\varphi$ is a strictly
positive locally H\"older continuous potential satisfying $P(-H\varphi)=0$. Using Lemma
\ref{lem:entropy bound} we know that there exists a constant
$D>0$ such that for all $p\in\Lambda_{k}$ 
\begin{equation}\label{eq:5.4-1}
	{\displaystyle \sum_{n\geq1}\sum_{\ul y\in\sigma^{-n}(\ul z_{p})}\mathbf{1}_{p}(\ul y)\mathbf{1}_{\{S_{n}\varphi(\ul y)\leq t\}}(\ul y)\leq De^{tH}\mu(p)}
\end{equation}
where we note that $\mu$ is the equilibrium state for $-H\varphi=-af-bg$. Since $a=H-bm$, we have 
\[
\varphi=\frac{1}{H}(af+bg)=f+\frac{b}{H}(g-mf).
\]
Hence, we know that for any $n$ and $\ul y\in\Sigma^{+}$
\begin{equation}
	\mathbf{1}_{\{S_{n}f(\ul y)\leq t\}}(\ul y)\mathbf{1}_{\{|S_{n}(g-mf)(\ul y)|\leq\xi\}}(\ul y)\leq \mathbf{1}_{\{S_{n}f(\ul y)+\frac{b}{H}S_{n}(g-mf)(\ul y)\leq t+\frac{b}{H}\xi\}}(\ul y)=\mathbf{1}_{\{S_{n}\varphi(\ul y)\leq t+\frac{b}{H}\xi\}}(\ul y).\label{eq:5.4-2}
\end{equation}
By equation (\ref{eq:5.4-1}) and (\ref{eq:5.4-2}), we finally obtain
\[
{\displaystyle \sum_{n\geq1}\sum_{\ul y\in\sigma^{-n}(\ul z_{p})}\mathbf{1}_{p}(\ul y)\mathbf{1}_{\{S_{n}f(\ul y)\leq t\}}(\ul y)\mathbf{1}_{\{|S_{n}(g-mf)(\ul y)|\leq\xi\}}(\ul y)\leq De^{\frac{b\xi}{H}}e^{tH}\mu(p)=:Ee^{tH}\mu(p)}
\]
which completes the proof of the claim. Then, we get
\begin{alignat*}{1}
	&\sum_{n\geq1}\sum_{\ul y\in\sigma^{-n}(\ul z_p)}\mathbf{1}_{p}(\ul y)\mathbf{1}_{\{t\leq S_{n}f(\ul y)\leq t+\xi\}}(\ul y)\mathbf{1}_{\{mt\leq S_{n}g(\ul y)\leq mt+\xi\}}(\ul y)\\
	 &\leq \sum_{n\geq1}\sum_{\ul y\in\sigma^{-n}(\ul z_p)}\mathbf{1}_{p}(\ul y)\mathbf{1}_{\{t\leq S_{n}f(\ul y)\leq t+\xi\}}(\ul y)\mathbf{1}_{\{|S_{n}g(\ul y)-mS_{n}f(\ul y)|\leq(m+1)\xi\}}(\ul y)\\
	&\leq \sum_{n\geq1}\sum_{\ul y\in\sigma^{-n}(\ul z_p)}\mathbf{1}_{p}(\ul y)\mathbf{1}_{\{S_{n}f(\ul y)\leq t+\xi\}}(\ul y)\mathbf{1}_{\{|S_{n}g(\ul y)-mS_{n}f(\ul y)|\leq(m+1)\xi\}}(\ul y)\\
	&\leq \underset{C}{\underbrace{De^{\left(\frac{b(m+1)\xi}{H}+H\xi\right)}}}e^{tH}\mu(p)
\end{alignat*}
which finishes the proof of the proposition.
\end{proof}

Recall the notation $I_{\xi,m}^{2}(t)=(t,t+\xi)\times(mt,mt+\xi)$ and consider the set 
\begin{gather*}
	\mathcal{W}(p,t):=\{\ul y\in\Sigma^{+}\colon\ul y\in p,\ \sigma^{n}(\ul y)=\ul z_{p},\ S_{n}\fbf(\ul y)\in I_{\xi,m}^{2}(t)\ \text{\text{for some }\ensuremath{n\geq1}}\}.
\end{gather*}
Since we assume that $\ul z_p$ is not periodic, if $\ul y\in\mathcal W(p,t)$ then there exists a unique positive integer $n(\ul y)$ such that $\sigma^{n(\ul y)}(\ul y)=\ul z_{p}$. Now, for any $\eee>0$ define
\begin{gather*}
	\mathcal{W}(p,t,\geq\epsilon):=\left\{ \ul y\in\mathcal{W}(p,t)\colon\left|\frac{n(\ul y)}{t}-t_{m}\right|\geq\epsilon\right\} \quad\text{ and }\quad\mathcal{W}(p,t,<\epsilon):=\left\{ \ul y\in\mathcal{W}(p,t)\colon\left|\frac{n(\ul y)}{t}-t_{m}\right|<\epsilon\right\} 
\end{gather*}
Observe that these sets are related to the quantities $W(n,p,t)$ introduced in Section \ref{sec:prep}, and in particular
\[
\#\mathcal W(p,t)=\sum_{n\geq 1}W(n,p,t).
\]
We now derive a type of large deviation result which shows that the contribution to the growth of $\mathcal{W}(p,t)$ by the elements $\ul y$ for which $n(\ul y)/t$ is far from $t_m$ is negligible. 

\begin{lem}
	[Negligible part]\label{lem:a-priori estimate-2} Suppose $m\in\mathcal S(\fbf)$ is an admissible slope for $\fbf$. For any $\epsilon>0$ there exist constants
	$C_{2}=C_2(k,m,\xi,\eee),\eta=\eta(k,m,\xi,\eee)>0$ such that 
	\[
	\sum_{p\in\Lambda_{k}}\mathcal{W}(p,t,\geq\eee)=\sum_{p\in\Lambda_{k}}{\displaystyle \sum_{n\colon\left|\frac{n}{t}-t_{m}\right|\geq\eee}W(n,p,t)\leq C_{2}e^{t\left(H_{\fbf}(m)-\eta\right)}}.
	\]
\end{lem}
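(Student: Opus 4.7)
The plan is to bound $W(n, \boxt) = \sum_{p\in\Lambda_k} W(n, p, t)$ using Lemma~\ref{lem:prop1} with a judiciously chosen $\zbf \in D$ near $\zbf_m$, and then to sum the resulting estimates over $n$ by a geometric series. The guiding observation is that at $\zbf_m$ we have $\mathbb{P}(\zbf_m)=0$, $\nabla\mathbb{P}(\zbf_m)=\xbf_m$ (Corollary~\ref{cor:man-legen}(1)--(2)), and $H(\zbf_m) = H_{\fbf}(m)$, where $H(\zbf):=-\langle\zbf,\mathbf{m}\rangle$. Consequently, for any $t'>0$, the function $\zbf\mapsto t'\mathbb{P}(\zbf)+H(\zbf)$ equals $H_{\fbf}(m)$ at $\zbf_m$ and has gradient $(t'-t_m)\xbf_m$ there. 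When $t'\neq t_m$, a small perturbation of $\zbf_m$ along $-\mathrm{sign}(t'-t_m)\,\xbf_m$ strictly decreases this function below $H_{\fbf}(m)$, producing the exponential gap we need.

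I would split the sum $\sum_{n\colon |n/t-t_m|\geq\eee}$ into three pieces. Piece (i), $n<k$, is handled directly by Proposition~\ref{prop:n<k} (the $k-1$ terms each contribute at most $Ce^{(H_{\fbf}(m)-\zeta)t}$). Piece (ii), $n\geq n_0:=\lceil(t_m+\eee)t\rceil$, is treated by taking $\zbf_+ = \zbf_m-u\xbf_m\in D$ for a small $u>0$: since $\nabla\mathbb{P}(\zbf_m)=\xbf_m$ has positive components, $\mathbb{P}(\zbf_+)<0$, and Lemma~\ref{lem:prop1} (applied with the minimizing $\xbf=(t+\xi, mt+\xi)$) gives $W(n,\boxt) \leq Q e^{n\mathbb{P}(\zbf_+) + tH(\zbf_+) + O(\xi)}$; summing the geometric series yields a bound of order $e^{t[(t_m+\eee)\mathbb{P}(\zbf_+)+H(\zbf_+)]}$. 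Piece (iii), $k\leq n\leq n_1:=\lfloor(t_m-\eee)t\rfloor$, contains at most $O(t)$ terms by the strict positivity of $f$; I take $\zbf_-=\zbf_m+u\xbf_m$ so that $\mathbb{P}(\zbf_-)>0$, and Lemma~\ref{lem:prop1} combined with domination by the largest ($n=n_1$) term bounds the piece by $t\cdot Qe^{n_1\mathbb{P}(\zbf_-)+tH(\zbf_-)+O(\xi)} = O(t\cdot e^{t[(t_m-\eee)\mathbb{P}(\zbf_-)+H(\zbf_-)]})$. In both pieces, the Taylor expansion of the exponent $\zbf\mapsto t'\mathbb{P}(\zbf)+H(\zbf)$ at $\zbf_m$ produces a gain of order $u\eee\|\xbf_m\|^2$, so taking $u$ small strictly beats $H_{\fbf}(m)$; the extra polynomial factor $t$ in (iii) is absorbed into a slightly smaller $\eta$.

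The main obstacle is verifying that both perturbations remain inside the open set $D$. For $\zbf_+=\zbf_m-u\xbf_m$ this is immediate, since $-\xbf_m$ has strictly negative components and $\zbf_m$ lies in the open set $D$ by Corollary~\ref{cor:man-legen}(1). For $\zbf_-=\zbf_m+u\xbf_m$ one needs the perturbed components $(-a_m+u/t_m, -b_m+um/t_m)$ to remain $\leq 0$, which holds for $u$ small whenever $a_m, b_m>0$; at those boundary admissible slopes where one of $a_m, b_m$ vanishes, I would replace $\xbf_m$ with an alternative direction $\mathbf v$ having $\langle \xbf_m, \mathbf v\rangle >0$ that keeps $\zbf_-$ inside $\mathbb{R}_{\leq 0}^2$, such as $(1,0)$ or $(0,1)$ (one of these always works since $\zbf_m\in D$ forces $a_m+b_m>0$). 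The $O(\xi)$ error from the choice of $\xbf=(t+\xi, mt+\xi)$ in Lemma~\ref{lem:prop1} is independent of $n, t$ and is absorbed into the constant $C_2$.
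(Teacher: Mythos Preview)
Your proposal is correct and follows essentially the same three-piece strategy as the paper: handle $n<k$ via Proposition~\ref{prop:n<k}, and for $n\geq k$ apply Lemma~\ref{lem:prop1} with a perturbed weight $\zbf$ whose pressure is negative (upper tail) or positive (lower tail), then sum. The only genuine difference is in the choice of perturbation. The paper takes the Legendre-optimal points $\zbf^i=\nabla\mathbb P^*(\mathbf m/t^i)$ with $t^1=t_m+\eee$, $t^0=t_m-\eee$; this choice automatically lands in $D$ (since $\nabla\mathbb P^*$ maps $\nabla\mathbb P(D)$ back to $D$) and the key inequality $t^i\mathbb P(\zbf^i)-\langle\mathbf m,\zbf^i\rangle<H_{\fbf}(m)$ drops out of Legendre duality (Proposition~\ref{prop:legen-trans}\eqref{item:legendre 5} and Lemma~\ref{lem:swarp_f_g}) rather than from a Taylor expansion. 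For the lower tail the paper also sums the finite geometric series directly, avoiding your extra polynomial factor $t$.

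Your explicit perturbation $\zbf_m\pm u\,\xbf_m$ is more elementary but forces you to confront the boundary issue $\zbf_-\notin D$ when $a_m=0$ or $b_m=0$; your proposed fix (replace $\xbf_m$ by $(1,0)$ or $(0,1)$) works, but the paper's Legendre choice sidesteps this entirely. Either way the argument goes through.
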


\begin{proof}
    When $n<k$, the statement follows from a repeated application of  Proposition
	\ref{prop:n<k}. Thus, assume $n\geq k$ and let ${\bf m}=(1,m)$. By Lemma \ref{lem:cov-legen}(2) we know that the function
	$t\mapsto-t\mathbb{P}^{*}(\frac{\bf m}{t})$ is concave down 
    and realizes
	its maximum at $t=t_{m}$. Let $t^{1}=t_{m}+\eee$, $t^{0}=t_{m}-\eee$, and $\mathbf{z}^{i}=\nabla\mathbb{P^{*}}(\frac{\mathbf{m}}{t^{i}})$
	for $i=0,1$. By Proposition \ref{prop:legen-trans}(4), we have
	\[
	\mathbb{P}(\mathbf{z}^{1})<0=\mathbb P\left(\nabla\mathbb{P}^{*}\left(\frac{\mathbf{m}}{t_m}\right)\right)<\mathbb{P}(\mathbf{z}^{0})
	\]
	and
	$t^{i}\mathbb{P}(\mathbf{z}^{i})-\langle\mathbf{m},\mathbf{z}^{i}\rangle<H_{\fbf}(m)$ by Proposition \ref{prop:legen-trans}(6) and Lemma \ref{lem:swarp_f_g}(2).
    
	Consider $U_{t}=\left(1,1+\frac{\xi}{t}\right)\times\left(m,m+\frac{\xi}{t}\right)$, so that
	$t\cdot U_{t}=I_{\xi,m}^{2}(t)$. Recall that Lemma \ref{lem:prop1} shows
	that for all $n\ge k$, and for a fixed $\mathbf{z}\in D$, if $\mathbf{x}\in\mathbb{R}^{2}$ is such that $\langle\mathbf{z},\mathbf{y}\rangle\geq\langle\mathbf{z},\mathbf{x}\rangle$ for all $\mathbf{y}\in U_{t}$, then
	\[
	W(n,U_{t})\leq Qe^{n\mathbb{P}(\mathbf{z})-\langle\mathbf{z},\mathbf{x}\rangle}.
	\]
	Notice that the linear functional $\mathbf{y}\mapsto\langle\mathbf{z},\mathbf{y}\rangle$
	for $\mathbf{y}\in \overline{U}_{t}$ reaches its extreme values at a vertex
	of $\overline{U}_{t}$. 
    
    We let $\mathbf{x}^{i}(t)$ be the vertex of $\overline{U}_{t}$
	such that $\langle\mathbf{z}^{i},\mathbf{y}\rangle\geq\langle\mathbf{z}^{i},\mathbf{x}^{i}(t)\rangle$
	for all $\mathbf{y}\in U_{t}$. We also notice that $|\mathbf{x}^{i}(t)-\mathbf{m}|\leq\sqrt{2}\frac{\xi}{t}$.
	With this notation, we have
	\begin{align*}
		W(n,I_{\xi,m}^{2}(t)) & \leq Qe^{n\mathbb{P}(\mathbf{z}^{1})-t\langle\mathbf{z}^{1},\mathbf{x}^{1}(t)\rangle}
	\end{align*}    
	and given that $\mathbb{P}(\mathbf{z}^{1})<0$, we obtain
	\begin{align*}
		\sum_{n\geq tt^{1}}W(n,I_{\xi,m}^{2}(t)) & \leq\sum_{n\geq tt_{1}}Qe^{n\mathbb{P}(\mathbf{z}^{1})-t\langle\mathbf{z}^{1},\mathbf{x}^{1}(t)\rangle}\ \ \\
		& \leq Q\cdot\frac{e^{tt^{1}\mathbb{P}(\mathbf{z}^{1})-t\langle\mathbf{z}^{1},\mathbf{x}^{1}(t)\rangle}}{1-e^{\mathbb{P}(\mathbf{z}^{1})}}=Q'e^{t\left(t^{1}\mathbb{P}(\mathbf{z}^{1})-\langle\mathbf{z}^{1},\mathbf{x}^{1}(t)\rangle\right)}\\
		& \leq \overline Q'e^{t(H_{\fbf}(m)-\eta_{1})}.
	\end{align*}
	Similarly, 
	\begin{align*}
		\sum_{n\leq tt^{0}}W(n,I_{\xi,m}^{2}(t)) & \leq\sum_{n\leq tt^{0}}Qe^{n\mathbb{P}(\mathbf{z}^{0})-t\langle\mathbf{z}^{0},\mathbf{x}^{0}(t)\rangle}\\
		& \leq Q\cdot\frac{e^{\mathbb{P}(\mathbf{z}^{0})}\left(e^{tt^{0}\mathbb{P}(\mathbf{z}^{0})}-1\right)e^{-t\langle\mathbf{z}^{0},\mathbf{x}^{0}(t)\rangle}}{e^{\mathbb{P}(\mathbf{z}^{0})}-1}= Q''e^{t\left(t^{0}\mathbb{P}(\mathbf{z}^{0})-\langle\mathbf{z}^{0},\mathbf{x}^{0}(t)\rangle\right)}\\
		& \leq \overline Q''e^{t(H_{\fbf}(m)-\eta_{2})}.\qedhere
	\end{align*}
\end{proof}

\begin{lem}
\label{lem:rough-bounds} There exists a constant $C_3=C_3(k,m,\xi)>0$ independent
of $p$ such that 
\[
\sum_{p\in\Lambda_{k}}\sum_{n\geq1}\frac{1}{n}W(n,p,t)\leq C_3\frac{e^{H_\fbf(m)t}}{t}
\]
where $W(n,p,t)=\#\left(p\cap\sigma^{-n}(\ul z_{p})\cap\{\ul y\colon S_{n}\fbf(\ul y)\in I_{\xi,m}^{2}(t)\}\right)$.
\end{lem}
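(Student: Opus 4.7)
The plan is to split the inner sum over $n$ into a \emph{bulk} regime $\{n : |n/t - t_m| < \epsilon\}$ and a \emph{tail} regime $\{n : |n/t - t_m| \geq \epsilon\}$, for a fixed $\epsilon \in (0, t_m/2)$. The crucial observation is that the $1/n$ weighting gains a factor of $\sim 1/(t_m t)$ on the bulk, while the tail contributes only a negligible amount thanks to the large-deviation estimate of Lemma \ref{lem:a-priori estimate-2}. This is precisely what produces the $1/t$ improvement compared to Proposition \ref{prop:a-priori estimate}.

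For the bulk regime, I use that $n \geq (t_m - \epsilon)t$, so that $1/n \leq \bigl((t_m-\epsilon)t\bigr)^{-1}$. Summing Proposition \ref{prop:a-priori estimate} over $p \in \Lambda_k$ and using that the $k$-cylinders partition $\Sigma^+$ (so $\sum_{p\in\Lambda_k} \mu_{-a_m f - b_m g}(p) = 1$), one obtains
\[
\sum_{p\in\Lambda_k}\sum_{\substack{n\geq 1\\ |n/t-t_m|<\epsilon}}\frac{1}{n}W(n,p,t) \;\leq\; \frac{1}{(t_m-\epsilon)\,t}\sum_{p\in\Lambda_k}\sum_{n\geq 1}W(n,p,t) \;\leq\; \frac{C'\, e^{H_{\fbf}(m)\,t}}{(t_m-\epsilon)\,t}.
\]
For the tail regime, I simply bound $1/n \leq 1$ and apply Lemma \ref{lem:a-priori estimate-2}, getting
\[
\sum_{p\in\Lambda_k}\sum_{\substack{n\geq 1\\ |n/t-t_m|\geq \epsilon}}\frac{1}{n}W(n,p,t) \;\leq\; C_2 \, e^{t(H_{\fbf}(m)-\eta)} \;\leq\; \tilde{C}\,\frac{e^{H_{\fbf}(m)\,t}}{t}
\]
for all $t \geq t_0$, since the exponential factor $e^{-\eta t}$ decays much faster than $1/t$. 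Adding these two bounds produces the desired estimate for large $t$.

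There is no real obstacle: the only minor technicality is absorbing small values of $t$ into the constant, which is harmless since $W(n,p,t)$ vanishes for $t$ small enough that $I^2_{\xi,m}(t)$ contains no admissible ergodic sums (given strict positivity of $f$ and $g$), and in any case the bound is trivial on any bounded range of $t$.
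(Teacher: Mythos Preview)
Your proof is correct and follows essentially the same argument as the paper: split into bulk $|n/t - t_m| < \epsilon$ and tail $|n/t - t_m| \geq \epsilon$, use $1/n \leq 1/((t_m-\epsilon)t)$ together with Proposition~\ref{prop:a-priori estimate} summed over $p$ on the bulk, and Lemma~\ref{lem:a-priori estimate-2} on the tail. The paper's presentation is nearly identical, only phrased via the sets $\mathcal{W}(p,t,<\epsilon)$ and $\mathcal{W}(p,t,\geq\epsilon)$.
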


\begin{proof}
This follows from Proposition \ref{prop:a-priori estimate} and Lemma \ref{lem:a-priori estimate-2} via the following estimate
\begin{align*}
\sum_{p\in\Lambda_{k}}\sum_{n\geq1}\frac{1}{n}W(n,p,t) & = \sum_{p\in\Lambda_{k}}\sum_{\ul y\in\mathcal{W}(p,t)}\frac{1}{n(\ul y)}\\
 & = \sum_{p\in\Lambda_{k}}\sum_{y\in\mathcal{W}(p,t,\geq\eee)}\frac{1}{n(\ul y)} + \sum_{p\in\Lambda_{k}}\sum_{\ul y\in\mathcal{W}(p,t,<\eee)}\frac{1}{n(\ul y)}\\
 & \leq C_{2}e^{t\left(H_\fbf(m)-\eta\right)} + \sum_{p\in\Lambda_{k}}\sum_{\ul y\in\mathcal{W}(p,t,<\eee)}\frac{1}{t(t_{m}-\eee)}\\
 & \leq C_{2}e^{t\left(H_\fbf(m)-\eta\right)} + \frac{Ce^{tH_{{\bf {\bf f}}}(m)}}{t(t_m-\epsilon)}\sum_{p\in\Lambda_{k}}\mu_{-a_mf-b_mg}(p)\\
 & \leq  \frac{C_{3}e^{tH_{{\bf {\bf f}}}(m)}}{t},
\end{align*}
where \(C_{2}\) is the constant from Lemma~\ref{lem:a-priori estimate-2}, 
\(C\) is the constant from Proposition~\ref{prop:a-priori estimate}, 
and \(\frac{C}{t_{m}-\varepsilon}\) is uniformly bounded for $t$ sufficiently large (for instance, when \(t_{m}>2\varepsilon\)). 
Hence a constant \(C_{3}\) in the last inequality exists.
\end{proof}

\section{Local counting, asymptotic estimates, and Theorems \ref{thm:weaker-main}, \ref{thm:local-counting}, and \ref{thmx:2}}\label{sec:local-estimate}

We begin by outlining the structure of this section. 
Our goal is to combine the global estimates derived in Section~\ref{sec:globalgrowth} 
with the local asymptotic expansion formula in Theorem~\ref{thm:loc-est} 
to establish the main results of the paper, namely 
Theorems~\ref{thm:weaker-main}, \ref{thm:local-counting}, and~\ref{thmx:2}. 
% {\color{blue} In Subsection~\ref{sec:abc proofs}, we assume Theorem~\ref{thm:loc-est} 
% and use it to derive Theorems~\ref{thm:weaker-main}, 
% \ref{thm:local-counting}, and~\ref{thmx:2}. In Subsection~\ref{sec:localgrowth}, we prove the local estimate 
% Theorem~\ref{thm:loc-est}, which relies on a Gaussian-type distribution\GM{What is a Gaussian-type distribution} result, 
% Lemma~\ref{lem:asymptotic}, together with an asymptotic estimate over shift spaces\GM{asymptotic estimate over shift spaces?}, 
% Lemma~\ref{lem:estimate}. 
% Finally, the proof of Lemma~\ref{lem:estimate} is presented in 
% Subsection~\ref{ssec:lem estimate} and is based on Fourier analysis 
% and the saddle-point method.}\GM{Do we need the part in blue?}

Let $\Sigma^+$ be a topologically mixing, countable state Markov shift with BIP and let $f,g\colon\Sigma^+\to\mathbb R$ be strictly positive locally H\"older continuous potentials with strong entropy gaps at infinity such that $\|f-g\|_\infty$ is finite. Set $\fbf=(f,g)$. Fix an admissible slope $m\in\mathcal S(\fbf)$ and recall Notation \ref{not:ambm} for $(a_m,b_m)=-\zbf_m$, $t_{m}$, ${\bf m}$, and ${\bf x}_{m}={\bf m}/t_m$. Furthermore, as in Theorem \ref{transfer fact}, we let $h_{-a_m f - b_m g}$ and $\nu_{-a_m f - b_m g}$ denote the eigenfunction and eigenmeasure of the transfer operator $\mathcal{L}_{-a_m f - b_m g}$, respectively. Throughout this section, we fix $k\in\mathbb N$, a $k$-cylinder $p\in\Lambda_{k}$ and a non-periodic word $\ul z_p\in p$.

The main goal of this section is to establish the following local estimate. 

\begin{thm}[Local estimate]\label{thm:loc-est} Let $m\in\mathcal{S}({\bf f})$ and consider $(a_{m},b_{m})\in\mathcal{C}(\fbf)$. Then, as $t\to\infty$ 
	\[
	\sum_{n\geq1}\frac{1}{n}W(n,p,t)\sim\frac{e^{tH_{\mathbf{f}}(m)}}{t^{\frac{3}{2}}}\left(2\pi\overline{\mathbb{P}}''_{m}\right)^{-\frac{1}{2}}\cdot C_{p}(m)\cdot \int_{0}^{\xi}e^{a_{m}t}\ dt\cdot \int_{0}^{\xi}e^{b_{m}t}\ dt
	\]
	where $\overline{\mathbb{P}}''_{m}:=t_{m}^{3}\cdot({\bf x}_{m})^\top\cdot\nabla^{2}\mathbb{P}^{*}({\bf x}_{m})\cdot {\bf x}_{m}$ and $C_{p}(m)=h_{-a_{m}f-b_{m}g}(\ul z_{p})\nu_{-a_{m}f-b_{m}g}(p)$.
\end{thm}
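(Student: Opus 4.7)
The plan is a two-stage saddle-point analysis generalizing Lalley's approach from \cite{Lalley:1987df} to our two-parameter, countable-state setting. First, the Fourier--Laplace inversion from Section \ref{sec:prep} transforms the counting problem into a $\Theta$-integral involving the perturbed transfer operator; then spectral decomposition and a local central limit argument extract the asymptotics. Starting from the identity $\widehat W(n, p, \zbf_m + i\Theta) = (\mathcal L_{w(\Theta)}^n \mathbf 1_p)(\ul z_p)$ in \eqref{eq:w-hat} with $w(\Theta) = -a_m f - b_m g + i(\theta_1 f + \theta_2 g)$, Fourier inversion (implemented by first smoothing the indicator of $I^2_{\xi,m}(t)$ and removing the smoothing at the end via a squeeze, in the spirit of Proposition \ref{prop: epsilonk}(3)) gives
\[
W(n, p, t) = \frac{e^{H_\fbf(m) t}}{(2\pi)^2}\int_{\mathbb R^2} B(\Theta)\, e^{-it\langle \Theta, \mathbf m\rangle}\, (\mathcal L_{w(\Theta)}^n \mathbf 1_p)(\ul z_p)\, d\Theta,
\]
where $B(\Theta) = \int_0^\xi\int_0^\xi e^{(a_m - i\theta_1)u + (b_m - i\theta_2)v}\,du\,dv$; note $B(0)$ equals the product of integrals in the statement.

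Partition the $\Theta$-integral into $\|\Theta\| \leq \delta_0$, $\delta_0 < \|\Theta\| \leq M$, and $\|\Theta\| > M$. On the middle band, Corollary \ref{thm:decay-cor}(5) gives uniform exponential decay of $\|\mathcal L_{w(\Theta)}^n \mathbf 1_p\|_\beta$, so after $1/n$-summation this band contributes only $O(e^{(H_\fbf(m)-\eta)t})$ and is negligible. The tail band is controlled by integration by parts in $\Theta$, exploiting the polynomial decay of the mollified $B$ together with the uniform bound from Lemma \ref{lem:n>k-hatW} and the a priori estimates of Section \ref{sec:globalgrowth}. On the small band, Corollary \ref{thm:decay-cor}(4) yields
\[
(\mathcal L_{w(\Theta)}^n \mathbf 1_p)(\ul z_p) = e^{nP(w(\Theta))}\bigl[h(w(\Theta))(\ul z_p)\nu_{w(\Theta)}(p) + O(r_1^n)\bigr],
\]
with the error contributing only $O(e^{H_\fbf(m)t})$, negligible beside the expected $t^{-3/2}e^{H_\fbf(m)t}$ main term.

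Taylor expanding $P(w(\Theta)) = i\langle \xbf_m, \Theta\rangle - \tfrac{1}{2}\Theta^T \nabla^2 \mathbb P(\zbf_m)\Theta + O(\|\Theta\|^3)$ via Corollary \ref{cor:analy+postive-def}, and using $\langle \Theta, \mathbf m\rangle = t_m\langle \Theta, \xbf_m\rangle$, the phase becomes $i(n-t_m t)\langle\xbf_m,\Theta\rangle - \tfrac{n}{2}\Theta^T\nabla^2\mathbb P(\zbf_m)\Theta$. Freezing the slowly varying factors at $\Theta=0$ and evaluating the 2D Gaussian integral gives, for each $n$,
\[
\frac{W(n, p, t)}{n} \sim \frac{e^{H_\fbf(m) t} B(0)\, C_p(m)}{2\pi\, n^2 \sqrt{\det \nabla^2 \mathbb P(\zbf_m)}}\, \exp\!\left(-\frac{(n - t_m t)^2\, \mathbf m^T (\nabla^2 \mathbb P(\zbf_m))^{-1} \mathbf m}{2 n\, t_m^2}\right).
\]
Summing over $n$ and approximating by an integral under the substitution $n = t_m t + s\sqrt t$ (justified by Euler--Maclaurin after Lemma \ref{lem:a-priori estimate-2} restricts attention to $|n-t_m t|\leq C t^{1/2+\epsilon}$) produces an extra $\sqrt t$, giving the $t^{-3/2}$ rate. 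Combining the two Gaussian normalizations and invoking the suspension-flow identity for $\overline{\mathbb P}''_m$ from Remark \ref{rem:P''} simplifies the overall constant to $(2\pi \overline{\mathbb P}''_m)^{-1/2}$, matching the statement.

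The main technical obstacle is the tail region $\|\Theta\|>M$: the $1/n$-weighted sum of $(\mathcal L_{w(\Theta)}^n \mathbf 1_p)(\ul z_p)$ does not converge uniformly in $\Theta$ (its formal closed form $-\log(1 - e^{P(w(\Theta))})$ is singular as $\Theta\to 0$), so the interchange of sum and integral must be handled carefully. The cleanest route is to mollify the rectangle's indicator, upgrading $|B(\Theta)|=O(\|\Theta\|^{-2})$ to rapid decay and making the inversion rigorous; the mollification is then removed at the end via a sandwich argument between slightly larger and slightly smaller rectangles.
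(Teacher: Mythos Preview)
Your overall strategy---Fourier inversion via \eqref{eq:w-hat}, spectral decomposition of $\mathcal L^n_{w(\Theta)}$ near $\Theta=0$, Gaussian saddle-point analysis, then summation in $n$---matches the paper's. The differences are organizational. The paper first isolates a uniform local limit theorem for fixed $n$ (Lemma \ref{lem:estimate}, proved via the saddle-point method and the complex Ruelle--Perron--Frobenius Corollary \ref{thm:decay-cor}), expressed through the Legendre transform $\mathbb P^*$, and then sums over $n$ using Lalley's discrete Laplace lemma (Lemma \ref{lem:asymptotic}). Your route collapses these two steps into one direct computation with a fixed base point $\zbf_m$; both yield the same Gaussian in $(n-t_m t)$, but the paper's separation makes the error control more transparent. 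For the tail $\|\Theta\|$ large, the paper does not use integration by parts: it invokes the Babillot--Ledrappier reduction to the class $\mathcal H$ of functions whose Fourier transform is compactly supported, so the $\Theta$-integral is automatically over a compact set and the tail issue disappears (this is precisely why Lemma \ref{lem:lemma2.4_B-L} is stated for $u\in\mathcal H$). Your mollification-plus-sandwich proposal is the same idea in a different costume, but the $\mathcal H$-trick is cleaner.

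One point to correct: your claim that the $O(r_1^n)$ spectral remainder ``contributes only $O(e^{H_\fbf(m)t})$, negligible beside the expected $t^{-3/2}e^{H_\fbf(m)t}$ main term'' is backwards as written---$e^{H_\fbf(m)t}$ dominates $t^{-3/2}e^{H_\fbf(m)t}$. What actually makes this error negligible is that, after restricting to $|n/t-t_m|<\epsilon$ via Lemma \ref{lem:a-priori estimate-2}, one has $n\asymp t$ and hence $r_1^n=O(e^{-ct})$, giving a contribution of order $e^{(H_\fbf(m)-c)t}$. The paper handles this inside the proof of Lemma \ref{lem:relate-with-RPF}, eq.~\eqref{eq:small-away-RPF}.
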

\begin{remark*}
    The notation $\overline{\mathbb{P}}''_{m}$ is used to simpilify the right-hand side in Theorem \ref{thm:loc-est}. One can also see Remark \ref{rem:P''} for another interpretaion of $\overline{\mathbb{P}}''_{m}$ as the second derivative of the pressure over the suspension flow. 
\end{remark*}
\subsection{The proofs of Theorems \ref{thm:weaker-main}, \ref{thm:local-counting}, and \ref{thmx:2}}\label{sec:abc proofs}

Theorem \ref{thm:loc-est} is the last ingredient needed to establish Theorems \ref{thm:weaker-main}, \ref{thm:local-counting}, and \ref{thmx:2} from the introduction. In this subsection we record the proofs of these results while assuming Theorem \ref{thm:loc-est}.
\begin{thm}[Theorem \ref{thm:weaker-main} and Theorem \ref{thmx:2}] For any $m\in\mathcal{S}({\bf f})$, we have
	\[
	\lim_{t\to\infty}\frac{1}{t}\log M(t;{\bf f,m,\xi)}=H_{\fbf}(m).
	\]
\end{thm}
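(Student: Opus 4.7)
The theorem combines the existence of $\alpha_{\mathbf{f}}(m)$ from Theorem \ref{thm:weaker-main} with its identification as $H_{\mathbf{f}}(m)$ from Theorem \ref{thmx:2}, so my plan is to sandwich $\frac{1}{t} \log M(t;\mathbf{f},m,\xi)$ between matching upper and lower bounds that both converge to $H_{\mathbf{f}}(m)$. The positivity and finiteness asserted in Theorem \ref{thm:weaker-main} will be immediate once this identification is proved, since $(a_m,b_m) \in \mathcal{C}(\mathbf{f})$ with $a_m+b_m>0$ and $m>0$ force $H_{\mathbf{f}}(m) = a_m + m b_m$ to lie in $(0,\infty)$.

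For the upper bound I would fix $k \in \mathbb{N}$ and split the sum at $n=k$. The contribution from $n<k$ is handled by Proposition \ref{prop:n<k}, which bounds it by $C e^{(H_{\mathbf{f}}(m) - \zeta) t}$. For $n \geq k$, the upper inequality of Proposition \ref{prop: epsilonk}(3) gives $M(n, I^2_{\xi,m}(t)) \leq W(n, I^2_{\xi,m}(t)_{\epsilon_k})$. The enlarged rectangle $I^2_{\xi,m}(t)_{\epsilon_k}$ can be covered by a rectangle of the form $I^2_{\xi+O(\epsilon_k),\, m}(t - O(\epsilon_k))$ (see the obstacle discussion below), so Lemma \ref{lem:rough-bounds} applied with this modified precision bounds the second piece by $C' e^{H_{\mathbf{f}}(m) t}/t$. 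Combining, $\limsup_{t\to\infty} \frac{1}{t} \log M(t;\mathbf{f},m,\xi) \leq H_{\mathbf{f}}(m)$.

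For the lower bound I would exploit Theorem \ref{thm:loc-est} at a single cylinder. By Theorem \ref{transfer fact} the eigendensity $h_{-a_m f-b_m g}$ is bounded away from zero and, via the Gibbs property, $\nu_{-a_m f - b_m g}$ gives positive mass to every cylinder; thus I can pick $p \in \Lambda_k$ with leading constant $C_p(m) = h_{-a_m f - b_m g}(\ul z_p)\,\nu_{-a_m f - b_m g}(p) > 0$. The lower inequality of Proposition \ref{prop: epsilonk}(3) restricted to this single cylinder gives $M(n, I^2_{\xi,m}(t)) \geq W(n, p, I^2_{\xi,m}(t)_{-\epsilon_k})$ for $n \geq k$, and the shrunk rectangle contains one of the form $I^2_{\xi',m}(t+\tau_k)$ with fixed $\xi' > 0$ and $\tau_k = O(\epsilon_k)$. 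Applying Theorem \ref{thm:loc-est} to $p$ with precision $\xi'$ and argument $t+\tau_k$ yields an asymptotic of order $C_p(m)\,e^{H_{\mathbf{f}}(m)(t+\tau_k)}/(t+\tau_k)^{3/2}$, whence $\liminf_{t\to\infty} \frac{1}{t} \log M(t;\mathbf{f},m,\xi) \geq H_{\mathbf{f}}(m)$, closing the sandwich.

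The main obstacle is the bookkeeping around Proposition \ref{prop: epsilonk}(3): when $m \neq 1$ the rectangles $I^2_{\xi,m}(t)_{\pm\epsilon_k}$ are not themselves of the form $I^2_{\xi',m}(t')$, because the two coordinate directions translate at different rates along the admissible direction of slope $m$. Absorbing this slope mismatch into a bounded modification of $\xi$ together with a bounded shift of $t$ is unproblematic in itself (neither affects the exponential rate), but it must be carried out consistently on both sides of the sandwich so that the same $H_{\mathbf{f}}(m)$ governs the upper and lower bounds.
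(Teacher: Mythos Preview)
Your proposal is correct and follows the same approach as the paper: sandwich via Proposition~\ref{prop: epsilonk}, discard small $n$ via Proposition~\ref{prop:n<k}, bound above by Lemma~\ref{lem:rough-bounds}, and bound below by Theorem~\ref{thm:loc-est} applied to a single cylinder with $C_p(m)>0$. The rectangle-alignment obstacle you flag is real but harmless---the paper simply writes the sandwich as $W(n,p,t;\mathbf f,m,\xi\pm\epsilon_k)$ without further comment, and as you correctly observe, the bounded shifts in $t$ and $\xi$ needed to make this precise do not affect the exponential rate.
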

\begin{proof} Fix $k\in\mathbb N$. For $n\geq k$, we apply Proposition \ref{prop: epsilonk}(3) with $U=I^2_{\xi,m}(t)$ to deduce that for any $p\in \Lambda_k$
	\begin{equation}
	W(n,p,t;\fbf,m,\xi-\eee_k)\leq \#\mathcal{M}_p(n,t;{\bf f,m,\xi)}\leq W(n,p,t;\fbf,m,\xi+\eee_k) \label{eq:WMbounds1}
	\end{equation}
    where $(\eee_k)_k$ is the sequence from Proposition \ref{prop: epsilonk}(2). Then, we apply Proposition \ref{prop:n<k} to see that there exists $\zeta=\zeta(m)>0$ and $C_{1}=C_1(k,m,\xi)>0$ such that for any $n<k$
	\begin{equation}
	    \#\mathcal{M}(n,t;{\bf f},m,\xi)\leq C_{1}e^{t\left(H_{\fbf}(m)-\zeta\right)}.\label{eq:WMbounds2}
	\end{equation}
Moreover, Lemma \ref{lem:rough-bounds} and Theorem \ref{thm:loc-est} show that there exists $C_{2}=C_2(m,p,\xi)$ and $C_{3}=C_3(k,m,\xi)$ such that 
	\[
	\frac{e^{tH_{\fbf}(m)}}{t^{\frac{3}{2}}}(C_2+o(1))= \sum_{n\geq 1} \frac{1}{n} W(n,p,t)\leq \sum_{p\in\Lambda_k}\sum_{n\geq 1} \frac{1}{n} W(n,p,t)\leq C_{3}\frac{e^{tH_{\fbf}(m)}}{t}.
	\]
	Thus, recalling that $M(t;\fbf,m,\xi)=\sum_{p\in\Lambda_k}\sum_{n\geq 1}\frac{1}{n}\#\mathcal M_p(n,t;\fbf,m,\xi)$, we obtain
	\[
	\lim_{t\to\infty}\frac{1}{t}\log M(t;{\bf f,m,\xi)}=H_{\fbf}(m).\qedhere
	\]
\end{proof}

\begin{rem} \label{rem:obsticle} \normalfont
We expect the asymptotic growth rate of $\sum_{p\in\Lambda_k}\sum_{n\geq 1} \frac{1}{n} W(n,p,t)$ to be ${e^{tH_{\fbf}(m)}}/{t^{\frac{3}{2}}}$, up to a constant, as suggested by Theorem \ref{thm:local-counting}. The key missing step for establishing the global version of Theorem \ref{thm:local-counting} is to improve the current upper bound in Lemma \ref{lem:rough-bounds} from ${e^{tH_{\fbf}(m)}}/t$ to ${e^{tH_{\fbf}(m)}}/{t^{\frac{3}{2}}}$.

% analogous to the compact cases in Lalley \cite{Lalley:1987df}, we are only one step away. The remaining task is 
\end{rem}

\begin{proof}[Proof of  Theorem \ref{thm:local-counting}] 
	By Theorem \ref{thm:loc-est} we know 
	\[
	\sum_{n\geq1}\frac{1}{n}W(n,p,t;{\bf f,m,\xi)=\frac{e^{tH_{\fbf}(m)}}{t^{\frac{3}{2}}}C(m,p,\xi)(1+o(1))}
	\] where $C(m,p,\xi)=\left(2\pi\overline{\mathbb{P}}''_{m}\right)^{-\frac{1}{2}}\cdot C_{p}(m)\cdot\int_{0}^{\xi}e^{a_{m}t}\ dt\cdot\int_{0}^{\xi}e^{b_{m}t}\ dt.$
	Moreover, by equations (\ref{eq:WMbounds1}) and (\ref{eq:WMbounds2}) we get
	\[
	\frac{e^{tH_{\fbf}(m)}}{t^{\frac{3}{2}}}C(m,p,\xi-\eee_k)(1+o(1))\leq\sum_{n\geq1}\frac{1}{n}\#\mathcal{M}_p(n,t;{\bf f,m,\xi)\leq\frac{e^{tH_{\fbf}(m)}}{t^{\frac{3}{2}}}C(m,p,\xi+\eee_k)(1+o(1))}.
	\] 
	To conclude the proof recall that $\epsilon_k\to0$ as $k\to\infty$ by Proposition \ref{prop: epsilonk}(2).
\end{proof}

\subsection{Proof of Theorem \ref{thm:loc-est}}\label{sec:localgrowth}
This proof consists of three steps.
First, for any $\epsilon>0$ we break the sum into two parts: 
\[
\sum_{n\geq1}\frac{1}{n}W(n,p,t)=\sum_{n:\ \left|\frac{n}{t}-t_{m}\right|\ge\epsilon}\frac{1}{n}W(n,p,t)+\sum_{n:\ \left|\frac{n}{t}-t_{m}\right|<\epsilon}\frac{1}{n}W(n,p,t).
\]
By Lemma \ref{lem:a-priori estimate-2}, for any $\epsilon>0$
\[
\limsup_{t\to\infty}e^{-tH_{{\bf f}}(m)}\cdot t^{\frac{3}{2}}\left(\sum_{n:\ \left|\frac{n}{t}-t_{m}\right|\geq\epsilon}\frac{1}{n}W(n,p,t)\right)\leq\limsup_{t\to\infty}C_{2}t^{\frac{3}{2}}e^{-t\eta}=0.
\]
Therefore, for any $\epsilon>0$, it suffices to study the asymptotic behavior for $t\to\infty$ of
\[
\displaystyle\sum_{n:\ \left|\frac{n}{t}-t_{m}\right|<\epsilon}\frac{1}{n}W(n,p,t).
\]

Since the characteristic function of the rectangle $(0,\xi)\times(0,\xi)$ can be approximated by a nonnegative smooth function $u$
with compact support, it is enough to show that:
\begin{align}\label{eq:xx}
\lim_{\epsilon\to0}\lim_{t\to\infty}
e^{-tH_{{\bf f}}(m)}\, t^{\frac{3}{2}}
\left(
\sum_{n:\,\left|\frac{n}{t}-t_{m}\right|<\epsilon}
\frac{1}{n}
\int_{\mathbb{R}^{2}}
u({\bf y}-t{\bf m}) W(n,p,{\bf y})\, d{\bf y}
\right)
 \\
=
\left(2\pi\overline{\mathbb{P}}''_{m}\right)^{-\frac{1}{2}}
C_{p}(m)
\int_{\mathbb{R}^{2}}
u({\bf y})\, e^{-\langle \zbf_{m}, {\bf y} \rangle}\, d{\bf y}.
\nonumber
\end{align}

We will need Lemma \ref{lem:estimate} below, whose proof is postponed to Section \ref{ssec:lem estimate}. For $N\in\mathbb N$, denote by $C_c^N(\mathbb R^2)$ the space of $N$-differentiable functions on $\mathbb R^2$ with compact support.

\begin{lem}\label{lem:estimate} Let $K$ be a compact neighborhood of $\bf x_m$ in $\nabla\mathbb P(D)$.  Suppose ${\bf x=\nabla}\mathbb{P}({\bf z)}\in K$
and $u\text{\ensuremath{\in C_{c}^{N}(\mathbb{R}^{2})}}$ is nonnegative, then as $n\to\infty$
\[
\int u({\bf y-n{\bf x})W(n,p,\ybf)\ d \ybf\sim\frac{e^{-n\mathbb{P}^{*}({\bf x})}}{2\pi n}\left(\det\nabla\mathbb{P}^{*}({\bf x})\right)^{1/2}\cdot C_{p}(\zbf)\cdot\int_{\mathbb{R}^{2}}u({\bf y)}e^{-\langle{\bf z},{\bf y}\rangle}\ d{\bf y},}
\]
where $C_{p}(\zbf)=h_{\inn{\zbf,\fbf}}(\ul z_{p})\nu_{\inn{\zbf,\fbf}}(p)$.
Furthermore, the above convergence is uniform in $K$. 
\end{lem}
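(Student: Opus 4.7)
The plan is to prove this as a two-dimensional local central limit theorem for $W(n,p,\cdot)$, with the role of the characteristic function played by the tilted transfer operator. Absorb the exponential weight $\zbf = \nabla\mathbb{P}^{*}(\xbf)$ into the test function by setting $F(\ybf) = u(\ybf)e^{-\langle\zbf,\ybf\rangle}$, which is still nonnegative and in $C^{N}_{c}(\mathbb R^2)$. Rewriting $u(S_{n}\fbf(\ul y)-n\xbf) = F(S_{n}\fbf(\ul y)-n\xbf)\,e^{-n\langle\zbf,\xbf\rangle}\,e^{\langle\zbf,S_{n}\fbf(\ul y)\rangle}$ and Fourier-inverting $F$, then using the identity $\widehat{W}(n,p,\zbf) = (\mathcal{L}^{n}_{\langle\zbf,\fbf\rangle}\mathbf{1}_{p})(\ul z_{p})$ from \eqref{eq:w-hat} analytically continued to $\zbf+i\Theta$ via Theorem \ref{thm:tran-holo}, I obtain
\[
I_{n} := \int u(\ybf-n\xbf)\,W(n,p,\ybf)\,d\ybf = \frac{e^{-n\langle\zbf,\xbf\rangle}}{(2\pi)^{2}}\int_{\mathbb{R}^{2}}\widehat{F}(\Theta)\,e^{-in\langle\Theta,\xbf\rangle}\,\bigl(\mathcal{L}^{n}_{\langle\zbf+i\Theta,\fbf\rangle}\mathbf{1}_{p}\bigr)(\ul z_{p})\,d\Theta.
\]

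Next I split the $\Theta$-integral into $\{|\Theta|<\epsilon\}$ and its complement. On the small window, part (4) of Corollary \ref{thm:decay-cor} provides the spectral decomposition
\[
\bigl(\mathcal{L}^{n}_{\langle\zbf+i\Theta,\fbf\rangle}\mathbf{1}_{p}\bigr)(\ul z_{p}) = e^{n\mathbb{P}(\zbf+i\Theta)}\Bigl[h_{\langle\zbf+i\Theta,\fbf\rangle}(\ul z_{p})\,\nu_{\langle\zbf+i\Theta,\fbf\rangle}(p) + O(r_{1}^{n})\Bigr]
\]
with error uniform in $\xbf \in K$. A Taylor expansion of $\mathbb{P}(\zbf+i\Theta)$ about $\zbf$, combined with $\nabla\mathbb{P}(\zbf)=\xbf$ (Corollary \ref{cor:analy+postive-def}), cancels the phase $e^{-in\langle\Theta,\xbf\rangle}$ and produces $e^{n\mathbb{P}(\zbf+i\Theta)-in\langle\Theta,\xbf\rangle} \approx e^{n\mathbb{P}(\zbf)}\exp\bigl(-\tfrac{n}{2}\Theta^{\mathsf T}\nabla^{2}\mathbb{P}(\zbf)\Theta\bigr)$, where $\nabla^{2}\mathbb{P}(\zbf)$ is positive definite. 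The substitution $\Theta = \phi/\sqrt{n}$ turns the small-window contribution into a Gaussian integral normalizing to $2\pi/\sqrt{\det\nabla^{2}\mathbb{P}(\zbf)} = 2\pi\sqrt{\det\nabla^{2}\mathbb{P}^{*}(\xbf)}$ via Proposition \ref{prop:legen-trans}(4). Evaluating the slowly-varying factors at $\Theta=0$ recovers $\widehat{F}(0) = \int u(\ybf)e^{-\langle\zbf,\ybf\rangle}d\ybf$, $h_{\langle\zbf,\fbf\rangle}(\ul z_{p})$, and $\nu_{\langle\zbf,\fbf\rangle}(p)$; combined with the Legendre identity $\mathbb{P}(\zbf)-\langle\zbf,\xbf\rangle = -\mathbb{P}^{*}(\xbf)$ from Proposition \ref{prop:legen-trans}(5), this yields the asserted leading asymptotic.

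The main obstacle will be bounding the complementary-region integral $\int_{|\Theta|\geq\epsilon}$ by $o(e^{n\mathbb{P}(\zbf)}/n)$ uniformly in $\xbf\in K$. On any compact annulus $\{\epsilon \leq |\Theta| \leq R_{0}\}$, part (5) of Corollary \ref{thm:decay-cor} gives uniform exponential decay $\|\mathcal{L}^{n}_{\langle\zbf+i\Theta,\fbf\rangle}\|_{\beta} \leq Ce^{n\mathbb{P}(\zbf)}(1+\delta)^{-n}$, which is more than enough. The subtle difficulty is the tail $\{|\Theta|>R_{0}\}$: in Lalley's compact setting \cite{Lalley:1987df} the spectral gap holds uniformly outside any neighborhood of the origin, whereas here part (5) only delivers it on compact sets. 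The remedy combines the pointwise comparison $|\mathcal{L}^{n}_{\langle\zbf+i\Theta,\fbf\rangle}\mathbf{1}_{p}(\ul z_{p})| \leq \mathcal{L}^{n}_{\langle\zbf,\fbf\rangle}\mathbf{1}_{p}(\ul z_{p}) \leq Qe^{n\mathbb{P}(\zbf)}\mu_{\langle\zbf,\fbf\rangle}(p)$ from Lemma \ref{lem:n>k-hatW} with the polynomial decay $|\widehat{F}(\Theta)| = O(|\Theta|^{-N})$ coming from $F \in C^{N}_{c}$; by letting $R_{0}=R_{0}(n)\to\infty$ at a suitable polynomial rate and interpolating with part (5) applied on slowly-growing annuli, the tail contribution is made $o(e^{n\mathbb{P}(\zbf)}/n)$. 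Controlling all implied constants uniformly for $\xbf$ in the compactum $K$—using the real-analytic dependence of $\mathbb{P}$, $h$, and $\nu$ on $\zbf$ furnished by Theorem \ref{thm:APT} and Theorem \ref{thm:tran-holo}—is the last delicate point of the argument.
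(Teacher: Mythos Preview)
Your overall strategy---Fourier inversion, the identity $\widehat{W}(n,p,\cdot)=(\mathcal{L}^{n}_{\langle\cdot,\fbf\rangle}\mathbf{1}_{p})(\ul z_{p})$, the spectral decomposition from Corollary~\ref{thm:decay-cor}, and a Gaussian saddle-point evaluation on the small window $\{|\Theta|<\epsilon\}$---is exactly the backbone the paper uses, and your main-term computation is correct.

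The gap is precisely where you flag it: the region $\{|\Theta|\geq\epsilon\}$. Your proposed remedy does not close. The crude bound $|\mathcal{L}^{n}_{\langle\zbf+i\Theta,\fbf\rangle}\mathbf{1}_{p}(\ul z_p)|\leq Qe^{n\mathbb{P}(\zbf)}$ together with $|\widehat F(\Theta)|=O(|\Theta|^{-N})$ gives a tail of order $e^{n\mathbb{P}(\zbf)}R_{0}^{\,2-N}$; to make this $o(e^{n\mathbb{P}(\zbf)}/n)$ forces $R_{0}(n)\to\infty$ at a polynomial rate. But Corollary~\ref{thm:decay-cor}(5) asserts uniformity of the limit only on \emph{fixed} compacta: the implicit constants---equivalently, the threshold after which $(1+\delta)^{n}e^{-n\mathbb{P}(\zbf)}\|\mathcal{L}^{n}_{\langle\zbf+i\Theta,\fbf\rangle}\mathbf 1_p\|_{\beta}<1$---may depend arbitrarily badly on the compact set, and nothing available here bounds their growth as the annulus $\{\epsilon\leq|\Theta|\leq R_{0}(n)\}$ expands. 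Unlike Lalley's finite-alphabet situation, in this countable-state setting there is no uniform spectral-gap statement for $|\Theta|\to\infty$ to fall back on, so the ``interpolation on slowly-growing annuli'' remains unjustified.

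The paper avoids this difficulty by a different reduction rather than by a sharper tail estimate. Citing Babillot--Ledrappier \cite[Lemma~2.4]{Babillot:1998kh}, it first argues that it suffices to prove the asymptotic for test functions $u$ in a class $\mathcal{H}$ whose Laplace--Fourier transforms $\Theta\mapsto\widehat{u}(i\Theta-\zbf)$ are \emph{compactly supported}. For such $u$ the entire $\Theta$-integral lives on a fixed compact set from the outset, so on the complement of the small window Corollary~\ref{thm:decay-cor}(5) applies directly and yields the exponential gain $(1+\delta)^{-n}$ with no growing-region issue; this is the content of Lemma~\ref{lem:relate-with-RPF}. The passage from $\mathcal{H}$ back to general nonnegative $u\in C_{c}^{N}$ is then a soft approximation argument supplied by the cited lemma. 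That reduction is what the paper buys; your direct route would require an additional quantitative input on the spectral radius of $\mathcal{L}_{\langle\zbf+i\Theta,\fbf\rangle}$ for large $|\Theta|$ that is not available in this framework.
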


\begin{remark*}
Let us outline the strategy for obtaining equation~\eqref{eq:xx} assuming Lemma~\ref{lem:estimate}. 
The first step is to derive equation~\eqref{eq:***} below, which provides the asymptotic expansion for the left-hand side of (\ref{eq:xx}) for points near \({\bf x}_{m}\). 
Next, we expand the left-hand side of \eqref{eq:***}, denoted by \((*)\), and terms such as \(\overline{\mathbb{P}}''_{m}\) appearing in equation~\eqref{eq:xx} arise naturally. 
Using this expansion together with Lemma~\ref{lem:asymptotic}, a Gaussian-type distribution result, we can rewrite and bound the left-hand side of \eqref{eq:***}, i.e.\ \((*)\), in terms of \(\overline{\mathbb{P}}''_{m}\) and the other quantities in equation~\eqref{eq:xx}. 
Equation~\eqref{eq:xx} then follows by taking the limits of the upper and lower bounds for \((*)\).    
\end{remark*}

We now begin with the first step. We plan to apply Lemma~\ref{lem:estimate} to points 
\({\bf x}\) near \({\bf x}_{m} = {\bf m}/t_{m}\) (cf.\ Notation~\ref{not:ambm}), 
in particular to those of the form 
\({\bf x} = {\bf m}/(n/t)\) whenever \(|n/t - t_{m}| < \epsilon\) for small \(\epsilon\).
To do this, we need to ensure that these points lie in 
\(\nabla\mathbb{P}(D)\).
To see this, note first that \(D\) is an open set 
(by Proposition~\ref{cor:gibbs and eq for zf}(3)), and that 
\(\nabla\mathbb{P}\) is a homeomorphism on \(D\).  
The latter follows from the Inverse Function Theorem and because 
\(\nabla^{2}\mathbb{P}\) is positive definite (Corollary~\ref{cor:analy+postive-def}(4)).
Consequently, \(\nabla\mathbb{P}(D) \subset \mathbb{R}^{2}\) is open.

Since \({\bf x}_{m}\) lies in the open set \(\nabla\mathbb{P}(D)\), 
there exists \(\epsilon > 0\) sufficiently small such that whenever 
\(|n/t - t_{m}| < \epsilon\), the point \({\bf m}/(n/t)\) lies in a neighborhood 
of \({\bf x}_{m}\) contained in \(\nabla\mathbb{P}(D)\).
Thus there exists $
{\bf z}^{*} := \nabla\mathbb{P}^{*}\!\left(\frac{{\bf m}}{n/t}\right) \in D.
$

Then summing the estimate from Lemma \ref{lem:estimate} over the set $\{n:\left|\frac{n}{t}-t_{m}\right|<\epsilon\}$, for $t\to\infty$, we have
\begin{equation}
\sum_{\left|\frac{n}{t}-t_{m}\right|<\epsilon}\frac{1}{n}\int u({\bf y}-t{\bf m})W(n,p,{\bf y})\ d{\bf y}\sim\underset{:=~(*)}{\underbrace{\sum_{\left|\frac{n}{t}-t_{m}\right|<\epsilon}\frac{e^{-n\mathbb{P}^{*}(t{\bf m}/n)}}{2\pi n^{2}}\cdot\left(\det\nabla^{2}\mathbb{P}^{*}(t{\bf m}/n)\right)^{1/2}\cdot\mathbf{Q}_{p}\left(t{\bf m/n}\right)}}\label{eq:***}
\end{equation}
where $\mathbf{Q}_{p}\left(t{\bf m}/n\right)\colon=C_p(\bf z^*)\int_{\mathbb R^2} u(\ybf)e^{-\langle{\bf z}^*,\ybf\rangle}$. Now, the last step in the proof of Theorem \ref{thm:loc-est} consists of using a standard probability trick, Lemma \ref{lem:asymptotic} below, to simplify the
right hand side of the above equation. 

\begin{lem}[Lalley \cite{Lalley:1987df}, Lemma 6]
	\label{lem:asymptotic} Suppose $F$ is continuous on a closed interval
	$[t_{1},t_{2}]$ and $t_{1}<t_{m}<t_{2}$. Let $\sigma>0$. Then,
	for any $\eee>0$ such that $(t_m-\eee,t_m+\eee)\subset[t_{1},t_{2}]$,
	we have
	\begin{equation}
	\lim_{t\to\infty}\sum_{n\colon\left|\frac{n}{t}-t_m\right|<\eee}\frac{1}{t^{1/2}}F\left(\frac{n}{t}\right)e^{-\frac{t\left(\frac{n}{t}-t_m\right)^{2}}{2\sigma^{2}}}=F(t_m)(2\pi)^{1/2}\sigma.\label{eq:asymptotic lemma}
	\end{equation}
	Moreover, Equation (\ref{eq:asymptotic lemma}) holds uniformly for
	$t_m$ and $\sigma$ in a compact subset of $(0,\infty)$ and $F$
	in a compact subset of $C([t_{1},t_{2}])$ with $t_{1}<t_m<t_{2}$. 
\end{lem}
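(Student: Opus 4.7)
The plan is to recognize the sum as a Riemann sum for a Gaussian integral after an appropriate rescaling. Write the sum as
\[
S(t) := \sum_{n:\, |n/t - t_m| < \eee} \frac{1}{t^{1/2}} F\!\left(\frac{n}{t}\right) e^{-\frac{t(n/t - t_m)^2}{2\sigma^2}}.
\]
For each integer $n$ in the summation range, set $v_n := (n/t - t_m)\sqrt{t}$. Consecutive integers produce sample points $v_{n+1} - v_n = 1/\sqrt{t} =: \Delta v$, and the condition $|n/t - t_m| < \eee$ becomes $|v_n| < \eee\sqrt{t}$. Factoring $1/\sqrt{t} = \Delta v$ out of the prefactor, the sum becomes
\[
S(t) = \sum_{n:\, |v_n| < \eee\sqrt{t}} F\!\left(t_m + \frac{v_n}{\sqrt{t}}\right) e^{-v_n^2/(2\sigma^2)} \Delta v,
\]
which is a Riemann sum (with mesh $\Delta v = 1/\sqrt{t}$) for the integral
\[
\int_{-\eee\sqrt{t}}^{\eee\sqrt{t}} F\!\left(t_m + \frac{v}{\sqrt{t}}\right) e^{-v^2/(2\sigma^2)}\, dv.
\]

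The next step is to show that this integral converges to $F(t_m)\int_{-\infty}^{\infty} e^{-v^2/(2\sigma^2)}\, dv = F(t_m)\sigma\sqrt{2\pi}$ as $t\to\infty$. Fix $R>0$ arbitrarily large; on $|v|\le R$ the map $v\mapsto F(t_m + v/\sqrt{t})$ converges uniformly to $F(t_m)$ by continuity of $F$ at $t_m$. The Gaussian contribution from $|v|>R$ is bounded by $\sup_{[t_1,t_2]}|F|\cdot\int_{|v|>R} e^{-v^2/(2\sigma^2)}\, dv$, which is independent of $t$ and small as $R\to\infty$; in particular the eventual upper limit $\eee\sqrt{t}\to\infty$ captures the full Gaussian mass. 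Finally, the Riemann sum error between $S(t)$ and its integral approximation is controlled by the modulus of continuity $\omega_F$ of $F$ on $[t_1,t_2]$ evaluated at the mesh $1/\sqrt{t}$, times the total Gaussian mass; this error tends to $0$ as $t\to\infty$. Combining these three estimates in the standard $\eps$-$\delta$ fashion yields $\lim_{t\to\infty} S(t) = F(t_m)\sigma\sqrt{2\pi}$.

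For the uniformity statement: when $t_m$ and $\sigma$ range over a compact subset of $(0,\infty)$ and $F$ ranges over a compact subset $\mathcal{K}\subset C([t_1,t_2])$, the functions in $\mathcal{K}$ are uniformly bounded and uniformly equicontinuous by Arzelà--Ascoli, so the three error bounds above (the pointwise-to-uniform passage on $|v|\le R$, the Gaussian tail for $|v|>R$, and the Riemann sum error controlled by the common modulus of continuity) are uniform in $F\in\mathcal{K}$ and in the compact parameter set. The main (mild) obstacle is keeping the truncation parameter $R$ independent of $t$ while the effective truncation $\eee\sqrt{t}$ grows, but this is exactly what the standard ``truncate first, then let mesh go to zero, then let $R\to\infty$'' ordering of limits handles cleanly.
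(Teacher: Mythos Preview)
Your argument is correct: the substitution $v_n=(n/t-t_m)\sqrt{t}$ turns the sum into a Riemann sum of mesh $1/\sqrt{t}$ for a Gaussian-weighted integral, and the three-step decomposition (truncate at $|v|\le R$, control the Gaussian tail, bound the Riemann sum error via the modulus of continuity) converges as stated; the uniformity claim is handled exactly as you say via Arzel\`a--Ascoli. The paper does not supply its own proof---it simply cites Lalley's original Lemma~6, which uses the same Riemann-sum/Gaussian-approximation idea you have written out---so your proposal matches the intended argument. One small sharpening: when you say the Riemann sum error is controlled by $\omega_F$ evaluated at mesh $1/\sqrt{t}$, note that after passing through the map $v\mapsto t_m+v/\sqrt{t}$ the relevant scale for $F$ is actually $1/t$ (or $R/\sqrt t$ on the truncated piece), which is even better; and for the Gaussian factor the cleanest bound is via its bounded variation on $\mathbb R$, giving a Riemann sum error of order $1/\sqrt{t}$ uniformly in $\sigma$ on compacta.
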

Recall that by Lemma \ref{lem:cov-legen}(2) and Corollary \ref{cor:man-legen}(4), $s\mapsto-s\mathbb{P^{*}}({\bf m}/s)$
is concave down for $s>0$ and achieves its maximum uniquely at $s=t_{m}$. Expanding
$-s\mathbb{P^{*}}({\bf m}/s)$ near $t_{m}$, we have
\begin{alignat*}{1}
	-\frac{n}{t}\mathbb{P}^{*}\left({\bf m}\frac{t}{n}\right)= & -t_{m}\mathbb{P}^{*}\left({\bf m}/t_{m}\right)-t_{m}^{-1}\left({\bf m}/t_{m})^\top\cdot\nabla^{2}\mathbb{P}^{*}({\bf m}/t_{m})\cdot({\bf m}/t_{m}\right)\cdot\frac{\left|\frac{n}{t}-t_{m}\right|^{2}}{2}+R(\zeta,m)\left|\frac{n}{t}-t_{m}\right|^{3}\\
	= & \underset{H_{{\bf f}}(m)}{\underbrace{-t_{m}\mathbb{P}^{*}({\bf x}_{m})}}-\underset{\text{main\ contribution}}{\underbrace{t_{m}^{-1}({\bf x}_{m})^\top\cdot\nabla^{2}\mathbb{P}^{*}({\bf x}_{m})\cdot{\bf x}_{m}\cdot\frac{\left|\frac{n}{t}-t_{m}\right|^{2}}{2}}}+\underset{\text{negligible term}}{\underbrace{R(\zeta,m)\left|\frac{n}{t}-t_{m}\right|^{3}}}
\end{alignat*}
where \(R(\zeta, m)\) is the remainder term from the Taylor expansion for some 
\(\zeta \in \left( \frac{n}{t},\, t_{m} \right)\). Since the function $s\mapsto-s\mathbb{P^{*}}({\bf m}/s)$
is analytic around $s=t_{m}$, there exists a constant $C_{m,\epsilon}>0$
such that $|R(\zeta,x)|\leq C_{m,\epsilon}$. 
Recalling that
\[
t_{m}^{-1}({\bf x}_{m})^\top\cdot\nabla^{2}\mathbb{P}^{*}({\bf x}_{m})\cdot{\bf x}_{m}=t_{m}^{-4}\overline{\mathbb{P}}''_{m},
\]
for $n$ such that $\left|\frac{n}{t}-t_{m}\right|<\epsilon$, we estimate
\begin{equation}
(*)\leq\sum_{\left|\frac{n}{t}-t_{m}\right|<\epsilon}\underset{=e^{tH_{{\bf f}}(m)}\cdot e^{-\frac{t\left|\frac{n}{t}-t_{m}\right|^{2}}{2\sigma_{+}^{2}}}}{\underbrace{e^{tH_{{\bf f}}(m)-tt_{m}^{-4}\overline{\mathbb{P}}''_{m}\cdot\frac{\left|\frac{n}{t}-t_{m}\right|^{2}}{2}+tC_{m,\epsilon}\epsilon\left|\frac{n}{t}-t_{m}\right|^{2}}}}\cdot\underset{=\frac{1}{t^{2}}F(n/t)}{\underbrace{\frac{1}{2\pi n^{2}}\cdot\left(\det\nabla^{2}\mathbb{P}^{*}(t{\bf m}/n)\right)^{1/2}\cdot\mathbf{Q}_{p}(t{\bf m}/n)}}\label{eq:6.4-11}
\end{equation}
\[
(*)\geq\sum_{\left|\frac{n}{t}-t_{m}\right|<\epsilon}\underset{=e^{tH_{{\bf f}}(m)}\cdot e^{-\frac{t\left|\frac{n}{t}-t_{m}\right|^{2}}{2\sigma_{-}^{2}}}}{\underbrace{e^{tH_{{\bf f}}(m)-tt_{m}^{-4}\overline{\mathbb{P}}''_{m}\cdot\frac{\left|\frac{n}{t}-t_{m}\right|^{2}}{2}-tC_{m,\epsilon}\epsilon\left|\frac{n}{t}-t_{m}\right|^{2}}}}\cdot\underset{=\frac{1}{t^{2}}F(n/t)}{\underbrace{\frac{1}{2\pi n^{2}}\cdot\left(\det\nabla^{2}\mathbb{P}^{*}(t{\bf {\bf m}}/n)\right)^{\frac{1}{2}}\cdot\mathbf{Q}_{p}(t{\bf m}/n)}}
\]
where $F(s)=\frac{1}{2\pi s^{2}}\left(\det\nabla^{2}\mathbb{P}^{*}({\bf m}/s)\right)^{\frac{1}{2}}\cdot\mathbf{Q}_{p}({\bf m}/s)$
and $\sigma_{\pm}=\left(t_{m}^{-4}\overline{\mathbb{P}}''_{m}\mp2\epsilon C_{m,\epsilon}\right)^{-\frac{1}{2}}$.

By Lemma \ref{lem:asymptotic}, we get
\begin{alignat}{1}
\lim_{t\to\infty}\frac{1}{F(t_{m})}{\displaystyle \sum_{\left|\frac{n}{t}-t_{m}\right|<\epsilon}t^{-1/2}e^{-\frac{t\left|\frac{n}{t}-t_{m}\right|^{2}}{2\sigma_{+}^{2}}}\cdot F(n/t)=\sqrt{2\pi}\sigma_{+}=\sqrt{2\pi}\left(t_{m}^{-4}\overline{\mathbb{P}}''_{m}-2\epsilon C_{m,\epsilon}\right)^{-\frac{1}{2}}}\label{eq:6.4-1}
\end{alignat}
and similarly 
\begin{alignat*}{1}
\lim_{t\to\infty}\frac{1}{F(t_{m})}{\displaystyle \sum_{\left|\frac{n}{t}-t_{m}\right|<\epsilon}t^{-1/2}e^{-\frac{t\left|\frac{n}{t}-t_{m}\right|^{2}}{2\sigma_{-}^{2}}}\cdot F(n/t)=\sqrt{2\pi}\sigma_{-}=\left(t_{m}^{-4}\overline{\mathbb{P}}''_{m}+2\epsilon C_{m,\epsilon}\right)^{-\frac{1}{2}}.}
\end{alignat*}

Finally, we put all the estimates above together and obtain, for any
$\epsilon$ small enough (depending on $m$),
\begin{align*}
 & \limsup_{t\to\infty}e^{-tH_{{\bf f}}(m)}\cdot t^{\frac{3}{2}}\left(\sum_{\left|\frac{n}{t}-t_{m}\right|\leq\epsilon}\frac{1}{n}\int u({\bf y}-t{\bf m})\cdot W(n,p,{\bf y)\ d{\bf y}}\right)\\
 & =\limsup_{t\to\infty}e^{-tH_{{\bf f}}(m)}\cdot t^{\frac{3}{2}}\left(*\right)\cdot (1+o(1)) & (\text{by~eq.~}(\ref{eq:***}))\\
 & \leq\lim_{t\to\infty}e^{-tH_{{\bf f}}(m)}\cdot t^{\frac{3}{2}}\cdot\left((1+o(1)) \cdot t^{-\frac{3}{2}}e^{tH_{{\bf f}}(m)}\left({\displaystyle \sum_{\left|\frac{n}{t}-t_{m}\right|<\epsilon}t^{-\frac{1}{2}}e^{-\frac{t\left|\frac{n}{t}-t_{m}\right|^{2}}{2\sigma_{+}^{2}}}\cdot F(n/t)}\right)\right) & (\text{by~eq.~}(\ref{eq:6.4-11}))\\
 &= F(t_{m})\sqrt{2\pi}\sigma_{+}. & (\text{by~eq.~}(\ref{eq:6.4-1}))
\end{align*}

Similarly, we have 
$$
 \liminf_{T\to\infty}e^{-tH_{{\bf f}}(m)}\cdot t^{\frac{3}{2}}\left(\sum_{\left|\frac{n}{t}-t_{m}\right|\leq\epsilon}\frac{1}{n}\int u({\bf y}-t{\bf m})\cdot W(n,p,\bf y)\ d{\bf y}\right)
	\geq F(t_{m})\sqrt{2\pi}\sigma_{-}.
$$
Then equation (\ref{eq:xx}) follows by taking the limit
\[
\lim_{\epsilon\to0}\sigma_{\pm}=\lim_{\epsilon\to0}\left(t_{m}^{-4}\overline{\mathbb{P}}''_{m}\mp2\epsilon C_{m,\epsilon}\right)^{-\frac{1}{2}}=t_{m}^{-2}\overline{\mathbb{P}}''_{m}
\]
which concludes the proof of Theorem \ref{thm:loc-est}.

\subsection{The proof of Lemma \ref{lem:estimate}}\label{ssec:lem estimate} In this subsection, we aim to prove the following key asymptotic estimate:

\medskip
\noindent\textbf{Lemma \ref{lem:estimate}.} {\em Let $K$ be a compact neighborhood of $\bf x_m$ in $\nabla\mathbb P(D)$.  Suppose ${\bf x=\nabla}\mathbb{P}({\bf z)}\in K$
and $u\text{\ensuremath{\in C_{c}^{N}(\mathbb{R}^{2})}}$ is nonnegative, then as $n\to\infty$
\[
\int u({\bf y-n{\bf x})W(n,p,\ybf)\ d \ybf\sim\frac{e^{-n\mathbb{P}^{*}({\bf x})}}{2\pi n}\left(\det\nabla\mathbb{P}^{*}({\bf x})\right)^{1/2}\cdot C_{p}(\zbf)\cdot\int_{\mathbb{R}^{2}}u({\bf y)}e^{-\langle{\bf z},{\bf y}\rangle}\ d{\bf y},}
\]
where $C_{p}(\zbf)=h_{\inn{\zbf,\fbf}}(\ul z_{p})\nu_{\inn{\zbf,\fbf}}(p)$.
Furthermore, the above convergence is uniform in $K$.}
\smallskip

In general, verifying Lemma \ref{lem:estimate} across the entire space $C_{c}^{N}(\mathbb{R}^{2})$ is challenging. However, as noted in Babillot-Ledrappier \cite[Lemma 2.4]{Babillot:1998kh}, it suffices to check Lemma \ref{lem:estimate} on a smaller, well-behaved function class $\mathcal{H}$. Specifically, the function class $\mathcal{H}$ is the linear span of functions of the form $\left\{ e^{i\langle {\bf z}, \cdot \rangle} h(\cdot) : h \in \mathcal{H}^{+}, {\bf z} \in \mathbb{R}^{2} \right\}$, where a function $h: \mathbb{R}^{2} \to \mathbb{C}$ belongs to the class $\mathcal{H}^{+}$ if:
\begin{enumerate}
    \item $h$ is real-valued and non-negative, and
    \item the Laplace--Fourier transform (see equation~\eqref{eq:laplace-fourier}) ${\bf \Theta} \mapsto \widehat{h}(-i{\bf \Theta})$ belongs to $C_{c}^{N}(\mathbb{R}^{2})$.
\end{enumerate}
\begin{remark*}
The statement of Babillot--Ledrappier \cite[Lemma~2.4]{Babillot:1998kh} is formulated 
in terms of convergence of measures on \(\mathbb{R}^{2}\). As explained immediately 
before \cite[Lemma~2.4]{Babillot:1998kh}, by considering the Borel measures 
\(\nu_{n}^{\mathbf{x}}\) on \(\mathbb{R}^{2}\), defined (via the Riesz representation theorem) by
\[
\nu_{n}^{\mathbf{x}}(G)
:= 
\frac{2\pi n\, e^{n\mathbb{P}^{*}(\mathbf{x})}}
     {C_{p}(\mathbf{z}) \left(\det\nabla\mathbb{P}^{*}(\mathbf{x})\right)^{1/2}}
\int_{\mathbb{R}^{2}} 
G(\mathbf{y}-n\mathbf{x})\, W(n,p,\mathbf{y}) \, d\mathbf{y},
\]
for any continuous compactly supported function \(G\) on \(\mathbb{R}^{2}\), and by noting that 
the limiting measure \(\lambda\) (in the notation of \cite[Lemma~2.4]{Babillot:1998kh}) is given by
\[
\lambda(G)
=
\int_{\mathbb{R}^{2}} 
G(\mathbf{y})\, e^{-\langle \mathbf{z}, \mathbf{y} \rangle} \, d\mathbf{y},
\]
the proof of \cite[Lemma~2.4]{Babillot:1998kh} applies verbatim in our setting.
\end{remark*}

The remainder of this section is devoted to deriving the following lemma:

\begin{lem}
\label{lem:lemma2.4_B-L} 
Let $K$ be a compact neighborhood of $\bf x_m$ in $\nabla\mathbb P(D)$.  Suppose ${\bf x=\nabla}\mathbb{P}({\bf z)}\in K$
and $u\in\cal H$, then as $n\to\infty$ 
\[
\int u({\bf y-n{\bf x})W(n,p,\ybf)\ d \ybf\sim\frac{e^{-n\mathbb{P}^{*}({\bf x})}}{2\pi n}\left(\det\nabla\mathbb{P}^{*}({\bf x})\right)^{1/2}\cdot C_{p}(\zbf)\cdot\int_{\mathbb{R}^{2}}u({\bf y)}e^{-\langle{\bf z},{\bf y}\rangle}\ d{\bf y},}
\]
where $C_{p}(\zbf)=h_{\inn{\zbf,\fbf}}(\ul z_{p})\nu_{\inn{\zbf,\fbf}}(p)$.
Furthermore, the above convergence is uniform in $K$.
\end{lem}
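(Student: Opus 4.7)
The plan is to establish the asymptotic via a Laplace-tilted Fourier representation, followed by a saddle-point expansion centered at $\zbf$. By linearity, I may fix a generator $u\in\cal H$ of the form $u(\ybf)=e^{i\langle \zbf_0,\ybf\rangle}h(\ybf)$ with $\widehat{h}(-i\mathbf{\Theta})\in C_c^N(\Rb^2)$. The key observation is that although $\widehat{W}(n,p,\cdot)$ is not defined at purely imaginary arguments (the real part lies outside $D$), it is holomorphic and well-controlled on the translated line $\zbf+i\Rb^2\subset\overline D$, since $\zbf\in D$ by Corollary \ref{cor:man-legen}(1). Thus the tilted measure $W_\zbf:=e^{\langle \zbf,\cdot\rangle}W(n,p,\cdot)$ is a finite positive measure on $\Rb^2$ with total mass $\widehat{W}(n,p,\zbf)$ and Fourier transform $\widehat{W}(n,p,\zbf-i\mathbf{s})$. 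Rewriting the target integral as $\int e^{-\langle \zbf,\ybf\rangle}u(\ybf-n\xbf)\,dW_\zbf(\ybf)$ and applying a Parseval-type identity yields
\[
\int u(\ybf-n\xbf)W(n,p,\ybf)\,d\ybf = \frac{e^{-n\langle \zbf,\xbf\rangle}}{(2\pi)^2}\int_{\Rb^2}\widehat{h}\bigl(-\zbf+i(\zbf_0-\mathbf{s})\bigr)\,e^{-in\langle \mathbf{s},\xbf\rangle}\,\widehat{W}(n,p,\zbf+i\mathbf{s})\,d\mathbf{s}.
\]

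The next step is to localize: split the $\mathbf{s}$-integral into a small ball $|\mathbf{s}|<\delta$ and its complement. On the complement, Corollary \ref{thm:decay-cor}(5) — which crucially uses the independence of $f$ and $g$ — yields the uniform spectral decay $|e^{-nP(\zbf)}\widehat{W}(n,p,\zbf+i\mathbf{s})|=o((1+\delta')^{-n})$ on compact sets; this is combined with the Paley--Wiener-type polynomial decay of $\widehat{h}(-\zbf+i(\zbf_0-\mathbf{s}))$ as $|\mathbf{s}|\to\infty$ (inherited from the $C^N$-smoothness of $\widehat{h}(-i\cdot)$) to conclude that the far-from-saddle contribution is negligible relative to the main term $e^{-n\mathbb{P}^*(\xbf)}/n$.

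On the near-saddle piece $|\mathbf{s}|<\delta$, Corollary \ref{thm:decay-cor}(4) gives
\[
\widehat{W}(n,p,\zbf+i\mathbf{s}) = e^{nP(\zbf+i\mathbf{s})}\bigl(h_{\langle \zbf+i\mathbf{s},\fbf\rangle}(\ul z_p)\,\nu_{\langle \zbf+i\mathbf{s},\fbf\rangle}(p)+O(r_1^n)\bigr),
\]
with the leading factor analytic in $\mathbf{s}$ by Corollary \ref{thm:decay-cor}(3). Taylor expanding $P(\zbf+i\mathbf{s})=P(\zbf)+i\langle \mathbf{s},\xbf\rangle-\tfrac{1}{2}\langle \mathbf{s},\nabla^2 P(\zbf)\mathbf{s}\rangle+O(|\mathbf{s}|^3)$, where the linear term uses $\nabla\mathbb{P}(\zbf)=\xbf$ from Corollary \ref{cor:analy+postive-def}(2), the linear imaginary term cancels the oscillatory factor $e^{-in\langle \mathbf{s},\xbf\rangle}$ exactly, and the identity $P(\zbf)-\langle \zbf,\xbf\rangle=-\mathbb{P}^*(\xbf)$ from Proposition \ref{prop:legen-trans}(5) collapses the prefactor to $e^{-n\mathbb{P}^*(\xbf)}$.

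The final step is the classical rescaling $\mathbf{s}=\mathbf{t}/\sqrt{n}$: the cubic error becomes $O(|\mathbf{t}|^3/\sqrt{n})\to 0$, the smooth factors converge to their values at $\mathbf{t}=0$ (in particular $\widehat{h}(-\zbf+i\zbf_0)=\int u(\ybf)e^{-\langle \zbf,\ybf\rangle}\,d\ybf$ and $h_{\langle \zbf,\fbf\rangle}(\ul z_p)\nu_{\langle \zbf,\fbf\rangle}(p)=C_p(\zbf)$), and dominated convergence reduces the integral to the Gaussian $\int_{\Rb^2}e^{-\frac{1}{2}\langle \mathbf{t},\nabla^2 P(\zbf)\mathbf{t}\rangle}\,d\mathbf{t}=2\pi/\sqrt{\det\nabla^2 P(\zbf)}=2\pi\sqrt{\det\nabla^2\mathbb{P}^*(\xbf)}$, the last equality via Proposition \ref{prop:legen-trans}(4). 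Gathering the factor $n^{-1}$ from $d\mathbf{s}=d\mathbf{t}/n$ and the prefactors produces exactly the claimed asymptotic, and uniformity over $\xbf\in K$ is inherited from the uniform versions of Corollary \ref{thm:decay-cor} on compact sets. I expect the main obstacle to be the control of the off-saddle contribution: Corollary \ref{thm:decay-cor}(5) only supplies pointwise decay on compact sets of $\mathbf{s}$, so one must carefully combine it with the Paley--Wiener decay of $\widehat{h}(-\zbf+i(\zbf_0-\mathbf{s}))$ to get integrable bounds over all of $\Rb^2$; a secondary issue is the rigorous justification of the Laplace-tilted Parseval step, which hinges on the finiteness of $W_\zbf$ (guaranteed by $\zbf\in D$) together with sufficient polynomial decay of $h$ to handle the countably infinite support of $W_\zbf$.
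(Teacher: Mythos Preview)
Your proposal is correct and follows essentially the same route as the paper's proof, which packages the tilted Plancherel--Parseval step together with the spectral estimates of Corollary~\ref{thm:decay-cor}(4)--(5) into Lemma~\ref{lem:relate-with-RPF}, and then invokes the saddle-point expansion (Proposition~\ref{prop:saddle-point}) via Lemma~\ref{lem:cond-saddle-pt} and Corollary~\ref{cor:apply-saddle-pt}. The paper addresses your main stated obstacle---off-saddle integrability---by observing that for $u\in\mathcal H$ the factor $\Theta\mapsto\widehat u(i\Theta-\zbf)$ vanishes outside a compact set, so the far-from-saddle integral is over a compact region and Corollary~\ref{thm:decay-cor}(5) applies directly without any appeal to Paley--Wiener decay.
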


\begin{remark*}
The proof of Lemma~\ref{lem:lemma2.4_B-L} is given in the next subsection. 
Roughly speaking, the proof relies on a perturbation approach together with the saddle-point method. 
We also use the complex Ruelle--Perron--Frobenius theorem to bound and estimate the terms in the perturbation.
\end{remark*}

\subsubsection{The saddle-point method and the proof of Lemma~\ref{lem:lemma2.4_B-L}}
This subsection aims to prove Lemma~\ref{lem:lemma2.4_B-L}. 
Before beginning the proof, we first introduce one of our main tools: the saddle-point method.

Let $F:\Omega\colon=\{\|\mathbf{\Theta}\|<\epsilon\}\subset\Rb^{2}\to\Rb$ be an analytic function such that $\nabla F(\bf 0)=0$ and $\nabla^{2}F(\bf 0)$ is positive definite. As $F$ is analytic $\Omega\subset\mathbb{R}^{2}$, we can extend $F$ to a holomorphic function on $\Omega_{\mathbb{C}}=\{{\bf \Theta\in\mathbb{C}^{2}:\ }\|\mathbf{\Theta}\|<\epsilon\}$. Here we will abuse notation and continue to call the extended holomorphic function by $F:\Omega_{\mathbb{C}}\to\mathbb{C}$. 

To fix our notation, we write $M_{u}=\sup\{|u(x)|\}$, $v(t)=O_{F}(t)$
if and only if there exist $t_{0}>0$ and two constants $C_{1}$ and $C_{2}$
only depending on $F$ such that $C_{1}t\leq v(t)\leq C_{2}t$ for all $t>t_{0}$,
and $L_{v}$ is the Lipschitz constant of a Lipschitz function $v,$
i.e., $|v(z)-v(w)|\leq L{}_{v}|z-w|.$ 
\begin{prop}[Saddle-point method]
\label{prop:saddle-point} If $G\colon \mathbb{C}^{2}\to\mathbb{C}$ is
a Lipschitz function with compact support, then
\[
\int_\Omega G(i\mathbf{\Theta})e^{nF(i\mathbf{\Theta})}\ d\mathbf{\Theta}=\frac{e^{nF(\bf 0)}G(\bf 0)}{n}\cdot\frac{2\pi}{\sqrt{\det \nabla^{2}F(\bf 0)}}+\frac{e^{nF(\bf 0)}}{n}\left(L_{G}\cdot O_{F}\left(\frac{1}{\sqrt{n}}\right)+M_{G}\cdot O_{F}\left(\frac{1}{\sqrt{n}}\right)\right).
\]
\end{prop}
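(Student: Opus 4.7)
The plan is to execute a standard two-dimensional saddle-point analysis. Set $A := \nabla^{2} F(\mathbf{0})$, which by hypothesis is real and positive definite. Since $F$ is real-analytic on $\Omega$ and $\nabla F(\mathbf{0}) = 0$, the Taylor expansion at the origin reads
\[
F(i\mathbf{\Theta}) = F(\mathbf{0}) - \tfrac{1}{2}\mathbf{\Theta}^{t} A\, \mathbf{\Theta} + R(\mathbf{\Theta}),
\]
with $|R(\mathbf{\Theta})| \leq C_{F} \|\mathbf{\Theta}\|^{3}$ on a neighborhood of $\mathbf{0}$. After shrinking $\Omega$ slightly if needed, this gives the uniform domination $\operatorname{Re}(F(i\mathbf{\Theta}) - F(\mathbf{0})) \leq -\tfrac{1}{4}\mathbf{\Theta}^{t} A\, \mathbf{\Theta}$ throughout $\Omega$, which is what drives the Gaussian concentration.

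I would then introduce an intermediate scale $\delta_{n} = n^{-2/5}$ (so that $n\delta_{n}^{2} \to \infty$ while $n\delta_{n}^{3} \to 0$) and split the domain as $\Omega = B_{\delta_{n}} \cup (\Omega \setminus B_{\delta_{n}})$, where $B_{\delta_{n}} = \{\|\mathbf{\Theta}\| < \delta_{n}\}$. On the outer region, the quadratic real-part bound yields
\[
\left|\int_{\Omega \setminus B_{\delta_{n}}} G(i\mathbf{\Theta})\, e^{nF(i\mathbf{\Theta})}\, d\mathbf{\Theta}\right| \leq M_{G}\, |\Omega|\, e^{nF(\mathbf{0})}\, e^{-c n \delta_{n}^{2}},
\]
which decays faster than any polynomial in $1/n$ and is absorbed into the $M_{G}\cdot O_{F}(n^{-1/2})$ error.

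For the inner ball, I would factor $e^{nF(i\mathbf{\Theta})} = e^{nF(\mathbf{0})}\, e^{-\frac{n}{2}\mathbf{\Theta}^{t} A\, \mathbf{\Theta}}\, e^{nR(\mathbf{\Theta})}$, and use the uniform bound $|nR(\mathbf{\Theta})| \leq C_{F} n\delta_{n}^{3} \to 0$ on $B_{\delta_{n}}$ to expand $e^{nR(\mathbf{\Theta})} = 1 + O(n\|\mathbf{\Theta}\|^{3})$. Simultaneously, I would decompose $G(i\mathbf{\Theta}) = G(\mathbf{0}) + (G(i\mathbf{\Theta}) - G(\mathbf{0}))$ using $|G(i\mathbf{\Theta}) - G(\mathbf{0})| \leq L_{G}\|\mathbf{\Theta}\|$. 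Extending the resulting Gaussian integrals from $B_{\delta_{n}}$ to all of $\mathbb{R}^{2}$ incurs another super-polynomially small error, and the classical moment identities
\[
\int_{\mathbb{R}^{2}} e^{-\frac{n}{2}\mathbf{\Theta}^{t} A\, \mathbf{\Theta}}\, d\mathbf{\Theta} = \frac{2\pi}{n\sqrt{\det A}}, \qquad \int_{\mathbb{R}^{2}} \|\mathbf{\Theta}\|^{k}\, e^{-\frac{n}{2}\mathbf{\Theta}^{t} A\, \mathbf{\Theta}}\, d\mathbf{\Theta} = O_{F}(n^{-1-k/2})
\]
produce the main term $G(\mathbf{0}) \cdot \frac{2\pi}{n\sqrt{\det A}}$, a Lipschitz-error contribution of order $L_{G} \cdot O_{F}(n^{-3/2})$ (from the $k=1$ moment), and a cubic-remainder contribution of order $M_{G} \cdot O_{F}(n^{-3/2})$ (from the $k=3$ moment weighted by the $n$ from $e^{nR}$). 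Multiplying through by $e^{nF(\mathbf{0})}$ and pulling out $e^{nF(\mathbf{0})}/n$ yields exactly the claimed formula.

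The main obstacle will be coordinating the two independent error sources at the prescribed rate. The Lipschitz roughness of $G$ forbids sharper integration-by-parts tricks, so its contribution must come directly from the $\|\mathbf{\Theta}\| \sim n^{-1/2}$ spread of the Gaussian; meanwhile the cubic remainder in $F$ must be controlled on a scale $\delta_{n}$ that is simultaneously large enough for the Gaussian tail outside $B_{\delta_{n}}$ to be negligible and small enough for $n\delta_{n}^{3} \to 0$. Any exponent strictly between $1/3$ and $1/2$ suffices, and the choice $\delta_{n} = n^{-2/5}$ balances both constraints cleanly.
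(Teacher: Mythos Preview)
Your argument is correct and follows essentially the same saddle-point analysis as the paper's proof in Appendix~\ref{app:saddle-point}: Taylor expand $F(i\mathbf{\Theta})$ about $\mathbf{0}$, isolate the Gaussian factor $e^{-\frac{n}{2}\mathbf{\Theta}^t A\mathbf{\Theta}}$, control the cubic remainder, and replace $G(i\mathbf{\Theta})$ by $G(\mathbf{0})$ via the Lipschitz bound. The only technical difference is that you introduce an intermediate scale $\delta_n=n^{-2/5}$ and split $\Omega=B_{\delta_n}\cup(\Omega\setminus B_{\delta_n})$, whereas the paper makes the single substitution $\widetilde{\mathbf{\Theta}}=\sqrt{n}\,\mathbf{\Theta}$ and works over the entire rescaled domain $\{\|\widetilde{\mathbf{\Theta}}\|<\sqrt{n}\,\epsilon\}$, using the observation that $|nR(i\widetilde{\mathbf{\Theta}}/\sqrt{n})|\leq K\epsilon\|\widetilde{\mathbf{\Theta}}\|^2$ there, so that for $\epsilon$ small the cubic remainder is absorbed into the Gaussian and no intermediate cutoff is needed. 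Both executions yield the same error structure and are standard variants of the Laplace method.
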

\begin{proof} We include a proof of this statement in Appendix \ref{app:saddle-point} because we were not able to find an appropriate reference. Our proof is adapted from \cite[Sec. 5.2]{Anantharaman:2000jq}.
\end{proof}

We establish Lemma \ref{lem:lemma2.4_B-L} in two steps. First, Lemma \ref{lem:cond-saddle-pt} allows us to use the saddle-point method to derive one side of the statement of Lemma \ref{lem:lemma2.4_B-L}. The second step is Lemma \ref{lem:relate-with-RPF}, which relies on the complex Ruelle-Perron-Frobenius theorem (i.e., Corollary \ref{thm:decay-cor}). 

\begin{lem}
\label{lem:cond-saddle-pt} Let $\bf z\in D$ and $\bf x=\nabla \mathbb P (\bf z)$. Suppose $u \in \mathcal{H}$ and define 
\[
F({\bf y}):=\langle{\bf x},{\bf y}-{\bf z}\rangle+\mathbb{P}({{\bf z}-{\bf y}})
\]
and 
\[
G_{p}({\bf y}):=\widehat{u}({\bf y}-{\bf z})C_{p}({\bf z}-{\bf y})
\]
where $C_{p}({\bf z}-{\bf y})=h_{\langle{\bf z}-{\bf y},{\bf f}\rangle}(\ul z_{p})\nu_{\inn{{\bf z}-{\bf y},\fbf}}(p)$.
Then there exists $\epsilon>0$ such that $F$ is analytic in $\Omega=\{\bf{\Theta}\colon\|{\bf\Theta}\|<\eee\}$,
$F(\bf 0)=-\mathbb{P}^{*}({\bf x})$, $\nabla F(\bf 0)=0$ and $\nabla^{2}F(\bf 0)=\nabla^{2}\mathbb{P}(\mathbf{z})=\left(\nabla^{2}\mathbb{P}^{*}(\mathbf{x})\right)^{-1}$
is positive definite. Moreover, $G_{p}$ is a
bounded Lipschitz continuous function in $\Omega_{\mathbb{C}}$. 
\end{lem}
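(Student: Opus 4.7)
The statement decomposes into two essentially independent claims: the pointwise identities and positive-definiteness for $F$ at the origin, and the bounded Lipschitz continuity of $G_p$ on $\Omega_\mathbb{C}$. Both rest on choosing $\eee > 0$ small enough that the complex perturbation theory of the transfer operator (Theorem \ref{thm:tran-holo} and Corollary \ref{thm:decay-cor}) applies at $\zbf - \ybf$ for every $\ybf \in \Omega_\mathbb{C}$; this is possible because $\zbf$ is an interior point of the real domain $D$.

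The $F$-part is essentially calculational. On the complex neighborhood just described, $\ybf \mapsto \mathbb{P}(\zbf - \ybf)$ is holomorphic by Corollary \ref{thm:decay-cor}(3), so $F$ is analytic on $\Omega_\mathbb{C}$. Direct substitution gives $F(\mathbf{0}) = -\langle \xbf, \zbf \rangle + \mathbb{P}(\zbf)$, which equals $-\mathbb{P}^{*}(\xbf)$ by Proposition \ref{prop:legen-trans} item (\ref{item:legendre 5}) applied to the hypothesis $\nabla \mathbb{P}(\zbf) = \xbf$. A single chain-rule computation gives $\nabla F(\ybf) = \xbf - \nabla \mathbb{P}(\zbf - \ybf)$, which vanishes at $\ybf = \mathbf{0}$, and a second differentiation gives $\nabla^{2} F(\ybf) = \nabla^{2}\mathbb{P}(\zbf - \ybf)$ (the two sign flips from the chain rule cancel). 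Therefore $\nabla^{2} F(\mathbf{0}) = \nabla^{2}\mathbb{P}(\zbf)$, which equals $(\nabla^{2}\mathbb{P}^{*}(\xbf))^{-1}$ by Proposition \ref{prop:legen-trans} item (\ref{item:legendre 4}) and is positive definite by Corollary \ref{cor:analy+postive-def}(4) thanks to the independence of $f$ and $g$.

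For the $G_p$-part, it suffices to show that $G_p$ extends holomorphically to an open neighborhood of $\overline{\Omega}_{\mathbb{C}}$; boundedness and Lipschitz continuity on $\Omega_\mathbb{C}$ then follow by Cauchy estimates after a small shrinkage of $\eee$. The factor $C_{p}(\zbf - \ybf) = h_{\langle\zbf-\ybf,\fbf\rangle}(\ul z_{p})\,\nu_{\langle\zbf-\ybf,\fbf\rangle}(p)$ is holomorphic on this neighborhood by Corollary \ref{thm:decay-cor}(3): the map $\ybf \mapsto h_{\langle\zbf-\ybf,\fbf\rangle}$ is analytic into $\mathcal{F}_{\beta}^{b}(\mathbb{C})$ and point evaluation at $\ul z_p$ is a bounded linear functional, while $\ybf \mapsto \nu_{\langle\zbf-\ybf,\fbf\rangle}(p)$ is analytic by the weak-$*$ statement of Corollary \ref{thm:decay-cor}(3) applied with $\phi = \mathbf{1}_{p} \in \mathcal{F}_{\beta}^{b}(\mathbb{C})$. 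For the factor $\hat u(\ybf - \zbf)$, decompose $u$ as a finite linear combination of generators $e^{i\langle\zbf_{0},\cdot\rangle} h$ with $h \in \mathcal{H}^{+}$, so that $\hat u(\mathbf v) = \hat h(\mathbf v + i\zbf_{0})$ on each summand; the defining property of $\mathcal{H}^{+}$, namely that $\Theta \mapsto \hat h(-i\Theta)$ lies in $C_{c}^{N}(\mathbb{R}^{2})$, together with standard Paley-Wiener analysis, yields the required analytic extension of $\hat h$ (and hence of $\hat u$) to a complex neighborhood of $-\zbf$.

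The main technical obstacle is combining two analyticity statements of different flavors: the complex analyticity in $\ybf$ of $\nu_{\langle\zbf-\ybf,\fbf\rangle}(p)$, which requires the full strength of the complex Ruelle-Perron-Frobenius theorem in Corollary \ref{thm:decay-cor}(3) rather than the simpler real-analytic dependence on $D$, and the extension of $\hat u$ off the imaginary axis, which is the reason the class $\mathcal{H}$ is set up via the Paley-Wiener condition in the first place. Once both factors are known to be holomorphic on a joint complex neighborhood of $\overline\Omega_\mathbb{C}$, the product $G_p$ inherits holomorphicity there, and bounded Lipschitz continuity on $\Omega_\mathbb{C}$ is automatic.
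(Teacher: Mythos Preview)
Your proof is correct and follows essentially the same route as the paper's (very brief) argument: the properties of $F$ come from the analyticity and derivative formulas for $\mathbb P$ (the paper cites only Corollary~\ref{cor:analy+postive-def}, while you additionally spell out the chain-rule computation and the Legendre-transform identities from Proposition~\ref{prop:legen-trans}), and the analyticity of $C_p$ is handled via Corollary~\ref{thm:decay-cor} in both. The one place you diverge is in treating the factor $\widehat u(\ybf-\zbf)$: the paper simply records that $u\in\mathcal H$ forces $\widehat u\in C_c^N(\mathbb R^2)$, so bounded Lipschitz continuity is immediate, whereas you go through a Paley--Wiener extension argument to obtain holomorphicity of $\widehat h$ near $-\zbf$ and then invoke Cauchy estimates. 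This is a detour, and in fact Paley--Wiener applied to the compactly supported $\Theta\mapsto\widehat h(-i\Theta)$ yields entirety of $h$, not of $\widehat h$; the paper's more direct reading of the definition of $\mathcal H$ is both shorter and avoids this slip.
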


\begin{proof}
The analyticity and the formulas for the derivatives of $F$ follow from Corollary \ref{cor:analy+postive-def}.
As $u\in\mathcal{H}$ we know $\widehat{u}\in C_{c}^{N}(\mathbb R^2)$. Corollary \ref{thm:decay-cor} gives us the analyticity of ${\bf y}\mapsto C_{p}({\bf z}-{\bf y})$
provided ${\bf y}\in\Omega_{\mathbb{C}}$. Hence, $G_{p}$ is bounded
Lipschitz in $\Omega_{\mathbb{C}}$.
\end{proof}
An immediate consequence of Proposition \ref{prop:saddle-point} and
Lemma \ref{lem:cond-saddle-pt} is:
\begin{cor}
\label{cor:apply-saddle-pt} Let $\Omega=\left\{ \|\mathbf{\Theta}\|<\epsilon\right\} \subset\Rb^{2}$, and let $F$ and $G_p$ be as in Lemma \ref{lem:cond-saddle-pt}. Then,
as $n\to\infty$, 
\begin{equation}
\int_{\Omega}G_{p}(i\mathbf{\Theta})e^{nF(i\mathbf{\Theta})}\ d\mathbf{\Theta}\sim e^{-n\mathbb{P}^{*}({\bf x})}\left(\frac{2\pi}{n}\right)(\det\nabla^{2}\mathbb{P}^{*}({\bf z}))^{\frac{1}{2}}\widehat{u}(-{\bf z})\cdot C_{p}({\bf z}).\label{eq:apply-saddle-pt}
\end{equation}
\end{cor}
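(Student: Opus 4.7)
The plan is to apply Proposition \ref{prop:saddle-point} directly to the pair $(F,G_p)$ from Lemma \ref{lem:cond-saddle-pt} and then to identify the resulting constants with those on the right-hand side of the claim. First I would verify the hypotheses of the saddle-point method: Lemma \ref{lem:cond-saddle-pt} guarantees that $F\colon\Omega\to\mathbb{R}$ is analytic with $\nabla F(\mathbf 0)=0$ and positive definite Hessian $\nabla^2 F(\mathbf 0)=\nabla^2\mathbb{P}(\mathbf z)$, and that $G_p$ is bounded and Lipschitz on $\Omega_{\mathbb C}$, so that $M_{G_p}$ and $L_{G_p}$ are finite constants independent of $n$.

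Next, I would substitute the explicit values $F(\mathbf 0)=-\mathbb{P}^*(\mathbf x)$ and $G_p(\mathbf 0)=\widehat{u}(-\mathbf z)\,C_p(\mathbf z)$ into the conclusion of Proposition \ref{prop:saddle-point}. The determinant appearing there is handled by Legendre duality: Proposition \ref{prop:legen-trans}(\ref{item:legendre 4}) gives $\nabla^2 F(\mathbf 0)=\bigl(\nabla^2\mathbb{P}^*(\mathbf x)\bigr)^{-1}$, so
\[
\frac{1}{\sqrt{\det\nabla^2 F(\mathbf 0)}}=\sqrt{\det\nabla^2\mathbb{P}^*(\mathbf x)},
\]
and the leading term of the saddle-point expansion becomes precisely the asserted right-hand side.

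Finally, I would argue that the error term is negligible compared to the main term. Since $L_{G_p}$ and $M_{G_p}$ are bounded independently of $n$, the error furnished by Proposition \ref{prop:saddle-point} is of order $e^{-n\mathbb{P}^*(\mathbf x)}\,n^{-3/2}$, which is dominated by the leading term of order $e^{-n\mathbb{P}^*(\mathbf x)}\,n^{-1}$, yielding the stated asymptotic equivalence. I do not anticipate any serious obstacle: as the paper indicates, the statement really is an immediate consequence of Proposition \ref{prop:saddle-point} and Lemma \ref{lem:cond-saddle-pt}, the only substantive input being the Legendre-duality identification of the two Hessians.
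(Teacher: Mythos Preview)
Your proposal is correct and follows precisely the approach the paper intends: the paper states the corollary as ``an immediate consequence of Proposition \ref{prop:saddle-point} and Lemma \ref{lem:cond-saddle-pt}'' without further argument, and you have simply spelled out that immediate consequence, including the Legendre-duality identification of the Hessian determinant.
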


The following lemma shows that the asymptotic growth of $\int_{\Rb^{2}}u({\bf y}-n{\bf x})W(n,p,{\bf y})\ d{\bf y}$
is captured by integrating the transfer operator around zero after applying the Laplace-Fourier transform. The proof is based on estimates
of the transfer operator. 
\begin{lem}
\label{lem:relate-with-RPF} Let $\bf z \in D$ and $\bf x=\nabla\mathbb{P}(\zbf)$. Suppose $u \in \mathcal{H}$, then as $n\to\infty$
\[
\int_{\Omega}\widehat{u}(i\mathbf{\Theta}-{\bf z})C_{p}({\bf z}-i\mathbf{\Theta})e^{n\langle{\bf x},i\mathbf{\Theta}-{\bf z}\rangle+n\mathbb{P}({\bf z}-i\mathbf{\Theta})}\ d\mathbf{\Theta}\sim(2\pi)^{2}\int_{\mathbb R^{2}}u({\bf y}-n{\bf x})W(n,p,{\bf y})\ d{\bf y}
\]
where $C_{p}({\bf z}-i\mathbf{\Theta})=h_{\langle{\bf z}-i\mathbf{\Theta},{\bf f}\rangle}(\ul z_{p})\cdot\nu_{\langle{\bf z}-i\mathbf{\Theta},{\bf f}\rangle}(p)$. Moreover, the convergence above is uniform for $\bf x\in\nabla\mathbb P(D)$ in any compact neighborhood of ${\bf x}$.
\end{lem}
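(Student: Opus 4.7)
The plan is to express the right-hand side $(2\pi)^{2}\int u(\ybf-n\xbf)W(n,p,\ybf)\,d\ybf$ as a single Fourier-type integral against the iterated transfer operator, and then to identify its principal part with the integral over $\Omega$ appearing in the statement. By Fourier-inverting $\ybf\mapsto e^{-\langle\zbf,\ybf\rangle}u(\ybf)$ and switching order of integration, one obtains
\begin{equation*}
(2\pi)^{2}\int_{\mathbb R^{2}}u(\ybf-n\xbf)W(n,p,\ybf)\,d\ybf=\int_{\mathbb R^{2}}\widehat u(i\mathbf\Theta-\zbf)\,e^{n\langle\xbf,i\mathbf\Theta-\zbf\rangle}\,\widehat W(n,p,\zbf-i\mathbf\Theta)\,d\mathbf\Theta.
\end{equation*}
Identity \eqref{eq:w-hat} then lets me replace $\widehat W(n,p,\zbf-i\mathbf\Theta)$ by $(\mathcal L^{n}_{\langle\zbf-i\mathbf\Theta,\fbf\rangle}\mathbf 1_{p})(\ul z_{p})$; since $u\in\mathcal H$, the integrand is supported in a compact set $K\subset\mathbb R^{2}$ of $\mathbf\Theta$-values, and Fubini is justified by the Gibbs bound from Proposition \ref{prop:a-priori estimate} applied to $W$.

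Next, I would split the $\mathbf\Theta$-integration into the spectral neighborhood $\Omega=\{\|\mathbf\Theta\|<\epsilon\}$ and its complement $K\setminus\Omega$. On $\Omega$, Corollary \ref{thm:decay-cor}(4) with $\phi=\mathbf 1_{p}$ gives
\begin{equation*}
(\mathcal L^{n}_{\langle\zbf-i\mathbf\Theta,\fbf\rangle}\mathbf 1_{p})(\ul z_{p})=e^{n\mathbb P(\zbf-i\mathbf\Theta)}C_{p}(\zbf-i\mathbf\Theta)+e^{n\mathbb P(\zbf-i\mathbf\Theta)}\eta_{n}(\mathbf\Theta),
\end{equation*}
with $|\eta_{n}(\mathbf\Theta)|\le K_{1}r_{1}^{n}\|\mathbf 1_{p}\|_{\beta}$ uniformly on $\Omega$ for some $r_{1}<1$. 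Substituting the principal term reproduces exactly the integral appearing on the left of the lemma. Using $\Re\mathbb P(\zbf-i\mathbf\Theta)=\mathbb P(\zbf)+O(\|\mathbf\Theta\|^{2})$ (from Corollary \ref{cor:analy+postive-def}) together with the Legendre identity $\mathbb P^{*}(\xbf)=\langle\xbf,\zbf\rangle-\mathbb P(\zbf)$ (Proposition \ref{prop:legen-trans}), the remainder contributes $O\bigl(r_{1}^{n}e^{-n\mathbb P^{*}(\xbf)}\bigr)$.

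On $K\setminus\Omega$, the independence of $f$ and $g$ triggers Corollary \ref{thm:decay-cor}(5): uniformly on this compact set,
\[
\|\mathcal L^{n}_{\langle\zbf-i\mathbf\Theta,\fbf\rangle}\mathbf 1_{p}\|_{\beta}=o\bigl((1+\delta)^{-n}\bigr)\,e^{n\mathbb P(\zbf)}.
\]
Hence, the contribution of $K\setminus\Omega$ is at most $o\bigl((1+\delta)^{-n}\bigr)\,e^{-n\mathbb P^{*}(\xbf)}\cdot\int_{K}|\widehat u(i\mathbf\Theta-\zbf)|\,d\mathbf\Theta$. Corollary \ref{cor:apply-saddle-pt} shows that the main integral (over $\Omega$) has size $\asymp\tfrac{1}{n}e^{-n\mathbb P^{*}(\xbf)}$, so both error terms are negligible relative to it and the asymptotic equivalence of the lemma follows.

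The main technical obstacle I anticipate is the uniformity in $\xbf$ on a compact neighborhood of $\xbf_{m}$: one must ensure that the spectral gap $r_{1}$, the $o\bigl((1+\delta)^{-n}\bigr)$ bound in Corollary \ref{thm:decay-cor}(5), and the continuous dependence of $K$ on $\zbf=\nabla\mathbb P^{*}(\xbf)$ are all uniform. This is handled by the uniformity claims already built into Corollary \ref{thm:decay-cor} and by the real-analytic dependence of $(\mathbb P,h_{\langle\zbf,\fbf\rangle},\nu_{\langle\zbf,\fbf\rangle})$ on $\zbf$, which propagates back to $\xbf$ through Proposition \ref{prop:legen-trans}.
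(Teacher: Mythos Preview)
Your proposal is correct and follows essentially the same route as the paper's proof: Parseval to rewrite $(2\pi)^2\int u(\ybf-n\xbf)W(n,p,\ybf)\,d\ybf$ via $\widehat W(n,p,\zbf-i\mathbf\Theta)=(\mathcal L^n_{\langle\zbf-i\mathbf\Theta,\fbf\rangle}\mathbf 1_p)(\ul z_p)$, then the split $\Omega\cup(K\setminus\Omega)$ with Corollary~\ref{thm:decay-cor}(4) on $\Omega$ and Corollary~\ref{thm:decay-cor}(5) on the complement, and finally comparison with the saddle-point size from Corollary~\ref{cor:apply-saddle-pt}. One small remark: the Fubini justification does not need Proposition~\ref{prop:a-priori estimate}; it follows simply because $\widehat u$ has compact support and $\widehat W(n,p,\cdot)$ is continuous on the relevant strip.
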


\begin{proof} First, recall the Pancherel-Parseval's identity
\begin{equation}\label{eq:Parseval}
(2\pi)^2\int_{\bb R^2}U(\ybf)V(\ybf)d\ybf=\int_{\bb R^2}\widehat U(i\bf \Theta)\widehat V(-i\bf\Theta)d\bf\Theta.
\end{equation}
and that the Laplace-Fourier transform of $W(n,p,\bf y)$ is related to the transfer operator by Equation (\ref{eq:w-hat}). Now, we apply the Equation \ref{eq:Parseval} to the functions 
\[
U(\ybf)=u(\ybf-n\xbf)e^{-\langle\ybf,\zbf\rangle}\qquad\text{ and }\qquad V(\ybf)=W(n,p,\ybf)e^{\langle\ybf,\zbf\rangle}
\]
to get
\begin{align*}
(2\pi)^{2} & \int_{\mathbb R^{2}}u({\bf y}-n{\bf x})W(n,p,\bf y)d{\bf y}
=  \int_{\mathbb R^{2}}\widehat{u}(i\mathbf{\Theta}-{\bf z})e^{n\langle{\bf x},i{\bf\Theta}-{\bf z}\rangle}\widehat{W}(n,p,\mathbf{z}-i\mathbf{\Theta})\ d\mathbf{\Theta}\\
= & \int_{\mathbb R^{2}}\widehat{u}(i\mathbf{\Theta}-{\bf z})e^{n\langle{\bf x},i\bf\Theta-{\bf z}\rangle}\left(\mathcal{L}_{\langle{\bf z}-i\mathbf{\Theta},\mathbf{f}\rangle}^{n}{\bf 1}_{p}\right)(\ul z_{p})\ d\mathbf{\Theta}.
\end{align*}
By Corollary \ref{thm:decay-cor}(5), there exists $\delta>0$ such that
for any $\bf \Theta\neq \bf 0$
\begin{equation}
\lim_{n\to\infty}(1+\delta)^{n}e^{-n\mathbb{P}({\bf z})}\|\mathcal{L}_{\langle{\bf z}-i\mathbf{\Theta},\mathbf{f}\rangle}^{n}{\bf 1}_{p}\|_{\infty}=0\label{eq:CRPF-1}
\end{equation}
so we can find $n$ large enough
such that 
\[
\left\Vert e^{-n\mathbb{P}({\bf z})}(1+\delta)^{n}\left(\mathcal{L}_{\langle{\bf z}-i\mathbf{\Theta},\mathbf{f}\rangle}^{n}{\bf 1}_{p}\right)(\ul{z}_{p})\right\Vert \leq1.
\]
By Corollary \ref{thm:decay-cor}(4), we know that if $\bf\Theta$ is such that $\|\mathbf{\Theta}\|<\epsilon$,
there exist $R>0$ and $\eta\in(0,1)$ independent
of $\bf \Theta$ such that 
\begin{equation}
\left\Vert e^{-n\mathbb{P}({\bf z}-i\mathbf{\Theta})}\mathcal{L}_{\langle{\bf z}-i\mathbf{\Theta},\mathbf{f}\rangle}^{n}{\bf 1}_{p}-C_{p}({\bf z}-i\mathbf{\Theta})\right\Vert _{\infty}\leq R\eta^{n}.\label{eq:CRPF-2}
\end{equation}
Now, we split the integral as
\begin{align*}
 (2\pi)^{2}\int_{\mathbb R^{2}}u({\bf y}-n{\bf x})W(n,p,{\bf y})\ d{\bf y}
&=  \int_{\mathbb R^{2}}\underset{U(\mathbf{\Theta})}{\underbrace{\widehat{u}(i\mathbf{\Theta}-{\bf z})e^{n\langle{\bf x},i{\bf \Theta}-{\bf z}\rangle}\left(\mathcal{L}_{\langle{\bf z}-i\mathbf{\Theta},\mathbf{f}\rangle}^{n}{\bf 1}_{p}\right)(\ul z_{p})}}\ d\mathbf{\Theta}\\
&=  \int_{\Omega}U(\mathbf{\Theta})\ d\mathbf{\Theta}+\int_{\Omega^{c}}U(\mathbf{\Theta})\ d\mathbf{\Theta}.
\end{align*}

Recall that $\mathbb{P}^{*}({\bf x})=\langle{\bf x},{\bf z}\rangle-\mathbb{P}({\bf z})$ by Proposition \ref{prop:legen-trans}(4),
so we have
\begin{align}
\bigg|\int_{\Omega^c}U(\mathbf{\Theta})\ d\mathbf{\Theta}\bigg| & \leq\int_{K\cap\Omega^{c}}\widehat{u}(i{\bf \Theta}-\zbf)e^{n\left(-\langle{\bf x},{\bf z}\rangle+\mathbb{P}({\bf z})\right)}(1+\delta)^{-n}\left|e^{-n\mathbb{P}({\bf z})}(1+\delta)^{n}\left(\mathcal{L}_{\langle{\bf z}-i\mathbf{\Theta},\mathbf{f}\rangle}^{n}{\bf 1}_{p}\right)(\ul z_{p})\right|\ d\mathbf{\Theta}\nonumber \\
 & \leq M\int_{K\cap\Omega^{c}}e^{-n\mathbb{P}^{*}({\bf x})}(1+\delta)^{-n}\ d{\bf \Theta}\nonumber \\
 & =Me^{-n\mathbb{P}^{*}({\bf x})}(1+\delta)^{-n}\text{Leb}(K)\label{eq:small-outside-support}
\end{align}
where $\left\Vert \widehat{u}\right\Vert _{\infty}\leq M$ and $\widehat u$ vanishes for ${\bf \Theta}$ outside of $K$. On the other
hand, we can compute
\begin{align*}
\int_{\Omega}U(\mathbf{\Theta})\ d\mathbf{\Theta}= & \int_{\Omega}\widehat{u}(i\mathbf{\Theta}-{\bf z})e^{n\langle{\bf x},i{\bf \Theta}-{\bf z}\rangle}\left(\mathcal{L}_{\langle{\bf z}-i\mathbf{\Theta},\mathbf{f}\rangle}^{n}{\bf 1}_{p}\right)(\ul z_{p})\ d\mathbf{\Theta}\\
= & \underset{(*)}{\underbrace{\int_{\Omega}\widehat{u}(i\mathbf{\Theta}-{\bf z})e^{n\langle{\bf x},i{\bf \Theta}-{\bf z}\rangle+n\mathbb{P}({\bf z}-i\mathbf{\Theta})}C_{p}({\bf z}-i\mathbf{\Theta})\ d\mathbf{\Theta}}}\\
 & +\int_{\Omega}\widehat{u}(i\mathbf{\Theta}-{\bf z})e^{n\langle{\bf x},i{\bf \Theta}-{\bf z}\rangle}R({\bf z},{\bf f},\mathbf{\Theta})\ d\mathbf{\Theta}
\end{align*}
where $R({\bf z},{\bf f},\mathbf{\Theta})=\mathcal{L}_{\langle{\bf z}-i\mathbf{\Theta},\mathbf{f}\rangle}^{n}{\bf 1}_{p}(\ul z_p)-e^{n\mathbb{P}({\bf z}-i\mathbf{\Theta})}C_{p}({\bf z}-i\mathbf{\Theta})$.
Observe that by eq. $($\ref{eq:CRPF-2}$)$, we have

\begin{align}
\left|\int_{\Omega}\widehat{u}(i\mathbf{\Theta}-{\bf z})e^{n\langle{\bf x},i{\bf \Theta}-{\bf z}\rangle}R({\bf z},{\bf f},\mathbf{\Theta})\ d\mathbf{\Theta}\right| & \leq M\left|\int_{\Omega}e^{n\langle{\bf x},i{\bf \Theta}-{\bf z}\rangle}e^{n\bb P({\bf z}-i{\bf \Theta})}e^{-n\bb P({\bf z}-i{\bf \Theta})}R({\bf z},{\bf f},\mathbf{\Theta})\ d\mathbf{\Theta}\right|\label{eq:small-away-RPF}\\
 & \leq MR\eta^{n}\left|\int_{\Omega}e^{n\langle{\bf x},i{\bf \Theta}-{\bf z}\rangle}e^{n\bb P({\bf z}-i{\bf \Theta})}\ d\mathbf{\Theta}\right|\nonumber \\
 & \leq M'R\eta^{n}\int_{\Omega}e^{n\langle{\bf x},-{\bf z}\rangle}e^{n\bb P({\bf z})}\ d\mathbf{\Theta}\nonumber \\
 & \leq\pi\epsilon^{2}M'R\eta^{n}e^{-n\mathbb{P}^{*}({\bf x})}\nonumber 
\end{align}
where we used Corollary \ref{thm:decay-cor}(5) to obtain the third inequality. We notice that by Corollary \ref{cor:apply-saddle-pt} we know the
exponential growth rate of 
\[
(*):=\int_{\Omega}\widehat{u}(i\mathbf{\Theta}-{\bf z})e^{n\langle{\bf x},i{\bf \Theta}-{\bf z}\rangle+n\mathbb{P}({\bf z}-i\mathbf{\Theta})}C_{p}({\bf z}-i\mathbf{\Theta})\ d\mathbf{\Theta}
\]
is $\mathbb{P}^{*}({\bf x})$. Meanwhile, eq. (\ref{eq:small-outside-support})
and (\ref{eq:small-away-RPF}) show that the growth rate of these two
terms are strictly less then $\mathbb{P}^{*}({\bf x})$. Hence, 
\[
\lim_{n\to\infty}\frac{(2\pi)^{2}\int_{\mathbb R^{2}}u({\bf y}-n{\bf x})W(n,p,{\bf y})\ d{\bf y}}{\int_{\Omega}\widehat{u}(i\mathbf{\Theta}-{\bf z})e^{n\langle{\bf x},i{\bf \Theta}-{\bf z}\rangle+n\mathbb{P}({\bf z}-i\mathbf{\Theta})}C_{p}({\bf z}-i\mathbf{\Theta})\ d\mathbf{\Theta}}=1.\qedhere
\]
\end{proof}
We are now ready to present the proof of Lemma \ref{lem:lemma2.4_B-L}, which follows from Corollary \ref{cor:apply-saddle-pt} and Lemma \ref{lem:relate-with-RPF}.
\begin{proof}[Proof of Lemma \ref{lem:lemma2.4_B-L}]
By Lemma \ref{lem:relate-with-RPF}, we know

\[
\lim_{n\to\infty}\frac{(2\pi)^{2}\int_{\mathbb R^{2}}u({\bf y}-n{\bf x})W(n,p,{\bf y})\ d{\bf y}}{\int_{\Omega}\hat{u}(i\mathbf{\Theta}-{\bf z})e^{n\langle{\bf x},i{\bf \Theta}-{\bf z}\rangle+n\mathbb{P}({\bf z}-i\mathbf{\Theta})}C_{p}({\bf z}-i\mathbf{\Theta})\ d\mathbf{\Theta}}=1.
\]
Moreover, Corollary \ref{cor:apply-saddle-pt} shows that
\[
\lim_{n\to\infty}\frac{\frac{e^{-n\mathbb{P}^{*}({\bf x})}}{2\pi n}\left(\det\nabla^{2}\bb P^{*}(\xbf)\right)^{1/2}\wh u(-\zbf)C_{p}(\zbf)}{\int_{\Omega}\widehat{u}(i\mathbf{\Theta}-{\bf z})e^{n\langle{\bf x},i{\bf \Theta}-{\bf z}\rangle+n\mathbb{P}({\bf z}-i\mathbf{\Theta})}C_{p}({\bf z}-i\mathbf{\Theta})\ d\mathbf{\Theta}}=1.
\]
Combining these two equations, we obtain
\[
\lim_{n\to\infty}\frac{\frac{e^{-n\mathbb{P}^{*}({\bf x})}}{2\pi n}\left(\det\nabla^{2}\bb P^{*}(\xbf)\right)^{1/2}\wh u(-\zbf)C_{p}(\zbf)}{(2\pi)^{2}\int_{\mathbb R^{2}}u({\bf y}-n{\bf x})W(n,p,{\bf y})\ d{\bf y}}=1.\qedhere
\]
\end{proof}

\section{Application to cusped Hitchin representations: Theorem \ref{thm:main application}}\label{sec:cuspedHit}

In this section we recall results of Bray, Canary and the authors \cite{BCKMcounting} and combine them with Theorem \ref{thm:weaker-main} and Theorem \ref{thmx:2} to deduce Theorem \ref{thm:main application} from the introduction.

Let $S=\bb H^2/\Gamma$ be a geometrically finite hyperbolic surface with limit set $\Lambda(\Gamma)\subseteq \bb H^2$. Then, $\Gamma$ is a torsion-free geometrically finite Fuchsian group in $\sf{PSL}(2,\bb R)$ which we assume is not convex cocompact. We connect these objects to the previous sections by coding the recurrent portion of the geodesic flow of $S$.

\begin{thm}[Dal'Bo-Peign\'e \cite{Dalbo:1996vs}, Ledrappier-Sarig \cite{Ledrappier:2008wq}, Stadlbauer \cite{Stadlbauer:2004va}. See also Theorem 9.3 in \cite{BCKMcounting}]\label{thm:coding}
Let $\Gamma$ denote a torsion-free geometrically finite Fuchsian group which is not convex cocompact. There exists a topologically mixing countable state Markov shift $\Sigma^+$ with BIP which codes the recurrent portion of the geodesic flow on $T^1(\mathbb H^2/\Gamma)$. Moreover there exists a map $G\colon \mathit{Fix}^n\to\Gamma$ such that if $\gamma\in\Gamma$ is hyperbolic, then there exists $n\in\bb Z_{>0}$ and a unique (up to shift) $\ul x\in\text{Fix}^n$ such that $\gamma$ is conjugate to $G(\ul x)$.
\end{thm}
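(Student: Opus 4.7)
The plan is to build $\Sigma^+$ via the Bowen-Series boundary coding of the action of $\Gamma$ on $\partial \bb H^2$, modified by an inducing step at each parabolic fixed point. First I would choose an even-cornered fundamental polygon $P\subset\bb H^2$ whose ideal vertices are the parabolic fixed points of $\Gamma$, and decompose the circle $\partial \bb H^2$ into intervals $\{I_s\}$ associated to the side-pairing generators $s\in\Gamma$. On the limit set $\Lambda(\Gamma)$, the Bowen-Series map $T$ acts piecewise as $s$ on $I_s$; away from parabolic fixed points $T$ is uniformly expanding, and the recurrent part of the geodesic flow on $T^1(\bb H^2/\Gamma)$ is naturally modeled as a suspension flow over $(T,\Lambda(\Gamma))$ with roof function given by a displacement cocycle. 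Near each parabolic fixed point $q$ with parabolic generator $p$, however, $T$ has a neutral fixed point and the derivative degenerates. To restore uniform expansion I would subdivide the interval near $q$ into countably many cells using the orbit $\{p^n\}$ and induce, replacing each parabolic block by a single ``jump'' return to the compact part of the coding. The resulting shift has a countable alphabet $\mathcal A$ in which the cusp-cells are indexed by the number of parabolic iterations performed before the jump.

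Next, I would verify the structural properties required in Section \ref{sec:background}. Topological mixing follows from the fact that the geodesic flow is mixing on its non-wandering set for non-elementary geometrically finite Fuchsian groups, combined with standard symbolic mixing arguments for induced systems. For the BIP property, I would take $\mathcal B\subset\mathcal A$ to be the finite set of letters attached to the compact core of the coding (those away from every cusp): since each cusp excursion must be both entered from and exit to the core, every letter $a\in\mathcal A$ admits $b_p,b_s\in\mathcal B$ with $t_{b_pa}=t_{ab_s}=1$, which is exactly the big-images/pre-images condition.

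The map $G\colon \mathit{Fix}^n\to\Gamma$ is then defined by reading generator labels along an admissible periodic word $\ul x=(x_1,\dots,x_n)$, setting $G(\ul x)=s_{x_n}\cdots s_{x_1}$, where $s_{x_i}\in\Gamma$ is the (possibly parabolically-inflated) side-pairing associated to $x_i$. The axis of $G(\ul x)$ projects to the closed geodesic on $\bb H^2/\Gamma$ whose Bowen-Series itinerary is $\ul x$; combined with Series's bijection for the convex cocompact case, the inducing step extends this to a bijection up to cyclic shift between $\Gamma$-conjugacy classes of hyperbolic elements and shift-orbits in $\bigsqcup_n \mathit{Fix}^n$. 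The main obstacle is designing the cusp-inducing so that the resulting transition graph is genuinely Markov on a countable alphabet, the induced system retains enough distortion control to fit the locally H\"older transfer-operator framework used earlier in the paper, and both BIP and topological mixing survive the induction; this is precisely the technical heart of the Dal'Bo-Peign\'e, Ledrappier-Sarig, and Stadlbauer constructions.
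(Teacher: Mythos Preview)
The paper does not supply its own proof of this theorem: it is stated as a citation result attributed to Dal'Bo--Peign\'e, Ledrappier--Sarig, and Stadlbauer, with a pointer to \cite[Section~9.4]{BCKMcounting} for further discussion, and the only additional comment is that the construction of $\Sigma^+$ differs according to whether $S$ has finite or infinite area. There is therefore nothing in the paper to compare your argument against line by line.

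That said, your sketch is a reasonable outline of the construction carried out in the cited references: the Bowen--Series boundary map, an inducing step at each parabolic cusp to restore expansion, and the identification of periodic words with hyperbolic conjugacy classes via reading off side-pairing labels. You also correctly isolate the genuine technical content, namely designing the cusp-inducing so that the resulting transition matrix is Markov on a countable alphabet, BIP and topological mixing survive, and distortion is controlled. One caveat: the paper explicitly notes that the finite-area and infinite-area cases use different codings, so a complete treatment would need to handle both, whereas your sketch implicitly assumes a single uniform construction. But since the theorem is quoted rather than proved here, your proposal goes beyond what the paper itself provides.
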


Note that depending on whether $S$ has finite area or not, the construction of the shift $\Sigma^+$ is different, however Theorem \ref{thm:coding} holds in both settings. See \cite[Section 9.4]{BCKMcounting} for a more detailed discussion. In particular, when $\Gamma$ is convex cocompact, Bowen and Series code the geodesic flow via a finite states Markov shift, and then the statement of Theorem \ref{thm:main application} follows from \cite{Lalley:1987df, Schwartz:1993ty,Sharp:1998if,Glorieux17,Dai,chow2024multiplecor}. 

\subsection{Cusped Hitchin representations} In order to define cusped Hitchin representations we will first need to discuss a notion of positivity on the space of complete flags.

A matrix $U\in\mathsf{SL}(d,\bb R)$ is {\em unipotent and totally positive} with respect to a basis $u=(u_1,\dots,u_d)$ of $\mathbb R^d$ if in the basis $u$, the matrix $U$ is unipotent, upper triangular and the minors of $U$ are strictly positive, unless they are forced to be zero due to the shape of the matrix. Recall that a (complete) flag $F$ in $\mathbb R^d$ is a maximal nested sequence of subspaces of $\mathbb R^d$. That is, $F=(F^0,F^1,\dots, F^{d-1},F^d)$ with $\dim F^i=i$ and $F^{i-1}\subset F^{i}$ for all $i=1,\dots,d$. We denote by $\mathcal F_d$ the space of flags in $\mathbb R^d$. Two flags $F,G$ are {\em transverse} if $F^i\cap G^{d-i}=\{0\}$ for all $i=0,\dots, d$. A basis $u=(u_1,\dots, u_d)$ is {\em consistent} with a pair of transverse flags $F,G$ if $u_i\in F^i\cap G^{d-i+1}$ for all $i=1,\dots, d$. Then, a $k$-tuple of flags $(F_1,\dots, F_k)$ is {\em positive} if there exist a basis $u=(u_1,\dots, u_d)$ consistent with $F_1$ and $F_k$ and unipotent and totally positive matrices $U_2,\dots, U_{k-1}$ such that $F_j=U_j\cdot \ldots\cdot U_2\cdot F_2$. 

Given a torsion-free geometrically finite Fuchsian group, a representation $\rho\colon \Gamma\to\mathsf{SL}(d,\mathbb R)$ is {\em cusped Hitchin} if there exists a continuous, $\rho$-equivariant map $\xi\colon \Lambda(\Gamma)\to\mathcal F_d$ which sends any positive tuple flags in $\Lambda(\Gamma)\subset \partial \mathbb H^2\cong \mathcal F_2$ to a positive tuple of flags in $\mathcal F_d$. 

\subsection{Length functions and potentials} Choose
\[
\mathfrak a=\{x\in\bb R^d\colon x_1+\dots+x_d=0\}\qquad\text{ and }\qquad\mathfrak a^+=\{x\in\mathfrak a\colon x_1\geq \dots\geq x_d\}
\] 
as a Cartan subspace and a closed positive Weyl chamber for the Lie algebra of $\mathsf{SL}(d,\mathbb R)$, respectively. The Jordan projection is the map
\[
\lambda\colon\mathsf{SL}(d,\mathbb R)\to\mathfrak a^+
\]
which records the logarithms of the moduli of eigenvalues of elements of $\mathsf{SL}(d,\mathbb R)$ in decreasing order. If $\gamma\in\Gamma$ is hyperbolic and $\rho$ is a cusped Hitchin representation, then $(\lambda\circ\rho)(\gamma)$ lies in the interior of $\mathfrak a^+$ (see for example \cite[Corollary 9.2]{Fock:2006da} and \cite[Theorem 1.4]{CZZcusped}).  Thus, if $\Delta$ is the set of nonzero positive linear combinations of the simple roots for $\mathfrak a^+$, for any $\phi\in\Delta$ we define the {\em $\phi$-length function of $\rho$} as
\[
\ell^\phi_\rho(\gamma)=(\phi\circ\lambda\circ\rho)(\gamma)\geq 0
\]
and observe that if $\gamma$ is hyperbolic, then $\ell^\phi_\rho(\gamma)>0$.

Theorem D in \cite{BCKMcounting} constructs a potential on the shift space from Theorem \ref{thm:coding} associated to a cusped Hitchin representation $\rho$ and a choice of length function.

\begin{thm}[Bray-Canary-Kao-Martone \cite{BCKMcounting}]\label{thm:Hitchinpotentials} Let $\Gamma$ be a torsion-free, geometrically finite Fuchsian group which is not convex cocompact, $\rho:\Gamma\to\mathsf{SL}(d,\mathbb R)$ is a cusped Hitchin representation 
and $\phi\in\Delta$. Then there exists a strictly positive locally H\"older continuous potential $\tau_\rho^\phi\colon\Sigma^+\to\mathbb R$ with strong entropy gap at infinity such that for every $\ul x\in\text{Fix}^n$
\[
S_n\tau^\phi_\rho(\ul x)=\ell^\phi_\rho(G(\ul x)).
\] 
Moreover, if $\eta\colon\Gamma\to\mathsf{SL}(d,\mathbb R)$ is another cusped Hitchin representation, then $\|\tau_\rho^\phi-\tau^\phi_\eta\|_\infty$ is finite.
\end{thm}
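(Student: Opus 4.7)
This result is stated as an assertion taken from \cite{BCKMcounting} (Theorem D there), so the plan is to indicate how the argument proceeds and where the main difficulty lies. The construction divides naturally into three stages: (i) build the potential $\tau_\rho^\phi$ from the Frenet curve $\xi_\rho\colon\Lambda(\Gamma)\to\mathcal F_d$; (ii) verify local H\"older continuity, strict positivity, and the strong entropy gap at infinity; (iii) compare two such potentials for distinct representations to establish the last assertion.

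For stage (i), starting from the coding $G\colon\text{Fix}^n\to\Gamma$ of Theorem \ref{thm:coding}, I would define $\tau_\rho^\phi(\ul x)$ as the value of a flag-valued Busemann cocycle along $\xi_\rho$, projected to $\bb R$ via $\phi\in\Delta$, using the pair of limit points encoded by $\ul x\in\Sigma^+$ and its first return under $\sigma$. The cocycle is arranged so that if $\ul x\in\text{Fix}^n$ and $\gamma=G(\ul x)$, then the telescoping ergodic sum $S_n\tau_\rho^\phi(\ul x)$ reduces to the $\phi$-translation length of $\rho(\gamma)$ between the attracting and repelling flags $\xi_\rho(\gamma^\pm)$, which equals $\phi(\lambda(\rho(\gamma)))=\ell^\phi_\rho(\gamma)$. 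Strict positivity then follows because coded orbits correspond to hyperbolic elements of $\Gamma$, on each of which $\ell^\phi_\rho>0$, with a uniform lower bound on cylinders coming from the positivity of triples of flags in the image of $\xi_\rho$.

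For stage (ii), local H\"older regularity reduces to H\"older regularity of the Frenet curve $\xi_\rho$ (provided by the Anosov/cusped Hitchin theory, cf.\ \cite{CZZcusped}) together with the exponential decay of cylinder diameters under $\sigma$. The strong entropy gap at infinity is the most delicate ingredient and the principal obstacle in the whole argument: one must show that $S(\tau_\rho^\phi,a)$ grows along the alphabet $\mathcal A$ at just the right rate so that the series $Z_1(\tau_\rho^\phi,s)$ has finite critical exponent $d(\tau_\rho^\phi)>0$ and diverges at $s=d(\tau_\rho^\phi)$. This requires sharp estimates on the $\phi$-length growth of $\rho$ along parabolic orbits, which in turn come from the cusped Hitchin geometry of Canary--Zhang--Zimmer \cite{CZZcusped}; this is precisely the input that forces one to work beyond the convex-cocompact case and is where the BCKM construction does its hardest work.

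For stage (iii), the bound $\|\tau_\rho^\phi-\tau_\eta^\phi\|_\infty<\infty$ is obtained by splitting $\Sigma^+$ into a compact recurrent portion, on which the two continuous potentials differ by a uniformly bounded amount, and the cuspidal portions, on which both potentials measure comparable eigenvalue growth along the same parabolic subgroup of $\Gamma$. The point is that both $\rho$ and $\eta$ being cusped Hitchin prescribes the Jordan block structure of every parabolic element up to uniform distortion, so the discrepancy $|\tau_\rho^\phi-\tau_\eta^\phi|$ stays bounded even along arbitrarily deep cusp excursions. Once the three stages are assembled, the conclusion of Theorem \ref{thm:Hitchinpotentials} follows verbatim from the statement.
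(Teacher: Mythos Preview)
Your proposal is essentially correct in spirit: you correctly identify that this theorem is quoted from \cite{BCKMcounting}, and the paper's own proof is nothing more than the one-line citation ``The proof follows from Theorem D and Lemma 3.2 in \cite{BCKMcounting}.'' Your sketch of the underlying construction in \cite{BCKMcounting} is a reasonable outline of what happens there.

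One point where your sketch diverges from the actual logic: you claim strict positivity of $\tau_\rho^\phi$ follows directly from positivity of $\ell_\rho^\phi$ on hyperbolic elements together with a uniform lower bound coming from flag positivity. In fact, Theorem D of \cite{BCKMcounting} as stated only yields an \emph{eventually positive} potential; the paper explicitly invokes \cite[Lemma 3.2]{BCKMcounting} (also recalled in the remark following the definition of strict positivity in Section \ref{sec:background}) to replace it by a cohomologous strictly positive one. This is a minor bookkeeping issue rather than a gap, but since strict positivity is used essentially later (e.g.\ in Lemma \ref{lem:H>d} and Proposition \ref{prop:a-priori estimate}), it is worth getting the attribution right.
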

\begin{proof} The proof follows from Theorem D and Lemma 3.2 in \cite{BCKMcounting}.
\end{proof}

In this setting, we can characterize algebraically when a pair of potentials is independent.

\begin{lem}\label{lem:independence} If $\rho,\eta$ are cusped Hitchin representations and $\eta\neq \rho,(\rho^{-1})^\top$, then $\tau^\phi_\rho$ and $\tau^\phi_\eta$ are independent.
\end{lem}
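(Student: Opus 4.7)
The plan is to argue by the contrapositive: suppose there exist $(a,b) \neq (0,0)$ such that $h := a\tau^\phi_\rho + b\tau^\phi_\eta$ is arithmetic, and I will show that $\eta \in \{\rho, (\rho^{-1})^t\}$. By Theorem \ref{thm:Hitchinpotentials}, ergodic sums over periodic points satisfy $S_n h(\ul x) = a\ell^\phi_\rho(G(\ul x)) + b\ell^\phi_\eta(G(\ul x))$ for $\ul x \in \mathrm{Fix}^n$, so arithmeticity of $h$ translates to the condition that $a\ell^\phi_\rho(\gamma) + b\ell^\phi_\eta(\gamma) \in c\mathbb{Z}$ for some $c > 0$ and every hyperbolic $\gamma \in \Gamma$. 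The degenerate cases $a = 0$ or $b = 0$ can be disposed of first by showing that the $\phi$-length spectrum of a single cusped Hitchin representation is not arithmetic --- I would deduce this from the positive structure of $\rho$ established in \cite{CZZcusped} combined with the spectral-gap picture for $\mathcal{L}_{-\delta_\phi(\rho)\tau^\phi_\rho}$ from \cite{BCKMcounting} (encoded in Corollary \ref{thm:decay-cor}), which yields density of the $\phi$-length spectrum in $\mathbb{R}_{\geq 0}$.

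Assuming now that both $a$ and $b$ are nonzero, I would next invoke a Liv\v sic-type cohomology argument adapted to the BIP countable Markov shift $\Sigma^+$ to upgrade arithmeticity of $h$ into the statement that $h$ is Liv\v sic cohomologous to a locally constant $c\mathbb{Z}$-valued potential. Combined with the H\"older regularity of the two length potentials and the complex perturbation theory of Corollary \ref{thm:decay-cor}, this would rigidly constrain the ergodic sums, ultimately forcing a proportionality $\ell^\phi_\rho(\gamma) = \lambda \ell^\phi_\eta(\gamma)$ for some $\lambda > 0$ and all primitive hyperbolic $\gamma \in \Gamma$. Bowen's formula applied to both normalizations pins down $\lambda = \delta_\phi(\eta)/\delta_\phi(\rho)$.

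The final step --- and the main obstacle --- is to conclude from this proportionality of $\phi$-length spectra that $\eta \in \{\rho, (\rho^{-1})^t\}$, a marked length spectrum rigidity for cusped Hitchin representations. I would carry it out by combining positivity of the equivariant boundary map from \cite{CZZcusped} with character-variety and trace-identity arguments to show that sufficiently many $\phi$-lengths determine $\rho$ up to the Cartan involution $\rho \mapsto (\rho^{-1})^t$, building on the analogous statement for cocompact Hitchin representations invoked in \cite{Dai}. The main novelty would be the adaptation to the non-uniform geometrically finite setting, with the cusped coding $G$ from Theorem \ref{thm:coding} providing the bridge between the dynamical and representation-theoretic sides. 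Once this is established, it directly contradicts the hypothesis $\eta \neq \rho, (\rho^{-1})^t$ and completes the argument.
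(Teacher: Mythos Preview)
Your outline has a genuine gap at the passage from arithmeticity of $h=a\tau^\phi_\rho+b\tau^\phi_\eta$ to the proportionality $\ell^\phi_\rho=\lambda\,\ell^\phi_\eta$. Upgrading arithmeticity via Liv\v{s}ic to ``$h$ is cohomologous to a $c\mathbb Z$-valued locally constant potential'' is fine, but that conclusion is far weaker than ``$h$ is cohomologous to a constant''; it does \emph{not} by itself force any linear relation between the two length spectra. Nothing in Corollary~\ref{thm:decay-cor} (spectral gap/complex RPF) bridges this: those estimates tell you the transfer operator for $h+i\theta$ behaves nicely away from a discrete set of $\theta$, not that the period data of $f$ and $g$ are aligned. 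Without an extra algebraic input you cannot rule out, say, that $a\ell^\phi_\rho+b\ell^\phi_\eta$ happens to land in a lattice while $\ell^\phi_\rho$ and $\ell^\phi_\eta$ remain genuinely two-dimensional. Your final step then compounds the issue: even granting proportionality, you would still need a marked $\phi$-length spectrum rigidity theorem in the cusped setting, which you correctly flag as a nontrivial project in its own right.

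The paper avoids both problems by going through algebraic groups rather than thermodynamics. It invokes Sambarino's result that the Zariski closures of $\rho(\Gamma)$ and $\eta(\Gamma)$ are simple, connected, and center-free, and then applies the argument of \cite[Lemma~2.11]{Dai} (cf.\ \cite[Lemma~6.12]{Dai}). That lemma analyzes the Zariski closure of the product representation $(\rho,\eta)$ and uses Benoist's density of the limit cone to show directly that no nontrivial linear combination $a\ell^\phi_\rho+b\ell^\phi_\eta$ can be arithmetic unless $\eta\in\{\rho,(\rho^{-1})^t\}$. The point is that the algebraic/Zariski-density machinery handles both the ``arithmeticity $\Rightarrow$ dependence'' step and the rigidity step in one stroke, whereas your purely dynamical route would need substantial new ingredients at each of those two places.
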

\begin{proof} The Zariski closures of $\rho(\Gamma)$ and $\eta(\Gamma)$ are simple, center-free and connected Lie groups by a theorem of Sambarino (See Corollary 1.5 and Remark 6.5 in \cite{Sambarino:2024za}). Then, we can apply the proof of Lemma 2.11 in \cite{Dai} (see also \cite[Lemma 6.12]{Dai}) to obtain that the potentials $\tau^\phi_\rho$ and $\tau^\phi_\eta$ are independent.
\end{proof}

We are now ready to prove Thoerem \ref{thm:main application}.

\begin{proof}[Proof of Theorem \ref{thm:main application}]
Theorem \ref{thm:coding}, Theorem \ref{thm:Hitchinpotentials} and Lemma \ref{lem:independence} together imply that we can apply Theorems \ref{thm:weaker-main} and \ref{thmx:2} to the potentials $\tau_\rho^\phi$ and $\tau_\rho^\eta$. 
\end{proof}

\appendix

\section{Proof of Theorem \ref{thm:tran-holo}}\label{sec:Proof-of-holo}

\smallskip
\noindent\textbf{Theorem \ref{thm:tran-holo}.} {\em The map $(z,w)\mapsto\mathcal{L}_{zf+wg}$
is holomorphic in $\mathcal{B}(\mathcal{F}_{\beta}^{b}(\mathbb{C}))$
for all $(z,w)\in\overline{D}$.}
\smallskip

\begin{proof}
By \cite[Lemma 2.6.1]{Mauldin:2003dn}, we know $z\mapsto\mathcal{L}_{zf+wg}$
is holomorphic provided $z\mapsto\mathcal{L}_{zf+wg}$ is continuous.
Using Hartog's theorem for Banach spaces (cf. \cite[Thm 14.27]{Chae_Hartog_Thm}),
that is separate holomorphicity implies joint holomorphicity, it is
sufficient to show $z\mapsto\mathcal{L}_{zf+wg}$ is continuous for
any $(z,w)\in\overline{D}$, as the same argument shows that $w\mapsto\mathcal{L}_{zf+wg}$
is continuous for any $(z,w)\in\overline{D}$.

To see this, we fix $(z,w)\in\overline{D}$ and consider $z_{n}=z+\epsilon_{n}$
such that $\overline{D}\ni(z_{n},w)\to(z,w)$ as $n\to\infty$ in
$\mathbb{C}^{2}$, i.e., $\lim_{n\to\infty}|\epsilon_{n}|=0$. We
want to show that as $n\to\infty$
\[
\|\mathcal{L}_{z_{n}f+wg}-\mathcal{L}_{zf+wg}\|_{\rm {op}}\to0
\]
where $\|\cdot\|_{{\rm {op}}}$ is the operator norm on $\mathcal{B}(\mathcal{F}_{\beta}^{b}(\mathbb{C}))$.
The proof of this claim follows (with some minor modifications/simplifications) from Sarig \cite[Prop. 2 (3)]{Sarig06phase}. We give a proof below for the sake of completeness.

Let $\mathcal{A}_{x_{0}}=\{a\in\mathcal{A}:\ t_{ax_{0}}=1\}$. For
any $\phi\in\mathcal{F}_{\beta}^{b}(\mathbb{C})$, fix $\ul x$, $\ul y$
such that $x_{0}=y_{0}$. Write $U(\ul x)=\left(zf+wg\right)(\ul x)$, $G_{n}(\ul x)=1-e^{\epsilon_{n}f(\ul x)},$ and
\[
R_{n}\phi(\ul x):=\mathcal{L}_{zf+wg}\phi(\ul x)-\mathcal{L}_{z_{n}f+wg}\phi(\ul x)=\sum_{a\in\mathcal{A}_{x_{0}}}e^{U(a\ul x)}G_{n}(a\ul x)\phi(a\ul x).
\]
Hence,
\begin{align*}
|R_{n}\phi(\ul x)|\leq & \|\phi\|_{\mathcal{\beta}}\sum_{a\in\mathcal{A}_{x_{0}}}|e^{U(a\ul x)}||G_{n}(a\ul x)|\\
\leq & \|\phi\|_{\mathcal{\beta}}\sum_{a\in\mathcal{A}_{x_{0}}}e^{(\Re z\cdot f+\Re w\cdot g)(a\ul x)}|1-e^{\epsilon_{n}f(a\ul x)}|\\
\leq & \|\phi\|_{\mathcal{\beta}}\sum_{a\in\mathcal{A}_{x_{0}}}e^{(\Re z\cdot f+\Re w\cdot g)(a\ul x)+|\epsilon_{n}|f(a\ul x)}|e^{-|\epsilon_{n}|f(a\ul x)}-e^{(\epsilon_{n}-|\epsilon_{n}|)f(a\ul x)}|\\
= & \|\phi\|_{\mathcal{\beta}}\sum_{a\in\mathcal{A}_{x_{0}}}e^{\left((\Re z+|\epsilon_{n}|)\cdot f+\Re w\cdot g\right)(a\ul x)}|e^{-|\epsilon_{n}|f(a\ul x)}-e^{(\epsilon_{n}-|\epsilon_{n}|)f(a\ul x)}|.
\end{align*}
Since $\lim_{n\to\infty}|\epsilon_{n}|=0$, we know $(\Re z+|\epsilon_{n}|,\Re w)\in D$
when $n$ is large enough. Thus, there exists $0<\epsilon<<1$ such
that $(\Re z+\epsilon,\Re w)\in D$ and $|\epsilon_{n}|<\epsilon$
when $n$ is large. Therefore, by the Gibbs property of $(\Re z+|\epsilon_{n}|)\cdot f+\Re w\cdot g$,
we have 
\[
e^{\left((\Re z+|\epsilon_{n}|)\cdot f+\Re w\cdot g\right)(a\ul x)}\leq e^{\left((\Re z+\epsilon)\cdot f+\Re w\cdot g\right)(a\ul x)}\leq Q\cdot\mu_{(\Re z+\epsilon)\cdot f+\Re w\cdot g}([a])
\]
where $[a]$ is the cylinder generated by $a\in\mathcal A_{x_0}$. Set $F_{n}(\ul x):=e^{-|\epsilon_{n}|f(\ul x)}-e^{(\epsilon_{n}-|\epsilon_{n}|)f(\ul x)}.$
Since $f\geq0$, we can easily get $|F_{n}|\leq2$ and $\lim_{n\to\infty}|F_{n}(\ul x)|=0$.
Hence
\begin{alignat*}{1}
|R_{n}\phi(\ul x)|/\|\phi\|_{\beta} & \leq\sum_{a\in\mathcal{A}_{x_{0}}}Q\cdot\mu_{(\Re z+\epsilon)\cdot f+\Re w\cdot g}([a])|F_{n}(a\ul x)|\\
 & \leq Q\int_{\mathcal A_{x_0}}|F_{n}(\ul x)|\ d\mu_{(\Re z+\epsilon)\cdot f+\Re w\cdot g}(\ul x)<\infty.
\end{alignat*}
By the Bounded Convergence Theorem, we have as $n\to\infty$
\[
\sup\left\{ \frac{|R_{n}\phi(\ul x)|}{\|\phi\|_{\beta}}:\ \phi\in\mathcal{F}_{\beta}^{b}(\mathbb{C})\right\} \to0.
\]
The second step is to show $\sup\left\{ \frac{\mathrm{Lip}(R_{n}\phi(\ul x))}{\|\phi\|_{\beta}}:\ \phi\in\mathcal{F}_{\beta}^{b}(\mathbb{C})\right\} \to0$
as $n\to\infty$. 
\begin{align*}
|R_{n}\phi(\ul x)-R_{n}\phi(\ul y)|\leq & \sum_{a\in\mathcal{A}_{x_{0}}}|e^{U(a\ul x)}G_{n}(a\ul x)\phi(a\ul x)-e^{U(a\ul y)}G_{n}(a\ul y)\phi(a\ul y)|\\
\leq & \sum_{a\in\mathcal{A}_{x_{0}}}\left|e^{U(a\ul x)}\right|\cdot\left|\left(1-e^{U(a\ul y)-U(a\ul x)}\right)\right|\cdot\left|G_{n}(a\ul x)\right|\cdot\left|\phi(a\ul x)\right|\\
 & +\sum_{a\in\mathcal{A}_{x_{0}}}\left|e^{U(a\ul y)}\right|\cdot\left|G_{n}(a\ul x)-G_{n}(a\ul y)\right|\cdot\left|\phi(a\ul x)\right|\\
 & +\sum_{a\in\mathcal{A}_{x_{0}}}\left|e^{U(a\ul y)}\right|\cdot\left|G_{n}(a\ul y)\right|\cdot\left|\phi(a\ul x)-\phi(a\ul y)\right|.
\end{align*}
Notice that if $U(a\ul x)\neq U(a\ul y)$ then 
\begin{align*}
\left|\left(1-e^{U(a\ul y)-U(a\ul x)}\right)\right|\leq & \frac{\left|\left(1-e^{U(a\ul y)-U(a\ul x)}\right)\right|}{\left|U(a\ul y)-U(a\ul x)\right|}\left(\mathrm{Lip}(U)\right)d(a\ul x,a\ul y)^{\beta}<Kd(\ul x,\ul y)^{\beta}
\end{align*}
where, for example, $K=\max\left\{ \mathrm{Lip}(U)\cdot\sup\left\{ \frac{\left|1-e^{\delta}\right|}{\delta}:\ |\delta|\leq \mathrm{Lip}(U)\right\} ,2\right\} $.
Hence
\begin{align*}
\sum_{a\in\mathcal{A}_{x_{0}}}\left|e^{U(a\ul x)}\right|\cdot\left|\left(1-e^{U(a\ul y)-U(a\ul x)}\right)\right|\cdot\left|G_{n}(a\ul x)\right|\cdot\left|\phi(a\ul x)\right|\leq & Kd(\ul x,\ul y)^{\beta}\|\phi\|_{\mathcal{\beta}}\sum_{a\in\mathcal{A}_{x_{0}}}\left|e^{U(a\ul x)}\right|\cdot\left|G_{n}(a\ul x)\right|,
\end{align*}
\[
\sum_{a\in\mathcal{A}_{x_{0}}}\left|e^{U(a\ul y)}\right|\cdot\left|G_{n}(a\ul y)\right|\cdot\left|\phi(a\ul x)-\phi(a\ul y)\right|\leq Ae^{-1}d(\ul x,\ul y)^{\beta}\|\phi\|_{\mathcal{\beta}}\sum_{a\in\mathcal{A}_{x_{0}}}\left|e^{U(a\ul y)}\right|\cdot\left|G_{n}(a\ul y)\right|
\]
and 
\begin{align*}
\sum_{a\in\mathcal{A}_{x_{0}}}\left|e^{U(a\ul y)}\right|\cdot\left|G_{n}(a\ul x)-G_{n}(a\ul y)\right|\cdot\left|\phi(a\ul x)\right| & \leq\|\phi\|_{\mathcal{\beta}}\sum_{a\in\mathcal{A}_{x_{0}}}\left|e^{U(a\ul y)}\right|\left|e^{\epsilon_{n}f(a\ul x)}-e^{\epsilon_{n}f(a\ul y)}\right|\\
 & \leq\|\phi\|_{\mathcal{\beta}}\sum_{a\in\mathcal{A}_{x_{0}}}e^{\Re U(a\ul y)}\left|e^{\epsilon_{n}\left(f(a\ul x)\right)}\right|\left|1-e^{\epsilon_{n}\left(f(a\ul y)-f(a\ul x)\right)}\right|\\
 & \leq\|\phi\|_{\mathcal{\beta}}\left(\epsilon_{n}K'\right)d(\ul x,\ul y)^{\beta}\sum_{a\in\mathcal{A}_{x_{0}}}e^{\Re U(a\ul y)}\left|e^{\epsilon_{n}\left(f(a\ul x)\right)}\right|\\
 & \leq\epsilon_{n}K'\|\phi\|_{\mathcal{\beta}}d(\ul x,\ul y)^{\beta}\sum_{a\in\mathcal{A}_{x_{0}}}e^{\left((\Re z+\epsilon)\cdot f+\Re w\cdot g)\right)(a\ul x)}\\
 & \leq\epsilon_{n}K'\|\phi\|_{\mathcal{\beta}}d(\ul x,\ul y)^{\beta}\left(Q\int d\mu_{(\Re z+\epsilon)\cdot f+\Re w\cdot g)}\right)\\
 & \leq\epsilon_{n}\cdot K'Q\|\phi\|_{\mathcal{\beta}}d(\ul x,\ul y)^{\beta}
\end{align*}
where, for example, $K'=\max\left\{\mathrm{ Lip}(f)\cdot\sup\left\{ \frac{\left|1-e^{\delta}\right|}{\delta}:\ |\delta|\leq \mathrm{Lip}(f)\right\} ,2\right\} $.

Hence
\[
|R_{n}\phi(\ul x)-R_{n}\phi(\ul y)|\leq(Ae^{-1}+K)\|\phi\|_{\mathcal{\beta}}\sum_{a\in\mathcal{A}_{x_{0}}}\left|e^{U(a\ul x)}\right|\cdot\left|G_{n}(a\ul x)\right|+\epsilon_{n}\cdot K'Q\|\phi\|_{\beta}.
\]
As we have proved in the first step: $\text{\ensuremath{\sum_{a\in\mathcal{A}_{x_{0}}}\left|e^{U(a\ul x)}\right|\cdot\left|G_{n}(a\ul x)\right|}}\to0$
as $n\to\infty$, we have established the claim. 
\end{proof}	

\section{Proof of the Saddle-point method}\label{app:saddle-point}
Let $F:\Omega=\{\bf \Theta\colon \|\bf \Theta\|<\eee\}\subset\mathbb{R}^{2}\to\mathbb{R}$ be an analytic
function such that $\nabla F(\bf 0)=0$ and $\nabla^{2}F(\bf 0)$ is positive definite. By analyticity, we know $F$ has an absolutely convergent Taylor expansion $F(\mathbf{x})=F(\bf 0)+\nabla^{2}F(\bf 0)\langle\mathbf{x},\mathbf{x}\rangle+R(\mathbf{x})$
for $\mathbf{x}\in \Omega$. Thus, $F$ is analytic over $\Omega_{\bb C}=\{\bf \Theta\in\mathbb{C}^{2}:\|\bf \Theta\|<\epsilon\}$
with the same Taylor expansion. We will abuse notations and let $F$ denote the corresponding analytic function over $\mathbb{C}^{2}$. 

To fix our notation, we write $M_{u}=\sup\{|u(x)|\}$, $v(t)=O_{F}(t)$
if and only if there exists $t_{0}>0$ and two constants $C_{1}$and $C_{2}$
only depending on $F$ such that $C_{1}t\leq v(t)\leq C_{2}t$ for $t>t_{0}$,
and $L_{v}$ is the Lipschitz constant of a Lipschitz function $v,$
i.e., $|v(z)-v(w)|\leq L{}_{v}|z-w|.$
\begin{lem}
	Let $G:\mathbb{C}^{2}\to\mathbb{C}$ be a Lipschitz function with
	compact support. Suppose $F:\Omega\subset\mathbb{R}^{2}\to\mathbb{R}$
	is an analytic function such that $\nabla F(\bf 0)=0$ and $\nabla^{2}F(\bf 0)$
	is positive definite. Then 
	\[
	\int_{\Omega}G(i\bf \Theta)e^{nF(i\bf \Theta)}\ d\bf \Theta=\frac{e^{nF(\bf 0)}G(\bf 0)}{n}\cdot\frac{2\pi}{\sqrt{\det \nabla^{2}F(\bf 0)}}+\frac{e^{nF(\bf 0)}}{n}\left(L_G\cdot O_{G}\left(\frac{1}{\sqrt{n}}\right)+M_{G}\cdot O_{F}\left(\frac{1}{\sqrt{n}}\right)\right).
	\]
\end{lem}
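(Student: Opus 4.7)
The plan is to carry out a standard two-dimensional Laplace argument along the purely imaginary direction. Since $F$ is real-analytic on $\Omega$ with $\nabla F(\mathbf{0})=\mathbf{0}$ and $A:=\nabla^{2}F(\mathbf{0})$ positive definite, extending $F$ holomorphically to $\Omega_{\mathbb{C}}$ and Taylor-expanding yields
\[
F(i\mathbf{\Theta})=F(\mathbf{0})-\tfrac{1}{2}\langle A\mathbf{\Theta},\mathbf{\Theta}\rangle+R(i\mathbf{\Theta}),\qquad |R(i\mathbf{\Theta})|\leq C_{F}\|\mathbf{\Theta}\|^{3},
\]
on a small ball around the origin. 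The key observation is that the quadratic term, evaluated at $i\mathbf{\Theta}$, produces $-\tfrac12\langle A\mathbf{\Theta},\mathbf{\Theta}\rangle\le 0$, so $\mathrm{Re}(F(i\mathbf{\Theta})-F(\mathbf{0}))$ has a strict local maximum at $\mathbf{0}$ and the integrand concentrates there.

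First I would fix $\delta>0$ small enough that $|R(i\mathbf{\Theta})|\leq\tfrac{1}{4}\lambda_{\min}(A)\|\mathbf{\Theta}\|^{2}$ on $B_{\delta}:=\{\|\mathbf{\Theta}\|\leq\delta\}\subset\Omega$, and split the integral over $B_\delta$ and $\Omega\setminus B_\delta$. Outside $B_{\delta}$, combining the cubic bound on $R$ with the positive definiteness of $A$ gives $\mathrm{Re}\,F(i\mathbf{\Theta})\leq F(\mathbf{0})-c\delta^{2}$ for some $c=c(F)>0$, so the tail integral is dominated by $M_{G}\cdot\mathrm{Leb}(\Omega)\cdot e^{nF(\mathbf{0})}e^{-cn\delta^{2}}$, which is superpolynomially small compared with $e^{nF(\mathbf{0})}/n$ and therefore absorbed in the error.

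For the main piece over $B_{\delta}$, I would perform the Laplace rescaling $\mathbf{\Theta}=\mathbf{u}/\sqrt{n}$ to obtain
\[
\int_{B_{\delta}}G(i\mathbf{\Theta})e^{nF(i\mathbf{\Theta})}\,d\mathbf{\Theta}=\frac{e^{nF(\mathbf{0})}}{n}\int_{B_{\delta\sqrt{n}}}G\!\left(\tfrac{i\mathbf{u}}{\sqrt{n}}\right)e^{-\tfrac{1}{2}\langle A\mathbf{u},\mathbf{u}\rangle}\,e^{nR(i\mathbf{u}/\sqrt{n})}\,d\mathbf{u}.
\]
Since $|nR(i\mathbf{u}/\sqrt{n})|\leq C_{F}\|\mathbf{u}\|^{3}/\sqrt{n}$, on an intermediate scale such as $\|\mathbf{u}\|\leq n^{1/6}$ one may expand $e^{nR(i\mathbf{u}/\sqrt{n})}=1+O_{F}(\|\mathbf{u}\|^{3}/\sqrt{n})$, while Lipschitz continuity of $G$ gives $G(i\mathbf{u}/\sqrt{n})=G(\mathbf{0})+O(L_{G}\|\mathbf{u}\|/\sqrt{n})$. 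The Gaussian weight $e^{-\frac12\langle A\mathbf{u},\mathbf{u}\rangle}$ renders the contribution of $\|\mathbf{u}\|>n^{1/6}$ superpolynomially small. Plugging these two expansions into the Gaussian integral $\int_{\mathbb{R}^{2}}e^{-\tfrac{1}{2}\langle A\mathbf{u},\mathbf{u}\rangle}d\mathbf{u}=2\pi/\sqrt{\det A}$ produces the leading term $e^{nF(\mathbf{0})}G(\mathbf{0})\cdot 2\pi/(n\sqrt{\det A})$, with two error contributions: replacing $G(i\mathbf{u}/\sqrt{n})$ by $G(\mathbf{0})$ yields a term $L_{G}\cdot O(1/\sqrt{n})$, and replacing $e^{nR(\cdot)}$ by $1$ yields a term $M_{G}\cdot O_{F}(1/\sqrt{n})$, both multiplied by $e^{nF(\mathbf{0})}/n$.

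The main technical obstacle will be choosing the intermediate truncation scale so that three competing requirements hold simultaneously: the cubic remainder $nR(i\mathbf{u}/\sqrt{n})$ is small enough to linearize, the truncated Gaussian tail is genuinely of lower order than the $O(1/\sqrt n)$ error one is trying to keep, and the final constants depend on $F$ only through $C_F$ and $\lambda_{\min}(A)$ and on $G$ only through $M_G$ and $L_G$. A secondary technical point is controlling $e^{nR(i\mathbf{u}/\sqrt{n})}-1$ uniformly in $\mathbf{u}$ on the truncation region, which is precisely what forces the prefactor in the claimed error to be $O_{F}(1/\sqrt{n})$ rather than something smaller.
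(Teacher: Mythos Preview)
Your proposal is correct and follows essentially the same Laplace-method approach as the paper: rescale by $\sqrt{n}$, extract the Gaussian leading term, and control the two error contributions via the Lipschitz property of $G$ and the cubic Taylor remainder of $F$. The only tactical difference is that the paper avoids your intermediate $n^{1/6}$-truncation by expanding $e^{nR}$ one order further, using $|e^{z}-1-z|\le e^{|z|}\,|z|^{2}/2$ together with $|nR(i\widetilde{\boldsymbol\Theta}/\sqrt{n})|\le K\epsilon\|\widetilde{\boldsymbol\Theta}\|^{2}$ so that $e^{|nR|}$ is absorbed directly into the Gaussian; this also makes your outer-region split over $\Omega\setminus B_{\delta}$ unnecessary, since in the paper $\Omega$ is already the small ball $\{\|\boldsymbol\Theta\|<\epsilon\}$.
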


\begin{proof}
	Let $\widetilde {\bf \Theta}=\sqrt{n}\bf \Theta\in\mathbb{R}^{2}$, we
	have 
	\[
	\int_{\Omega}G(i\bf \Theta)e^{nF(i\bf \Theta)}\ d\bf \Theta=\frac{e^{nF(\bf 0)}}{n}\int_{\{\|\widetilde {\bf \Theta}\|<\sqrt{n}\epsilon\}}G\left(\frac{i\widetilde {\bf \Theta}}{\sqrt{n}}\right)e^{n\left(F\left(\frac{i\widetilde {\bf \Theta}}{\sqrt{n}}\right)-F(\bf 0)\right)}\ d\widetilde {\bf \Theta}.
	\]
	Since $R(\bf x)=F(\bf x)-F(\bf 0)-\nabla^2F(\bf 0)(\xbf,\xbf )/2$, we have
	\[
	\int_{\Omega}G(i\bf \Theta)e^{nF(i\bf \Theta)}\ d\bf \Theta=\frac{e^{nF(\bf 0)}}{n}\int_{\{\|\widetilde {\bf \Theta}\|<\sqrt{n}\epsilon\}}G\left(\frac{i\widetilde {\bf \Theta}}{\sqrt{n}}\right)e^{-\frac{1}{2}\nabla^2F(\bf 0)(\widetilde {\bf \Theta},\widetilde {\bf \Theta})+nR\left(\frac{i\widetilde {\bf \Theta}}{\sqrt{n}}\right)}\ d\widetilde {\bf \Theta}
	\]
    and, for $\|\widetilde {\bf \Theta}\|<\sqrt{n}\epsilon$, there exists $K>0$ such that
    \begin{align*}
		\left|nR\left(\frac{i\widetilde {\bf \Theta}}{\sqrt{n}}\right)\right| \leq\frac{K}{\sqrt{n}}\|\widetilde {\bf \Theta}\|^{3} \leq K\epsilon\|\widetilde {\bf \Theta}\|^{2}.
	\end{align*}

	Since $\left|e^{z}-1-z\right|\leq e^{|z|}\frac{|z|^{2}}{2}$, we can compute
	\begin{align*}
		\left|\int_{\{\|\widetilde {\bf \Theta}\|<\sqrt{n}\epsilon\}}\left|G\left(\frac{i\widetilde {\bf \Theta}}{\sqrt{n}}\right)e^{-\frac{1}{2}\nabla^2F(\bf 0)(\widetilde {\bf \Theta},\widetilde {\bf \Theta})}\left(e^{nR\left(\frac{i\widetilde {\bf \Theta}}{\sqrt{n}}\right)}-1-nR\left(\frac{i\widetilde {\bf \Theta}}{\sqrt{n}}\right)\right)\right|d\widetilde {\bf \Theta}\right|\\&\hspace{-8cm}\leq\frac{M_G}{2}\left|\int_{\{\|\widetilde {\bf \Theta}\|<\sqrt{n}\epsilon\}}e^{-\frac{1}{2}\nabla^2F(\bf 0)(\widetilde {\bf \Theta},\widetilde {\bf \Theta})}e^{\left|nR\left(\frac{i\widetilde {\bf \Theta}}{\sqrt{n}}\right)\right|}\left|nR\left(\frac{i\widetilde {\bf \Theta}}{\sqrt{n}}\right)\right|^{2}d\widetilde {\bf \Theta}\right|\\
        &\hspace{-8cm}\leq\frac{M_GK^{2}}{2n}\left|\int_{\{\|\widetilde {\bf \Theta}\|<\sqrt{n}\epsilon\}}e^{-\frac{1}{2}\nabla^2F(\bf 0)(\widetilde {\bf \Theta},\widetilde {\bf \Theta})+K\epsilon\|\widetilde {\bf \Theta}\|^{2}}\|\widetilde {\bf \Theta}\|^{6}d\widetilde {\bf \Theta}\right|\\
		&\hspace{-8cm}\leq C_1(F,\epsilon)\cdot\frac{M_GK^{2}}{2n}
	\end{align*}
	where $C_{1}(F,\epsilon):=\int_{\bb R^2}e^{-\frac{1}{2}\nabla^2F(\bf 0)(\widetilde {\bf \Theta},\widetilde {\bf \Theta})+K\epsilon\|\widetilde {\bf \Theta}\|^{2}}\|\widetilde {\bf \Theta}\|^{6}d\widetilde {\bf \Theta}$
	is finite when $\epsilon$ is small. So we get 
	\begin{align*}
	\int_{\{\|\bf \Theta\|<\epsilon\}}G(i\bf \Theta)e^{nF(i\bf \Theta)}\ d\bf \Theta&\\&\hspace{-3cm}=\frac{e^{nF(\bf 0)}}{n}\left(\int_{\{\|\widetilde {\bf \Theta}\|<\sqrt{n}\epsilon\}}G\left(\frac{i\widetilde {\bf \Theta}}{\sqrt{n}}\right)e^{-\frac{1}{2}\nabla^2F(\bf 0)(\widetilde {\bf \Theta},\widetilde {\bf \Theta})}\left(1+nR\left(\frac{i\widetilde {\bf \Theta}}{\sqrt{n}}\right)\right)\ d\widetilde {\bf \Theta}+M_GO_{F}\left(\frac{1}{n}\right)\right).
	\end{align*}
	Since $G$ is Lipschitz with compact support, we have 
	\begin{align*}
		\left|\int_{\{\|\widetilde {\bf \Theta}\|<\sqrt{n}\epsilon\}}\left|G\left(\frac{i\widetilde {\bf \Theta}}{\sqrt{n}}\right)-G(\bf0)\right|e^{-\frac{1}{2}\nabla^2F(\bf 0)(\widetilde {\bf \Theta},\widetilde {\bf \Theta})}\ d\widetilde {\bf \Theta}\right| & \leq\frac{L_{G}}{\sqrt{n}}\int_{\mathbb R^{2}}\|\widetilde {\bf \Theta}\|e^{-\frac{1}{2}\nabla^2F(\bf 0)(\widetilde {\bf \Theta},\widetilde {\bf \Theta})}\ d\widetilde {\bf \Theta}=\colon\frac{L_{G}}{\sqrt{n}}\cdot C_{2}(F)
	\end{align*}
	and observe that $C_{2}(F)<\infty.$ We also have 
	\begin{align*}
		\left|\int_{\{\|\widetilde {\bf \Theta}\|<\sqrt{n}\epsilon\}}\left|G\left(\frac{i\widetilde {\bf \Theta}}{\sqrt{n}}\right)\cdot nR\left(\frac{i\widetilde {\bf \Theta}}{\sqrt{n}}\right)\right|e^{-\frac{1}{2}\nabla^2F(\bf 0)(\widetilde {\bf \Theta},\widetilde {\bf \Theta})
        }\ d\widetilde {\bf \Theta}\right|&\\ &\hspace{-3cm} \leq M_{G}\int_{\mathbb R^{2}}\frac{K\|\widetilde {\bf \Theta}\|^{3}}{\sqrt{n}}e^{-\frac{1}{2}\nabla^2F(\bf 0)(\widetilde {\bf \Theta},\widetilde {\bf \Theta})}\ d\widetilde {\bf \Theta}=: \frac{KM_{G}}{\sqrt{n}}\cdot C_{3}(F)
	\end{align*}
	and observe $C_{3}(F)<\infty.$
	Therefore, summing up all the estimates we have 
	\begin{align*}
		\int_{\{\|\bf \Theta\|<\epsilon\}}G(i\bf \Theta)e^{nF(i\bf \Theta)}\ d\bf \Theta &\\
        &\hspace{-3cm}=\frac{e^{nF(\bf 0)}G(\bf0)}{n}\left(\int_{\{\|\widetilde {\bf \Theta}\|<\sqrt{n}\epsilon\}}e^{-\frac{1}{2}\nabla^2F(\bf 0)(\widetilde {\bf \Theta},\widetilde {\bf \Theta})}\ d\widetilde {\bf \Theta}\right)+\frac{e^{nF(\bf 0)}}{n}\left(L_G\cdot O_{F}\left(\frac{1}{\sqrt{n}}\right)+M_{G}\cdot O_{F}\left(\frac{1}{\sqrt{n}}\right)\right).
	\end{align*}
	Lastly, it is routine to see that 
	\begin{align*}
		\int_{\{\|\widetilde {\bf \Theta}\|<\sqrt{n}\epsilon\}}e^{-\frac{1}{2}\nabla^2F(\bf 0)(\widetilde {\bf \Theta},\widetilde {\bf \Theta})}\ d\widetilde {\bf \Theta} & =\frac{2\pi}{\sqrt{\det \nabla^2F(\bf 0)}}+O_{F}(e^{-n\epsilon^{2}}).
	\end{align*}
\end{proof}
\bibliographystyle{plain}
\bibliography{BIB}

\begin{thebibliography}{10}

\bibitem{Anantharaman:2000jq}
Nalini Anantharaman.
\newblock {Precise counting results for closed orbits of Anosov flows}.
\newblock {\em Annales Scientifiques de l'\'Ecole Normale Sup\'erieure.
  Quatri\`eme S\'erie}, 33(1):33--56, 2000.

\bibitem{Babillot:1998kh}
Martine Babillot and Fran\c{c}ois Ledrappier.
\newblock Lalley's theorem on periodic orbits of hyperbolic flows.
\newblock {\em Ergodic Theory Dynam. Systems}, 18(1):17--39, 1998.

\bibitem{Babillot:2000fv}
Martine Babillot and Marc Peign{\'e}.
\newblock {Homologie des g\'eod\'esiques ferm\'ees sur des vari\'et\'es
  hyperboliques avec bouts cuspidaux}.
\newblock {\em Annales Scientifiques de l'\'Ecole Normale Sup\'erieure.
  Quatri\`eme S\'erie}, 33(1):81--120, 2000.

\bibitem{BCK-2019}
Harrison Bray, Richard Canary, and Lien-Yung Kao.
\newblock Pressure metrics for deformation spaces of quasifuchsian groups with
  parabolics.
\newblock {\em Algebr. Geom. Topol.}, 23(8):3615--3653, 2023.

\bibitem{BCKMcounting}
Harrison Bray, Richard Canary, Lien-Yung Kao, and Giuseppe Martone.
\newblock Counting, equidistribution and entropy gaps at infinity with
  applications to cusped {H}itchin representations.
\newblock {\em J. Reine Angew. Math. (Crelle's Journal)}, 791:1--51, 2022.

\bibitem{BCKMpressure}
Harrison Bray, Richard Canary, Lien-Yung Kao, and Giuseppe Martone.
\newblock Pressure metrics for cusped {H}itchin components.
\newblock {\em Adv. Math.}, 435:Paper No. 109352, 24, 2023.

\bibitem{Burger:1993wb}
Marc Burger.
\newblock {Intersection, the {M}anhattan curve, and {P}atterson-{S}ullivan
  theory in rank {$2$}}.
\newblock {\em Internat. Math. Res. Notices}, (7):217--225, 1993.

\bibitem{Canary:2024}
Richard Canary.
\newblock Hitchin representations of {F}uchsian groups.
\newblock {\em EMS Surv. Math. Sci.}, 9(2):355--388, 2022.

\bibitem{CZZcusped}
Richard Canary, Tengren Zhang, and Andrew Zimmer.
\newblock Cusped {H}itchin representations and {A}nosov representations of
  geometrically finite {F}uchsian groups.
\newblock {\em Adv. Math.}, 404:Paper No. 108439, 67, 2022.

\bibitem{Chae_Hartog_Thm}
Soo~Bong Chae.
\newblock {\em Holomorphy and calculus in normed spaces}, volume~92 of {\em
  Monographs and Textbooks in Pure and Applied Mathematics}.
\newblock Marcel Dekker, Inc., New York, 1985.
\newblock With an appendix by Angus E. Taylor.

\bibitem{chow2023jordan}
Michael Chow and Hee Oh.
\newblock Jordan and cartan spectra in higher rank with applications to
  correlations.
\newblock {\em arXiv e-prints}, 2023.

\bibitem{chow2024multiplecor}
Michael Chow and Hee Oh.
\newblock Multiple correlations of spectra for higher rank anosov
  representations.
\newblock {\em arXiv e-prints}, 2024.

\bibitem{Dai}
Xian Dai and Giuseppe Martone.
\newblock Correlation of the renormalized {H}ilbert length for convex
  projective surfaces.
\newblock {\em Ergodic Theory Dynam. Systems}, 43(9):2938--2973, 2023.

\bibitem{Dalbo:1996vs}
Fran{\c c}oise Dal'bo and Marc Peign{\'e}.
\newblock {\em {Comportement asymptotique du nombre de g{\'e}od{\'e}siques
  ferm{\'e}es sur la surface modulaire en courbure variable}}.
\newblock Ast\'erisque, 1996.

\bibitem{Epstein87homology}
Charles~L. Epstein.
\newblock Asymptotics for closed geodesics in a homology class, the finite
  volume case.
\newblock {\em Duke Math. J.}, 55(4):717--757, 1987.

\bibitem{Fock:2006da}
Vladimir Fock and Alexander Goncharov.
\newblock {Moduli spaces of local systems and higher {T}eichm\"uller theory}.
\newblock {\em Publ. Math. Inst. Hautes \'Etudes Sci.}, 103(103):1--211, 2006.

\bibitem{Glorieux17}
Olivier Glorieux.
\newblock Counting closed geodesics in globally hyperbolic maximal compact
  {A}d{S} 3-manifolds.
\newblock {\em Geom. Dedicata}, 188:63--101, 2017.

\bibitem{Huber:1959ed}
Heinz Huber.
\newblock {Zur analytischen Theorie hyperbolischer Raumformen und
  Bewegungsgruppen}.
\newblock {\em Mathematische Annalen}, 138(1):1--26, 1959.

\bibitem{Kao:2018th}
Lien-Yung Kao.
\newblock {Manhattan Curves for Hyperbolic Surfaces with Cusps}.
\newblock {\em Ergodic Theory Dynam. Systems}, 40(7):1843--1874, 2020.

\bibitem{Kao:2019tr}
Lien-Yung Kao.
\newblock Pressure metrics and {M}anhattan curves for {T}eichm\"{u}ller spaces
  of punctured surfaces.
\newblock {\em Israel J. Math.}, 240(2):567--602, 2020.

\bibitem{Kato95-pertubation}
Tosio Kato.
\newblock {\em Perturbation theory for linear operators}.
\newblock Classics in Mathematics. Springer-Verlag, Berlin, 1995.
\newblock Reprint of the 1980 edition.

\bibitem{Katsuda88homology}
Atsushi Katsuda and Toshikazu Sunada.
\newblock Homology and closed geodesics in a compact {R}iemann surface.
\newblock {\em Amer. J. Math.}, 110(1):145--155, 1988.

\bibitem{Kessebohmer:2017ke}
Marc Kesseb\"{o}hmer and Sabrina Kombrink.
\newblock A complex {R}uelle-{P}erron-{F}robenius theorem for infinite {M}arkov
  shifts with applications to renewal theory.
\newblock {\em Discrete Contin. Dyn. Syst. Ser. S}, 10(2):335--352, 2017.

\bibitem{Benoit19Perturb}
Beno\^{i}t~R. Kloeckner.
\newblock Effective perturbation theory for simple isolated eigenvalues of
  linear operators.
\newblock {\em J. Operator Theory}, 81(1):175--194, 2019.

\bibitem{Lalley:1987df}
Steven Lalley.
\newblock Distribution of periodic orbits of symbolic and {A}xiom {A} flows.
\newblock {\em Adv. in Appl. Math.}, 8(2):154--193, 1987.

\bibitem{Lalley:1989eh}
Steven Lalley.
\newblock {Closed geodesics in homology classes on surfaces of variable
  negative curvature}.
\newblock {\em Duke Math. J.}, 58(3):795--821, 1989.

\bibitem{Lalley:1989jm}
Steven Lalley.
\newblock Renewal theorems in symbolic dynamics, with applications to geodesic
  flows, non-{E}uclidean tessellations and their fractal limits.
\newblock {\em Acta Math.}, 163(1-2):1--55, 1989.

\bibitem{Ledrappier:2008wq}
Fran{\c c}ois Ledrappier and Omri Sarig.
\newblock {Fluctuations of ergodic sums for horocycle flows on {$\Bbb
  Z^d$}-covers of finite volume surfaces}.
\newblock {\em Discrete Contin. Dyn. Syst.}, 22(1-2):247--325, 2008.

\bibitem{Margulis:1970gj}
Grigory Margulis.
\newblock {Applications of ergodic theory to the investigation of manifolds of
  negative curvature}.
\newblock {\em Functional Analysis and Its Applications}, 3(4):335--336, 1969.

\bibitem{Mauldin:2003dn}
Daniel Mauldin and Mariusz Urba\'nski.
\newblock {\em {Graph directed Markov systems}}, volume 148 of {\em Cambridge
  Tracts in Mathematics}.
\newblock Cambridge University Press, Cambridge, Cambridge, 2003.

\bibitem{Parry:1990tn}
William Parry and Mark Pollicott.
\newblock {Zeta functions and the periodic orbit structure of hyperbolic
  dynamics}.
\newblock {\em Ast\'erisque}, 187-188:1--268, 1990.

\bibitem{Pollicott:2012ud}
Fr{\'e}d{\'e}ric Paulin, Mark Pollicott, and Barbara Schapira.
\newblock {\em {Equilibrium states in negative curvature}}.
\newblock Number 373. Ast\'erisque, 2015.

\bibitem{Phillips87homology}
Ralph Phillips and Peter Sarnak.
\newblock Geodesics in homology classes.
\newblock {\em Duke Math. J.}, 55(2):287--297, 1987.

\bibitem{Pollicott91homology}
Mark Pollicott.
\newblock Homology and closed geodesics in a compact negatively curved surface.
\newblock {\em Amer. J. Math.}, 113(3):379--385, 1991.

\bibitem{Pollicott06correlation}
Mark Pollicott and Richard Sharp.
\newblock Correlations for pairs of closed geodesics.
\newblock {\em Invent. Math.}, 163(1):1--24, 2006.

\bibitem{Pollicott13correlation}
Mark Pollicott and Richard Sharp.
\newblock Correlations of length spectra for negatively curved manifolds.
\newblock {\em Comm. Math. Phys.}, 319(2):515--533, 2013.

\bibitem{Rockafellar98_Legendre}
R.~Tyrrell Rockafellar and Roger J.-B. Wets.
\newblock {\em Variational analysis}, volume 317 of {\em Grundlehren der
  mathematischen Wissenschaften [Fundamental Principles of Mathematical
  Sciences]}.
\newblock Springer-Verlag, Berlin, 1998.

\bibitem{Sambarino:2024za}
Andr\'es Sambarino.
\newblock Infinitesimal {Z}ariski closures of positive representations.
\newblock {\em J. Differential Geom.}, 128(2):861--901, 2024.

\bibitem{Sarig06phase}
Omri Sarig.
\newblock Continuous phase transitions for dynamical systems.
\newblock {\em Comm. Math. Phys.}, 267(3):631--667, 2006.

\bibitem{Sarig:2009wta}
Omri Sarig.
\newblock {\em {Lecture notes on thermodynamic formalism for topological Markov
  shifts}}.
\newblock 2009.

\bibitem{Schwartz:1993ty}
Richard Schwartz and Richard Sharp.
\newblock {The correlation of length spectra of two hyperbolic surfaces}.
\newblock {\em Comm. Math. Phys.}, 153(2):423--430, 1993.

\bibitem{Sharp93homology}
Richard Sharp.
\newblock Closed orbits in homology classes for {A}nosov flows.
\newblock {\em Ergodic Theory Dynam. Systems}, 13(2):387--408, 1993.

\bibitem{Sharp:1998if}
Richard Sharp.
\newblock {The Manhattan curve and the correlation of length spectra on
  hyperbolic surfaces}.
\newblock {\em Math. Z.}, 228(4):745--750, 1998.

\bibitem{Stadlbauer:2004va}
Manuel Stadlbauer.
\newblock {The return sequence of the {B}owen-{S}eries map for punctured
  surfaces}.
\newblock {\em Fund. Math.}, 182(3):221--240, 2004.

\end{thebibliography}

\end{document}